\newtheorem{theorem}{Theorem}[section]
\newtheorem{lemma}[theorem]{Lemma}
\newtheorem{fact}[theorem]{Fact}
\newtheorem{proposition}[theorem]{Proposition}
\newtheorem{corollary}[theorem]{Corollary}
\theoremstyle{definition}
\newtheorem{definition}[theorem]{Definition}
\newenvironment{remark}
  {\pushQED{\qed}\remarkx}
  {\popQED\endremarkx}
\newenvironment{claim}
  {\pushQED{\qed}\claimx}
  {\popQED\endclaimx}
\def\NN{\mathbb{N}}
\def\sgn{\mathrm{sgn}}
\def\RR{\mathbb{R}}
\def\TT{\mathbb{T}}
\def\ZZ{\mathbb{Z}}
\def\d{\,\mathrm{d}}
\def\e{\mathbbm{1}}
\def\id{\mathrm{id}_G}
\def\exp{\mathrm{exp}}
\def\mut{\widetilde{\mu}}
\def\dis{\mathfrak{d}}
\def\tri{\,\triangle\,}
\def\Stab{\mathrm{Stab}}
\title[Measure growth in compact groups and the Kemperman Inverse Problem]{Measure growth in compact semisimple  Lie groups and\\ the Kemperman Inverse Problem}
\author{Yifan Jing}
\address{Mathematical Institute, University of Oxford, UK}
\email{yifan.jing@maths.ox.ac.uk}
\author{Chieu-Minh Tran}
\address{Department of Mathematics, National University of Singapore, Singapore}
\email{trancm@nus.edu.sg}
\thanks{YJ was supported by Ben Green’s Simons Investigator Grant, ID:376201.}
\thanks{CMT was supported by Anand Pillay's NSF Grant-2054271.}
\subjclass[2020]{Primary 22B05; Secondary 22D05, 11B30, 05D10, 43A05}
\date{}
\begin{document}

\begin{abstract}
Suppose $G$ is a compact semisimple Lie group, $\mu$ is the normalized Haar measure on $G$, and $A, A^2 \subseteq G$ are measurable. We show that  
$$\mu(A^2)\geq  \min\{1, 2\mu(A)+\eta\mu(A)(1-2\mu(A))\}$$ with the absolute constant $\eta>0$ (independent from the choice of $G$) quantitatively determined. We also show a more general result for connected compact groups without a toric quotient and resolve the Kemperman Inverse Problem from 1964.
\end{abstract}




\maketitle

\section{Introduction} 

\subsection{Statement of the main results} Throughout this introduction, $G$ is a {\it connected} and {\it compact} group,  $\mu$ is the normalized Haar measure on $G$, and $A, B \subseteq G$ are measurable. We also assume that the product sets $A^2:=\{a_1a_2 : a_1, a_2 \in A\}$ and   $AB:=\{ab : a\in A, b \in B\}$ are measurable whenever they appear.  Our first result reads:




\begin{theorem}\label{thm: maingap}
There is a constant $\eta>0$ such that the following hold. Suppose $G$ is a compact semisimple  Lie group, and $A\subseteq G$ a compact set with positive measure. Then: 
\begin{enumerate}[\rm (i)]
    \item 
$\mu(A^2)\geq (2+10^{-12})\mu(A)$ provided that $\mu(A) < 10^{-10^{1560}} $;
\item  $\mu(A^2)\geq \min\{1, 2\mu_G(A)+\eta\mu_G(A)|1-2\mu_G(A)|\}$.
\end{enumerate}
\end{theorem}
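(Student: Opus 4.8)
The plan is to first dispose of the case $\mu(A)\ge 1/2$, where Kemperman's inequality $\mu(A^2)\ge\min\{1,2\mu(A)\}$ already forces $\mu(A^2)=1$, the right-hand side of \iir; so one may assume $0<\mu(A)<1/2$, and, $A$ being compact, $A^2$ is compact and no measurability issue arises. I would then split into the ranges $0<\mu(A)<\theta_0$ and $\theta_0\le\mu(A)<1/2$, where $\theta_0=10^{-10^{1560}}$: statement \ir\ is proved in the first range, and \iir\ in the second (and it follows from \ir\ in the first once $\eta\le 10^{-12}$, since there $\eta\mu(A)(1-2\mu(A))<10^{-12}\mu(A)$).

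\textbf{Very small sets: proof of \ir.} Here I would run a structure dichotomy at a suitable scale $\delta$ (a small power of $\mu(A)$): either $A$ is, up to a subset of negligible relative measure, contained in the $\delta$-neighbourhood of a coset of a proper closed connected subgroup $H\le G$ (with $H=\{1\}$, a genuine $\delta$-ball, allowed), or $A$ is ``spread out''. In the first case one exploits that a compact semisimple Lie group is perfect, hence has no toric quotient — in particular no codimension-one closed connected subgroup, so $k:=\operatorname{codim}H\ge 2$ when $H\ne\{1\}$. Passing to $H$ and using a bi-invariant metric on $G$ (under which conjugation is an isometry, so the tube is ``almost a subgroup'' and fibres over $H$ with $k$-dimensional Euclidean transverse fibres), a fibrewise Brunn--Minkowski estimate yields $\mu(A^2)\ge(2^{k}-o(1))\mu(A)$, which is $\ge(8-o(1))\mu(A)$ when $H=\{1\}$ (as $\dim G\ge 3$) and $\ge(4-o(1))\mu(A)$ otherwise; when instead $\delta$ and $H$ are chosen so that the coset is not effectively preserved under left translation by $A$ — which, again because $G$ has no toric quotient and after taking $H$ as large as possible, one can always arrange — one selects $a_1,a_2,a_3\in A$ so that $a_1A,a_2A,a_3A\subseteq A^2$ lie in pairwise almost disjoint thin tubes around distinct cosets of $H$, giving $\mu(A^2)\ge(3-o(1))\mu(A)$. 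In the ``spread out'' case $A$ is pseudorandom, its autocorrelation $g\mapsto\mu(gA\cap A)$ is diffuse, and a typical triple $a_1,a_2,a_3\in A$ already has the translates $a_iA\subseteq A^2$ pairwise overlapping in measure $o(\mu(A))$, so again $\mu(A^2)\ge(3-o(1))\mu(A)$, in fact much more. Since $\mu(A)<\theta_0$, in every case $\mu(A^2)>(2+10^{-12})\mu(A)$. The labour is in proving the dichotomy effectively — it must be run scale by scale, the neighbourhood geometry of the various closed connected subgroups of $G$ controlled uniformly, and (when $G$ is not simple) its simple factors peeled off by a preliminary fibrewise estimate, legitimate since every quotient and every closed connected normal subgroup of a semisimple group is again semisimple; the iteration this entails is what forces the tower in $\theta_0$.

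\textbf{Larger sets: proof of \iir.} For $\theta_0\le\mu(A)<1/2$ I would argue by contradiction: if $\mu(A^2)<2\mu(A)+\eta\mu(A)(1-2\mu(A))$ then $A$ is a near-extremiser for Kemperman's inequality, and I would invoke the quantitative inverse (stability) theorem for that inequality in connected compact groups — the main structural result underlying the paper — which forces such a near-extremiser to be, up to a set of small measure, the preimage $\pi^{-1}(I)$ of an arc $I\subseteq\TT$ under a continuous surjective homomorphism $\pi\colon G\to\TT$, with $|I|$ close to $\mu(A)$. But $G=[G,G]$, so no nontrivial continuous homomorphism $G\to\TT$ exists (its image would be a connected, hence trivial, closed subgroup of $\TT$); this contradiction gives $\mu(A^2)\ge 2\mu(A)+\eta\mu(A)(1-2\mu(A))$. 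The shape of the error term is intrinsic: the surplus of $\mu(A^2)$ over $2\mu(A)$ scales with $\mu(A)$, while as $\mu(A)\uparrow 1/2$ the bound $\mu(A^2)\le 1$ caps the surplus at $1-2\mu(A)$, so $\eta\,\mu(A)(1-2\mu(A))$ is the natural target. Taking $\eta$ to be a sufficiently small absolute constant — no larger than the minimum of the constants produced in the two ranges — makes both parts hold.

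\textbf{Main obstacle.} The difficulty is not the qualitative picture — ``near-minimal doubling forces a toric-quotient arc'', and ``localization forces large doubling'', are classical in spirit — but its effectivity and uniformity: one needs an absolute $\eta>0$, and the explicit constant $2+10^{-12}$, working for \emph{every} compact semisimple Lie group at once, although $\dim G$ and the rank are unbounded. This demands uniform control of the geometry of tubular neighbourhoods of arbitrary closed connected subgroups (where the bi-invariant metric and the absence of codimension-one subgroups are essential), of how near-minimal-doubling sets distribute along the fibres of the quotients of $G$, and — most expensively — of the number of scales at which near-subgroup structure can appear; it is this multiscale bookkeeping inside the effective dichotomy that is both the crux and the source of the tower-type threshold in \ir.
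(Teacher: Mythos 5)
Your treatment of part~(ii) matches the paper's route: reduce to a near-extremiser, invoke Theorem~\ref{thm: mainassymmetric} (the paper's main inverse theorem) to produce a continuous surjection $\chi\colon G\to\TT$, and contradict semisimplicity since $\chi(G)$ would be a nontrivial connected abelian quotient of $G=[G,G]$. This is precisely how Section~10 treats~(ii).

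Your plan for part~(i), by contrast, takes a genuinely different route and has a real gap. The entire argument hinges on an effective, $G$-uniform structure dichotomy --- either $A$ is essentially a $\delta$-tube around a coset of a proper closed connected subgroup, or $A$ is ``spread out'' with diffuse autocorrelation --- which you do not prove and which is not an available theorem; you concede that establishing it effectively is ``the crux'', but that is exactly where a proof has to appear. Two subsidiary steps also do not stand as written: the ``spread out $\Rightarrow$ three translates of $A$ from inside $A$ pairwise almost disjoint'' inference does not follow from failing to be near a coset (Fubini controls $\mathbb E_g\,\mu_G(gA\cap A)=\mu_G(A)^2$ over all $g\in G$, but the translates you need correspond to $g\in A^{-1}A$, where $\mu_G(gA\cap A)$ can be $\Theta(\mu_G(A))$), and in the tubular case the fibrewise Brunn--Minkowski estimate must cope with the fact that $\bigl(g\cdot N_\delta(H)\bigr)^2$ need not lie in a tube around a coset of $H$ when $G$ is nonabelian. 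The paper proves~(i) by a completely different mechanism: for small $\mu_G(A)$ it (a)~covers $A$ by an approximate group via Tao's theorem (Fact~\ref{fact: tao approximate groups}), (b)~finds a one-dimensional torus $H$ meeting $A$ and $B$ ``transversally in measure'' (Theorem~\ref{thm: kakeya new}), (c)~applies the inverse theorem on $\TT$ fibre by fibre to show the fibres of $A$ are near-intervals of a common length translating rigidly (Theorem~\ref{thm: fibers of same length 1newnew}), (d)~builds an almost-linear path-monotone pseudometric $d_A$ from this (Proposition~\ref{prop: almost linear metric from local}), and (e)~integrates $d_A$ to a surjective continuous homomorphism $G\to\TT$ via irreducible sequences and the monitor lemma (Theorem~\ref{thm: homfrommeasurecompact}), contradicting semisimplicity. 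The threshold $10^{-10^{1560}}$ is not produced by a multiscale iteration: it is $1/3^{3^{20}/c_\TT}$ with $c_\TT\approx 10^{-1550}$ the inverse-theorem constant for $\TT$, namely the size at which one may feed $A$ to Theorem~\ref{thm: kakeya new} directly and skip the measure-reducing step of Proposition~\ref{prop: red to small sets} that would otherwise degrade the growth gap.
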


One may expect to obtain a product-theorem type result, that $\mu(A^2)\geq\min\{1,(2+\eta)\mu(A)\}$ in Theorem~\ref{thm: maingap}(ii). This is impossible, and a counterexample is constructed in Section 3: when $\mu(A)$ is close to $1/2$, $\mu(A^2)-2\mu(A)$ can be arbitrarily small. Thus the statement of Theorem~\ref{thm: maingap}(ii) is of the best possible form. On the other hand, a combination of Theorem~\ref{thm: maingap}(ii) and Kemperman's inequality does give us $\mu(A^3)>\min\{1,(3+\eta)\mu(A)\}$. For an asymmetric formulation involves two sets $A$ and $B$, see Theorem~\ref{thm: asymm 1.2}. 

Theorem~\ref{thm: maingap} can be contrasted with the situation when $G$ is a connected and compact solvable Lie group, equivalently, $G$ is the $d$-dimensional torus $\TT^d = (\RR/\ZZ)^d$. Here, the lower bound  $\mu(A^2)\geq \min\{1, 2\mu(A)\}$ given by the symmetric version of Kemperman inequality ~\cite{Kemperman} cannot be further improved: With $I \subseteq \TT$ the interval $[0, c]_{\TT}$, $\chi: \TT^d \to \TT$ a continuous surjective group homomorphism, and $A =\chi^{-1}(I)$,  one can check that $\mu(A^2)= 2\mu(A)$.

For the compact simple Lie group $\text{SO}_3(\RR)$, Theorem~\ref{thm: maingap} is a step toward the following conjecture by Breuillard and Green~\cite[Problem 78]{BenBook}: If $A \subseteq  \text{SO}_3(\RR)$ has sufficiently small measure, then 
$$\mu(A^2)> 3.99 \mu(A).$$ 
Note that measure growth rate around $4=2^2=2^{3-1}$ does happen in $\text{SO}_3(\RR)$ (3-dimensional), when $A$ is a small neighborhood around a maximal proper closed subgroup $H$ (1-dimensional) of $\text{SO}_3(\RR)$. Hence, one has an optimistic generalization of the above conjecture: the minimum measure growth rate in a connected compact Lie group $G$ is close to $2^{d-m}$ with $d$ the dimension of $G$ and $m$ the maximal dimension of a proper closed subgroup. Our later proof of Theorem~\ref{thm: maingap}, which goes through Theorem~\ref{thm: mainassymmetric}, essentially verifies this when $d-m=1$. We remark that recently the authors and Zhang~\cite{JTZ} shows that, in a {\it noncompact, connected}, and {\it unimodular} real algebraic group, the measure growth rate is at least $2^{d-m}$ with $d$ the topological dimension and $m$ the dimension of a maximal compact subgroup. This is another suggestion that the more general assertion is true. The problem for compact groups is expected to be much harder.


Theorem~\ref{thm: maingap} can also be seen as the continuous counterpart of  growth phenomena in finite simple groups of Lie type. In a celebrated work, Helfgott~\cite{Helfgott08} established the following: If $X \subseteq \mathrm{SL}_2(\mathbb{F}_p)$ is not contained in any proper subgroup, $|X| \leq p^{3-\delta}$ for some $\delta>0$,  then there are $c=c(\delta)$, and $\varepsilon=\varepsilon(\delta)$ such that
$$ |X^3| \geq c |X|^{1+\varepsilon}. $$
Generalizations to all finite simple groups of Lie type were obtained in~\cite{BGT11} and ~\cite{PS16} independently. These results provide important inputs for the so-called Bourgain--Gamburd expander machine~\cite{BG08,BG12}; see \cite{Breuillard18,Tao-expansion} for more background.


Our later proof, in fact, yields the following more general Theorem~\ref{thm: mainassymmetric} for groups that are not necessarily Lie.   The earlier Theorem~\ref{thm: maingap} is a consequence of Theorem~\ref{thm: mainassymmetric} and the fact that  $\TT$ is not a quotient of any simple Lie group.

\begin{theorem}\label{thm: mainassymmetric}
Let $K\geq 1$, then there is $\eta>0$ such that the following hold. Suppose $G$ is a connected compact group, $A,B$ are compact subsets of $G$ such that $\mu(AB)<1$, $0<\mu_G(B)^K\leq \mu_G(A)\leq \mu_G(B)^{1/K}$ and
\[
\mu(AB)<\mu(A) + \mu(B) +\eta \min\{\mu(A), \mu(B)\}(1-\mu(A)-\mu(B)).
\]
Then there is a continuous surjective group homomorphism $\chi: G \to \TT$ with 
$$\ker(\chi) \subseteq AA^{-1} \cap BB^{-1}.$$
\end{theorem}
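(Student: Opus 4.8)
The plan is to prove a stability (inverse) statement: reduce first to compact connected Lie groups, then isolate a uniform measure-growth gap in the semisimple case as the technical core, and finally use a fibration over a toric quotient both to rule out the remaining non-extremal configurations and to produce~$\chi$.

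\textbf{Step 1 (reduction to Lie groups).} By the structure theory of compact groups (Peter--Weyl), a connected compact group $G$ is an inverse limit of compact connected Lie groups $G/N_i$. Using inner/outer regularity of $\mu$, one approximates $A,B$ from inside and outside by sets that are, up to negligible measure, pulled back from some $G/N_i$; the size hypotheses and the defect hypothesis then descend to $G/N_i$ with $\eta$ adjusted by a harmless factor, and a continuous surjection $\chi\colon G/N_i\to\TT$ whose kernel lies in the relevant difference sets pulls back to $G$. After this reduction I may assume $G$ is a compact connected Lie group, $A,B$ are compact with $\mu(\partial A)=\mu(\partial B)=0$, and (WLOG) $\mu(A)\le\mu(B)$. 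Moreover if $\mu(A)+\mu(B)\ge 1$ then $\mu(AB)=1$ by Kemperman's inequality and the $\bigl(1-\mu(A)-\mu(B)\bigr)$ factor is nonpositive, so the defect hypothesis already fails; hence I may also assume $\mu(A)+\mu(B)<1$.

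\textbf{Step 2 (dichotomy).} A compact connected Lie group admits a continuous surjection onto $\TT$ iff its connected centre is nontrivial, i.e.\ iff $G$ is not semisimple, and in that case every toric quotient factors through $Z(G)^\circ$. The theorem therefore splits: (a) if $G$ is semisimple, the desired conclusion cannot hold, so I must instead show the defect hypothesis is impossible, i.e.\ prove the uniform gap
\[
\mu(AB)\ \ge\ \mu(A)+\mu(B)+\eta\,\mu(A)\bigl(1-\mu(A)-\mu(B)\bigr);
\]
(b) if $G$ is not semisimple, I must either locate a toric quotient with the stated kernel property, or again produce the gap.

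\textbf{Step 3 (the semisimple core --- the main obstacle).} The reason a gap must exist is that a semisimple compact connected Lie group has \emph{no} codimension-one connected subgroup: a transitive action on $G/H\cong S^1$ would, after averaging a metric, yield an $\mathrm{SO}(2)$-quotient, contradicting semisimplicity. Thus every proper connected subgroup has codimension $\ge 2$, the interval-preimage configurations that make Kemperman tight in $\TT^d$ have no analogue, and product sets near such ``one-dimensional'' configurations are forced to grow by strictly more than the factor $2$, with a quantitative gap (morally towards $2^2$); this is the $d-m=1$ phenomenon flagged after Theorem~\ref{thm: maingap}. To turn this into an explicit $\eta$ I would: (i) first settle the regime where $\mu(A)+\mu(B)$ is bounded away from $1$, arguing by contradiction that a sequence of near-extremal pairs over all semisimple $G$ of bounded dimension would converge (in an ultraproduct / Gromov--Hausdorff sense) to a genuine Kemperman-extremal pair in a limiting connected compact group, whose rigidity forces a one-dimensional toric quotient, contradicting semisimplicity; (ii) make this effective through a fibration/bootstrapping argument, slicing $G$ over $G/H$ with $H$ a maximal torus and running a Brunn--Minkowski-type fibrewise estimate, reducing the uniformity of the constant to finitely many ``model'' groups via finite covers (to $\mathrm{PSU}(2)$ and its products); (iii) handle $\mu(A)+\mu(B)$ close to $1$ by a complementation argument on $(AB)^c$, where the $\bigl(1-\mu(A)-\mu(B)\bigr)$ factor makes the claim weak. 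The genuinely hard point is obtaining one $\eta$ valid for \emph{all} semisimple $G$ at once: the naive compactness argument is ineffective because $G$ ranges over infinitely many groups, so one needs either the reduction to finitely many models or an effective stability argument, which is presumably the source of the tower-type constants in Theorem~\ref{thm: maingap}.

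\textbf{Step 4 (non-semisimple case, and producing $\chi$).} When $Z(G)^\circ\ne 1$, fix a continuous surjection $q\colon G\to\TT$ with connected kernel $N$, and write $A_s,B_s\subseteq N$ for the fibres. Running the inductive structure of Kemperman's original proof --- Kemperman's inequality in $\TT$ for $q(A),q(B)$, together with a fibrewise Kemperman estimate in $N$ --- gives
\[
\mu(AB)\ \ge\ \mu(A)+\mu(B)+\bigl(\text{base defect of }q(A),q(B)\bigr)+\bigl(\text{fibre defect in }N\bigr),
\]
where the base defect vanishes only when $q(A),q(B)$ are arcs (rigidity of Kemperman in $\TT$), and the fibre defect is bounded below by Step~3 applied inside $N$ unless almost every fibre $A_s$ (resp.\ $B_s$) is full in $N$; if $N$ still has a toric quotient one iterates, and the hypothesis $\mu(B)^K\le\mu(A)\le\mu(B)^{1/K}$ keeps $\min\{\mu(A),\mu(B)\}$ comparable to both throughout. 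If either defect exceeds the $\eta$-window, the defect hypothesis fails and we are done; otherwise $q(A),q(B)$ are arcs and a.e.\ fibre is full. The latter gives, for every $h\in\ker q$, that $\mu(A\cap hA)=\int_\TT\mu_N(A_s\cap(hA)_s)\d s=\mu(A)>0$, hence $h\in AA^{-1}$, and similarly $h\in BB^{-1}$; thus $\ker q\subseteq AA^{-1}\cap BB^{-1}$ and $\chi:=q$ works.
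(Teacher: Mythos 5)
Your proposal gets the high-level skeleton roughly right --- reduce to compact connected Lie groups, split on whether $G$ is semisimple, and in the semisimple case prove a uniform growth gap --- and Steps~1 and~2 and the final kernel computation in Step~4 are workable in principle. But the technical core, which you explicitly flag as ``the main obstacle,'' is a genuine gap: you offer no mechanism that would actually deliver the uniform $\eta$ for all semisimple $G$, and the mechanisms you gesture at would not work.

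Concretely, the issues in Step~3 are these. (a)~The ultraproduct/Gromov--Hausdorff compactness argument is intrinsically ineffective and only bounded-dimension; the paper explicitly contrasts itself with Carolino's ultraproduct-based approach for exactly this reason. (b)~``Slicing $G$ over $G/H$ with $H$ a maximal torus'' is not an available move: $G/H$ is then a flag manifold, not a group, so there is no Kemperman-type inequality on the base to run a fibrewise estimate against. The paper slices over a \emph{one-dimensional} torus $H$, chosen by the non-Kakeya result (Theorem~\ref{thm: kakeya new}): using Tao's approximate-group structure theorem, one shows that if $A$ has large intersection with a coset of \emph{every} $1$-torus then $A^{O(1)}=G$, forcing $\mu(A)$ to be bounded below. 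This produces an $H$ against which both $A$ and $B$ have small fibres, which is what makes the rest of the argument run. (c)~The proposed ``reduction to finitely many models ($\mathrm{PSU}(2)$ and its products)'' is not a real reduction: $\mathrm{PSU}(3)$, $G_2$, etc.\ are not covers, quotients, or products of $\mathrm{PSU}(2)$, and nothing in your sketch explains why a uniform constant for those models would transfer.

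What is genuinely missing is the mechanism the paper uses to turn small doubling into a homomorphism: from the transversal $1$-torus $H$ and the $\TT$-inverse theorem, it builds an almost-linear, almost path-monotone left-invariant pseudometric $d_A(g_1,g_2)=\mu(g_1A\tri g_2A)$ on $G$ (Theorem~\ref{thm: fibers of same length 1newnew}, Proposition~\ref{prop: almost linear metric from local}), shows path monotonicity upgrades to global monotonicity (Proposition~\ref{prop: localmonotoneimplyglobalmonotone}), and then ``integrates'' $d_A$ into a continuous surjection $G\to\TT$ via irreducible $\lambda$-sequences, the monitor lemma (Lemma~\ref{prop: lower bound on n}), measurable selection, and the Grove--Karcher--Ruh/Kazhdan stability of almost homomorphisms (Theorem~\ref{thm: homfrommeasurecompact}). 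For semisimple $G$ this is a contradiction, giving the gap; for general $G$ this produces $\chi$. None of this --- the choice of the $1$-torus, the pseudometric, or the integration --- appears in your proposal, and without it Step~3 has no content. Also note that the reduction to Lie groups in the paper is not a bare Peter--Weyl approximation: the quotient-domination theorem (Theorem~\ref{thm: criticality transfer}) and Corollary~\ref{cor: kernel control} are needed precisely to push the Gleason--Yamabe kernel inside $AA^{-1}\cap BB^{-1}$ while controlling the defect, which your ``negligible measure'' phrasing elides.
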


 Theorem~\ref{thm: mainassymmetric}  strengthens some aspects of Gleason--Yamabe theorem in the  setting of small growth. For a connected and compact group $G$ and an open neighborhood $U$ of the identity, the Gleason--Yamabe theorem establishes the existence of a continuous and surjective group homomorphism $\pi: G \to L$ with $L$ a Lie group and $$\ker(\pi) \subseteq U.$$ Under the further assumption that $\mu(U^2) < (2+\eta)\mu(U)$, the open version of Theorem~\ref{thm: mainassymmetric} tells us we can choose $L$ to be $\TT$ but only having the weaker conclusion of $\ker(\pi) \subseteq UU^{-1}$. 

Prior to our work, Carolino~\cite{thesis} shows that if $U$ is a sufficiently small approximate subgroups (i.e., $U$ is open and precompact, $U= U^{-1}$, $\mu(U) \leq c(K)$, and $U^2$ can be covered by $K$ left translates of $U$ for a fixed constant $K$), then one can choose $L$ with $\dim(L)$ bounded from a bove by $d = d(K)$ and $\ker(\pi) \subseteq U^4$. Carolino's proof makes use of ultraproduct, and the dependency of $d$ on $K$ cannot be made effective. In view of  the assertion generalizing the Breuillard--Green conjecture, one would hope to choose $d(K) = \lfloor\log K \rfloor (\lfloor \log K \rfloor+1)/2 $.  The open version of  Theorem~\ref{thm: mainassymmetric} verifies this guess for $K \leq 2+ 10^{-12}$.

Theorem~\ref{thm: mainassymmetric} can be combined  with some geometric arguments to resolve the \emph{Kemperman Inverse Problem}, which we will now explain. The Kemperman inequality, proven in 1964, says that in a connected compact group
\begin{equation}\label{eq: intro kemperman}
\mu(AB) \geq \min\{\mu(A)+\mu(B),1\};
\end{equation}
Kemperman result is in fact more general, and applicable to connected and unimodular locally compact groups. One can view Kemperman's theorem as an analog of the Cauchy--Davenport theorem, and Kneser's theorem for abelian groups.

For the class of all connected compact groups,  the Kemperman inequality is sharp. When $I, J \subseteq \TT$ with $\mu_\TT(I)+\mu_\TT(J)<1$, $\chi: G \to \TT$ a continuous surjective group homomorphism, $A =\chi^{-1}(I)$, and $B =\chi^{-1}(J)$, one can check that $\mu(AB)= \mu(A) +\mu(B)$. If $I$, $J$, and $\chi$ are as above, $A$ is almost $\chi^{-1}(I) $ and $B$ is almost $\chi^{-1}(J)$, one then have $  \mu(AB)$ is almost $ \mu(A)+\mu(B)$. Combining Theorem~\ref{thm: mainassymmetric} and some geometric arguments, we can show that the only situations where the equality happens or nearly happens in the Kemperman inequality are these obvious ones. This answers a question by Kemperman since 1964~\cite{Kemperman}.

\begin{theorem}\label{thm:mainapproximate}
 Let $G$ be a connected compact group, and $A,B$ compact subsets of $G$ with $$0< \lambda=\min\{\mu_G(A),\mu_G(B),1-\mu_G(A)-\mu_G(B)\}.$$
  There is a constant $K=K(\lambda )$, not depending on $G$, such that for any $0\leq \varepsilon<1$, whenever we have $\delta\leq K\varepsilon$ and
  \[
  \mu(AB)\leq \mu(A)+\mu(B)+\delta\min\{\mu(A),\mu(B)\},
  \]
 there is a surjective continuous group homomorphism $\chi: G \to \TT$ together with two compact intervals $I,J\in \mathbb T$ with
 \[
 \mu_\TT(I)\leq (1+\varepsilon)\mu(A),\quad \mu_\TT(J)\leq (1+\varepsilon)\mu(B),
 \]
 and $A\subseteq \chi^{-1}(I)$, $B\subseteq\chi^{-1}(J)$. Moreover, with $\varepsilon = 0$, we can strengthen the conclusion to  $A= \chi^{-1}(I)$, $B =\chi^{-1}(J)$.
\end{theorem}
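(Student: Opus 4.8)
The plan is to derive Theorem~\ref{thm:mainapproximate} from Theorem~\ref{thm: mainassymmetric} by means of a fibered analysis over the circle. First I would check the hypotheses of Theorem~\ref{thm: mainassymmetric}. Since $\lambda\le\mu(A),\mu(B)\le 1-\lambda$, the two measures are polynomially comparable: picking $K_0=K_0(\lambda)$ with $(1-\lambda)^{K_0}\le\lambda$ forces $\mu(B)^{K_0}\le\mu(A)\le\mu(B)^{1/K_0}$, so Theorem~\ref{thm: mainassymmetric} applies with this $K_0$ and produces a constant $\eta=\eta(\lambda)>0$. Moreover $\mu(AB)\le\mu(A)+\mu(B)+\delta\min\{\mu(A),\mu(B)\}\le(1-\lambda)(1+\delta)$, so $\mu(AB)<1$ once $\delta<\lambda/(1-\lambda)$, and the small-growth hypothesis $\mu(AB)<\mu(A)+\mu(B)+\eta\min\{\mu(A),\mu(B)\}(1-\mu(A)-\mu(B))$ holds once $\delta<\eta\lambda$. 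Thus, provided $K<\eta(\lambda)\lambda$, Theorem~\ref{thm: mainassymmetric} gives a surjective continuous homomorphism $\chi\colon G\to\TT$ with $N:=\ker\chi\subseteq AA^{-1}\cap BB^{-1}$.

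The heart of the matter is to upgrade this to the statement that $A$, $B$ and $AB$ are, up to an error of measure $O_\lambda(\delta)$, unions of full $N$-fibers. Writing $f(t)=\mu_N(A\cap\chi^{-1}(t))$ and $g(t)=\mu_N(B\cap\chi^{-1}(t))$ for the fiber densities (Haar on each fiber normalized), one has $\mu(A)=\int_\TT f$, $\mu(B)=\int_\TT g$, $\bar A:=\chi(A)=\{f>0\}$ and $\bar B:=\chi(B)=\{g>0\}$. The mechanism I would use is that $AB$ must waste measure unless the fibers of $A$ and of $B$ are nearly full and do not "rotate'' inside $N$ as the base point varies: since $(AB)\cap\chi^{-1}(s)\supseteq(A\cap\chi^{-1}(t))(B\cap\chi^{-1}(s-t))$ for all $t$, and since inside the compact group $N$ one has $\mu_N(XY)\ge\min\{\mu_N(X)+\mu_N(Y),1\}$ (so in particular $\mu_N(X)+\mu_N(Y)>1$ forces $XY=N$), any substantial deficit or misalignment in the fibers would make $(AB)\cap\chi^{-1}(s)$ full for $s$ ranging over a set of measure exceeding $\mu(A)+\mu(B)+\delta\min\{\mu(A),\mu(B)\}$ — contradicting the hypothesis (and here the constraint $\mu(AB)<1$ is essential, since otherwise $AB$ could simply be everything). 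Carried out with quantitative care this would give $\mu(\chi^{-1}(\bar A))\le\mu(A)+C(\lambda)\delta$, $\mu(\chi^{-1}(\bar B))\le\mu(B)+C(\lambda)\delta$ and $\mu(\chi^{-1}(\bar A+\bar B))\le\mu(AB)+C(\lambda)\delta$, whence $\mu_\TT(\bar A+\bar B)\le\mu_\TT(\bar A)+\mu_\TT(\bar B)+C'(\lambda)\delta$. A secondary technical nuisance is that $N$ need not be connected; I would handle this by passing to its identity component (or invoking Kneser's theorem with its stabilizer subgroup), using that a disconnected finite extension of $\TT$ cannot occur as a quotient of the connected group $G$.

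It then remains a one-dimensional problem: the compact sets $\bar A,\bar B\subseteq\TT$ have $\mu_\TT(\bar A),\mu_\TT(\bar B)$ within $C(\lambda)\delta$ of $\mu(A),\mu(B)$, and $\mu_\TT(\bar A+\bar B)\le\mu_\TT(\bar A)+\mu_\TT(\bar B)+C'(\lambda)\delta$. A stability form of Kneser's inequality on $\TT$ (a continuous analogue of Freiman's $3k-4$ theorem, obtainable from classical one-dimensional arguments) then shows that, after composing with a suitable surjective endomorphism $\psi\colon\TT\to\TT$, the sets $\psi(\bar A)$ and $\psi(\bar B)$ are contained in compact intervals $I_0,J_0$ with $\mu_\TT(I_0)\le\mu(A)+C''(\lambda)\delta$ and $\mu_\TT(J_0)\le\mu(B)+C''(\lambda)\delta$. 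Replacing $\chi$ by $\psi\circ\chi$ (still surjective onto $\TT$) we get $A\subseteq\chi^{-1}(I_0)$ and $B\subseteq\chi^{-1}(J_0)$. Since $\mu(A),\mu(B)\ge\lambda$, the errors $C''(\lambda)\delta$ are absorbed into the factor $(1+\varepsilon)$ as soon as $\delta\le(\lambda/C''(\lambda))\varepsilon$; choosing $K=K(\lambda)$ to be the minimum of $\lambda/C''(\lambda)$ and the constants from the first paragraph, and enlarging $I_0,J_0$ slightly to intervals $I,J$, yields $\mu_\TT(I)\le(1+\varepsilon)\mu(A)$, $\mu_\TT(J)\le(1+\varepsilon)\mu(B)$, $A\subseteq\chi^{-1}(I)$ and $B\subseteq\chi^{-1}(J)$.

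Finally, when $\varepsilon=0$ we must have $\delta=0$, so $\mu(AB)=\mu(A)+\mu(B)$ by the Kemperman inequality~\eqref{eq: intro kemperman}; all of the $O_\lambda(\delta)$ errors above vanish, so the fibers of $A$ and $B$ are full up to null sets and $\bar A,\bar B$ are intervals up to null sets, and the compactness of $A$ and $B$ (together with the fact that $\chi^{-1}$ of a closed interval is closed) upgrades these to the exact identities $A=\chi^{-1}(I)$, $B=\chi^{-1}(J)$. The hard part, I expect, will be the fibered reduction of the second paragraph: establishing quantitatively, with the linear-in-$\delta$ loss needed for the $K(\lambda)$ bookkeeping, that near-equality in Kemperman's inequality forces $A$ and $B$ to be $N$-invariant up to small measure, exploiting both the geometry of the fibration $\chi$ and the hypothesis $\mu(AB)<1$.
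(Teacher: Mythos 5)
Your proposal follows the paper's route step for step: apply Theorem~\ref{thm: mainassymmetric} to obtain $\chi$; transfer the near-minimal expansion along the fibration $\chi$ (your paragraph~2 is exactly the paper's quotient domination theorem, Theorem~\ref{thm: criticality transfer}, whose proof carries out the ``spillover'' estimate you sketch via Riemann--Stieltjes integration over the level sets of the fiber-length functions, Lemma~\ref{lem: Keyestimatequotientcompact}); then apply a one-dimensional stability result on $\TT$ (Fact~\ref{fact: new inverse theorem torus}, packaged as Theorem~\ref{thm: abelian case}); and finally convert the symmetric-difference bounds to containment (Lemma~\ref{lem: from sym dif to subset}) and absorb the loss into $(1+\varepsilon)$ using $\mu(A),\mu(B)\ge\lambda$. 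The structure matches Proposition~\ref{prop: struc on AB}.

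The one point to watch is the precondition for your fibered reduction. What Theorem~\ref{thm: criticality transfer} needs is $\mu_\TT(\chi A)+\mu_\TT(\chi B)<1$, so that the Kemperman inequality in the base group $\TT$ is usable without wrap-around. This is \emph{not} implied by $\mu(AB)<1$ (with thin fibers the projections can already cover $\TT$ while $\mu(AB)$ stays small), nor by the kernel inclusion $\ker\chi\subseteq AA^{-1}\cap BB^{-1}$ on its own. The paper secures the projection bound from a quantitative property of the constructed $\chi$ that goes beyond the bare statement of Theorem~\ref{thm: mainassymmetric}: the output of Theorem~\ref{thm: homfrommeasurecompact} that $\ker\chi$ meets a Stab-type neighborhood of $A$ only in a strictly smaller such neighborhood, which Lemma~\ref{lem: making projection small} converts into $\mu_\TT(\chi A)+\mu_\TT(\chi B)<1/5$. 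So one should either carry this extra output along rather than using Theorem~\ref{thm: mainassymmetric} as a black box, or prove the projection estimate separately. You are right that the fibered reduction is the technical heart; this precondition is the specific place the difficulty hides, and your second paragraph as written does not close it.
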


Some special cases of Theorem~\ref{thm:mainapproximate} for abelian groups are known before. See Tao~\cite{T18} for compact abelian groups, Griesmer~\cite{G19} for the disconnected setting, and~\cite{TT18} for an application of the result in~\cite{T18}  in Chowla's problem.  Christ--Iliopoulou~\cite{ChristIliopoulou} proved a sharp exponent result for compact abelian groups. Theorem~\ref{thm:mainapproximate} in particular answers a question by Tao for connected groups. We also obtain characterizations when the equality happens in~\eqref{eq: intro kemperman}, for the precise statements see Theorem~\ref{thm:mainequal} and Theorem~\ref{thm: kemperman noncompact}.

\subsection{Overview of the proof of Theorem~\ref{thm: mainassymmetric}} Given the data of a compact connected group $G$ and $A, B \subseteq G$ with sufficiently small measure and satisfying the ``nearly minimal expansion'' condition 
\[
\mu(AB) \approx \mu(A) + \mu(B),
\]
Theorem~\ref{thm: mainassymmetric} calls for the construction of an appropriate continuous and surjective group homomorphism  $\chi: G \to \TT$. 


To motivate our construction, let us assume we already know  there is some continuous and surjective group homomorphism $\chi: G \to \TT$ and interval $I\subseteq \TT$ such that we have $A \approx \chi^{-1}(I)$ (i.e.,  $\mu(A \triangle \chi^{-1}(I )) \approx 0$), but the precise description of $\chi$ is somehow forgotten. If $g\in G$ is an element such that $\chi(g)$ is in $(0, \mu(A)/10)_\TT$ but still much larger than the error in $\approx$, one has
$$ \Vert\chi(g)\Vert_\TT = \frac{1}{2} \mu_\TT( I \triangle \chi(g) I) \approx \frac{1}{2}  \mu(A \triangle g A). $$
Hence, for such element $g$, its image in $\chi(\TT)$ can be approximately determined up to a sign by considering $\mu(A \triangle g A)$. Of course, we do not know a priory that such $\chi$ exists as it is essentially the conclusion of Theorem~\ref{thm:mainapproximate} which is itself based on Theorem~\ref{thm: mainassymmetric}. However, the idea of constructing a group homomorphism from the measure difference $(g_1, g_2) \mapsto \mu(g_1A \triangle g_2A)$ does work. 

We implement the above idea in two steps.  In the first step,  we deduce from $\mu(A+B) \approx \mu(A)+\mu(B)$ enough information about the ``shapes'' of $A$ and $B$ to show that the pseudometric
$$d_A: G \times G \to \RR^{>0}, (g_1, g_2) \mapsto \mu(g_1A \triangle g_2A)  $$
resembles the Euclidean metric on $\TT$: With $\lambda =\mu(A)/17$,  for $g_1$, $g_2$, and $g_3$ in the neighborhood 
$$N(\lambda) := \{ g\in G : d_A(\id, g)< \lambda),$$ we have
$$ d_A(g_1, g_2) \approx | d_A(g_1, g_3) \pm d_A(g_2, g_3)| \text{ and } d_A(\id, g^2_1) \approx 2 d_A(\id, g_1). $$
We refer to the above property as ``almost linearity''.
Toward constructing the desired group homomorphism, we think of the above $N(\lambda)$ as the Lie algebra of $G$ assuming it is a Lie group, of $\RR^{>{0}}$ as the Lie algebra of $\TT$, and of the ``almost linearity'' property as saying that 
the map $G \to \RR^{>0}, g \mapsto d_A(\id, g)$ is approximately a homomorphism of Lie algebra without sign. In  the second step, we ``integrate''  $g \mapsto d_A(\id, g)$ to obtain the desired group homomorphism $\chi: G \to \TT$ handling the difficulty that $g \mapsto d_A(\id, g)$ is not quite a homomorphism of Lie algebra.

We now discuss the above two steps in more details focusing on the case where $G$ is already a (connected and compact) Lie group. The reduction to this case can be obtained using the Gleason--Yamabe theorem plus some geometric arguments.  Through a submodularity argument by Tao, we also reduce the problem to the case where $\mu(A) =\mu(B)$ is rather small, say $< 1/10^3$. Hence,  $\mu(AB)\approx 2\mu(A) $.






\subsection*{Step 1: Shape of minimally growth sets} \label{Sec: Overviewstep1} In this step, we will choose a suitable 1-dimensional torus subgroup $H$ of $G$, gain increasingly detailed geometric information about $A$ in relation to the cosets of $H$, and exploit the known conclusion of the Kemperman inverse problem of $H$ to show the ``almost linearity'' of $d_A$.

First, we choose a 1-dimensional torus subgroup $H$ of $G$ such that the  cosets of $H$ intersects $A$ and $B$``transversally in measure'': letting $\mu_H$ denoted the normalized Haar measure on $H$, and visualizing $G$ as the rectangle $H \times G/H$ as in Figure~\ref{fig:intuition}, we want the ``length'' $\mu_H(x^{-1} A \cap H)$ of each ``left fiber'' $A \cap xH$ of $A$ to be small (say $<1/10$), and a similar condition hold for ``right fibers'' of $B$. 

 \begin{figure}[h]
     \centering
     \includegraphics[width=5.9in]{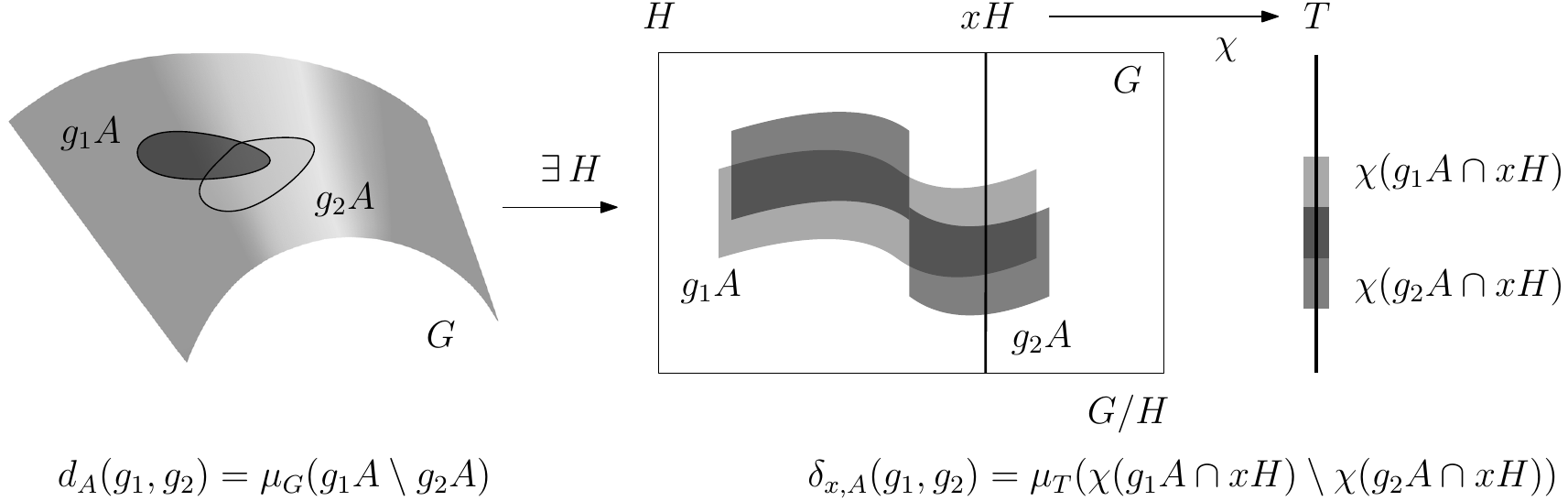}
     \caption{$d_A$ is a pseudometric, $H$ is the subgroup that intersects $A$ transversally in measure. We will show that the linear metric in $H$ can be lifted to a almost linear pseudometric in $G$.}
     \label{fig:intuition}
 \end{figure}

The argument for the existence of such $H$ is as follows. Using results in additive combinatorics, we can further arrange that $A$ is an open approximate group, i.e., $\id \in A$, $A = A^{-1}$, $A^2$ can be covered by $K$ left translates of $A$. The effect of this arrangement is that $\mu(A^n) < O_{K, n}(1)\mu(A)$. Suppose there is no $H$ as described above. Then one can show that $A^{N_1}$ contains all $1$-dimensional torus subgroups of $G$, implying $A^{2N_1} = G$. This brings a contradiction to the assumption that $\mu(A)$ is very small. 

Viewing the left cosets of  a 1-dimension torus subgroup of $G$ as the counterpart of lines with the same directions in $\RR^d$, the above can be seen as saying that $A$ cannot be a ``left Kakeya set'' and, likewise, $B$ cannot be a ``right Kakeya set''. We also note that this part can be generalized to show a similar conclusion when the growth rate $\approx 2$ is replaced by another fixed constant.

In the second part, we fix a 1-dimensional torus subgroup $H$ of $G$, and obtain a fine understanding of the shape of $A$ and its left translates relative to the left cosets of $H$: We want  $g_1A$ and $g_2A$ geometrically look like in the picture with $g_1, g_2 \in G$ in a suitable neighborhood of $\id$, and behave rigidly under translations.  
(For instance, we want the ``fibers'' of $A$ and $B$, i.e. intersections of $A$ or $B$ with coset of $H$, to be preimages of intervals of $T$, and all the nonempty ``fibers'' of $g_1A$ to have similar ``lengths''.)

The idea to get the second part is as follows. Suppose $g_1A$ does not have such shape. Choose $x$ uniformly at random, we expect that $\mu_H(Bx^{-1} \cap H)$ to be $\mu(B)$. It then follows from the Kemperman inequality and the solution of the Kemperman Inverse Problem for $H$ (via induction on dimension) that the measure of $g_1A(B \cap Hx)$, a subset of $g_1AB$, will already be much larger than $2\mu(A)$, a contradiction.

 The third and final part of Step 1 is to deduce the desired property of $d_A$ from the picture obtained in the second part. This step is rather technical. The key point is that, the nice geometric shape of $A$ can pass some of the properties of $\delta_{x,A}$ to the global pseudometric $d_A$.

 \subsection*{Step 2: From an almost linear pseudometric to a group homomorphism} \label{Sec: Overviewstep2} In this step, constructing the group homomorphism $\chi_A: G \to \TT$ from the ``almost linear'' pseudometric $d_A$ in  step 1. As mentioned earlier, the proof can be viewed as a discretization of the usual construction of a Lie group homomorphism from a Lie algebra homomorphism.

First, we construct a multi-valued function $\Phi_A$ from $G$ to  $\RR$, seen as the universal cover of $\TT$, with a suitable ``almost homomorphism'' property.  For an element $g \in G$, we represent it in ``the shortest way'' as a product of elements in $N(\lambda) := \{ g\in G : d_A(\id, g)< \lambda)$. The following notion capture this idea.
\begin{definition}
A sequence $(g_1,\dots,g_m)$  with  $g_i\in N(\lambda)$ is \emph{irreducible} if $g_{i+1}\cdots g_{i+j}\notin N(\lambda)$  for $2\leq j\leq 4$. 
\end{definition} 
Now, we set $\Omega_m(g)$ be the subset of $\RR$ consists of sums of the form
\begin{equation}
\sum_{i=1}^m \sgn(g_i)d(g_i,\id),
\end{equation}
where $(g_1,\dots,g_m)$ is an irreducible sequence with $g=g_1\cdots g_m$, and  $\sgn(g_i)$ is the \emph{relative sign} defined in Section~\ref{section:pseudometric}. Heuristically, $\sgn(g_i)$  specifies the ``direction'' of the translation by $g_i$ relative to a fixed element. Finally, we set $$\Omega(g)  =\bigcup_{m=1}^M \Omega_m(g),$$ where we will describe how to choose $M$ in the next paragraph. We do not quite have $\Omega(gg') = \Omega(g)+ \Omega(g')$, but a result along this line can be proven.

Call the maximum distance between two elements of $\Omega(g)$ the error of $\Omega(g)$. We want this error
to be small so as to be able to extract $\chi(g)$ from it eventually.
In the construction, each $d(g_i,\id)$ will increase the error of the image of $\Omega(g)$ by at least $\varepsilon$. Since the error propagate very fast, to get a sharp exponent error bound, we cannot choose very large $M$. To show that a moderate value of $M$ suffices, we construct a ``monitoring system'',  an irreducible sequence of bounded length, and every element in $G$ is ``close to'' one of the elements in the sequence. 
The knowledge required for the construction amounts to an understanding of the size and expansion rate of small stabilizers of $A$, which can also be seen as a refined Sanders--Croot--Sisask-type result for nearly minimally expanding sets.

From this we get a multi-valued almost group homomorphism $\Psi_A$ from $G$ to $\TT$ by quotienting $\RR$ by $\omega_A\ZZ$ for a value of $\omega_A \in \RR^{>0}$  which we now describe. In the special case where $d$ is linear (i.e. $\varepsilon =0$), the metric induced by $d$ in $G/\ker d$ is a multiple of the standard Euclidean metric in $\RR/\ZZ$ by a constant $\omega$. 
If we apply the machinery of irreducible sequence to this special case that $\varepsilon=0$, we will get for each $g\in G$ that $\Omega(g) = \psi(g) + \omega\ZZ$ for $\psi(g) \in [0, \omega)$. In particular, $\Omega(\id)= \omega\ZZ$, hence 
$
\omega=\inf\big|\Omega(\id) \setminus\{0\}\big|.
$

In general case when $\varepsilon>0$, we can define
\[
\omega=\inf\big|\Omega(\id) \setminus(-M\varepsilon,M\varepsilon)\big|.
\]
By further applying properties of irreducible sequences and the monitor lemma, we show  that for each $g$, $\Omega(g) \subseteq \psi(g) \omega\ZZ + (-M\varepsilon,M\varepsilon)$. For each $g\in G$, we then set $\Psi(g) \subseteq \TT$ to be $\Omega(g)/\omega\ZZ$.

Note that $\Psi$ is ``continuous'' from the way we construct it. We obtain $\chi$ from $\Psi$ by first extracting from $\Psi$ a universal measurable single valued almost-homomorphism $\psi$, then modifying $\psi$ to get a universal measurable group homomorphism $\chi$, and show that $\chi$ is automatically continuous.

\subsection{Structure of the paper}

The paper is organized as follows. Section~\ref{sec: Preliminaries} includes some facts about Haar measures and unimodular groups, which will be used in the subsequent part of the paper. A general version of the Kemperman inequality and its inverse theorem in tori are also included in Section~\ref{sec: Preliminaries}. In Section 3, we construct an example in $\mathrm{SO}_3(\RR)$ to show that when $1/2-\mu(A)$ is small, $\mu(A^2)-2\mu(A)$ can be arbitrarily small (Proposition~\ref{prop: example}). 
Section~4 allows us to arrange that in a minimally or a nearly minimally expanding pair $(A,B)$, the sets $A$ and $B$ have small measure (Proposition~\ref{prop: red to small sets}). 

In Section 5, when $\mu(AB)$ is small, we show that there is a closed one-dimensional torus $H$ intersects $A$ and $B$ that ``transversally in measures'' simultaneously provided they have small measures (Theorem~\ref{thm: kakeya new}). The smallness assumptions can be removed when $\mu(AB)-\mu(A)-\mu(B)$ is small (Proposition~\ref{prop: Torictransversal}). This section corresponds the first part of `Step 1' discussed above. Section 6 corresponds the second part of `Step 1', where we construct a locally path monotone almost linear pseudometric from the geometric shape of one of the given small measure growth sets. The geometric properties of small expansion sets are proved in Theorem~\ref{thm: fibers of same length 1newnew}. Using that, finally in Proposition~\ref{prop: almost linear metric from local} we manage to lift a metric from a smaller dimensional subgroup to a desired pseudometric over the original Lie group. 
Sections 7 and 8 correspond to `Step 2'. In Section 7, we prove some global properties from the path monotone locally almost linear pseudometric. In particular, we show that path monotonicity implies global monotonicity (Proposition~\ref{prop: localmonotoneimplyglobalmonotone}). In Section 8, we `integrate' the almost linear almost monotone pseudometric to extract a group homomorphism onto tori (Theorem~\ref{thm: homfrommeasurecompact}). The monitor lemma (Lemma~\ref{prop: lower bound on n}) is also proved in this section, which plays a key role on controlling the error terms. In Section 9, we prove the quotient domination theorem (Theorem~\ref{thm: criticality transfer}), which allow us to transfer the problem from locally compact groups into Lie groups. This can be seen as a Lie model theorem for sets with very small doublings (Corollary~\ref{cor: kernel control}). 

We prove Theorems~\ref{thm: maingap} and \ref{thm: mainassymmetric} in Section 10. In addition to the previous steps, we need to apply probabilistic arguments to control the images in the quotient (Lemma~\ref{lem: making projection small}). In Section 11, we prove Theorem~\ref{thm:mainapproximate}. Characterizations for sets that satisfy the equality in~\eqref{eq: intro kemperman} are also given in this section (Theorem~\ref{thm:mainequal}). In Section 12, we discuss the Kemperman Inverse Problem in connected unimodular noncomapct groups, and characterize sets and groups when the equality happen (Theorem~\ref{thm: kemperman noncompact}). 

\subsection{Notation and convention} 
Throughout, let $k$ and $l$ range over the set $\ZZ$ of integers, and $m$ and $n$ range over the set $\NN=\{0,1,\ldots\}$ of natural numbers. A constant in this paper is always a positive real number. For real valued quantities $r$ and $s$, we will use the standard notation $r= O(s)$ and $s= \Omega(r)$ to denote the statement that $r< Ks$ for an absolute constant $K$ independent of the choice of the parameters underlying $r$ and $s$.  If we wish to indicate dependence of the constant on an additional
parameter, we will subscript the notation appropriately.

We let $G$ be a locally compact group, and $\mu_G$ a left Haar  measure on $G$. We normalize $\mu_G$, i.e., scaling by a constant to get $\mu_G(G)=1$, when $G$ is compact. For $\mu_G$-measurable $A\subseteq G$ and a constant $\varepsilon$, 
we set
\[  \Stab^{\varepsilon}_{G}(A) =\{ g \in G : \mu_G( A \tri gA) \leq \varepsilon \} \  \text{ and } \  \Stab^{<\varepsilon}_G(A) =\{ g \in G : \mu_G( A \tri gA) < \varepsilon\} .
\] 
Suppose $A$, $B$, and $AB$ are $\mu_G$-measurable sets in $G$. The {\bf discrepancy} of $A,B$ in $G$ is defined by
\[
\dis_G(A,B)=\mu_G(AB)-\mu_G(A)-\mu_G(B). 
\]
When $G$ is connected, we always have $\dis_G(A,B)\geq0$.

Let $H$ range over closed subgroups of $G$. We let  $\mu_H$ denote  a left Haar measure of $H$, and normalize $\mu_H$ when $H$ is compact. Let $G/H$ and $H\backslash G$ be the left coset space and the right coset space with quotient maps
\[ \pi: G \to G/H\  \text{ and }\ \widetilde{\pi}: G \to H\backslash G
\] Given a coset decomposition of $G$, say $G/H$, a left-fiber of a set $A\subseteq G$ refers to $A\cap xH$ for some $xH\in G/H$. We also use $\mu_H$ to denote the fiber lengths in the paper, that is, we sometimes write $\mu_H(A\cap xH)$ to denote $\mu_H(x^{-1}A\cap H)$, as we want to capture the coset we studied. This will not cause problems as in the paper the ambient group is always unimodular. By saying that $G$ is Lie group, we mean $G$ is a real Lie group with finite dimension, and we denote $\dim(G)$ the real dimension of $G$.

\section{Preliminaries}\label{sec: Preliminaries}
 Throughout this section, we assume that $G$ is a \emph{connected} locally compact group (in particular, Hausdorff) equipped with a left Haar measure $\mu_G$, and $A,B \subseteq G$ are nonempty. 

\subsection{Locally compact groups and Haar measures}\label{sec: 3.1}

  Below are some basic facts about $\mu_G$ that we will use; see~\cite[Chapter~1]{Harmonicanalysis} for details:
\begin{fact} \label{fact: Haarmeasurenew}
Suppose $\mu_G$ is either a left or a right Haar measure on $G$. Then:
\begin{enumerate}[\rm (i)]
    \item If $A$ is compact, then $A$ is $\mu_G$-measurable and $\mu_G(A)< \infty$.
    \item If $A$ is open, then $A$ is $\mu_G$-measurable and $\mu_G(A)>0$.
    \item \emph{(Outer regularity)} If $A$ is Borel, then there is a decreasing sequence $(U_n)$ of open subsets of $G$ with $A \subseteq U_n$ for all $n$, and $\mu_G(A) = \lim_{n \to \infty} \mu_G(U_n). $
    \item \emph{(Inner regularity)} If $A$ is open, then there is an increasing sequence $(K_n)$ of compact subsets of $A$ such that $\mu_G(A) = \lim_{n \to \infty} \mu_G(K_n). $
     \item \emph{(Measurability characterization)} If there is an increasing sequence $(K_n)$ of compact subsets of $A$, and a decreasing sequence $(U_n)$ of open subsets of $G$ with $A \subseteq U_n$ for all $n$ such that $\lim_{n \to \infty} \mu_G(K_n) = \lim_{n \to \infty} \mu_G(U_n) $, then $A$ is measurable.
     \item \emph{(Uniqueness)} If $\mu'_G$ is another measure on $G$ satisfying the properties (1-5), then there is $C \in \RR^{>0}$ such that $\mu'_G = C\mu_G$.
    \item  \emph{(Continuity of measure under symmetric difference)} Suppose $A \subseteq G$ is measurable, then the function $G \to \RR, g \mapsto \mu_G(A \tri gA)$ is continuous.  
\end{enumerate}
\end{fact}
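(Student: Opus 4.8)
The plan is to deduce this from the regularity properties of Haar measure already recorded above (it is standard, which is why it appears as a cited fact). I will take $\mu_G(A)<\infty$, the only case used in the paper — it holds automatically when $G$ is compact — since for sets of infinite measure the function can equal $+\infty$ on a whole neighbourhood of the identity without being continuous there, so some such finiteness hypothesis is genuinely needed. Write $\phi(g)=\mu_G(A\tri gA)$; I will treat the case of a left Haar measure, the right Haar case being identical up to a factor of the (continuous, positive) modular function, which does not affect the argument.

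First I would reduce to continuity at $\id$. From the elementary bound $|\mu_G(P)-\mu_G(Q)|\le\mu_G(P\tri Q)$ for measurable sets of finite measure, together with the identity $(A\tri gA)\tri(A\tri g_0A)=gA\tri g_0A=g_0\big((g_0^{-1}g)A\tri A\big)$ (using that $\tri$ is associative and commutative with $A\tri A=\varnothing$), left-invariance of $\mu_G$ gives $|\phi(g)-\phi(g_0)|\le\phi(g_0^{-1}g)$. Since $g_0^{-1}g\to\id$ as $g\to g_0$, it then suffices to prove $\phi(h)\to 0=\phi(\id)$ as $h\to\id$.

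Next I would peel $A$ down to a compact set. The inclusion $P\tri hP\subseteq(Q\tri hQ)\cup(Q\setminus P)\cup h(Q\setminus P)$ for $P\subseteq Q$, combined with left-invariance, yields $\mu_G(P\tri hP)\le\mu_G(Q\tri hQ)+2\mu_G(Q\setminus P)$. Applying this first with $A\subseteq U$ for an open $U\supseteq A$ with $\mu_G(U\setminus A)$ small (outer regularity, Fact~\ref{fact: Haarmeasurenew}(iii)), and then with $K\subseteq U$ for a compact $K$ with $\mu_G(U\setminus K)$ small (inner regularity of open sets, Fact~\ref{fact: Haarmeasurenew}(iv)), reduces the problem to bounding $\mu_G(K\tri hK)$ for $K$ compact. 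For that I would pick (outer regularity once more) an open $V\supseteq K$ with $\mu_G(V\setminus K)$ small and invoke the standard fact that there is a symmetric open neighbourhood $W$ of $\id$ with $WK\subseteq V$: for $h\in W$ one gets both $hK\setminus K\subseteq V\setminus K$ and $K\setminus hK=h\big(h^{-1}K\setminus K\big)$ (with $h^{-1}\in W$), so $\mu_G(K\tri hK)\le 2\mu_G(V\setminus K)$ is small on $W$. Tracing the constants back gives $\mu_G(A\tri hA)$ arbitrarily small for $h$ near $\id$.

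I do not expect a real obstacle here — the statement is routine. The only points requiring a little care are: the finite-measure proviso; handling $hK\setminus K$ and $K\setminus hK$ simultaneously by using a \emph{symmetric} neighbourhood $W$ (equivalently, that $h^{-1}\to\id$ too); and avoiding a circular appeal to continuity of translation on $L^1(\mu_G)$, which is essentially the same assertion. A cleaner-looking alternative, if one is willing to cite density of $C_c(G)$ in $L^1(\mu_G)$, is to note $\mathbbm{1}_A\in L^1$, approximate it in $L^1$ by some $f\in C_c(G)$, and observe that a compactly supported, left-uniformly-continuous $f$ satisfies $\|f-L_hf\|_1\to 0$ as $h\to\id$ since $L_hf\to f$ uniformly while staying supported in a fixed compact set.
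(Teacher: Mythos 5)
The paper does not prove this statement: it is stated as a \emph{Fact} and referred to~\cite[Chapter~1]{Harmonicanalysis}, so there is no internal argument to compare against. You have sensibly confined your attention to part~(vii), the only item that is not simply a defining regularity property of a Radon/Haar measure, and your proof of it is correct and follows the standard textbook route: reduce to continuity at the identity via $(A\tri gA)\tri(A\tri g_0A)=gA\tri g_0A=g_0\big((g_0^{-1}g)A\tri A\big)$ and left invariance, approximate $A$ by an open $U$ from outside and a compact $K\subseteq U$ from inside, and finally exploit the existence of a symmetric open neighbourhood $W$ of $\id$ with $WK\subseteq V$ for a slightly larger open $V\supseteq K$. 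Your finiteness caveat is a genuine improvement over the paper's phrasing, which omits it; the set $A=\bigcup_n[n,n+\tfrac12]\subseteq\RR$ shows $g\mapsto\mu_G(A\tri gA)$ can jump from $0$ to $+\infty$ at $\id$ when $\mu_G(A)=\infty$. One small imprecision worth fixing: the inequality you record, $\mu_G(P\tri hP)\le\mu_G(Q\tri hQ)+2\mu_G(Q\setminus P)$ for $P\subseteq Q$, is applied in one direction to pass from $A$ to $U$ but is needed in the \emph{reverse} direction to pass from $U$ to $K$, namely $\mu_G(U\tri hU)\le\mu_G(K\tri hK)+2\mu_G(U\setminus K)$. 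Both directions do hold — the cleanest way to see this is that for $P\subseteq Q$ one has $(P\tri hP)\tri(Q\tri hQ)=(Q\setminus P)\tri h(Q\setminus P)$, so $|\mu_G(P\tri hP)-\mu_G(Q\tri hQ)|\le 2\mu_G(Q\setminus P)$ — but you should state the two-sided estimate rather than leave the reader to supply half of it.
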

  We remark that the assumption that $G$ is connected implies that every measurable set is $\sigma$-finite (i.e., countable union of sets with finite $\mu_G$-measure). Without the connected assumption, we only have inner regularity for $\sigma$-finite sets. From Fact~\ref{fact: Haarmeasurenew}(vii), we get the following easy corollary:
\begin{corollary} \label{cor: Stabilizerisclosed}
Suppose $A$ is $\mu_G$-measurable and $\varepsilon$ is a constant. Then $\Stab^{\varepsilon}_G(A)$ is closed in $G$, while $\Stab^{<\varepsilon}_G(A)$ is open in $G$. In particular, $\Stab^{0}_G(A)$ is a closed subgroup of $G$.
\end{corollary}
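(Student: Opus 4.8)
The plan is to read off both topological statements directly from the continuity assertion in Fact~\ref{fact: Haarmeasurenew}(vii), and then verify the group axioms for $\Stab^0_G(A)$ by hand. First I would record that, since $A$ is $\mu_G$-measurable and left translation preserves $\mu_G$-measurability, the set $gA$ is $\mu_G$-measurable for every $g\in G$, so the symmetric difference $A\tri gA$ is $\mu_G$-measurable and the function $f\colon G\to\RR$, $f(g)=\mu_G(A\tri gA)$, is well defined; by Fact~\ref{fact: Haarmeasurenew}(vii) it is continuous. Then $\Stab^{\varepsilon}_G(A)=f^{-1}\big((-\infty,\varepsilon]\big)$ is the preimage of a closed set, hence closed in $G$, while $\Stab^{<\varepsilon}_G(A)=f^{-1}\big((-\infty,\varepsilon)\big)$ is the preimage of an open set, hence open in $G$. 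In particular $\Stab^0_G(A)=f^{-1}(\{0\})$ is closed.

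It then remains to check that $\Stab^0_G(A)$ is a subgroup. The identity lies in it since $\mu_G(A\tri \id A)=\mu_G(\varnothing)=0$. For closure under inverses and products I would use the elementary inclusion $X\tri Z\subseteq (X\tri Y)\cup(Y\tri Z)$ together with the left-invariance of $\mu_G$. If $g\in\Stab^0_G(A)$, then $\mu_G(A\tri g^{-1}A)=\mu_G\big(g(A\tri g^{-1}A)\big)=\mu_G(gA\tri A)=0$, so $g^{-1}\in\Stab^0_G(A)$. If $g,h\in\Stab^0_G(A)$, then
\[
\mu_G(A\tri ghA)\leq \mu_G(A\tri gA)+\mu_G(gA\tri ghA)=\mu_G(A\tri gA)+\mu_G\big(g(A\tri hA)\big)=0,
\]
so $gh\in\Stab^0_G(A)$, which completes the argument.

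I do not expect a genuine obstacle here; the proof is essentially the standard "preimage of a (half-)open interval under a continuous map" bookkeeping. The only points that call for a little care are noting that $A\tri gA$ is genuinely $\mu_G$-measurable for all $g$, so that $f$ is defined on all of $G$ and Fact~\ref{fact: Haarmeasurenew}(vii) applies, and invoking left-invariance of $\mu_G$ (rather than bi-invariance, which we do not have in general) in exactly the two displayed computations above.
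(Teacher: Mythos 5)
Your argument is correct and matches what the paper has in mind: the paper simply states that this corollary follows from Fact~\ref{fact: Haarmeasurenew}(vii) (continuity of $g\mapsto\mu_G(A\tri gA)$), and you read the closedness/openness off as preimages of closed/open sets and then verify the subgroup axioms for $\Stab^0_G(A)$ by the standard symmetric-difference and left-invariance computations. Nothing more is needed.
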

  We say that $G$ is {\bf unimodular} if $\mu_G$ (and hence every left Haar measure on $G$)  is also a right Haar measure. The following is well known and can be easily verified: 

\begin{fact} \label{fact: measureofinverse}
If $G$ is unimodular, $A$ is $\mu_G$-measurable, then $A^{-1}$ is also $\mu_G$-measurable and $\mu_G(A) = \mu_G(A^{-1})$.
\end{fact}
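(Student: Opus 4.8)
The statement to prove is Fact~\ref{fact: measureofinverse}: if $G$ is unimodular and $A$ is $\mu_G$-measurable, then $A^{-1}$ is $\mu_G$-measurable and $\mu_G(A) = \mu_G(A^{-1})$.

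\medskip

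The plan is to introduce the pushforward measure $\nu$ defined by $\nu(E) = \mu_G(E^{-1})$ for Borel sets $E$, and show that $\nu$ is a right Haar measure on $G$; unimodularity then forces $\nu = c\,\mu_G$ for some constant $c > 0$, and an approximation argument pins down $c = 1$. First I would observe that the inversion map $\iota: G \to G$, $g \mapsto g^{-1}$ is a homeomorphism (continuity of inversion is part of the definition of a topological group, and $\iota$ is its own inverse), so $\iota$ carries Borel sets to Borel sets, open sets to open sets, and compact sets to compact sets. Hence $\nu(E) := \mu_G(\iota(E)) = \mu_G(E^{-1})$ is a well-defined Borel measure on $G$. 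For right-invariance: $\nu(Eg) = \mu_G((Eg)^{-1}) = \mu_G(g^{-1}E^{-1}) = \mu_G(E^{-1}) = \nu(E)$, using left-invariance of $\mu_G$ in the penultimate step. One also checks $\nu$ is nonzero, finite on compacta, and inherits outer/inner regularity from $\mu_G$ via the homeomorphism $\iota$ — concretely, properties (i)--(v) of Fact~\ref{fact: Haarmeasurenew} transfer across $\iota$ because $\iota$ bijects the relevant classes of sets. So $\nu$ is a right Haar measure.

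\medskip

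Next, since $G$ is unimodular, $\mu_G$ is itself a right Haar measure, and by the uniqueness clause (the right-Haar analogue of Fact~\ref{fact: Haarmeasurenew}(vi), which holds because a right Haar measure on $G$ is a left Haar measure on the opposite group) there is a constant $c > 0$ with $\nu = c\,\mu_G$, i.e. $\mu_G(E^{-1}) = c\,\mu_G(E)$ for all Borel $E$. Applying this twice, $\mu_G(E) = \mu_G((E^{-1})^{-1}) = c\,\mu_G(E^{-1}) = c^2\,\mu_G(E)$; choosing $E$ with $0 < \mu_G(E) < \infty$ (e.g. any nonempty open precompact set, which exists since $G$ is locally compact) gives $c^2 = 1$, hence $c = 1$. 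Therefore $\mu_G(E^{-1}) = \mu_G(E)$ for all Borel sets $E$.

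\medskip

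Finally I would upgrade from Borel sets to arbitrary $\mu_G$-measurable sets. Given $\mu_G$-measurable $A$, use the measurability characterization: since $G$ is connected, $A$ is $\sigma$-finite, so by Fact~\ref{fact: Haarmeasurenew}(iii) and (iv)--(v) there exist an increasing sequence of compact sets $K_n \subseteq A$ and a decreasing sequence of open sets $A \subseteq U_n$ with $\lim_n \mu_G(K_n) = \mu_G(A) = \lim_n \mu_G(U_n)$ (one gets inner regularity from writing $A$ as a countable increasing union of finite-measure pieces and applying (iv) after intersecting with a Borel hull, or directly invoking $\sigma$-finiteness as the excerpt notes). Then $K_n^{-1}$ is an increasing sequence of compact subsets of $A^{-1}$, $U_n^{-1}$ is a decreasing sequence of open supersets of $A^{-1}$, and by the Borel case $\lim_n \mu_G(K_n^{-1}) = \lim_n \mu_G(K_n) = \lim_n \mu_G(U_n) = \lim_n \mu_G(U_n^{-1})$. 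By Fact~\ref{fact: Haarmeasurenew}(v), $A^{-1}$ is $\mu_G$-measurable, and squeezing gives $\mu_G(A^{-1}) = \mu_G(A)$. The only mildly delicate point is making sure the regularity/measurability machinery of Fact~\ref{fact: Haarmeasurenew} is stated for the correct one-sided Haar measure and transfers cleanly across $\iota$; since $\iota$ is a homeomorphism and $\mu_G$ is assumed both left and right invariant, this is routine, and I expect no real obstacle — the whole argument is a short transport-of-structure along the inversion homeomorphism.
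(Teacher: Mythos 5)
The paper states this as a ``well known'' fact and gives no proof, so there is nothing to compare against. Your argument is the standard one and is correct: push $\mu_G$ forward along the inversion homeomorphism to get a translation-invariant Radon measure $\nu$, invoke uniqueness of Haar measure and unimodularity to get $\nu = c\mu_G$, use the involution to force $c=1$, and then transfer from Borel to arbitrary $\mu_G$-measurable sets via regularity (valid here since connectedness gives $\sigma$-compactness, hence $\sigma$-finiteness). One small simplification you might note: since $G$ is unimodular, $\mu_G$ is right-invariant, so $\nu(gE)=\mu_G(E^{-1}g^{-1})=\mu_G(E^{-1})=\nu(E)$ exhibits $\nu$ directly as a \emph{left} Haar measure, letting you cite Fact~\ref{fact: Haarmeasurenew}(vi) verbatim rather than routing through the opposite group.
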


 We use the following isomorphism theorem of topological groups. 
\begin{fact}\label{fact: first iso thm}
Suppose $G$ is a locally compact group, $H$ is a closed normal subgroup of $G$. Then we have the following.
\begin{enumerate}[\rm (i)]
    \item \emph{(First isomorphism theorem)} Suppose $\phi: G \to Q$ is a continuous surjective group homomorphism with $\ker \phi = H$.  Then the exact sequence of groups
        $$  1 \to H \to G \to Q \to 1 $$ 
    is an exact sequence of topological groups if and only if  $\phi$ is open; the former condition is equivalent to saying that  $Q$ is canonically isomorphic to $G/H$ as topological groups. 
    \item \emph{(Third isomorphism theorem)} Suppose $S \leq  G$ is closed, and $H \leq S$. Then $S/H$ is a closed subgroup of $G/H$. If $S\vartriangleleft G$ is normal, then $S/H$ is a normal subgroup of $G/H$, and we have the exact sequence of topological groups
    $$  1 \to S/H \to G/H \to G/S \to 1; $$
    this is the same as saying that $(G/H)/(S/H)$ is canonically isomorphic to $G/S$ as topological groups. 
\end{enumerate}
\end{fact}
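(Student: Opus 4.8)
The plan is to reduce both parts to a single elementary principle, which I would isolate first as a preliminary lemma and which needs neither local compactness nor connectedness: if $N \vartriangleleft G$ is a normal subgroup, then the canonical map $\pi \colon G \to G/N$ onto the quotient with its quotient topology is \emph{open}, because $\pi^{-1}(\pi(U)) = UN = \bigcup_{n \in N} Un$ is open whenever $U \subseteq G$ is open; the same computation with $\ker\phi$ in place of $N$ shows that a continuous surjective homomorphism $\phi \colon G \to Q$ of topological groups is open if and only if it is a topological quotient map, and it applies verbatim to quotient maps $G \to G/S$ onto coset spaces when $S$ is merely a closed subgroup. Given this, parts (i) and (ii) are short.

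For part (i): the universal property of the quotient topology gives a continuous bijective homomorphism $\bar\phi \colon G/H \to Q$ with $\bar\phi \circ \pi = \phi$. Since $\pi$ is open and surjective, $\phi$ is open if and only if $\bar\phi$ is open (forward: $\bar\phi(V) = \phi(\pi^{-1}V)$ for $V$ open; backward: $\phi = \bar\phi\circ\pi$ is a composite of open maps), and a continuous bijective open homomorphism is precisely an isomorphism of topological groups. It remains to observe that ``$1 \to H \to G \to Q \to 1$ exact in topological groups'' means exactly that $H \hookrightarrow G$ is a topological embedding (automatic, as $H$ is closed and carries the subspace topology) together with $G \to Q$ being a topological quotient map; by the lemma the latter is equivalent to $\phi$ being open, hence to $\bar\phi$ being an isomorphism, i.e. to $Q$ being canonically $G/H$. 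This is the asserted chain of equivalences.

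For part (ii): write $\pi \colon G \to G/H$ and $\pi' \colon G \to G/S$ for the quotient maps. Since $H \subseteq S$ one has $\pi(S) = S/H$, a subgroup, and $\pi^{-1}(\pi(S)) = SH = S$ is closed, so $\pi(S)$ is closed because $\pi$ is a quotient map; a short check using $H \subseteq S$ gives $\pi(U \cap S) = \pi(U) \cap \pi(S)$ for open $U$, so $\pi|_S$ is open onto $S/H$ with the subspace topology and therefore the quotient topology on $S/H$ coincides with that subspace topology, making $S/H \hookrightarrow G/H$ a closed embedding. If in addition $S \vartriangleleft G$, then $S/H = \pi(S)$ is normal in $G/H$ (image of a normal subgroup under a surjective homomorphism), and $\rho \colon G/H \to G/S$, $gH \mapsto gS$, is a well-defined surjective homomorphism with kernel $S/H$; it is continuous since $\rho \circ \pi = \pi'$ with $\pi'$ continuous and $\pi$ a quotient map, and open since $\rho(W) = \pi'(\pi^{-1}W)$ for open $W$ and $\pi'$ is open. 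Applying part (i) to $\rho$ then yields both the exactness of $1 \to S/H \to G/H \to G/S \to 1$ in topological groups and the canonical identification $(G/H)/(S/H) \cong G/S$.

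I do not expect a genuine obstacle here; the content is bookkeeping. The only places demanding care are (a) distinguishing the quotient topology of $S$ modulo $H$ from the subspace topology inherited from $G/H$ — these agree, but the proof of that uses the \emph{openness} of the group quotient map, not just its continuity — and (b) the repeated reliance on ``quotient map $\Rightarrow$ open'' for homomorphisms, which is precisely what makes the nontrivial direction of the equivalence in part (i) go through and which would fail for arbitrary continuous surjections. Extracting the opening lemma makes both of these transparent.
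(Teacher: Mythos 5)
The paper records this as a background \emph{Fact} and gives no proof of its own, so there is nothing to compare against; what matters is whether your argument is sound, and it is. Your reduction to the elementary observation that the quotient map $\pi\colon G\to G/N$ is open (because $\pi^{-1}(\pi(U))=UN$) is exactly the right organizing principle: it gives you that a continuous surjective homomorphism is a topological quotient map if and only if it is open, from which (i) follows by factoring $\phi=\bar\phi\circ\pi$ and matching opennesses, and it underlies the single genuinely careful step in (ii), namely that the quotient topology on $S/H$ agrees with the subspace topology it inherits from $G/H$. The set identity $\pi(U\cap S)=\pi(U)\cap\pi(S)$ that you invoke for this does hold (the inclusion $\supseteq$ uses $H\subseteq S$: if $uH=sH$ with $s\in S$ then $u\in sH\subseteq S$), and it shows $\pi|_S$ is open onto its image, hence a quotient map onto $\pi(S)$, which is what is needed. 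The remaining steps of (ii) — closedness of $\pi(S)$ via $\pi^{-1}(\pi(S))=SH=S$, normality of $S/H$, continuity and openness of $\rho\colon G/H\to G/S$ via $\rho\circ\pi=\pi'$ and $\rho(W)=\pi'(\pi^{-1}W)$, then feeding $\rho$ back into part (i) — are all correct. You are also right that none of this uses local compactness or connectedness; the Fact is stated for locally compact $G$ only because that is the ambient setting in the paper, not because the proof requires it.
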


Suppose $H$ is a closed subgroup of $G$. The following fact allows us to link Haar measures on $G$ with the Haar measures on $H$ for unimodular $G$ and $H$:

\begin{fact}[Quotient integral formula]\label{fact: QuotientIF} 
Suppose $H$ is a closed subgroup of $G$ with a left Haar measure $\mu_H$. If $f$ is a continuous function on $G$ with compact support, then
$$ xH \mapsto \int_H f(xh) \d\mu_H(x). $$
defines a function $f^H: G/H \to \RR$ which is continuous and has compact support. 
If both $G$ and $H$ are unimodular, then there  is unique invariant Radon measures $\mu_{G/H}$ on $G \slash H$ such that for all continuous function $f: G \to \RR$ with compact support, the following integral formula holds
    $$ \int_G f(x) \d\mu_G(x) = \int_{G/H} \int_H f(xh) \d\mu_H(h) \d\mu_{G /H}(xH). $$
A similar statement applies replacing the left homogeneous space $G/H$ with the right homogeneous space $H \backslash G$.
\end{fact}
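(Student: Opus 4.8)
The plan is to run the classical construction of Weil's quotient integral formula: build a positive $G$-invariant linear functional on $C_c(G/H)$ out of $\mu_G$, and invoke the Riesz representation theorem. The elementary assertions come first. For $f\in C_c(G)$, the function $h\mapsto f(xh)$ has support inside the compact set $x^{-1}(\operatorname{supp}f)\cap H$, so $f^H(xH)$ is a finite integral; its support lies in the compact set $\pi(\operatorname{supp}f)$; and continuity of $f^H$ follows from the left uniform continuity of $f$ once one notes that, as $xH$ ranges over a compact set, the relevant fibres all sit inside one compact subset of $H$ (this is where local compactness of $H$ is used, via $\mu_H$ being finite on compacta, cf.\ Fact~\ref{fact: Haarmeasurenew}(i)). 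A partition-of-unity argument upgrades this to the statement I actually need downstream: given a compact $C\subseteq G/H$ there is $\psi\in C_c(G)$ with $\psi\geq 0$ and $\psi^H\equiv 1$ on a neighbourhood of $C$ --- lift $C$ to a compact $K\subseteq G$ with $\pi(K)=C$ using the openness of $\pi$, choose $\phi\in C_c(G)$ with $\phi>0$ on $K$ (so $\phi^H>0$ on a neighbourhood of $C$ by Fact~\ref{fact: Haarmeasurenew}(ii) and continuity of $\phi^H$), and renormalise $\phi$ by $\phi^H\circ\pi$ on the open set where the latter is positive, cutting off with a bump from $C_c(G/H)$.

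The substantive point is to define $I\colon C_c(G/H)\to\RR$ by $I(f^H)=\int_G f\,\d\mu_G$ and show this is independent of the representative $f$; by linearity this amounts to: if $f\in C_c(G)$ and $f^H\equiv 0$ then $\int_G f\,\d\mu_G=0$. I would pick $\psi\in C_c(G)$ with $\psi^H\equiv 1$ on $\pi(\operatorname{supp}f)$, write $\int_G f\,\d\mu_G=\int_G f(x)\,\psi^H(\pi(x))\,\d\mu_G(x)=\int_G\!\int_H f(x)\psi(xh)\,\d\mu_H(h)\,\d\mu_G(x)$, apply Fubini (legitimate since the integrand is continuous with compact support), perform the substitution $x\mapsto xh^{-1}$ in the inner integral --- valid because $G$ is unimodular, so $\mu_G$ is right-invariant --- apply Fubini a second time, and recognise the resulting inner integral $\int_H f(xh^{-1})\,\d\mu_H(h)$ as $\int_H f(xh)\,\d\mu_H(h)=f^H(\pi(x))=0$, using that $\mu_H$ is inversion-invariant because $H$ is unimodular (Fact~\ref{fact: measureofinverse}). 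Thus $I$ is well defined; it is plainly positive and linear, and it is $G$-invariant because $\mu_G$ is left-invariant and $(L_gf)^H=L_{gH}(f^H)$. The Riesz representation theorem then furnishes a unique Radon measure $\mu_{G/H}$ on $G/H$ with $I(\bar f)=\int_{G/H}\bar f\,\d\mu_{G/H}$; $G$-invariance of $I$ transfers to $\mu_{G/H}$ by that uniqueness, and unwinding the definition of $I$ is exactly the asserted integral formula for $f\in C_c(G)$. Uniqueness of $\mu_{G/H}$ among invariant Radon measures satisfying the formula is then immediate: any such $\nu$ represents the same functional $I$, so $\nu=\mu_{G/H}$ by the uniqueness clause of Riesz. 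The statement for $H\backslash G$ is obtained verbatim with $f(hx)$ in place of $f(xh)$ throughout.

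I expect the well-definedness of $I$ to be the only step requiring genuine care, and it is precisely there that both unimodularity hypotheses are indispensable: without them, the substitution $x\mapsto xh^{-1}$ and the reflection $h\mapsto h^{-1}$ each introduce a value of the modular function, so one would only obtain a relatively invariant measure on $G/H$ together with a modular correction factor in the formula rather than the clean invariant statement above. Everything else --- the continuity and support properties of $f^H$, the partition-of-unity construction of $\psi$, and the two applications of Fubini --- is routine once the compact-support bookkeeping is set up.
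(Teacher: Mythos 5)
The paper treats this as a ``Fact'' and cites it to a reference on harmonic analysis rather than proving it, so there is no internal proof to compare against. Your argument is the standard Weil quotient-integral-formula proof and is correct: the reduction of well-definedness to $f^H\equiv 0\Rightarrow\int_G f\,\d\mu_G=0$ via a partition-of-unity lift $\psi$, the two Fubini swaps, and the two substitutions (right translation in $G$, inversion in $H$) are exactly where the unimodularity hypotheses on $G$ and $H$ respectively are consumed, and you identify this correctly; the passage from the invariant positive functional to $\mu_{G/H}$ via Riesz, with uniqueness inherited from the uniqueness clause of Riesz, is also right.
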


  We can extend Fact~\ref{fact: QuotientIF} to measurable functions on $G$, but the function $f^H$ in the statement can  only be defined and is $\mu_{G/H}$-measurable $\mu_G$-almost everywhere. So, in particular, this problem applies to indicator function $\e_A$ of a measurable set $A$. This causes problem in our later proof and prompts us to sometimes restrict our attention to a better behaved subcollection of measurable subsets of $G$. We say that a subset of $G$ is {\bf $\sigma$-compact} if it is a countable union of compact subsets of $G$. 
 \begin{lemma} We have the following:
 \label{lem: mesurability}
\begin{enumerate}[\rm (i)]
    \item $\sigma$-compact sets are measurable.
    \item the collection of $\sigma$-compact sets is closed under taking countable union, taking finite intersection, and taking product set.
    \item For all $\mu_G$-measurable $A$, we can find a $\sigma$-compact subset $A'$ of $A$ such that $\mu_G(A') =\mu_G(A)$.
    \item Suppose $G$ is unimodular, $H$ is a closed subgroup of $G$ with a left Haar measure $\mu_H$, $A \subseteq G$ is $\sigma$-compact, and $\e_A$ is the indicator function of $A$. Then
 $aH \mapsto \mu_H(A \cap aH)$
defines a measurable function $\e^{H}_A: G/H \to \RR$. If $H$ is unimodular and, $\mu_{G/H}$ is the Radon measure given in Fact~\ref{fact: QuotientIF}, then  $$ \mu_G(A)=  \int_{G/H} \int_H \mu_H(A \cap aH) \d\mu_H(h) \d\mu_{G /H}(xH). $$
A similar statement applies replacing the left homogeneous space $G/H$ with the right homogeneous space $H \backslash G$.
\end{enumerate}

\end{lemma}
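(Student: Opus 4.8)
The plan is to treat (i)--(iii) by soft point-set and measure theory, leaving the real content for (iv).

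For (i): a compact set is $\mu_G$-measurable by Fact~\ref{fact: Haarmeasurenew}(i), and the $\mu_G$-measurable sets form a $\sigma$-algebra, so any countable union of compact sets is measurable. For (ii): closure under countable unions is immediate after reindexing; if $A=\bigcup_n K_n$ and $B=\bigcup_m L_m$ with the $K_n,L_m$ compact, then $A\cap B=\bigcup_{n,m}(K_n\cap L_m)$, where each $K_n\cap L_m$ is a closed subset of the compact set $K_n$ (using that $G$ is Hausdorff, so $L_m$ is closed), hence compact, while $AB=\bigcup_{n,m}K_nL_m$, where $K_nL_m$ is the image of the compact set $K_n\times L_m$ under the continuous multiplication map $G\times G\to G$, hence compact. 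For (iii): a connected locally compact group is $\sigma$-compact, since the subgroup generated by a compact symmetric identity neighbourhood is open, hence clopen, hence all of $G$; thus every $\mu_G$-measurable set is $\sigma$-finite. Writing $A=\bigcup_n A_n$ with $\mu_G(A_n)<\infty$ and using inner regularity of $\mu_G$ on finite-measure measurable sets (standard for Haar measure, see~\cite{Harmonicanalysis}; this is precisely where connectedness is used, cf.\ the remark preceding the lemma), I choose for each $n,k$ a compact $K_{n,k}\subseteq A_n$ with $\mu_G(K_{n,k})>\mu_G(A_n)-1/k$ and set $A'=\bigcup_{n,k}K_{n,k}$; then $A'$ is $\sigma$-compact, $A'\subseteq A$, and $\mu_G(A\setminus A')\le\sum_n\mu_G\bigl(A_n\setminus\bigcup_k K_{n,k}\bigr)=0$.

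The main point is (iv), and I would first reduce the measurability assertion to compact sets. Given a compact $K\subseteq G$, Urysohn's lemma produces a decreasing sequence $(f_j)$ of continuous compactly supported functions with $0\le f_j\le1$ and $f_j\downarrow\e_K$ pointwise (take a decreasing neighbourhood basis of $K$ with compact closures and truncate). By Fact~\ref{fact: QuotientIF} each $f_j^H$ is continuous on $G/H$, and for each fixed coset $xH$ dominated convergence along $H$ gives $f_j^H\downarrow\e_K^H$ pointwise; hence $\e_K^H$ is Borel. For a $\sigma$-compact $A$, write $A=\bigcup_n K_n$ with the $K_n$ compact and increasing; then $\e_A^H=\lim_n\e_{K_n}^H$ is an increasing pointwise limit of Borel functions, hence Borel and thus measurable, and continuity from below of $\mu_H$ on each fibre identifies it with $xH\mapsto\mu_H(A\cap xH)$. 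This is exactly the point of restricting to $\sigma$-compact sets: the function is everywhere defined and honestly measurable, not merely so almost everywhere.

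For the integral formula I add the hypothesis that $H$ is unimodular, so that the invariant Radon measure $\mu_{G/H}$ of Fact~\ref{fact: QuotientIF} exists and $\int_G f_j\,\d\mu_G=\int_{G/H}f_j^H\,\d\mu_{G/H}$. With $f_j\downarrow\e_K$ as above, $f_1$ is a $\mu_G$-integrable majorant on the left and $f_1^H$ --- continuous with compact support, hence $\mu_{G/H}$-integrable since $\mu_{G/H}$ is Radon --- is a majorant on the right, so two applications of dominated convergence yield $\mu_G(K)=\int_{G/H}\mu_H(K\cap xH)\,\d\mu_{G/H}(xH)$ for every compact $K$. Applying this to an increasing sequence $K_n\uparrow A$ and invoking monotone convergence on both sides gives $\mu_G(A)=\int_{G/H}\mu_H(A\cap xH)\,\d\mu_{G/H}(xH)$, i.e.\ the displayed identity; the statement with the right coset space $H\backslash G$ follows identically, or by applying the result to $A^{-1}$, which is $\sigma$-compact with $\mu_G(A^{-1})=\mu_G(A)$ by Fact~\ref{fact: measureofinverse}. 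The only step that genuinely needs care --- the principal obstacle --- is arranging that the approximants $f_j$ admit a single compactly supported majorant simultaneously on $G$ and, after passing to the quotient, on $G/H$, so that dominated convergence is legitimate on both sides of the quotient integral formula at once; everything else is bookkeeping.
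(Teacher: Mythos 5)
Your proof is correct and takes essentially the same route as the paper's: approximate $\e_K$ for compact $K$ by a monotone sequence of continuous compactly supported functions, push the quotient integral formula through a convergence theorem, and then exhaust a $\sigma$-compact $A$ by an increasing sequence of compacts. One small note in your favour: you correctly take a \emph{decreasing} Urysohn sequence, whereas the paper's proof writes ``monotone nondecreasing,'' which looks like a typo since $\e_K$ is upper rather than lower semicontinuous when $K$ is compact.
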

\begin{proof}
The verification of (i-iii) is straightforward. We now prove (iv). First consider the case where $A$ is compact. By Baire's Theorem, $\e_A$ is the pointwise limit of a monotone nondecreasing sequence of continuous function of compact support. If $f: G \to \RR$ is a continuous function of compact support, then the function $$f^H: G/H \to R, aH \mapsto \int_{H} f(ax) dx $$ is continuous with compact support, and hence measurable; see, for example, \cite[Lemma~1.5.1]{Harmonicanalysis}.  Noting that $\mu_H(A \cap aH) = \int_H \e_A(ax) dx$, and applying monotone convergence theorem, we get that $\e^H_A$
is the pointwise limit of a monotone nondecreasing sequence of continuous function of compact support. Using monotone convergence theorem again, we get $\e^H_A$ is integrable, and hence measurable. Also, by monotone convergence theorem, we get the quotient integral formula in the statement. 

Finally, the general case where $A$ is only $\sigma$-compact can be handled similarly, noting that $\e_A$ is then the pointwise limit of a monotone nondecreasing sequence of indicator functions of compact sets.
\end{proof}

  Suppose $H$ is a closed subgroup of $G$. Then $H$ is locally compact, but not necessarily unimodular. We use the following fact in order to apply induction arguments in the later proofs. 

\begin{fact}\label{fact: unimodular}
Let $G$ be a unimodular group. If $H$ is a closed normal subgroup of $G$, then $H$ is unimodular. Moreover, if $H$ is compact, then $G/H$ is unimodular.  
\end{fact}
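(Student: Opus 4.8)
The plan is to rephrase everything in terms of modular functions. Recall that the \emph{modular function} $\Delta_G: G\to\RR^{>0}$ of a locally compact group $G$ with left Haar measure $\mu_G$ is the continuous homomorphism determined by $\mu_G(Eg)=\Delta_G(g)\mu_G(E)$ for measurable $E\subseteq G$, and that $G$ is unimodular exactly when $\Delta_G\equiv1$. Note that a compact group $K$ is automatically unimodular: normalizing so that $\mu_K(K)=1$, the relation $\mu_K(Kg)=\mu_K(K)$ forces $\Delta_K\equiv1$. I will also use repeatedly that $\mu_G$ is bi-invariant when $G$ is unimodular, so that right translations (and hence conjugations) preserve $\mu_G$.

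For the first assertion, the key input is the classical identity $\Delta_G|_H=\Delta_H$, valid for every closed \emph{normal} subgroup $H\trianglelefteq G$ (see e.g.\ Hewitt--Ross or Bourbaki); a quick justification is that, $H$ being normal, $G/H$ is a topological group by Fact~\ref{fact: first iso thm} and hence carries a left Haar measure, which is automatically invariant under the left translation action of $G$ on $G/H$, and such a $G$-invariant Radon measure on $G/H$ exists if and only if $\Delta_G|_H=\Delta_H$. Granting this, if $G$ is unimodular then $\Delta_H=\Delta_G|_H\equiv1$, so $H$ is unimodular.

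For the second assertion, suppose in addition that $H$ is compact. Then the quotient map $q: G\to G/H$ is proper: if $C\subseteq G/H$ is compact then $q^{-1}(C)$ is closed, and it is contained in $KH$ for a suitable compact $K\subseteq G$ (cover $C$ by finitely many $q$-images of relatively compact open sets and let $K$ be the union of their closures), so $q^{-1}(C)$ is compact by Fact~\ref{fact: Haarmeasurenew}. Consequently the pushforward $\nu:=q_*\mu_G$ is a nonzero Borel measure on $G/H$ that is finite on compact sets, hence a Radon measure. Now using the set-theoretic identities $q^{-1}(q(g)B)=g\,q^{-1}(B)$ and $q^{-1}(B\,q(g))=q^{-1}(B)\,g$ (the latter using normality of $H$) together with the left-invariance of $\mu_G$, we get that $\nu$ is left-invariant on $G/H$, hence a positive multiple of the left Haar measure $\mu_{G/H}$; and since $G$ is unimodular, $\mu_G$ is also right-invariant, so the same computation shows that $\nu$ is right-invariant. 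Therefore $\mu_{G/H}$ is bi-invariant, i.e.\ $G/H$ is unimodular.

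I do not anticipate a real obstacle here: the argument is a chain of standard Haar-measure manipulations. The two points that deserve a little care are the identity $\Delta_G|_H=\Delta_H$ for closed normal subgroups, which I would quote rather than reprove, and the verification that $\nu=q_*\mu_G$ genuinely qualifies as a Radon measure — this is automatic once one observes that $G/H$ is $\sigma$-compact in the situations of interest, since then any Borel measure finite on compacts is Radon.
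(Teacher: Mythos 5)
The paper states this as a Fact without proof, so there is no argument of the paper's to compare against; your two-part derivation is correct and standard. For the first part, the identity $\Delta_G|_H=\Delta_H$ for a closed normal subgroup $H$ is exactly Weil's criterion for the existence of a $G$-invariant Radon measure on $G/H$, and normality supplies such a measure (a left Haar measure on the group $G/H$), as you observe. For the second, properness of $q$ gives local finiteness and openness of $q$ gives nontriviality of $q_*\mu_G$, and the two identities $q^{-1}\bigl(q(g)B\bigr)=g\,q^{-1}(B)$ and $q^{-1}\bigl(Bq(g)\bigr)=q^{-1}(B)\,g$ (the latter genuinely using normality) convert the bi-invariance of $\mu_G$ into bi-invariance of $q_*\mu_G$. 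The regularity caveat you flag is real for arbitrary locally compact groups but moot here: wherever Fact~\ref{fact: unimodular} is invoked in the paper, $G$ is connected, hence $\sigma$-compact, and $G/H$ inherits $\sigma$-compactness, so your fallback applies. An alternative for the second half that avoids pushforwards altogether: conjugation by any $g\in G$ is a topological automorphism of the compact normal group $H$ and therefore fixes $\mu_H$ (it scales $\mu_H$ by a positive constant $\delta(g)$, and $\delta(g)=1$ because $\mu_H(H)$ is finite and must be preserved); the standard relation among the modular functions of $G$, $H$, and $G/H$ then collapses to $\Delta_{G/H}\circ q=\Delta_G$, which is identically $1$ when $G$ is unimodular.
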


  \subsection{Kemperman's inequality and the inverse problems} We will need a version of Kemperman's inequality for arbitrary sets.   Recall that the \emph{inner Haar measure} $\mut_{G}$ associated to $\mu_G$ is given by
  $$ \mut_G(A) = \sup \{ \mu_G(K) : K \subseteq A  \text{ is compact}.  \}$$
  The following is well known and can be easily verified:
 \begin{fact} \label{fact: Innermeasure}
 Suppose $\mut_G$ is the inner Haar measure associated to $\mu_G$. Then we have the following:
 \begin{enumerate}[\rm (i)]
     \item \emph{(Agreement with $\mu_G$)} If $A$ is measurable, then $\mut_G(A)=\mu(A)$. 
     \item \emph{(Inner regularity)} There is $\sigma$-compact $A' \subseteq A$ such that $$\mut_G(A)=\mut_G(A') = \mu_G(A).$$
     \item \emph{(Superadditivity)} If $A$ and $B$ are disjoint, then 
     $$\mut_G(A\cup B) \geq \mut_G(A)+\mut_G(B).$$
     \item \emph{(Left invariance)} For all $g\in G$, $\mut_G(gA)=\mut(A)$.
     \item \emph{(Right invariance)} If $G$ is unimodular, then for all $g\in G$, $\mut_G(Ag)=\mut(A)$.
 \end{enumerate}
 \end{fact}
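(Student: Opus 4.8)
The plan is to read off all five items directly from the definition $\mut_G(A)=\sup\{\mu_G(K):K\subseteq A \text{ compact}\}$, using the regularity properties of Haar measure collected in Fact~\ref{fact: Haarmeasurenew} together with the observation (recorded just after that fact) that connectedness of $G$ forces every measurable set to be $\sigma$-finite. Items (iii), (iv) and (v) are essentially one-line unwindings of the definition; the only item carrying real content is (i), which is just inner regularity of $\mu_G$ on arbitrary measurable sets, and (ii) is a routine consequence of (i) and Lemma~\ref{lem: mesurability}(i).

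For (i), since $\mut_G(A)\le\mu_G(A)$ is immediate from monotonicity, it suffices to produce, for each $\varepsilon>0$, a compact $K\subseteq A$ with $\mu_G(A\setminus K)<\varepsilon$. Assume first $\mu_G(A)<\infty$. By outer regularity (Fact~\ref{fact: Haarmeasurenew}(iii)) pick an open $U\supseteq A$ with $\mu_G(U\setminus A)$ small; by inner regularity for open sets (Fact~\ref{fact: Haarmeasurenew}(iv)) pick a compact $L\subseteq U$ with $\mu_G(U\setminus L)$ small; and by outer regularity again pick an open $W\supseteq U\setminus A$ with $\mu_G(W)$ small. Then $K:=L\setminus W$ is closed in $L$, hence compact, is contained in $A$ (because $W\supseteq U\setminus A$ and $A\subseteq U$), and $\mu_G(A\setminus K)\le\mu_G(A\setminus L)+\mu_G(A\cap W)$ is small. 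When $\mu_G(A)=\infty$, write $A=\bigcup_n A_n$ with $A_n$ measurable of finite increasing measure (possible by $\sigma$-finiteness), apply the finite case to each $A_n$, and let $\mu_G(A_n)\to\infty$. For (ii), choose compact $K_n\subseteq A$ with $\mu_G(K_n)\to\mut_G(A)$ (or $\to\infty$ if $\mut_G(A)=\infty$) and set $A'=\bigcup_n K_n$; this is $\sigma$-compact, hence measurable by Lemma~\ref{lem: mesurability}(i), it lies in $A$, and $\mut_G(A)\ge\mut_G(A')\ge\sup_n\mu_G(K_n)=\mut_G(A)$, so $\mut_G(A')=\mut_G(A)$, while $\mut_G(A')=\mu_G(A')$ by (i); when $A$ is measurable this also equals $\mu_G(A)$ by (i) again.

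For (iii), given disjoint $A,B$ and compact $K\subseteq A$, $L\subseteq B$, the union $K\cup L$ is a compact subset of $A\cup B$ and $\mu_G(K\cup L)=\mu_G(K)+\mu_G(L)$ by finite additivity on disjoint Borel sets, so taking suprema over $K$ and over $L$ independently gives $\mut_G(A\cup B)\ge\mut_G(A)+\mut_G(B)$. For (iv) and (v), left translation $x\mapsto gx$ (and, since $G$ is unimodular in (v), right translation $x\mapsto xg$) is a homeomorphism of $G$, so $K\mapsto gK$ (resp. $K\mapsto Kg$) is a bijection between the compact subsets of $A$ and those of $gA$ (resp. $Ag$), and $\mu_G(gK)=\mu_G(K)$ by left invariance (resp. $\mu_G(Kg)=\mu_G(K)$ by right invariance in the unimodular case); hence the defining suprema coincide. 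The one place where the standing hypotheses of the section genuinely enter — and the only conceivable obstacle — is the use of $\sigma$-finiteness in (i): inner regularity of Haar measure can fail on non-$\sigma$-finite sets in a general locally compact group, so it is precisely connectedness of $G$ that makes (i), and hence (ii), go through; the remaining items are purely formal.
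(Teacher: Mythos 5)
The paper states this as a ``Fact'' without proof, declaring it well known; your argument correctly supplies the standard verification, and I see no gaps. Item (i) is the only substantive point, and your three-step sandwich (outer-approximate $A$ by an open $U$, inner-approximate $U$ by a compact $L$, outer-approximate the excess $U\setminus A$ by an open $W$, then take $K=L\setminus W$) together with the $\sigma$-finiteness remark after Fact~\ref{fact: Haarmeasurenew} for the infinite-measure case is exactly the textbook proof that a Radon measure is inner regular on $\sigma$-finite measurable sets; the remaining items are, as you note, immediate from the definition together with finite additivity and the translation invariance of $\mu_G$. One cosmetic remark: as you observe, the displayed equality $\mut_G(A')=\mu_G(A)$ in item (ii) tacitly presumes $A$ is measurable (otherwise $\mu_G(A)$ is undefined), and your reading — $\mut_G(A')=\mu_G(A')$ always, with the further equality to $\mu_G(A)$ holding when $A$ is measurable — is the intended one.
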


It is easy to see that we can replace the assumption that $A$ and $B$ are compact in Kemperman's inequality in the introduction with the weaker assumption that $A$ and $B$ are $\sigma$-compact. Together with the inner regularity of $\mut_G$ (Fact~\ref{fact: Innermeasure}.2), this give us the first part of the following Fact~\ref{fact: GeneralKemperman}. The second part of Fact~\ref{fact: GeneralKemperman} follows from the fact that taking product sets preserves compactness, $\sigma$-compactness, and analyticity. Note that taking product sets in general does not preserve measurability, so we still need inner measure in this case.

\begin{fact}[Generalized Kemperman's inequality for connected groups] \label{fact: GeneralKemperman} 
Suppose $\widetilde{\mu}_G$ is the inner Haar measure on $G$, and $A, B \subseteq G$ are nonempty. Then
$$ \widetilde{\mu}_G(AB) \geq \min\{ \mut_G(A)+ \mut_G(B), \mut_G(G) \}. $$
Moreover, if $A$ and $B$ are compact, $\sigma$-compact, or analytic, then we can replace $\widetilde{\mu}_G$ with $\mu_G$.
\end{fact}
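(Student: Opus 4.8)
The plan is to reduce everything to Kemperman's inequality for \emph{compact} sets (stated in the introduction; see~\cite{Kemperman}), first extending it to $\sigma$-compact sets by an exhaustion argument, and then to arbitrary nonempty sets via the inner regularity of $\widetilde{\mu}_G$ recorded in Fact~\ref{fact: Innermeasure}. \emph{Step 1 ($\sigma$-compact case).} Suppose $A$ and $B$ are $\sigma$-compact, and write $A=\bigcup_n K_n$, $B=\bigcup_n L_n$ as increasing unions of compact sets (replacing a given compact exhaustion by its partial unions). Then $(K_nL_n)_n$ is an increasing sequence of compact sets with union $AB$, so by Lemma~\ref{lem: mesurability} the set $AB$ is $\sigma$-compact, hence $\mu_G$-measurable. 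Applying Kemperman's inequality to each compact pair $(K_n,L_n)$ and using continuity of $\mu_G$ from below,
\[
\mu_G(AB)=\lim_{n\to\infty}\mu_G(K_nL_n)\geq \lim_{n\to\infty}\min\{\mu_G(K_n)+\mu_G(L_n),\,\mu_G(G)\}=\min\{\mu_G(A)+\mu_G(B),\,\mu_G(G)\},
\]
where the last equality uses $\mu_G(K_n)\to\mu_G(A)$, $\mu_G(L_n)\to\mu_G(B)$ and continuity of $t\mapsto\min\{t,\mu_G(G)\}$ on $[0,\infty]$. Since $\sigma$-compact sets are measurable, $\widetilde{\mu}_G$ agrees with $\mu_G$ on $A$, $B$, $AB$ (Fact~\ref{fact: Innermeasure}(i)), so this already gives the asserted inequality with $\mu_G$; the same applies when $A$ and $B$ are merely compact, in which case $AB$ is compact.

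\emph{Step 2 (general case).} For arbitrary nonempty $A,B\subseteq G$, use Fact~\ref{fact: Innermeasure}(ii) to choose $\sigma$-compact $A'\subseteq A$ and $B'\subseteq B$ with $\widetilde{\mu}_G(A')=\widetilde{\mu}_G(A)$ and $\widetilde{\mu}_G(B')=\widetilde{\mu}_G(B)$. Then $A'B'\subseteq AB$ is $\sigma$-compact, hence measurable, so Step 1 together with Fact~\ref{fact: Innermeasure}(i) yields
\[
\widetilde{\mu}_G(AB)\geq\widetilde{\mu}_G(A'B')=\mu_G(A'B')\geq\min\{\mu_G(A')+\mu_G(B'),\,\mu_G(G)\}=\min\{\widetilde{\mu}_G(A)+\widetilde{\mu}_G(B),\,\widetilde{\mu}_G(G)\},
\]
using $\mu_G(G)=\widetilde{\mu}_G(G)$. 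This is the first assertion. For the remaining (analytic) case of the ``moreover'' clause, note that $A\times B$ is analytic in $G\times G$ and multiplication is continuous, so $AB$ is analytic and therefore $\mu_G$-measurable (analytic sets are capacitable with respect to the Radon measure $\mu_G$); hence $\widetilde{\mu}_G$ and $\mu_G$ coincide on $A$, $B$, $AB$, and the displayed inequality holds verbatim with $\mu_G$ in place of $\widetilde{\mu}_G$.

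The argument is a routine reduction, so there is no genuine obstacle once Kemperman's inequality for compact sets is taken as input. The points that require a little attention are: choosing the inner approximations $A',B'$ so that $A'B'$ still lies inside $AB$ while remaining measurable (handled by the closure of $\sigma$-compact sets under products in Lemma~\ref{lem: mesurability}); carrying the possibly infinite value $\mu_G(G)$ correctly through the limits in the noncompact regime; and invoking measurability of analytic sets with respect to Haar measure for the last clause.
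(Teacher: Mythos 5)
Your proposal is correct and follows essentially the same route the paper indicates in the remark preceding Fact~\ref{fact: GeneralKemperman}: extend Kemperman's inequality from compact to $\sigma$-compact pairs by exhaustion, pass to general sets via the inner regularity of $\mut_G$ (Fact~\ref{fact: Innermeasure}(ii)), and for the ``moreover'' clause observe that product sets preserve compactness, $\sigma$-compactness, and analyticity, so $AB$ is $\mu_G$-measurable and $\mut_G$ agrees with $\mu_G$. Your write-up simply fills in the details the paper leaves implicit.
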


We need the following special case of Kneser's classification result~\cite{Kneser}, and the sharp dependence between $\varepsilon$ and $\delta$ is essentially due to Bilu~\cite{Bilu}.

\begin{fact}[Inverse theorem for $\TT^d$]\label{fact:inverse theorem torus}
Let $A,B$ be compact subsets of $\TT^d$. For every $\tau>0$, there is a constant $c=c(\tau)$ such that if
\[
\tau^{-1}\mu_{\TT^d}(A)\leq \mu_{\TT^d}(B)\leq \mu_{\TT^d}(A)\leq c,
\]
then either $\mu_{\TT^d}(A+B)\geq \mu_{\TT^d}(A)+2\mu_{\TT^d}(B)$, or there are compact intervals $I,J$ in $\TT$ with $\mu_\TT(I)=\mu_{\TT^d}(A+B)-\mu_{\TT^d}(B)$ and $\mu_\TT(J)=\mu_{\TT^d}(A+B)-\mu_{\TT^d}(A)$, and a continuous surjective group homomorphism $\chi:\TT^d\to\TT$, such that $A\subseteq \chi^{-1}(I)$ and $B\subseteq \chi^{-1}(J)$.
\end{fact}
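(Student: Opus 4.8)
This is the inverse form of Kneser's inequality on $\TT^d$, and I would obtain it by isolating the one-dimensional case and then reducing the general case to it, with Bilu's refinement of Freiman's theorem supplying the sharp dependence of $c$ on $\tau$. Suppose the first alternative fails; then by Kemperman's inequality (Fact~\ref{fact: GeneralKemperman}) the discrepancy $\delta:=\mu_{\TT^d}(A+B)-\mu_{\TT^d}(A)-\mu_{\TT^d}(B)$ satisfies $0\le\delta<\mu_{\TT^d}(B)\le\mu_{\TT^d}(A)\le c$, so $\mu_{\TT^d}(A+B)<3c<1$ and in particular $A+B\ne\TT^d$.

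\emph{Step 1: the case $d=1$.} Because $\mu_\TT(A+B)<1$, a rotation puts $0\notin A+B$, and a further translation of $A$ and of $B$ lifts $A$, $B$, and $A+B$ to compact subsets of $(0,1)\subseteq\RR$ on which addition is faithful. This reduces us to the inverse sumset theorem for compact subsets of $\RR$: from $|A+B|<|A|+2|B|$ one deduces that $A$ lies in an interval of length $|A+B|-|B|$ and $B$ in an interval of length $|A+B|-|A|$ — the one-dimensional equality-case stability of Brunn--Minkowski, i.e.\ the continuous analogue of Freiman's $3k-4$ theorem. Pushing back to $\TT$ gives the conclusion with $\chi=\mathrm{id}$. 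The hypothesis $\mu_\TT(A)\le c(\tau)$ is precisely what keeps the faithful lift available uniformly in $\tau=\mu(A)/\mu(B)$, and Bilu's argument identifies the optimal threshold $c(\tau)$.

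\emph{Step 2: reduction of $d>1$ to $d=1$, by induction on $d$.} Let $H:=\Stab^0_{\TT^d}(A+B)$; by Corollary~\ref{cor: Stabilizerisclosed} it is a closed subgroup, and it is proper since $A+B\ne\TT^d$, hence $\mu_{\TT^d}(H)=0$ by connectedness of $\TT^d$. I would first argue that the identity component $H^\circ$ is positive-dimensional: otherwise $A+B$ is aperiodic, and the smallness of $\mu(A),\mu(B),\mu(A+B)$ allows a Freiman-rectification of $A$ and $B$ onto subsets of $\RR^d$ preserving measures and sumsets; but by the Brunn--Minkowski inequality $\mu(B)\le\mu(A)$ forces $|A+B|\ge(|A|^{1/d}+|B|^{1/d})^d\ge|A|+3|B|$ for $d\ge 2$, contradicting $|A+B|<|A|+2|B|$. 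Thus $H^\circ$ is a nontrivial subtorus; let $\pi:\TT^d\to\TT^d/H^\circ\cong\TT^{d'}$ with $1\le d'<d$. By the quotient integral formula (Fact~\ref{fact: QuotientIF}), Kneser's inequality $\mu(A+B)\ge\mu(A+H)+\mu(B+H)$ together with Kemperman's inequality shows that $\mu_{\TT^{d'}}(\pi(A))$ and $\mu_{\TT^{d'}}(\pi(B))$ lie within $\delta$ of $\mu(A)$ and $\mu(B)$ and that $\dis_{\TT^{d'}}(\pi(A),\pi(B))=O(\delta)$; applying the inductive hypothesis inside $\TT^{d'}$ produces $\chi':\TT^{d'}\to\TT$, and $\chi:=\chi'\circ\pi$ is as required, the exact interval lengths being recovered when the $\delta$-errors are tracked to the end.

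\emph{Where the difficulty is.} The heart of the matter — and the only place requiring genuinely new work beyond Kneser's classification — is the sharpness in Step~1: obtaining the \emph{exact} interval lengths $\mu(A+B)-\mu(B)$ and $\mu(A+B)-\mu(A)$, not lengths inflated by an error term, and doing so for all $\mu(A)\le c(\tau)$. This is exactly Bilu's theorem. The higher-dimensional reduction of Step~2 is, by contrast, mostly bookkeeping once one knows how to dispose of the aperiodic case and how to push the hypotheses through the quotient.
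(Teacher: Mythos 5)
First, a point of comparison: the paper does not prove this statement at all. It is imported as a known result, with the classification attributed to Kneser~\cite{Kneser} and the sharp dependence of $c$ on $\tau$ to Bilu~\cite{Bilu}, so there is no in-paper argument to measure your sketch against. Your proposal correctly identifies the natural architecture (a one-dimensional core theorem plus a quotient-by-stabilizer reduction), but as a proof it has a genuine gap precisely at the point you call ``the heart of the matter,'' and the gap is not merely a missing sharpness computation --- the rectification step you assert is false as stated.

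Concretely, Step 1 breaks. Knowing $0\notin A+B$ after a rotation does not let you lift $A$, $B$, and $A+B$ to subsets of $(0,1)\subseteq\RR$ on which addition is faithful. Take $A=B=[0.08,0.12]\cup[0.58,0.62]\subseteq\TT$. Then $A+B=[0.16,0.24]\cup[0.66,0.74]$ in $\TT$ (the piece near $1.2$ wraps around onto the piece near $0.2$), so $0\notin A+B$ and $\mu_\TT(A+B)=2\mu_\TT(A)<\mu_\TT(A)+2\mu_\TT(B)$, i.e.\ the hypotheses of the Fact hold with $\mu_\TT(A)$ as small as you like; yet the real sumset of the lift is strictly larger than the torus sumset, and the conclusion of the Fact holds only with $\chi$ the doubling map, not with $\chi=\mathrm{id}$ as your Step 1 concludes. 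Ruling out such wrap-around configurations under the stated hypotheses is exactly the content of Bilu's theorem (and of the Grynkiewicz and Candela--de Roton results quoted in Fact~\ref{fact: new inverse theorem torus}); it requires a genuine Fourier-analytic or rectification argument, not a translation. The same difficulty recurs in Step 2, where ``Freiman-rectification of $A$ and $B$ onto subsets of $\RR^d$'' in the aperiodic case is invoked without any mechanism --- there is no off-the-shelf rectification for arbitrary compact subsets of $\TT^d$ of small measure --- and where the case that $\Stab^0_{\TT^d}(A+B)$ is a nontrivial finite group with trivial identity component is not addressed. Finally, the statement demands intervals of the \emph{exact} lengths $\mu_{\TT^d}(A+B)-\mu_{\TT^d}(B)$ and $\mu_{\TT^d}(A+B)-\mu_{\TT^d}(A)$, so ``tracking the $\delta$-errors to the end'' through the quotient is not bookkeeping: an $O(\delta)$ loss would already fail the conclusion. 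The honest disposition of this statement is the one the paper takes, namely to cite Kneser and Bilu.
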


When the group is a one dimensional torus $\TT$, a sharper result is recently obtained by Candela and de Roton~\cite{circle19}. The constant $c_\TT=3.1\cdot 10^{-1549}$ in the following fact is from an earlier result in $\ZZ/p\ZZ$ by Grynkiewicz~\cite{Grynkiewicz}. 

\begin{fact}[Inverse theorem for $\TT$]\label{fact: new inverse theorem torus}
There is a constant $c_\TT>0$ such that the following holds. If $A,B$ are compact subsets of $\TT$, with $\dis_\TT(A,B)<c_\TT$, and $\mu_\TT(A+B)+\dis_\TT(A,B)\leq 1$. Then there is a continuous surjective group homomorphism $\chi:\TT\to\TT$, and two compact intervals $I,J\subseteq \TT$ such that 
\[
\mu_\TT(I)\leq \mu_\TT(A)+\dis_G(A,B),\quad \mu_\TT(J)\leq \mu_\TT(B)+\dis_G(A,B),
\]
and $A\subseteq \chi^{-1}(I)$, $B\subseteq \chi^{-1}(J)$. 
\end{fact}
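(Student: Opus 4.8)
Since this statement is already established in the literature — it is the main theorem of Candela and de Roton~\cite{circle19}, and the numerical value of $c_\TT$ is inherited from the inverse theorem over $\ZZ/p\ZZ$ of Grynkiewicz~\cite{Grynkiewicz} — I will only outline the strategy one would follow. The plan is to transfer the sharp (asymmetric, $3k-4$-type) inverse theorem from $\ZZ/p\ZZ$, for a large prime $p$, back to $\TT$ by a discretization and compactness argument. Fix compact $A,B\subseteq\TT$ with $\delta:=\dis_\TT(A,B)<c_\TT$ and $\mu_\TT(A+B)+\delta\leq1$. After disposing of the degenerate cases — $\mu_\TT(A)=0$ or $\mu_\TT(B)=0$, where one of the sets is finite and the conclusion follows since the finite subgroups $\frac{1}{k}\ZZ/\ZZ$ of $\TT$ are exactly the kernels of $x\mapsto kx$, and $\delta=0$ with $\mu_\TT(A)+\mu_\TT(B)=1$ — we may assume $\mu_\TT(A),\mu_\TT(B)>0$ and $\mu_\TT(A+B)<1$, so $A+B$ misses an open arc. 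The homomorphism $\chi$ in the conclusion is genuinely needed: if $t+(A+B)=A+B$ for every $t$ in a finite subgroup, then $A$ and $B$ sit in several equally spaced arcs, and this homomorphism will be recovered below from the \emph{common difference} of the progressions produced by the discrete theorem.

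The first step is discretization. For a large prime $p$, let $\hat A,\hat B\subseteq\ZZ/p\ZZ$ be the sets of indices $k$ for which $[k/p,(k+1)/p)$ meets $A$ (respectively $B$). Compactness together with outer regularity gives $|\hat A|/p\to\mu_\TT(A)$ and $|\hat B|/p\to\mu_\TT(B)$; and since every $a+b\in A+B$ with $a\in[k/p,(k+1)/p)$ and $b\in[l/p,(l+1)/p)$ forces $k+l$ or $k+l+1$ to index a point of $A+B$, one obtains — with some care about the number of connected components of $A+B$ — that $|\hat A+\hat B|/p\leq\mu_\TT(A+B)+o(1)$. Thus for $p$ large, $\hat A$ and $\hat B$ have density below $c_\TT$ and $|\hat A+\hat B|\leq|\hat A|+|\hat B|+(\delta+o(1))p$, so Grynkiewicz's inverse theorem applies: there is a common difference $d$ such that $\hat A$ lies in a progression of difference $d$ and length at most $|\hat A+\hat B|-|\hat B|+1$, and $\hat B$ in one of difference $d$ and length at most $|\hat A+\hat B|-|\hat A|+1$.

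The second step is to transfer this back to $\TT$. Choosing the sign of $d$ so that $m:=d^{-1}\bmod p$ lies in $\{1,\dots,\lfloor p/2\rfloor\}$, let $\chi_p\colon\TT\to\TT$ be $x\mapsto mx$; rescaling the two progressions by $m$ turns them into intervals in $\ZZ/p\ZZ$, and unwinding the discretization shows that $\chi_p(A)$ and $\chi_p(B)$ are contained in arcs $I_p,J_p$ of lengths at most $(|\hat A+\hat B|-|\hat B|+1+m)/p$ and $(|\hat A+\hat B|-|\hat A|+1+m)/p$. For a fixed pair $A,B$ the degree $m$ stays bounded as $p\to\infty$ — it is the degree of the homomorphism already encoded in the geometry of $A,B$, and here the hypothesis $\mu_\TT(A+B)<1$ is what excludes a ``wild'' dilation of $\ZZ/p\ZZ$ — so $m/p\to0$, and the density estimates then give $\mu_\TT(I_p)\leq\mu_\TT(A)+\delta+o(1)$ and $\mu_\TT(J_p)\leq\mu_\TT(B)+\delta+o(1)$. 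Passing to a subsequence along which $m$ is constant and $I_p\to I$, $J_p\to J$, the inclusions $A\subseteq\chi^{-1}(I)$, $B\subseteq\chi^{-1}(J)$ with $\chi=(x\mapsto mx)$ persist, with the required $\mu_\TT(I)\leq\mu_\TT(A)+\delta$ and $\mu_\TT(J)\leq\mu_\TT(B)+\delta$.

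The crux — and the main obstacle — is this transference: one must control the digitization of the sumset $A+B$ so that the discrete sumset is not artificially inflated, one must show that the common difference $d$ corresponds to an honest continuous homomorphism $\TT\to\TT$ of bounded degree rather than to a ``wild'' dilation of $\ZZ/p\ZZ$, and one must carry the sharp estimate $\mu_\TT(I)\leq\mu_\TT(A)+\delta$ through the argument with no multiplicative loss. This last point is exactly why the sharp asymmetric $3k-4$-type form of the $\ZZ/p\ZZ$ inverse theorem (hence the constant $c_\TT$ of~\cite{Grynkiewicz}) is required, and why the limit $p\to\infty$ must be taken carefully. The remaining ingredients — the initial reductions, the density estimates for the digitizations, and the final compactness argument — are routine.
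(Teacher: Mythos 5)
This statement is labeled a \emph{Fact} in the paper: it is imported verbatim from Candela--de Roton~\cite{circle19}, with the numerical value of $c_\TT$ traced to Grynkiewicz's $\ZZ/p\ZZ$ theorem~\cite{Grynkiewicz}, and the paper supplies no proof of it. You correctly recognize this and, rather than attempt a from-scratch argument, sketch the discretization-and-transference strategy that the cited references actually use. That matches the situation accurately, and your identification of the genuine technical crux --- bounding the sumset digitization without loss, showing the common difference $d$ corresponds to a bounded-degree homomorphism of $\TT$, and preserving the sharp additive error $\delta$ (rather than losing a multiplicative constant) through the limit $p\to\infty$ --- is exactly where the work of Candela--de Roton lies.

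One substantive slip in your handling of the degenerate cases: you assert that $\mu_\TT(A)=0$ with $A$ compact forces $A$ to be \emph{finite}. This is false --- a compact null subset of $\TT$ can be a Cantor set, which is uncountable. In that situation the conclusion (that $A$ is contained in $\chi^{-1}(I)$ for a short interval $I$ with $\mu_\TT(I)\leq\dis_\TT(A,B)$) is not a triviality about finite subgroups; it requires an argument of its own, and indeed the Candela--de Roton formulation restricts to $\mu_\TT(A),\mu_\TT(B)>0$. Since you are summarizing a citation rather than proving the statement, this does not undermine the proposal, but the degenerate case deserves more care than a parenthetical if one were to flesh this out.
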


\section{An example in $\mathrm{SO}_3(\RR)$}

Given a compact group $G$ with a normalized Haar measure $\mu_G$, $A$ a compact subsets of $G$. As we mentioned in the introduction, one may expect the following product-theorem type inequality:
\[
\mu_G(A^2)\geq \min\{(2+\eta)\mu_G(A),1\},
\]
where $\eta>0$ is some absolute constant that does not depend on $A$ and $G$. 
In this section we will provide constructions to show that the above inequality is not true in general. More precisely, for any sufficiently small $\varepsilon>0$, we will construct a set $A\subseteq G:=\mathrm{SO}_3(\RR)$ with $\mu_G(A)<1/2-\varepsilon$, and $\mu_G(A^2)<1$.

Throughout this section, $u$, $v$, and $w$ possibly with decorations, range over unit vectors in $\RR^3$ (i.e., $u,v, w \in \RR^3$ have Euclidean norm $\|u\| = \|v\|= \|w\| =1$). We let $\alpha(u,v):= \arccos(u\cdot v)$ be the angle between $u$ and $v$. Let $\phi$ and $\theta$, possibly with decorations, range over $\RR$. We denote 
 $R_u^\phi$ the counter-clockwise rotation with signed angle $\phi$ where the axis and the positive direction (using right-hand rule) is specified by the unit vector $u$. We need the following Euler's rotation theorem:

\begin{fact} \label{fact: axisangle}
    Each nontrivial element  $g \in \mathrm{SO}_3(\RR)$ is of the form $R_u^\phi$ with $\phi \in (0, 2\pi)$. Moreover, the set of elements in $\RR^3$ fixed by such $g$ is  $\mathrm{span}(u) =\{\lambda u \mid \lambda \in \RR\}$.    
\end{fact}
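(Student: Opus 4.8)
The plan is to first produce a fixed axis for $g$ and then analyze how $g$ acts on the orthogonal complement of that axis. The crux is the existence of the axis: I claim every nontrivial $g \in \mathrm{SO}_3(\RR)$ has $1$ as an eigenvalue. To see this, use the relations $g g^{\mathsf{T}} = \mathrm{id}$ and $\det g = 1$ together with $\mathrm{id} - g^{\mathsf{T}} = (\mathrm{id}-g)^{\mathsf{T}}$ to compute
\[
\det(g - \mathrm{id}) = \det\!\big(g(\mathrm{id} - g^{\mathsf{T}})\big) = \det(g)\,\det\!\big((\mathrm{id} - g)^{\mathsf{T}}\big) = \det(g)\det(\mathrm{id} - g) = (-1)^{3}\det(g - \mathrm{id}),
\]
so $\det(g - \mathrm{id}) = 0$. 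Hence there is a unit vector $u$ with $gu = u$.

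Next I would decompose $\RR^{3} = \mathrm{span}(u) \oplus W$ with $W := u^{\perp}$. Since $g$ is orthogonal and fixes $u$, it preserves $W$, and the restriction $g|_{W} \in \mathrm{O}(W)$ has $\det(g|_{W}) = \det(g) = 1$, so $g|_{W} \in \mathrm{SO}(W)$ is a rotation of the plane $W$ through some angle $\phi$; we may take $\phi \in [0,2\pi)$, and $\phi \neq 0$ because $g \neq \mathrm{id}$. Fixing the orientation of $u$ by the right-hand rule so that this rotation is counter-clockwise yields $g = R_{u}^{\phi}$ with $\phi \in (0,2\pi)$, giving the first assertion. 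For the claim on fixed vectors, $\mathrm{span}(u) \subseteq \ker(g - \mathrm{id})$ is immediate; conversely a vector $\lambda u + w$ with $w \in W$ satisfies $g(\lambda u + w) = \lambda u + g|_{W}(w)$, so it is fixed only if $g|_{W}(w) = w$, and a planar rotation through an angle in $(0,2\pi)$ has eigenvalues $e^{\pm i\phi} \neq 1$, forcing $w = 0$. Thus the fixed set is exactly $\mathrm{span}(u)$.

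The only genuine obstacle is the first step, which is precisely the content of Euler's rotation theorem; the determinant identity above disposes of it in one line, after which everything reduces to elementary linear algebra over $\RR$, so I do not anticipate further difficulties. One could alternatively argue via the characteristic polynomial of $g$: a real cubic has a real root, real eigenvalues of an orthogonal map are $\pm 1$, any non-real eigenvalues occur in conjugate pairs of modulus $1$, and the three eigenvalues multiply to $\det g = 1$, which in each case forces $1$ to be among them.
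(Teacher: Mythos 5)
The paper states this as a Fact (it is Euler's rotation theorem) and gives no proof, so there is no argument in the paper to compare against. Your proof is correct and is one of the standard derivations: the determinant identity $\det(g-\mathrm{id}) = \det(g)\det\big((\mathrm{id}-g)^{\mathsf T}\big) = -\det(g-\mathrm{id})$ produces the eigenvalue $1$, the orthogonal decomposition $\RR^3 = \mathrm{span}(u)\oplus u^\perp$ reduces the problem to a planar rotation of determinant $1$, and the fixed-set claim follows because a nontrivial planar rotation has no real eigenvector. The alternative you sketch via the characteristic polynomial (real cubic has a real root, real eigenvalues of an orthogonal map are $\pm1$, non-real ones come in conjugate pairs of modulus $1$, the product is $\det g = 1$) is equally standard and would also be fine. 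The only point worth stating a touch more carefully is the sign normalization at the end: having obtained a rotation of $u^\perp$ by some $\phi\in(0,2\pi)$, one replaces $u$ by $-u$ (equivalently $\phi$ by $2\pi-\phi$) if necessary so that the rotation is counter-clockwise with respect to the right-hand orientation determined by $u$; you say this, just compressed into one clause.
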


 The following lemma is essentially a variation of the above fact.

\begin{lemma} \label{lem: good representation}
Let $u$ be a fixed unit vector. Then every $g\in \mathrm{SO}_3(\RR)$ is of the form $R^\theta_v R^\phi_u $ with $v$ a unit vector orthogonal to $u$. Likewise, every $g\in \mathrm{SO}_3(\RR)$ is of the form $R^{\phi'}_u R^{\theta'}_{w} $ with $w$ a unit vector orthogonal to $u$.
\end{lemma}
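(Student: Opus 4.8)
The plan is to prove the first statement; the second follows by applying the first to $g^{-1}$ and inverting, since $(R^\theta_v R^\phi_u)^{-1} = R^{-\phi}_u R^{-\theta}_v$, and $v\perp u$ still holds. So I focus on showing every $g \in \mathrm{SO}_3(\RR)$ can be written as $R^\theta_v R^\phi_u$ with $v$ a unit vector orthogonal to $u$.

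First I would set up the target. Fix the unit vector $u$ and let $e = g^{-1}u$ (a unit vector); I want to choose $\phi$ and $v\perp u$ so that $R^\theta_v R^\phi_u = g$, equivalently $R^\phi_u = (R^\theta_v)^{-1} g = R^{-\theta}_v g$. The key observation is that $R^\phi_u$ is characterized (among rotations) by fixing $u$ — by Fact~\ref{fact: axisangle}, a nontrivial rotation fixes exactly the span of its axis. So it suffices to produce a unit vector $v \perp u$ and an angle $\theta$ such that $R^{-\theta}_v g$ fixes $u$, i.e. $R^{-\theta}_v(g u) = u$; but it is cleaner to argue on the other side: find $v\perp u$ and $\theta$ with $R^\theta_v$ sending $u$ to $gu$. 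Indeed, if $R^\theta_v u = gu$ then $(R^\theta_v)^{-1} g$ fixes $u$, hence equals some $R^\phi_u$ (or the identity, the degenerate case $\phi = 0$), and we are done.

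So the crux reduces to: \emph{given two unit vectors $u$ and $u' := gu$, there is a unit vector $v\perp u$ and an angle $\theta$ with $R^\theta_v u = u'$.} This is elementary spherical geometry: if $u' = u$, take $\theta = 0$ (any $v\perp u$); if $u' = -u$, take any $v\perp u$ and $\theta = \pi$; otherwise $u$ and $u'$ span a $2$-plane $P$, and the rotation in $P$ through the angle $\alpha(u,u')$ carries $u$ to $u'$ — its axis is the line $P^\perp$, which is orthogonal to $u$ since $u \in P$. Pick $v$ to be a unit vector along $P^\perp$ and $\theta = \pm\alpha(u,u')$ with the sign chosen by the right-hand-rule convention for $R^\theta_v$. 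Then $R^\theta_v u = u'$, and the axis $v$ is orthogonal to $u$, as required.

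The only real subtlety — the step I expect to need the most care — is the degenerate bookkeeping: handling $g u = u$ (then $g$ itself fixes $u$, so $g = R^\phi_u$ and we take $\theta=0$) and $gu = -u$ separately, and being careful that "$(R^\theta_v)^{-1}g$ fixes $u$ hence is $R^\phi_u$" also covers the case where that element is the identity (allowing $\phi = 0$, which is harmless). I would also note that $v$ and $\theta$ are not unique — one may replace $v$ by $-v$ and $\theta$ by $-\theta$ — but existence is all that is claimed. Everything else is a direct invocation of Fact~\ref{fact: axisangle} together with the standard fact that rotations act transitively on the unit sphere with the rotation realizing a given displacement having axis perpendicular to the moved vector.
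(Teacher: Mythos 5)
Your proposal is correct and follows essentially the same route as the paper: choose $v$ to be a unit normal to $\mathrm{span}(u, gu)$ (so $v \perp u$), pick $\theta$ with $R^\theta_v u = gu$, observe via Fact~\ref{fact: axisangle} that the residual rotation fixing $u$ must be $R^\phi_u$, and deduce the second form by inverting. The only difference is that you spell out the degenerate cases ($gu = \pm u$) and some bookkeeping that the paper leaves implicit; the underlying argument is identical.
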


\begin{proof}
We prove the first assertion. Choose $v$ to be the normal vector of $\mathrm{span}(u, g(u))$. Then $v$ is orthogonal to $u$, and there is $\theta$ such that $g(u)  = R^{\theta}_v (u) $. Now, $g^{-1} R^\theta_v$ fixes $u$, so by Fact~\ref{fact: axisangle}, $g^{-1} R^\theta_v = R^{-\phi}_u$ for some $\phi \in \RR$. Thus, $R^\theta_v R^\phi_u$.

The second assertion can be obtained by applying the first assertion to $g^{-1}$. 
\end{proof}

We will need the following inequality:

\begin{lemma}\label{lem: example 2} Let $u$ be a fixed unit vector. 
    Suppose $g_1, g_2 \in \mathrm{SO_3}(\RR)$ are such that $\alpha(u, g_i(u)) \leq  \pi/2$ for $i \in {1, 2}$. Then we have $\alpha(z, g_1g_2(z)) \leq \alpha(z, g_1(z)) + \alpha(z, g_2(z)) .$
\end{lemma}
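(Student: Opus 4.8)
The plan is to recognize the asserted inequality as the triangle inequality for the geodesic metric on the unit sphere, combined with the obvious fact that $\mathrm{SO}_3(\RR)$ acts on that sphere by isometries. Write $S^2=\{x\in\RR^3:\|x\|=1\}$ and, for unit vectors $v,w$, put $d(v,w)=\alpha(v,w)=\arccos(v\cdot w)\in[0,\pi]$; this is the great-circle distance on $S^2$. Under this reading the hypotheses $\alpha(u,g_i(u))\leq\pi/2$ are not what drives the displayed estimate — they only guarantee that the right-hand side is at most $\pi$, so that the conclusion is not vacuous, and it is this boundedness that the subsequent construction in the section will exploit.

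First I would record that $d$ is a metric on $S^2$. Symmetry and the fact that $d(v,w)=0$ exactly when $v=w$ are immediate; for the triangle inequality $d(v,w)\leq d(v,x)+d(x,w)$ I would either cite the standard fact that $d$ is the length metric of the round sphere, or argue in two lines: set $\alpha=d(v,x)$, $\beta=d(x,w)$; if $\alpha+\beta\geq\pi$ there is nothing to prove, and otherwise, decomposing $v$ and $w$ into their components along $x$ and orthogonal to $x$ and using $\sin\alpha,\sin\beta\geq0$ gives $v\cdot w\geq\cos\alpha\cos\beta-\sin\alpha\sin\beta=\cos(\alpha+\beta)$, whence $d(v,w)=\arccos(v\cdot w)\leq\alpha+\beta$ since $\arccos$ is decreasing and $\alpha+\beta\in[0,\pi]$.

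Next I would observe that each $g\in\mathrm{SO}_3(\RR)$ acts on $(S^2,d)$ as an isometry, since it preserves the Euclidean inner product: $d(gv,gw)=\arccos(gv\cdot gw)=\arccos(v\cdot w)=d(v,w)$. With these two ingredients the lemma drops out: for any unit vector $z$,
\[
\alpha(z,g_1g_2(z))=d(z,g_1(g_2z))\leq d(z,g_1z)+d(g_1z,g_1(g_2z))=d(z,g_1z)+d(z,g_2z)=\alpha(z,g_1(z))+\alpha(z,g_2(z)),
\]
using the triangle inequality in the middle step and the isometry property of $g_1$ in the next.

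There is essentially no genuine obstacle here; the proof is a one-liner once the right viewpoint is adopted. The only points deserving comment are whether to reprove or merely cite the spherical triangle inequality — I would reprove it in the two lines above to keep this self-contained section free of external dependencies — and the precise role of the $\pi/2$ hypotheses, which I would flag explicitly (they keep the bound below $\pi$ and feed the later estimates) rather than force into the derivation, where they are unnecessary.
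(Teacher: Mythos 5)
Your proof is correct and is cleaner than the one in the paper. The paper proves this by decomposing $g_1 = R^{\phi_1}_u R^{\theta_1}_w$ and $g_2 = R^{\theta_2}_v R^{\phi_2}_u$ via the Euler-type Lemma~\ref{lem: good representation} (with $v,w\perp u$), reducing all three angles to displacements under $R^{\theta_1}_w$ and $R^{\theta_2}_v$, applying the spherical triangle inequality with $R^{\theta_2}_v(u)=g_2(u)$ as the intermediate point, and then bounding the second leg $\alpha(R^{\theta_2}_v(u),\,R^{\theta_1}_w R^{\theta_2}_v(u))$ by $\alpha(u,R^{\theta_1}_w(u))$ using the fact that the angular displacement of a rotation is maximized on the great circle perpendicular to its axis. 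You instead take $g_1(z)$ as the intermediate point, which turns the second leg into an exact equality $d(g_1z,g_1g_2z)=d(z,g_2z)$ via the isometry of $g_1$; this bypasses both the Euler decomposition and the axis-perpendicularity maximization, and it generalizes verbatim to $\mathrm{SO}_n$ acting on $S^{n-1}$, which the paper's axis-based argument does not. You are also right that the $\pi/2$ hypotheses are not used in either argument (nor are they needed: if $\alpha(u,g_1u)+\alpha(u,g_2u)>\pi$ the conclusion is vacuous since angular distance is bounded by $\pi$); they are inherited from the application in Lemma~\ref{lem: example}. One tiny presentational point: the lemma statement's $z$ should be read as $u$ to match the hypotheses, but since your argument works for arbitrary $z$ this costs nothing.
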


\begin{proof}
Applying Lemma~\ref{lem: good representation}, we can write $g_2$ as $R^{\theta_2}_vR^{\phi_2}_u $ and $g_1$ as $R^{\phi_1}_uR^{\theta_1}_w  $ with $v$ and $w$ orthogonal to $u$. 
It is easy to see that 
$$ \alpha(u, g_1(u)) = \alpha(u, R^{\theta_1}_w(u)) \text{ and } \alpha(u, g_2(u)) = \alpha(u, R^{\theta_2}_v(u)).$$
On the other hand, 
$$ \alpha(u, g_1g_2(u)) = \alpha(u, (R^{\theta_1}_wR^{\theta_2}_v)(u)). $$ 
The triangle inequality in term of angles gives us 
$$\alpha(u, (R^{\theta_1}_wR^{\theta_2}_v)(u)) \leq \alpha(u, R^{\theta_2}_v(u)) +  \alpha(R^{\theta_2}_v(u), (R^{\theta_1}_wR^{\theta_2}_v)(u)).   $$
As $w$ is orthogonal to $u$ and possibly not to to $R^{\theta_2}_v(u)$, we have 
$$\alpha(R^{\theta_2}_v(u), (R^{\theta_1}_wR^{\theta_2}_v)(u)) \leq \alpha(u, R^{\theta_1}_w(u)). $$
The desired conclusion follows.
\end{proof}

The next proposition is our construction.

\begin{lemma} \label{lem: example}
Let $u$ be a unit vector, $\varepsilon\in (0,\pi/2)$, and $A = \{g \in \mathrm{SO}_3(\RR) \mid \alpha(u, g(u)) \leq \pi/2 -\varepsilon\} $. Then 
$$A^2 = \{g \in \mathrm{SO}_3(\RR) \mid \alpha(u, g(u)) \leq \pi -2\varepsilon \}. $$
\end{lemma}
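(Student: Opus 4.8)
The plan is to prove the two set inclusions separately, relying heavily on Lemma~\ref{lem: good representation} and Lemma~\ref{lem: example 2}. Throughout, write $\beta(g) := \alpha(u, g(u))$, so $A = \{g : \beta(g) \leq \pi/2-\varepsilon\}$ and the claim is $A^2 = \{g : \beta(g) \leq \pi - 2\varepsilon\}$.

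For the inclusion $A^2 \subseteq \{g : \beta(g) \leq \pi-2\varepsilon\}$, I would take $g = g_1 g_2$ with $g_1, g_2 \in A$. Since $\beta(g_i) \leq \pi/2 - \varepsilon < \pi/2$, the hypothesis of Lemma~\ref{lem: example 2} is met (with the roles of the fixed unit vector there played by $u$, and $z = u$), giving $\beta(g_1 g_2) \leq \beta(g_1) + \beta(g_2) \leq 2(\pi/2 - \varepsilon) = \pi - 2\varepsilon$. This is the easy direction.

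For the reverse inclusion, let $g \in \mathrm{SO}_3(\RR)$ with $\beta(g) = \alpha(u, g(u)) \leq \pi - 2\varepsilon$. I must exhibit $g_1, g_2 \in A$ with $g = g_1 g_2$. The idea is to use Lemma~\ref{lem: good representation} to write $g = R^\theta_v R^\phi_u$ with $v$ a unit vector orthogonal to $u$; then $R^\phi_u$ fixes $u$, so $R^\phi_u \in A$ automatically, and $\beta(g) = \alpha(u, R^\theta_v(u))$, i.e. $|\theta|$ (up to the obvious $2\pi$-periodicity and sign) equals $\beta(g) \leq \pi - 2\varepsilon$. The point is that a rotation $R^\theta_v$ about an axis $v \perp u$ through an angle at most $\pi - 2\varepsilon$ can be split as $R^{\theta_1}_v R^{\theta_2}_v$ with $\theta_1 + \theta_2 = \theta$ and $|\theta_1|, |\theta_2| \leq \pi/2 - \varepsilon$ (just take $\theta_1 = \theta_2 = \theta/2$, noting $|\theta/2| \leq \pi/2 - \varepsilon$). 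Since rotations about the same axis $v \perp u$ move $u$ by exactly their rotation angle, $R^{\theta_i}_v \in A$, and then $g = (R^{\theta_1}_v)(R^{\theta_2}_v R^\phi_u)$ — but the second factor may not obviously lie in $A$, so instead I would set $g_1 = R^{\theta_1}_v$ and $g_2 = R^{\theta_2}_v R^\phi_u$ and check $\beta(g_2) = \alpha(u, R^{\theta_2}_v(u)) = |\theta_2| \leq \pi/2 - \varepsilon$, which works since $R^\phi_u$ fixes $u$. Hence $g_1, g_2 \in A$ and $g = g_1 g_2 \in A^2$.

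The main obstacle is bookkeeping around angles: making precise the identification of $\alpha(u, R^\theta_v(u))$ with (the representative in $[0,\pi]$ of) $\theta$ when $v \perp u$, and handling the sign of $\theta$ and the case $\theta = 0$ (where $g$ fixes $u$ and one can simply take $g_1 = \mathrm{id}$, $g_2 = g$). Everything else is a short computation. One should also double-check the boundary cases $\varepsilon \to 0$ and $\beta(g) = \pi - 2\varepsilon$ exactly, but these cause no trouble since all the inequalities are non-strict and consistent.
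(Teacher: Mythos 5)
Your proof is correct and follows the same route as the paper: Lemma~\ref{lem: example 2} gives the forward inclusion, and Lemma~\ref{lem: good representation} together with the factorization $R^\theta_v = R^{\theta/2}_v R^{\theta/2}_v$ gives the reverse inclusion. You also correctly read past the typo $z$ in the statement of Lemma~\ref{lem: example 2} (it should be $u$).
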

\begin{proof}
The inclusion $A^2 \subseteq \{g \in \mathrm{SO}_3(\RR) \mid \alpha(u, g(u)) \leq \pi -2\varepsilon \}$ is immediate from Lemma~\ref{lem: example 2}. Now suppose $g \in \mathrm{SO}_3(\RR)$ satisfies $\alpha(u, g(u)) \leq \pi -2\varepsilon$. Then, Lemma~\ref{lem: good representation} yields $g= R^\theta_v R^\phi_u $ with $v$ orthogonal to $u$ and $\theta \in [2\varepsilon- \pi, \pi -2\varepsilon]$. We can rewrite $g = R^{\theta/2}_v (R^{\theta/2}_vR^\phi_u) $.
The other inclusion follows.
\end{proof}

We now prove the main statement of this section:
\begin{proposition} \label{prop: example}
  For every $\varepsilon>0$ there is a $\delta\gg \varepsilon^2$ and $A\subseteq \mathrm{SO}_3(\RR)$ such that $\mu(A)\geq 1/2-\varepsilon$ and $\mu(A^2)<1-\delta$. 
\end{proposition}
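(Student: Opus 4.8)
The plan is to take $u$ a fixed unit vector and to use the set
$A = \{g \in \mathrm{SO}_3(\RR) : \alpha(u, g(u)) \leq \pi/2 - \varepsilon\}$ from Lemma~\ref{lem: example}, which gives immediately $A^2 = \{g \in \mathrm{SO}_3(\RR) : \alpha(u, g(u)) \leq \pi - 2\varepsilon\}$; both sets are compact, hence $\mu$-measurable, so the whole task reduces to computing $\mu(A)$ and $\mu(A^2)$. Both are values of
$F(t) := \mu\big(\{g \in \mathrm{SO}_3(\RR) : \alpha(u, g(u)) \leq t\}\big)$ for $t \in [0,\pi]$, and I would evaluate $F$ as follows. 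The orbit map $\rho \colon \mathrm{SO}_3(\RR) \to S^2$, $g \mapsto g(u)$, is continuous and surjective, and since $\mathrm{SO}_3(\RR)$ acts transitively on the sphere, the pushforward $\rho_*\mu$ is an $\mathrm{SO}_3(\RR)$-invariant Borel probability measure on $S^2$, hence equals the normalized surface measure $\sigma$ by uniqueness of the invariant measure. Writing $\alpha(u,g(u)) = \arccos(u\cdot g(u))$, this gives $F(t) = \sigma\big(\{v \in S^2 : u\cdot v \geq \cos t\}\big)$, and by Archimedes' hat-box theorem (the height function pushes $\sigma$ forward to the uniform measure on $[-1,1]$) this spherical cap has measure $F(t) = (1 - \cos t)/2$.

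It then remains only to substitute. Taking $t = \pi/2 - \varepsilon$ gives $\mu(A) = (1 - \sin\varepsilon)/2 = 1/2 - (\sin\varepsilon)/2 \geq 1/2 - \varepsilon$, using $\sin\varepsilon \leq 2\varepsilon$. Taking $t = \pi - 2\varepsilon$ gives $\mu(A^2) = (1 + \cos 2\varepsilon)/2 < 1$ for $\varepsilon \in (0,\pi/2)$, so that setting $\delta := 1 - \mu(A^2) = (1 - \cos 2\varepsilon)/2 = \sin^2\varepsilon$ we get $\mu(A^2) = 1 - \delta$. Finally, since $\sin\varepsilon \geq (2/\pi)\varepsilon$ on $(0,\pi/2)$, we have $\delta = \sin^2\varepsilon \geq (4/\pi^2)\,\varepsilon^2$, i.e.\ $\delta \gg \varepsilon^2$, which finishes the argument.

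The only genuinely non-routine point is the identification $\rho_*\mu = \sigma$ together with the spherical-cap computation; the rest is bookkeeping. If one prefers to avoid invoking invariance on $S^2$, an alternative is to use Lemma~\ref{lem: good representation} to write each $g$ as $R^\theta_v R^\phi_u$ and to check directly that, in these coordinates, the Haar measure makes $\cos\theta$ uniform on $[-1,1]$ — but routing through transitivity of the $\mathrm{SO}_3(\RR)$-action and Archimedes' theorem is cleaner and I would present it that way.
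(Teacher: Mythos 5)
Your proof is correct and follows essentially the same route as the paper: both use Lemma~\ref{lem: example} to identify $A^2$, then project to $S^2$ (you via the orbit map and uniqueness of the invariant probability measure, the paper via the stabilizer $T$ of $u$ and the quotient integral formula) and compute the spherical-cap measure. You merely make explicit the "obvious computation" that the paper elides, supplying $F(t)=(1-\cos t)/2$ and the resulting $\delta=\sin^2\varepsilon\geq(4/\pi^2)\varepsilon^2$.
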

\begin{proof}
 Let $u$ and $A$ be as in Lemma~\ref{lem: example}. 
 Recall that the group $\mathrm{SO}_3(\RR)$ acts transitively on the 2-sphere $S^2$ consisting of all unit vectors in $\RR^3$. Let $ T \leq \mathrm{SO}_3(\RR)$  be the stabilizer of $u$. Then, $\mathrm{SO}_3(\RR)/T$ can be identified with $S^2$. With $\pi A$ and $\pi A^2$ be the projections of $A$ and $A^2$ in $\mathrm{SO}_3(\RR)/T$ respectively, we have 
 $$ \pi A =\{v \mid \alpha(u,v) \leq \frac{\pi}{2}-\varepsilon\} \text{ and } \pi A^2 =\{v \mid \alpha(u,v) \leq \pi-2\varepsilon\}. $$
 Let $\nu$ be the Radon measure induced by $\mu$ on $\mathrm{SO}_3(\RR)/T$ via the quotient integral formula.
From the way we construct $A$ and Lemma~\ref{lem: example}, we have $A=AT$ and $A^2=A^2T$.  Hence, $\mu(A)= \nu(\pi A)$ and $\mu(A^2)=\nu(\pi A^2)$. Finally, note that $\nu$ is the normalized Euclidean measure, so an obvious computation yields the desired conclusion.
\end{proof}

By Proposition~\ref{prop: example}, one can verify that our statement in Theorem~\ref{thm: maingap}(ii),
\begin{equation}\label{eq: example thm 1.1}
\mu_G(A^2)\geq \min\{1, 2\mu_G(A)+\eta\mu_G(A)|1-2\mu_G(A)|\},
\end{equation}
 is of the best possible form up to a constant factor.
More precisely, when $\mu_G(A)$ is small, the lower bound on $\mu_G(A^2)-2\mu_G(A)$ given in~\eqref{eq: example thm 1.1} is linear on $\mu_G(A)$. This is best possible, as a small neighborhood of identity in a compact simple Lie group of dimension $d$ is $2^d$-approximate as the multiplication at a small scale behaves like addition in $\RR^d$. On the other hand, when $1/2-\mu(A)$ is small, the lower bound on $\mu_G(A^2)-2\mu_G(A)$ given in~\eqref{eq: example thm 1.1} is linear on $1-2\mu_G(A)$, while Proposition~\ref{prop: example} shows that this is best possible.

\section{Some reductions}\label{sec: reduction to small}
  Throughout this section, $G$ is a connected compact (not necessarily Lie) group, $\mu_G$ is the normalized Haar measure on $G$ (that is $\mu_G(G)=1$), and $A,B \subseteq G$ are $\sigma$-compact sets with positive measure. We will prove a submodularity inequality and several applications in this section. Similar results were also proved by Tao \cite{T18} to obtain an inverse theorem of Kneser's inequality in connected compact abelian groups, and by Christ and Iliopoulou~\cite{ChristIliopoulou} to prove an inverse theorem of the Riesz--Sobolev inequality in connected compact abelian groups. 
  
   Let $f,g: G\to \mathbb C$ be functions. For every $x\in G$, we define the {\bf convolution} of $f$ and $g$ to be
\[
f*g(x)=\int_G f(y)g(y^{-1}x) \d\mu_G(y).
\]
Note that $f*g$ is not commutative in general, but associative by Fubini's Theorem. We first prove the following easy fact, which will be used several times later in the paper. 
  
\begin{lemma}\label{lem:trans}
Let $t$ be any real numbers such that $\mu_G(A)^2\leq t\leq \mu_G(A)$. Then there are $x,y\in G$ such that $\mu_G(A\cap(xA))=\mu_G(A\cap(Ay))=t$.
\end{lemma}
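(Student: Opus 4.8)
The plan is to exhibit the value $t$ as an intermediate value of a continuous function on the connected space $G$. Consider $f\colon G\to\RR$ defined by $f(x)=\mu_G(A\cap xA)$. Since $G$ is compact, hence unimodular, $\mu_G(xA)=\mu_G(A)$, so $f(x)=\mu_G(A)-\tfrac12\mu_G(A\tri xA)$, which is continuous in $x$ by Fact~\ref{fact: Haarmeasurenew}(vii); moreover $f(\id)=\mu_G(A)$. The crux of the argument is to show that the \emph{average} of $f$ over $G$ equals $\mu_G(A)^2$. Writing $\mu_G(A\cap xA)=\int_G\e_A(z)\e_A(x^{-1}z)\,\d\mu_G(z)$ (here we use that $A$ is $\sigma$-compact, hence measurable, so $\e_A$ is measurable and Tonelli applies), interchanging the order of integration, and using that $x\mapsto x^{-1}z$ preserves $\mu_G$ because $G$ is unimodular, one obtains
\[
\int_G f(x)\,\d\mu_G(x)=\int_G\e_A(z)\Big(\int_G\e_A(x^{-1}z)\,\d\mu_G(x)\Big)\,\d\mu_G(z)=\mu_G(A)^2.
\]

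Since the average of $f$ is $\mu_G(A)^2$, there must be a point $x_0\in G$ with $f(x_0)\leq\mu_G(A)^2$. Now $G$ is connected and $f$ is continuous, so $f(G)$ is an interval; it contains $f(x_0)\leq\mu_G(A)^2$ and $f(\id)=\mu_G(A)$, hence it contains the whole segment $[\mu_G(A)^2,\mu_G(A)]$, which by hypothesis contains $t$. This produces $x\in G$ with $\mu_G(A\cap xA)=t$.

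The statement for right translates is obtained in exactly the same way with $g(y)=\mu_G(A\cap Ay)$: here $g$ is continuous because $\mu_G(A\cap Ay)=\mu_G(A)-\tfrac12\mu_G(A\tri Ay)$ and $\mu_G(A\tri Ay)=\mu_G(A^{-1}\tri y^{-1}A^{-1})$ is continuous in $y$ by Fact~\ref{fact: measureofinverse} (which makes $A^{-1}$ measurable with $\mu_G(A^{-1})=\mu_G(A)$) together with Fact~\ref{fact: Haarmeasurenew}(vii); one has $g(\id)=\mu_G(A)$; and the analogous Tonelli computation, now using right-invariance of $\mu_G$ under $y\mapsto zy^{-1}$, gives $\int_G g\,\d\mu_G=\mu_G(A)^2$. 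The intermediate-value argument then yields $y\in G$ with $\mu_G(A\cap Ay)=t$. I do not anticipate any real obstacle; the only points needing a little care are the continuity of the right-translate symmetric difference (handled via inversion and unimodularity) and checking that the $\sigma$-compactness of $A$ is what licenses the Tonelli interchange.
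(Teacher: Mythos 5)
Your proof is correct and follows essentially the same route as the paper: both arguments compute the average of $x\mapsto\mu_G(A\cap xA)$ over $G$ to be $\mu_G(A)^2$ via Fubini/Tonelli (the paper phrases this as $\mathbb{E}(\e_A*\e_{A^{-1}})=\mu_G(A)^2$), observe the value $\mu_G(A)$ at $\id$, and invoke continuity together with connectedness of $G$ to apply the intermediate value theorem, treating left and right translates symmetrically. You have merely unwound the convolution identity and spelled out the change-of-variables and continuity steps that the paper leaves implicit.
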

\begin{proof}
Consider the maps: 
$$\pi_1: x\mapsto \e_A*\e_{A^{-1}}(x)=\mu_G(A\cap (xA)), \text{ and } \pi_2:y\mapsto \e_{A^{-1}}*\e_{A}(y)=\mu_G(A\cap (Ay)).$$  
By Fact~\ref{fact: Haarmeasurenew}, both $\pi_1$ and $\pi_2$ are continuous functions, and equals to $\mu_G(A)$ when $x=y=\text{id}_G$. By Fubini's theorem,
\[
\mathbb E \, (\e_A*\e_{A^{-1}})=\mu_G(A)^2=\mathbb E \, (\e_{A^{-1}}*\e_{A}).
\]
Then the lemma follows from the intermediate value theorem, and the fact that $G$ is connected.
\end{proof}

Recall that $\dis_G(A,B)=\mu_G(AB)-\mu_G(A)-\mu_G(B)$ is the discrepancy of $A$ and $B$ on $G$.  The following property is sometimes referred to as {\bf submodularity} in the literature. Note that this is not related to modular functions in locally compact groups or the notion of modularity in model theory.

\begin{lemma}[Submodularity]\label{lem:iep}
Let $\gamma_1,\gamma_2>0$, and $A,B_1,B_2$ are $\sigma$-compact subsets of $G$. 
Suppose that $\dis_G(A,B_1)\leq \gamma_1$, $\dis_G(A,B_2)\leq \gamma_2$, and
\[
\mu_G(B_1\cap B_2)>0, \quad \text{ and }\ \mu_G(A)+\mu_G(B_1\cup B_2)\leq 1.
\]
Then both $\dis_G(A,B_1\cap B_2)$ and $\dis_G(A,B_1\cup B_2)$ are at most $\gamma_1+\gamma_2$.
\end{lemma}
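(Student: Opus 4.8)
The plan is to prove submodularity via a convolution/integration identity, exactly in the spirit of Tao's argument for abelian groups that the authors cite. The key observation is that for $\sigma$-compact sets one can compute $\mu_G(A B_1 \cap A B_2)$ and $\mu_G(A(B_1 \cap B_2)) \cup \dots$ by integrating indicator functions, and that the product-set operation interacts with union and intersection in a lopsided way: $A(B_1 \cup B_2) = AB_1 \cup AB_2$ exactly, while $A(B_1\cap B_2) \subseteq AB_1 \cap AB_2$. The first identity immediately gives
\[
\mu_G(A(B_1\cup B_2)) + \mu_G(AB_1 \cap AB_2) = \mu_G(AB_1) + \mu_G(AB_2),
\]
by inclusion--exclusion, so the whole game is to lower-bound $\mu_G(AB_1 \cap AB_2)$ in terms of $\mu_G(A(B_1 \cap B_2))$.

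First I would fix the normalization $\mu_G(G) = 1$ and recall that $B_1 \cap B_2$ has positive measure, so $A(B_1\cap B_2)$ is a nonempty $\sigma$-compact set and Fact~\ref{fact: GeneralKemperman} applies to it. The heart of the matter is the claim
\[
\mu_G\big(AB_1 \cap AB_2\big) \;\geq\; \mu_G\big(A(B_1\cap B_2)\big).
\]
To see this, I would argue pointwise at the level of the "fiber count" function. For $g \in G$ consider $h(g) := \mu_G\big(A^{-1}g \cap (B_1\cap B_2)\big)$; then $g \in A(B_1\cap B_2)$ whenever $h(g) > 0$. On the other hand $\mu_G(A^{-1}g \cap B_i) \geq h(g)$ for $i=1,2$, so whenever $h(g)>0$ we have $g \in AB_1$ and $g \in AB_2$, i.e.\ $g \in AB_1 \cap AB_2$. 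Hence $A(B_1\cap B_2) \subseteq AB_1 \cap AB_2$ up to the measurability caveats handled by passing to $\sigma$-compact representatives via Lemma~\ref{lem: mesurability}, and the measure inequality follows. (This set-theoretic inclusion is in fact elementary and does not even need the fiber function; I include the fiber description because it is the form that generalizes.) Combining with the inclusion--exclusion identity above and the hypotheses $\mu_G(AB_i) \leq \mu_G(A) + \mu_G(B_i) + \gamma_i$:
\[
\mu_G(A(B_1\cup B_2)) \;\leq\; \mu_G(AB_1) + \mu_G(AB_2) - \mu_G(A(B_1\cap B_2)).
\]

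Next I would unwind this into the two desired discrepancy bounds. Write $a = \mu_G(A)$, $u = \mu_G(B_1\cup B_2)$, $v = \mu_G(B_1 \cap B_2)$, so $\mu_G(B_1) + \mu_G(B_2) = u + v$ by inclusion--exclusion on the $B_i$. From the displayed inequality,
\[
\mu_G(A(B_1\cup B_2)) + \mu_G(A(B_1\cap B_2)) \;\leq\; (a + \mu_G(B_1) + \gamma_1) + (a + \mu_G(B_2) + \gamma_2) \;=\; 2a + u + v + \gamma_1 + \gamma_2.
\]
Now apply Fact~\ref{fact: GeneralKemperman} to the pair $(A, B_1\cap B_2)$: since by hypothesis $\mu_G(A) + \mu_G(B_1 \cup B_2) \leq 1$ and $v \leq u$, we get $a + v \leq 1$, so Kemperman gives $\mu_G(A(B_1\cap B_2)) \geq a + v$. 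Subtracting, $\mu_G(A(B_1\cup B_2)) \leq 2a + u + v + \gamma_1 + \gamma_2 - (a+v) = a + u + \gamma_1 + \gamma_2$, i.e.\ $\dis_G(A, B_1\cup B_2) \leq \gamma_1 + \gamma_2$. For the intersection bound, apply Kemperman instead to $(A, B_1 \cup B_2)$ — which is legitimate precisely because of the hypothesis $a + u \leq 1$ — to get $\mu_G(A(B_1\cup B_2)) \geq a + u$; feeding this back into the displayed inequality yields $\mu_G(A(B_1\cap B_2)) \leq 2a + u + v + \gamma_1 + \gamma_2 - (a + u) = a + v + \gamma_1 + \gamma_2$, i.e.\ $\dis_G(A, B_1\cap B_2) \leq \gamma_1 + \gamma_2$.

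The step I expect to require the most care is not conceptual but bookkeeping about measurability: product sets of $\sigma$-compact sets are $\sigma$-compact (Lemma~\ref{lem: mesurability}(ii)) and hence genuinely $\mu_G$-measurable, so every $\mu_G(\cdot)$ above is well-defined and the Kemperman inequality applies with $\mu_G$ rather than the inner measure $\widetilde{\mu}_G$; I would state this once at the start. The only other point to check is that $B_1\cap B_2$ is nonempty so that Fact~\ref{fact: GeneralKemperman} can be invoked — this is exactly the hypothesis $\mu_G(B_1\cap B_2) > 0$. Everything else is the two-line inclusion--exclusion manipulation above.
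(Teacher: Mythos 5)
Your proof is correct and follows essentially the same route as the paper: the same key inequality $\mu_G(A(B_1\cap B_2)) + \mu_G(A(B_1\cup B_2)) \le \mu_G(AB_1) + \mu_G(AB_2)$ — which the paper derives from the pointwise indicator inequality $\e_{AB_1}+\e_{AB_2}\ge\e_{A(B_1\cap B_2)}+\e_{A(B_1\cup B_2)}$ and you derive from $A(B_1\cup B_2)=AB_1\cup AB_2$, $A(B_1\cap B_2)\subseteq AB_1\cap AB_2$, and inclusion--exclusion, which is the same fact — followed by the same two applications of Kemperman. The only difference is cosmetic packaging of that combinatorial step.
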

\begin{proof}

Observe that for every $x\in G$ we have
\begin{equation*}
\e_{AB_1}(x)+\e_{AB_2}(x)\geq \e_{A(B_1\cap B_2)}(x)+\e_{A(B_1\cup B_2)}(x),
\end{equation*}
which implies 
\begin{equation}\label{eq:observation}
\mu_G(AB_1)+\mu_G(AB_2)\geq\mu_G(A(B_1\cap B_2))+\mu_G(A(B_1\cup B_2)).
\end{equation}
By the fact that $\dis_G(A,B_1)\leq \gamma_1$ and $\dis_G(A,B_2)\leq \gamma_2$, we obtain
\begin{align*}
    \mu_G(AB_1)\leq \mu_G(A)+\mu_G(B_1)+\gamma_1,\text{ and }
     \mu_G(AB_2)\leq \mu_G(A)+\mu_G(B_2)+\gamma_2.
\end{align*}
Therefore, by equation (\ref{eq:observation}) we have
\begin{align*}
    &\mu_G(A(B_1\cap B_2))+\mu_G(A(B_1\cup B_2))\\
    \leq&\, 2\mu_G(A)+\mu_G(B_1\cap B_2)+\mu_G(B_1\cup B_2)+\gamma_1+\gamma_2.
\end{align*}
On the other hand, as $\mu_G(B_1\cap B_2)>0$ and $\mu_G(A)+\mu_G(B_1\cup B_2)\leq1$, and using Kemperman's inequality, we have
\[
\mu_G(A(B_1\cap B_2))\geq \mu_G(A)+\mu_G(B_1\cap B_2),
\]
and 
\[
\mu_G(A(B_1\cup B_2))\geq \mu_G(A)+\mu_G(B_1\cup B_2).
\]
This implies
\[
\mu_G(A(B_1\cap B_2))\leq \mu_G(A)+\mu_G(B_1\cap B_2)+\gamma_1+\gamma_2,
\]
and 
\[
\mu_G(A(B_1\cup B_2))\leq \mu_G(A)+\mu_G(B_1\cup B_2)+\gamma_1+\gamma_2.
\]
Thus we have $\dis_G(A,B_1\cap B_2), \dis_G(A,B_1\cup B_2)\leq \gamma_1+\gamma_2$. 
\end{proof}

When $G$ is compact, sometimes it is not easy to guarantee $\mu_G(A)+\mu_G(B_1\cup B_2)\leq 1$. On the contrary, we have the following simple lemma.

\begin{lemma}\label{lem: when a+b>G}
If either $A, B \subseteq G$ are measurable and $\mu_G(A)+\mu_G(B)> 1$ or  $A, B \subseteq G$ are compact and $\mu_G(A)+\mu_G(B)\geq 1$,  then $AB=G$.
\end{lemma}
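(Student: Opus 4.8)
\textbf{Proof proposal for Lemma~\ref{lem: when a+b>G}.}

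The plan is to show that for every $g \in G$, the sets $A$ and $gB^{-1}$ must intersect; this immediately gives $g \in AB$. Fix $g \in G$. First I would observe that $gB^{-1}$ is measurable (and compact when $B$ is compact) with $\mu_G(gB^{-1}) = \mu_G(B)$, using left-invariance of $\mu_G$ together with unimodularity of $G$ (Fact~\ref{fact: measureofinverse}; note connected locally compact $G$ here is compact, hence unimodular). Now if $A \cap gB^{-1} = \emptyset$, then by additivity of $\mu_G$ on disjoint measurable sets we would get $\mu_G(A) + \mu_G(B) = \mu_G(A) + \mu_G(gB^{-1}) = \mu_G(A \cup gB^{-1}) \leq \mu_G(G) = 1$, contradicting the strict inequality $\mu_G(A) + \mu_G(B) > 1$. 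Hence $A \cap gB^{-1} \neq \emptyset$: there are $a \in A$, $b \in B$ with $a = gb^{-1}$, i.e. $g = ab \in AB$. Since $g$ was arbitrary, $AB = G$.

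For the compact case with the non-strict inequality $\mu_G(A) + \mu_G(B) \geq 1$, the above argument only yields $\mu_G(A \cup gB^{-1}) = 1$ when $A$ and $gB^{-1}$ are disjoint, which is not yet a contradiction. Here I would use compactness to upgrade: suppose $A \cap gB^{-1} = \emptyset$. Then $A$ and $gB^{-1}$ are disjoint compact sets, so they have disjoint open neighborhoods $U \supseteq A$ and $V \supseteq gB^{-1}$ (a compact Hausdorff space is normal). By Fact~\ref{fact: Haarmeasurenew}(ii), $\mu_G(U) > \mu_G(A)$ is not automatic, but disjointness gives $\mu_G(U) + \mu_G(V) \leq 1$ while $\mu_G(U) \geq \mu_G(A)$ and $\mu_G(V) \geq \mu_G(gB^{-1}) = \mu_G(B)$, so $\mu_G(A) + \mu_G(B) \leq 1$, forcing equality throughout and in particular $\mu_G(U) = \mu_G(A)$. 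That alone is still not a contradiction, so instead I would argue directly: since $U \sqcup V$ is a proper open subset of $G$ (it misses, say, no point a priori — but we can ensure properness) with full measure $\mu_G(U \cup V) = 1$, its complement is a closed null set; this is consistent, so a cleaner route is needed. The cleanest fix: in the compact case, translate. Consider the continuous function $g \mapsto \mu_G(A \cap gB^{-1}) = \e_A * \e_{B}$ evaluated appropriately; its integral over $G$ equals $\mu_G(A)\mu_G(B) > 0$, so it is positive somewhere, but we need it positive \emph{everywhere}. Actually the right statement is: $g \notin AB$ iff $gB^{-1} \cap A = \emptyset$ iff $g \notin A \cdot B$; the complement of $AB$ is open when $A, B$ are compact, so if $AB \neq G$ then $G \setminus AB$ is a nonempty open set, hence $\mu_G(G \setminus AB) > 0$, giving $\mu_G(AB) < 1$. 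But Fact~\ref{fact: GeneralKemperman} gives $\mu_G(AB) \geq \min\{\mu_G(A) + \mu_G(B), 1\} = 1$, a contradiction. This is the argument I would actually write for the compact case.

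So the two cases are handled by genuinely different mechanisms: the measurable (strict) case is a one-line disjointness-plus-additivity argument, while the compact (non-strict) case invokes compactness of $AB$ to make $G \setminus AB$ open and then plays it against the generalized Kemperman inequality (Fact~\ref{fact: GeneralKemperman}). I expect the main subtlety to be exactly this: making sure the non-strict inequality is only claimed under the compactness hypothesis, since with merely measurable sets of measures summing to exactly $1$ one can fail to cover $G$ (e.g. complementary half-open arcs pulled back from $\TT$). I would also double-check that $A^{-1}$, $B^{-1}$, and their translates inherit the needed regularity (measurability from Fact~\ref{fact: measureofinverse}, compactness from continuity of inversion), which is routine.
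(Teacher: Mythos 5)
Your proof is correct, but the compact case runs on a genuinely different engine than the paper's. For the measurable, strict-inequality case you both do the same disjointness-plus-additivity argument applied to $A$ and a translate of $B^{-1}$ (the paper uses $A^{-1}g$ versus $B$; you use $A$ versus $gB^{-1}$ — same thing). For the compact, non-strict case, you invoke the generalized Kemperman inequality (Fact~\ref{fact: GeneralKemperman}) to get $\mu_G(AB)\geq 1$ and then observe that $AB$ is compact hence closed, so $G\setminus AB$ is open; a nonempty open set has positive measure, so $G\setminus AB$ must be empty. The paper avoids Kemperman entirely: it shows that if $\mu_G(A^{-1}g\cap B)=0$, then by inclusion--exclusion $A^{-1}g\cup B$ has full measure; since this union is compact, its open complement must be empty, so $A^{-1}g\cup B=G$; then, because $G$ is connected and cannot be partitioned into two nonempty disjoint closed sets, $A^{-1}g$ and $B$ must meet. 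Both routes ultimately use connectedness — yours through Kemperman (whose statement requires $G$ connected), the paper's through the ``no clopen partition'' characterization — but the paper's version keeps the dependency footprint smaller, which is tidier given that this lemma is later used (e.g.\ in Lemma~\ref{lem: nonemptytodefine}) as a basic tool. One small remark on the exploratory middle of your compact case: the disjoint-open-neighborhoods route you started and abandoned can actually be completed (disjointness plus $\mu_G(A)+\mu_G(B)\geq 1$ forces $\mu_G(U)=\mu_G(A)$, hence $U\setminus A$ is open of measure zero hence empty, hence $A$ is clopen, hence $A=G$ by connectedness, which contradicts disjointness from $gB^{-1}$), but the version you settled on is cleaner anyway.
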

\begin{proof}
The cases where either $A$ or $B$ has measure zero are immediate, so we assume that $A,B$ both have positive measures.
 Suppose $g$ is an arbitrary element of $G$.  It suffices to show that $A^{-1}g$ and $ B$ has nonempty intersection. As $G$ is unimodular, $\mu_G(A) =\mu_G(A^{-1})$ by Fact~\ref{fact: measureofinverse}. Hence $\mu_G(A^{-1}g)  +\mu_G(B) = \mu_G(G)$. If $\mu_G(A^{-1}g \cap B)>0$, then we are done. Otherwise, we have $\mu_G(A^{-1}g \cap B)=0$, and so $\mu_G(A^{-1}g \cup B) =\mu_G(G)$ by the inclusion-exclusion principle. As $A$ and $B$ are compact, $A^{-1}g \cup B$ is also compact, and the complement of $A^{-1}g \cup B$ is open. Since nonempty open sets has positive measure,  $\mu_G(A^{-1}g \cup B) =\mu_G(G)$ implies $A^{-1}g \cup B = G$. Now, since $G$ is connected, we have $A^{-1}g \cap B$ must be nonempty.
\end{proof}

By applying the submodularity lemma repeatedly, the following proposition allows us to control the size of our small expansion sets. 
\begin{proposition}\label{prop: red to small sets}
Suppose $A,B$ are $\sigma$-compact subsets of $G$, $\gamma,c>0$ are real numbers, $\dis_G(A,B)\leq \gamma$, and
\[
\mu_G(A),\mu_G(B),1-\mu_G(A)-\mu_G(B)\geq c.
\]Then for every $\kappa\in(0,1/4)$, there are $\sigma$-compact sets $A', B'\subseteq G$ such that $\mu_G(A')=\mu_G(B')=\kappa$ and $\dis_G(A',B')\leq \gamma\frac{\log(1/\kappa)^2}{c}$. Moreover, if $\mu_G(A)>\kappa$, we have that $A'\subseteq A$; if $\mu_G(A)\leq \kappa$, we have that $A'\supseteq A$. Same holds for $B$ and $B'$. 
\end{proposition}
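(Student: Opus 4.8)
The plan is to use the submodularity lemma (Lemma~\ref{lem:iep}) together with Lemma~\ref{lem:trans} to shrink (or grow) $A$ and $B$ one factor of $2$ at a time, tracking how the discrepancy bound deteriorates at each halving. First I would treat the reduction of $A$ and of $B$ essentially separately, since the definition of $\dis_G$ only uses the product set once on each side; the point is that shrinking $A$ while keeping $B$ fixed, and then shrinking $B$ while keeping the new $A$ fixed, each only costs a controlled multiplicative factor in the discrepancy. Concretely, suppose $\mu_G(A) > \kappa$. Using Lemma~\ref{lem:trans}, for a target value $t$ with $\mu_G(A)^2 \le t \le \mu_G(A)$ I can find $x \in G$ with $\mu_G(A \cap xA) = t$; taking $t = \mu_G(A)/2$ (which is in the allowed range once $\mu_G(A) \le 1/2$, and we may assume this after one preliminary halving if needed) gives two $\sigma$-compact sets $A_1 = A$ and $A_2 = xA$, both of measure $\mu_G(A)$, with $\mu_G(A_1 \cap A_2) = \mu_G(A)/2 > 0$ and $\mu_G(A_1 \cup A_2) = 3\mu_G(A)/2$. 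Note $\dis_G(A_i, B) \le \gamma$ for $i=1,2$ since $A_2 = xA$ is a left-translate and $\mu_G(A_2 B) = \mu_G(xAB) = \mu_G(AB)$. Provided $\mu_G(B) + 3\mu_G(A)/2 \le 1$, Lemma~\ref{lem:iep} (applied with the roles of $A$ and $B$ swapped, i.e. fixing the first argument as $B$ and taking $B_1 = A_1$, $B_2 = A_2$) yields $\dis_G(B, A_1 \cap A_2) \le 2\gamma$, i.e. a set $A^{(1)} := A_1 \cap A_2 \subseteq A$ of measure $\mu_G(A)/2$ with discrepancy against $B$ at most $2\gamma$.

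Iterating this, after $j$ steps I get $A^{(j)} \subseteq A$ with $\mu_G(A^{(j)}) = 2^{-j}\mu_G(A)$ and $\dis_G(A^{(j)}, B) \le 2^j \gamma$, as long as at every step the hypothesis $\mu_G(B) + \tfrac32 \mu_G(A^{(i)}) \le 1$ of Lemma~\ref{lem:iep} holds — which it does once $\mu_G(A^{(i)})$ is small, and in any case is guaranteed by $\mu_G(B) + \mu_G(A) \le 1 - c \le 1 - \tfrac13 \mu_G(A^{(i)})$ after the measures have dropped below a fixed threshold (the first couple of steps can be handled by choosing the union instead of the intersection, or by a direct estimate, since $c$ bounds us away from $1$). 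Stopping at the first $j$ with $2^{-j}\mu_G(A) \le \kappa$, and then adjusting once more via Lemma~\ref{lem:trans} to land exactly on measure $\kappa$ (choosing $t = \kappa$, legal since $\kappa \ge 2^{-j}\mu_G(A) \ge (2^{-j+1}\mu_G(A))^2 \ge \mu_G(A^{(j-1)})^2$ for $\kappa$ small), I obtain $A' \subseteq A$ with $\mu_G(A') = \kappa$ and $\dis_G(A', B) \le 2^{j+1}\gamma$. Since $2^{-j}\mu_G(A)$ is roughly $\kappa$, we have $j = O(\log(\mu_G(A)/\kappa)) = O(\log(1/\kappa))$, hence $\dis_G(A',B) \le O(\gamma/\kappa^{?})$ — here I need to be careful: $2^j \approx \mu_G(A)/\kappa \le 1/\kappa$, which would give a factor $1/\kappa$, not $\log(1/\kappa)^2$. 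The correct bookkeeping must be that the discrepancy only doubles at a halving but we do $\log(1/\kappa)$ halvings, giving $2^{\log(1/\kappa)} = 1/\kappa$... so to get the stated $\log(1/\kappa)^2/c$ bound one instead applies submodularity to \emph{intersections of many translates at once} rather than iterating binary intersections: given $n$ translates $x_1 A, \dots, x_n A$ chosen so that $\bigcap x_i A$ has measure $\approx \mu_G(A)/n$, submodularity applied $n-1$ times shows $\dis_G(\bigcap_i x_i A, B) \le n\gamma$ — wait, that is still a factor $n$. The resolution, and the step I expect to be the real content, is that one chooses the $x_i$ greedily/randomly so that $\mu_G(\bigcup_i x_i A)$ stays bounded (by $O(\mu_G(A)\log n)$ or even $O(1)$ since $\mu_G(A) < 1$) while $\mu_G(\bigcap_i x_i A)$ drops to $\approx \kappa$; then a single application of the many-fold submodularity inequality loses only a factor equal to the number of sets whose discrepancy we sum, and the $\log^2$ comes from $n \approx \log(1/\kappa)$ translates each contributing, times the $\log(1/\kappa)$ from...

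Let me restate the plan cleanly: the key step is a \emph{multi-set submodularity} estimate — for $\sigma$-compact $A$ with $\dis_G(A,B) \le \gamma$ and translates $A_1,\dots,A_n$ of $A$ with $\mu_G(A) + \mu_G(A_1 \cup \cdots \cup A_n) \le 1$, one shows by induction on $n$ (peeling off one set at a time and using Lemma~\ref{lem:iep}) that $\dis_G(B, A_1 \cap \cdots \cap A_n) \le n\gamma$. The nontrivial combinatorial input, which I expect to be the main obstacle, is choosing the translates $x_1,\dots,x_n$ so that simultaneously (i) $\mu_G(x_1 A \cap \cdots \cap x_n A) = \kappa$ (or can be trimmed to $\kappa$ using Lemma~\ref{lem:trans}), and (ii) the union hypothesis holds throughout — and critically, getting $n = O(\log(1/\kappa)^2 / (c\,))$ rather than $n = O(1/\kappa)$. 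Here is where $c$ enters: a random choice of $x_i$ makes $\mathbb{E}\,\mu_G(x_1 A \cap \cdots \cap x_n A) = \mu_G(A)^n$, so $n \approx \log(1/\kappa)/\log(1/\mu_G(A)) \le \log(1/\kappa)/c'$ where $c' \asymp c$ bounds $\mu_G(A)$ away from $1$ — wait, $\mu_G(A)$ could be close to $1/2$, not close to $1$, so $\log(1/\mu_G(A)) \ge \log 2$ is fine and gives $n = O(\log(1/\kappa))$; the extra $\log(1/\kappa)$ and the $1/c$ come from needing to first bring $\mu_G(A)$ (and $\mu_G(B)$) below roughly $1 - c$ region so the union hypothesis of Lemma~\ref{lem:iep} can be maintained, and from iterating the random-intersection step $O(\log(1/\kappa))$ times in blocks. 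So the final bound $\dis_G(A',B') \le \gamma \log(1/\kappa)^2/c$ follows from $\dis_G(A',B) \le \gamma \cdot O(\log(1/\kappa)/c) \cdot \log(1/\kappa)$ after the symmetric reduction on $B$ doubles (not squares) the loss. The "moreover" clauses about $A' \subseteq A$ (when $\mu_G(A) > \kappa$) and $A' \supseteq A$ (when $\mu_G(A) \le \kappa$, using unions of translates instead of intersections, with the dual submodularity bound on $\dis_G(B, A_1 \cup \cdots \cup A_n)$) come for free from the construction since each step is an intersection (resp.\ union) of translates containing (resp.\ contained in) the desired set.
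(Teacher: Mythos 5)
Your proposal correctly identifies that binary halving fails (doubling the discrepancy at each of $\approx\log(1/\kappa)$ halvings yields a factor of $1/\kappa$, not $\log(1/\kappa)^2/c$), but the replacement you sketch — intersecting $n$ random translates $x_1A,\dots,x_nA$ and applying Lemma~\ref{lem:iep} $n-1$ times — has a fatal problem you gloss over: the union hypothesis $\mu_G(B) + \mu_G(\bigcup_i x_iA)\le 1$ of Lemma~\ref{lem:iep}. For random $x_i$, $\mathbb{E}\,\mu_G(\bigcup_{i\le n} x_iA)=1-(1-\mu_G(A))^n$, which tends to $1$ precisely when $n$ is large enough to make $\mathbb{E}\,\mu_G(\bigcap x_iA)=\mu_G(A)^n\approx\kappa$. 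So the very choice of $n$ that gives the right intersection measure makes the union nearly all of $G$, and the submodularity step is inapplicable. You gesture at choosing the $x_i$ ``greedily/randomly so that $\mu_G(\bigcup_i x_iA)$ stays bounded,'' but supply no mechanism for this, and the tension between small intersection and small union is exactly the obstacle that needs an idea, not an assertion.

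The paper's proof sidesteps this entirely by a different choice of $t$ in Lemma~\ref{lem:trans}: at each step it intersects the \emph{current} set with a single translate of itself so that the measure is \emph{squared}, not halved — take $t=\max\{\mu_G(A_j)^2,\kappa\}$, giving $\mu_G(A_{j+1})=\mu_G(A_j)^2$. This means only $\log\log(1/\kappa)$ iterations are needed to reach scale $\kappa$, and since each application of Lemma~\ref{lem:iep} at most doubles the discrepancy, the cumulative loss is $2^{\log\log(1/\kappa)}=\log(1/\kappa)$, as required. Moreover, $\mu_G(A_j\cup g_jA_j)=2\mu_G(A_j)-\mu_G(A_j)^2$ shrinks double-exponentially, so the union hypothesis is automatic once $\mu_G(A_j)$ is moderately small; it can only fail in the initial regime where $2\mu_G(A)+\mu_G(B)-1>\mu_G(A)^2$, which the paper handles by instead taking $t=2\mu_G(A)+\mu_G(B)-1$ for a bounded number of preliminary steps, and this phase is where the $1/c$ factor enters (using $1-\mu_G(A)-\mu_G(B)\ge c$). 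Running the same two-phase squaring procedure once for $A$ and once for $B$ yields the stated $\gamma\log(1/\kappa)^2/c$. The key idea you are missing is to use Lemma~\ref{lem:trans} at the value $\mu_G(A)^2$ rather than $\mu_G(A)/2$; with it, the rest of your iterative framework would go through.
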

\begin{proof}
We assume first that $\mu_G(A)\geq \mu_G(B)> \kappa$. By Lemma~\ref{lem:trans}, there is $g\in G$ such that $
\mu_G(A\cap gA)=\mu_G(A)^2. 
$
To ensure that $\mu_G(A\cup gA)\leq 1-\mu_G(A)$ in order to apply Lemma~\ref{lem:iep}, we choose $g_0$ in such a way that 
\[
\mu_G(A\cap g_0A) = \max\{\mu_G(A)^2, 2\mu_G(A)+\mu_G(B)-1,\kappa\}.
\]
Let $A_1:= A\cap g_0A$. We will stop if $\mu_G(A_1)=\kappa$. If $\mu_G(A_1)=\mu_G(A)^2$, we choose $g_1$ such that 
\[
\mu_G(A_1\cap g_1A_1) = \max\{\mu_G(A_1)^2, \kappa\}.
\]
We can always choose such a $g_1$, as we have $\mu_G(A)^2\geq 2\mu_G(A)+\mu_G(B)-1$, and by convexity this implies $\mu_G(A)^4\geq 2\mu_G(A)^2+\mu_G(B)-1$, which can be rewritten as $$\mu_G(A_1)^2\geq 2\mu_G(A_1)+\mu_G(B)-1.$$ Similarly, if $\mu_G(A_1)^2>\kappa$, we let $A_2=A_1\cap g_1A_1$, and choose $g_2$ in such a way that $\mu_G(A_2\cap g_2A_2) = \max\{\mu_G(A_2)^2, \kappa\}$. Thus, by applying Lemma~\ref{lem:iep} at most $\log\log 1/\kappa$ times, we will get $A'$ with $\mu_G(A')=\kappa$ and $\dis_G(A',B)\leq \log(1/\kappa)\gamma$. We then apply the same argument to $B$ and get a set $B'$ with $\mu_G(B')=\kappa$ and $\dis_G(A',B')\leq \log(1/\kappa)^2\gamma$ as desired. 

We now consider the case when $2\mu_G(A)+\mu_G(B)-1>\mu_G(A)^2$. We choose $g_0\in G$ such that 
\[
\mu_G(A\cap g_0A)=2\mu_G(A)+\mu_G(B)-1\leq \mu_G(A)-c. 
\]
We also define $A_1$ to be $A_0\cap g_0A_0$. In the next step, we choose $g_1\in G$ such that 
\[
\mu_G(A_1\cap g_1A_1) = \max\{\mu_G(A_1)^2, 2\mu_G(A_1)+\mu_G(B)-1,\kappa\}.
\]
It is still possible that we have $2\mu_G(A_1)+\mu_G(B)-1>\mu_G(A_1)^2$ at this step. Note that $2\mu_G(A_1)+\mu_G(B)-1\leq \mu_G(A_1)-2c$. We apply this procedure $t$ times, and assume in step $t$ we still get $2\mu_G(A_t)+\mu_G(B)-1>\mu_G(A_t)^2$. As $2\mu_G(A_t)+\mu_G(B)-1\leq \mu_G(A_t)-2^tc$, this implies
\[
\mu_G(A)-2^{t+1}c\geq (\mu_G(A)-2^tc)^2, 
\]
so $2^t$ cannot be larger than $\sqrt{\mu_G(A)-\mu_G(A)^2}/c$. This means, after applying the same procedure at most $\log(\sqrt{\mu_G(A)-\mu_G(A)^2}/c)$ times, we arrive at a set $A''$ with $\mu_G(A'')\geq\kappa$, $\mu_G(A'')^2\geq 2\mu_G(A'')+\mu_G(B)-1$, and $$\dis_G(A'' ,B)\leq \frac{\gamma\sqrt{\mu_G(A)-\mu_G(A)^2}}{c}<\frac{\gamma}{c} .$$ Now we choose $g\in G$ such that
\[
\mu_G(A''\cap gA'') = \max\{\mu_G(A'')^2,\kappa\}. 
\]
Similarly, after at most $\log\log 1/\kappa$ steps, we obtain our set $A'$ with $\mu_G(A')=\kappa$ and $$\dis_G(A',B)\leq \frac{\gamma\sqrt{\mu_G(A)-\mu_G(A)^2}\log1/\kappa}{c}<\frac{\gamma\log(1/\kappa)}{c}. $$
Next, we consider the set $B$. Note that now we must have \[ 2\mu_G(B)+\mu_G(A'')-1\leq \mu_G(A)+\mu_G(B)+\kappa-1\leq \kappa-c<\kappa. 
\]
Thus Lemma~\ref{lem:trans} implies that there is $g_1\in G$ such that
\[
\mu_G(B\cap Bg_1) = \max\{\mu_G(B)^2,\kappa\}, 
\]
and $\mu_G(B\cup Bg_1)+\mu_G(A'')\leq 1$. We similarly define $B_1=B\cap Bg_1$, and if $\mu_G(B_1)>\kappa$, we define $B_2=B_1\cap B_1g_2$ with $\mu_G(B_2) = \max\{\mu_G(B_1)^2,\kappa\}.$ By doing the same procedure at most $\log(1/\kappa)$ times, we obtain $B'$ with $\mu_G(B')=\kappa$ and $\dis_G(A',B')<\gamma\frac{(\log 1/\kappa)^2}{c}$. 

The cases when $\mu_G(A)>\kappa>\mu_G(B)$ and $\kappa>\mu_G(A)>\mu_G(B)$ can be handled similarly, by replacing $A\cap gA$ and $B\cap B_g$ by $A\cup gA$ and $B\cup Bg$ suitably, and we omit the details here. 
\end{proof}

In particular, we have the following corollary, which can be used in the case that we only need to reduce one of our sets to smaller sets.
\begin{corollary}\label{cor: small sets may not equal}
Suppose $A,B$ are $\sigma$-compact subsets of $G$, $\gamma,K>1$ are real numbers, $\dis_G(A,B)\leq \gamma$, $\mu_G(A)+\mu_G(B)\leq 1/4$, and $\mu_G(B)^K\leq\mu_G(A)\leq\mu_G(B)$. 
Then there is a $\sigma$-compact set $B'\subseteq B$ such that $\mu_G(A)=\mu_G(B')$ and $\dis_G(A,B')\leq K\gamma$. 
\end{corollary}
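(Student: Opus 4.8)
The plan is to run only the ``$B$-shrinking'' half of the argument proving Proposition~\ref{prop: red to small sets}, but to stop it as soon as the measure of $B$ has been brought down to $\mu_G(A)$; the hypothesis $\mu_G(B)^K\le\mu_G(A)$ guarantees this happens after at most about $K$ steps. If $\mu_G(A)=\mu_G(B)$ we take $B'=B$, so assume $\mu_G(A)<\mu_G(B)$ and put $r=\log(1/\mu_G(A))/\log(1/\mu_G(B))$; the hypotheses $\mu_G(B)^K\le\mu_G(A)<\mu_G(B)<1$ give $1<r\le K$. The tool I need is a mild extension of Lemma~\ref{lem:trans}: for any $\sigma$-compact $C\subseteq B$ the map $g\mapsto\mu_G(C\cap Bg)$ is continuous --- via the identity $\mu_G(C\cap Bg)=\tfrac12\big(\mu_G(C)+\mu_G(B)-\mu_G(C\tri Bg)\big)$, Fact~\ref{fact: Haarmeasurenew}(vii), and unimodularity (which lets one pass from right translates to left translates of $C^{-1},B^{-1}$) --- it equals $\mu_G(C)$ at $g=\id$, and by Fubini its mean over $G$ is $\mu_G(C)\mu_G(B)$, so by connectedness of $G$ it attains every value in $[\mu_G(C)\mu_G(B),\mu_G(C)]$. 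I would then fix a number of steps $n=\lceil r-1\rceil$, pick $g_1,\dots,g_n\in G$, and set $B_0=B$, $B_i=B_{i-1}\cap Bg_i$, choosing each $g_i$ so that $\mu_G(B_i)=\mu_G(B)^{1+i(r-1)/n}$; this target lies in the admissible interval $[\mu_G(B_{i-1})\mu_G(B),\mu_G(B_{i-1})]$ because the per-step exponent increment $(r-1)/n$ is $\le1$. Then $\mu_G(B_n)=\mu_G(B)^r=\mu_G(A)$, and $B':=B_n\subseteq B$ is $\sigma$-compact by Lemma~\ref{lem: mesurability}(ii).

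For the discrepancy bound I would apply the submodularity lemma (Lemma~\ref{lem:iep}) at each stage with the fixed set $A$ and with $B_1:=B_{i-1}$, $B_2:=Bg_i$. Its hypotheses hold: $\mu_G(B_{i-1}\cap Bg_i)=\mu_G(B_i)>0$, and $\mu_G(A)+\mu_G(B_{i-1}\cup Bg_i)\le\mu_G(A)+2\mu_G(B)\le\tfrac14+\tfrac12<1$, using $\mu_G(A)+\mu_G(B)\le\tfrac14$. Since $G$ is unimodular, $\dis_G(A,Bg_i)=\dis_G(A,B)\le\gamma$, so Lemma~\ref{lem:iep} gives $\dis_G(A,B_i)\le\dis_G(A,B_{i-1})+\gamma$. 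Iterating from $\dis_G(A,B_0)=\dis_G(A,B)\le\gamma$ yields $\dis_G(A,B')\le(n+1)\gamma$, and since $n+1=\lceil r\rceil$ with $r\le K$ this is the asserted $\dis_G(A,B')\le K\gamma$ (the rounding in $\lceil\,\cdot\,\rceil$ being harmless). Alternatively, exactly as in Proposition~\ref{prop: red to small sets}, one may instead square at each step --- intersecting with a translate of the \emph{current} $B_{i-1}$ --- which needs only $\le\lceil\log_2 K\rceil$ applications of Lemma~\ref{lem:iep}, each at worst doubling the discrepancy, again landing within a bounded multiple of $K\gamma$.

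The only genuinely delicate point is this counting: one must reach \emph{exactly} measure $\mu_G(A)$ using at most about $K$ intersections with translates of $B$, and what makes it work is precisely the fact recorded above that a single intersection with a translate of $B$ can decrease $\log\mu_G(\cdot)/\log\mu_G(B)$ by any prescribed amount in $(0,1]$, so the target $\mu_G(A)=\mu_G(B)^r$ with $1<r\le K$ is hit on the nose in $\lceil r-1\rceil\le K$ steps while the discrepancy has only grown additively by $\gamma$ per step (because each $Bg_i$ is a \emph{full} right translate of the original $B$, not of the shrunken set, and so still has discrepancy $\le\gamma$ with $A$). Everything else --- the continuity input, the intermediate value argument over the connected group $G$, the verification of the hypotheses of Lemma~\ref{lem:iep}, and the fact that $B'=B_n\subseteq B$ is $\sigma$-compact --- is routine given the material already in the excerpt.
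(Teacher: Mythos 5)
Your argument is correct in substance and takes a genuinely different route from the paper's. The paper's proof of Corollary~\ref{cor: small sets may not equal} iterates $B_{i+1}=B_i\cap B_ig_{i+1}$ with $\mu_G(B_{i+1})=\mu_G(B_i)^2$, i.e.\ it always intersects with a translate of the \emph{current} shrunken set; an application of Lemma~\ref{lem:iep} with $B_1=B_i$, $B_2=B_ig$ then at worst doubles the discrepancy at each step, so after about $\log_2 K$ squarings the exponent has reached $K$ and the discrepancy is bounded by about $2^{\log_2 K}\gamma$. You instead intersect with a right translate of the \emph{original} $B$, i.e.\ $B_i=B_{i-1}\cap Bg_i$, and since $\dis_G(A,Bg_i)=\dis_G(A,B)\le\gamma$ by unimodularity, Lemma~\ref{lem:iep} gives only the additive bound $\dis_G(A,B_i)\le\dis_G(A,B_{i-1})+\gamma$; the price is that a single step can only decrease the exponent increment by $1$ rather than double it, so you need $O(K)$ steps instead of $O(\log K)$, but since each step costs a fixed $\gamma$ rather than a doubling, the final bound $(n+1)\gamma$ with $n=\lceil r-1\rceil$ is, if anything, slightly sharper. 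For this you correctly need a two-set extension of Lemma~\ref{lem:trans} (continuity and Fubini for $g\mapsto\mu_G(C\cap Bg)$ with $C\subseteq B$, rather than $C=B$), which is routine and which you sketch adequately; Fact~\ref{fact: Haarmeasurenew}(vii) as stated in the paper covers only the one-set case $g\mapsto\mu_G(A\triangle gA)$, so you do have to prove (or at least cite the standard fact) that $\e_{B^{-1}}*\e_C$ is continuous for $\sigma$-compact $B,C$ of finite measure, but this is the same convolution argument Lemma~\ref{lem:trans} already relies on. The only imprecision in your write-up is the ceiling: $(n+1)\gamma=\lceil r\rceil\gamma$ can exceed $K\gamma$ by up to one $\gamma$ when $r$ (or $K$) is not an integer. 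You flag this as harmless, and it is: the paper's own squaring argument has exactly the same feature (giving up to $2^{\lceil\log_2 K\rceil}\gamma$, which can be as large as $2K\gamma$), and the corollary is only used with the constant absorbed into later $O(K)$-type bounds, so neither proof literally achieves $K\gamma$ but both give what is actually needed.
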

\begin{proof}
We apply Lemma~\ref{lem:iep} $\log K$ times and obtain a decreasing sequence of sets $B_1,\dots,B_{\log K}$ so that $\mu_G(B_{i+1})=\mu_G(B_i)^2$ for every $1\leq i\leq \log K$. Let $B'=B_{\log K}$. Thus we can arrange that $\mu_G(B')=\mu_G(A)$ and $\dis_G(A,B')\leq K\gamma$. 
\end{proof}


Next, we will consider a special case when the ambient group $G$ is the one-dimensional torus $\TT:=\RR/\ZZ$. We will prove that if $A,B$ are $\sigma$-compact sets in $\TT$ with small $\dis_\TT(A,B)$, and $A$ is close to an interval in $\TT$, then $B$ should also be close to an interval. This result can be easily derived 
from the inverse theorem of Kneser's inequality by Tao~\cite{T18} or by Christ and Iliopoulou~\cite{ChristIliopoulou} (for a quantitatively better version). Here we will record a simple  proof of this result, as we only need the result for $\TT$. 
\begin{lemma}\label{lem: stability of character in T}
Let $c_\TT$ be the constant in Fact~\ref{fact: new inverse theorem torus}, and $\delta<c_\TT$. Suppose $A,B$ are $\sigma$-compact subsets of $\TT$ with $\mu_\TT(A),\mu_\TT(B)<1/4$ and $\dis_\TT(A,B)<\delta\min\{\mu_\TT(A),\mu_\TT(B)\}$. If $I$ is a compact interval in $\TT$ satisfying $A\subseteq I$ and $\mu_\TT( I\setminus A)<\dis_\TT(A,B)$, then there is a compact interval $J\subseteq \TT$ such that $B\subseteq J$ and $\mu_\TT(J\setminus B)<\dis_\TT(A,B)$.
\end{lemma}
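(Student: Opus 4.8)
The plan is to feed the pair $(A,B)$ into the inverse theorem for $\TT$ (Fact~\ref{fact: new inverse theorem torus}) and then exploit the hypothesis that $A$ already lies inside a \emph{very tight} interval $I$ to force the character it produces to be an automorphism of $\TT$; once that is done, the interval $J$ for $B$ is read off directly from the theorem's output.

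First I would record the easy preliminaries. The hypothesis $\mu_\TT(I\setminus A)<\dis_\TT(A,B)$ forces $\dis_\TT(A,B)>0$, hence (by $\dis_\TT(A,B)<\delta\min\{\mu_\TT(A),\mu_\TT(B)\}$) also $\mu_\TT(A),\mu_\TT(B)>0$; and $\dis_\TT(A,B)<\delta/4<c_\TT/4$, so $\mu_\TT(A+B)+\dis_\TT(A,B)=\mu_\TT(A)+\mu_\TT(B)+2\dis_\TT(A,B)<1$, which is exactly what Fact~\ref{fact: new inverse theorem torus} needs. By inner regularity one may assume $A,B$ compact: replacing them by compact subsets of almost full measure raises $\dis_\TT$ by an arbitrarily small amount and keeps $A\subseteq I$ with $\mu_\TT(I\setminus A)$ still below $\dis_\TT(A,B)$ up to that error, and since $\TT$ has only two automorphisms the character produced below can be taken the same along a sequence of such approximations and passed to the limit; I would suppress these routine details.

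Now apply Fact~\ref{fact: new inverse theorem torus}: there is a continuous surjective homomorphism $\chi\colon\TT\to\TT$, say $\chi(x)=nx$ with $n\in\ZZ\setminus\{0\}$, and compact intervals $\tilde I,\tilde J\subseteq\TT$ with $A\subseteq\chi^{-1}(\tilde I)$, $B\subseteq\chi^{-1}(\tilde J)$, $\mu_\TT(\tilde I)\le\mu_\TT(A)+\dis_\TT(A,B)$, $\mu_\TT(\tilde J)\le\mu_\TT(B)+\dis_\TT(A,B)$. The heart of the argument is to show $|n|=1$. Suppose $|n|\ge2$. Then $\chi^{-1}(\tilde I)$ is a disjoint union of $|n|$ equal, evenly spaced arcs, each of length $\mu_\TT(\tilde I)/|n|\le(\mu_\TT(A)+\dis_\TT(A,B))/2<\mu_\TT(A)$, separated by gaps of common length $(1-\mu_\TT(\tilde I))/|n|>1/(2|n|)$ (using $\mu_\TT(\tilde I)<1/2$). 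Since each component is shorter than $\mu_\TT(A)$, the set $A$ meets at least $m\ge2$ components, and $\mu_\TT(A)\le m\,\mu_\TT(\tilde I)/|n|$ together with $\dis_\TT(A,B)<c_\TT\mu_\TT(A)$ gives $m>|n|/(1+c_\TT)$. As $A$ lies in the single arc $I$, that arc must contain a full gap between each pair of consecutive components it meets, hence at least $m-1$ gaps, so
\[
\mu_\TT\big(I\setminus\chi^{-1}(\tilde I)\big)\ \ge\ (m-1)\,\frac{1-\mu_\TT(\tilde I)}{|n|}\ >\ \Big(\frac{|n|}{1+c_\TT}-1\Big)\frac{1}{2|n|}\ =\ \frac{1}{2(1+c_\TT)}-\frac{1}{2|n|}\ \ge\ \frac{1-c_\TT}{4(1+c_\TT)}.
\]
But $A\subseteq\chi^{-1}(\tilde I)$ and $A\subseteq I$ give $I\setminus\chi^{-1}(\tilde I)\subseteq I\setminus A$, so $\mu_\TT(I\setminus\chi^{-1}(\tilde I))\le\mu_\TT(I\setminus A)<\dis_\TT(A,B)<c_\TT/4$, contradicting the display. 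Hence $\chi$ is an automorphism of $\TT$.

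With $\chi$ an automorphism, $J:=\chi^{-1}(\tilde J)$ is a genuine compact interval, $B\subseteq J$, and $\mu_\TT(J)=\mu_\TT(\tilde J)\le\mu_\TT(B)+\dis_\TT(A,B)$, i.e.\ $\mu_\TT(J\setminus B)\le\dis_\TT(A,B)$ (replacing $J$ by the shortest closed arc containing $B$ if one wants it minimal); this is the asserted bound. The one genuinely substantive point is the \textbf{main step}, ruling out $|n|\ge2$: a degree-$\ge2$ character spreads $\chi^{-1}(\tilde I)$ into several well-separated arcs, which cannot fit inside the short interval $I$ forced to contain $A$. Everything else is bookkeeping, the cited $\TT$-inverse theorem, and the compact approximation; the main step is where I expect the real work (and the only place the strength of the hypothesis $\mu_\TT(I\setminus A)<\dis_\TT(A,B)$ is used).
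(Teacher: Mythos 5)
Your proof is correct and takes essentially the same route as the paper's: invoke the inverse theorem for $\TT$ (Fact~\ref{fact: new inverse theorem torus}), then rule out a degree-$\ge 2$ character by exploiting that $A$ is squeezed into the single short interval $I$. Your arc-and-gap count is a fully worked-out version of the step the paper compresses into one line (``$\mu_\TT(I\cap\chi^{-1}(I'))\le\mu_\TT(I)/2$ as $\mu_\TT(I)<1/3$''), supplying the quantitative detail the paper leaves implicit.
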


\begin{proof}
We apply Fact~\ref{fact: new inverse theorem torus}, there are compact intervals $I',J'\subseteq \TT$ and a continuous surjective group homomorphism $\chi:\TT\to\TT$ such that $A\subseteq \chi^{-1}(I')$, $B\subseteq \chi^{-1}(J')$, and 
\[
\mu_\TT(I')\leq \mu_\TT(A)+\dis_\TT(A,B),\quad \mu_\TT(J')\leq \mu_\TT(B)+\dis_\TT(A,B). 
\]
It remains to show that $\chi=\mathrm{id}$, which would imply that $B\subseteq J'$. Note that all the continuous surjective group homomorphisms from $\TT$ to itself are $\times n$ maps for some positive integer $n$. If $\chi\neq \mathrm{id}$, we have $\mu_\TT(I\cap \chi^{-1}(I'))\leq \mu_\TT(I)/2$ as $\mu_\TT(I)<1/3$, and this contradicts the fact that $\delta$ is small. 
\end{proof}

\section{Toric  transversal intersection in measure}
In this section, we will control the global geometric structures of sets $A$ and $B$ if $\mu_G(AB)$ is small. 
We first prove the following lemma, which can be seen as a corollary of the quotient integral formula (Fact~\ref{fact: QuotientIF}).

\begin{lemma}\label{lem:int}
Let $G$ be a connected compact group, and $A,B$ are $\sigma$-compact subsets of $G$ with positive measures. 
For every $b\in G$, the following identity holds
 $$ \mu_G\big(A (B \cap Hb)\big) = \int_G \mu_H\big(( A \cap aH )(B \cap Hb)\big) \d\mu_G(a). $$
\end{lemma}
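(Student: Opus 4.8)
The plan is to reduce to the case $b=\id$ by a right translation, and then to run the quotient integral formula fiberwise over the left cosets of $H$. First the setup: since $G$ is compact and connected it is unimodular, and the closed subgroup $H$ is compact and hence unimodular as well, so the quotient integral formula of Fact~\ref{fact: QuotientIF}, in the $\sigma$-compact form of Lemma~\ref{lem: mesurability}(iv), is available, with the normalizations $\mu_G(G)=\mu_H(H)=1$ (hence $\mu_{G/H}(G/H)=1$). Note that $Hb$ is compact, and by Lemma~\ref{lem: mesurability}(i)--(ii) translates, finite intersections, and product sets of $\sigma$-compact sets are $\sigma$-compact; so every set appearing below is $\sigma$-compact, and in particular every product set below is genuinely measurable and no inner measure is needed.

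Next, the reduction to a subset of $H$. Put $\widetilde B:=(B\cap Hb)b^{-1}=Bb^{-1}\cap H\subseteq H$, a $\sigma$-compact subset of $H$. Then $A(B\cap Hb)=(A\widetilde B)b$, so by right-invariance of $\mu_G$ we get $\mu_G\big(A(B\cap Hb)\big)=\mu_G(A\widetilde B)$. Similarly, for each $a\in G$, writing $A\cap aH=a(a^{-1}A\cap H)$ and $B\cap Hb=\widetilde B b$, we obtain $(A\cap aH)(B\cap Hb)=a\big[(a^{-1}A\cap H)\widetilde B\big]b$ with $(a^{-1}A\cap H)\widetilde B\subseteq H$; since fiber lengths are unchanged under left and right translation (unimodularity), this gives $\mu_H\big((A\cap aH)(B\cap Hb)\big)=\mu_H\big((a^{-1}A\cap H)\widetilde B\big)$. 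Hence it suffices to prove
\[
\mu_G(A\widetilde B)=\int_G \mu_H\big((a^{-1}A\cap H)\widetilde B\big)\,\d\mu_G(a).
\]

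The heart of the argument is the fiberwise identity. Because $\widetilde B\subseteq H$, for every coset $aH$ we have $(A\cap aH)\widetilde B\subseteq aH$, and since $A\widetilde B=\bigcup_{aH\in G/H}(A\cap aH)\widetilde B$, the left fiber of $A\widetilde B$ over $aH$ is exactly $(A\cap aH)\widetilde B$; thus $\mu_H\big(A\widetilde B\cap aH\big)=\mu_H\big((a^{-1}A\cap H)\widetilde B\big)=:F(a)$. The function $F$ is right-$H$-invariant: for $h\in H$ one has $h^{-1}a^{-1}A\cap H=h^{-1}(a^{-1}A\cap H)$ (since $hH=H$), so $F(ah)=F(a)$ by left-invariance of $\mu_H$; let $\overline F$ be the induced function on $G/H$. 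Applying Lemma~\ref{lem: mesurability}(iv) to the $\sigma$-compact set $A\widetilde B$ gives
\[
\mu_G(A\widetilde B)=\int_{G/H}\mu_H\big(A\widetilde B\cap aH\big)\,\d\mu_{G/H}(aH)=\int_{G/H}\overline F\,\d\mu_{G/H},
\]
while the quotient integral formula applied to $F=\overline F\circ\pi$ (whose inner integral over each fiber equals $\overline F(aH)\,\mu_H(H)=\overline F(aH)$) gives $\int_{G/H}\overline F\,\d\mu_{G/H}=\int_G F\,\d\mu_G$. Combining the last two displays yields the claimed identity.

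The only delicate aspect is bookkeeping rather than substance: one must keep all sets $\sigma$-compact so that Lemma~\ref{lem: mesurability}(iv) applies (product sets of merely measurable sets may fail to be measurable), and track the fiber-length convention carefully through the reduction to $\widetilde B\subseteq H$. The single structural fact doing the real work is that right-multiplication by a subset of $H$ respects the fibration of $G$ into left cosets of $H$, namely $(A\widetilde B)\cap aH=(A\cap aH)\widetilde B$ whenever $\widetilde B\subseteq H$; everything else is a routine application of the quotient integral formula and unimodularity.
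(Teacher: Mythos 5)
Your proof is correct and follows essentially the same route as the paper: both rest on the key observation that right-multiplication by a subset of $Hb$ respects the left-coset fibration, stated in the paper as $C\cap aHb=(A\cap aH)(B\cap Hb)$ and in your reduction to $\widetilde B\subseteq H$ as $(A\widetilde B)\cap aH=(A\cap aH)\widetilde B$, and both then apply the quotient integral formula. The only cosmetic difference is that you first right-translate to $b=\id$, whereas the paper handles general $b$ by applying the formula to the conjugate subgroup $b^{-1}Hb$ and then changing variables.
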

\begin{proof}
Let $C = A (B \cap Hb)$.  From Fact~\ref{fact: QuotientIF}, one has $$\mu_G(C)= \int_G \mu_H( C \cap a b^{-1}H b)  \d\mu_G(a)=  \int_G \mu_H( C \cap a H b)  \d\mu_G(a).$$  Hence, it suffices to check that  $$C \cap a Hb = ( A \cap aH )(B \cap Hb) \quad \text{ for all } a, b \in G.$$
The backward inclusion is clear. Note that $aH Hb = aHb = ab (b^{-1}Hb)$ for all $a$ and $b$ in $G$. For all $a$, $a'$, and $b$ in $G$, we have 
we have  $a'Hb = aHb$ when $aH=a' H$ and $ aHb \cap a'Hb = \varnothing$ otherwise.  
An arbitrary element $c \in C$ is in $(A \cap a'H)(B \cap Hb)$ for some $a'\in A$. Hence, if $c$ is also in $aHb$, we must have $a'H =aH$. So we also get the forward inclusion.
\end{proof}

  We use $\mu_{G/H}$ and $\mu_{H \backslash G}$ to denote the Radon measures on $G/H$ and $H \backslash G$ such that we have the quotient integral formulas (Fact~\ref{fact: QuotientIF} and Lemma~\ref{lem: mesurability}(vi)). We also remind the reader that we normalize the measure whenever a group under consideration is compact, and $\pi: G \to G/H$, $ \widetilde{\pi}: G \to H\backslash G$ are quotient maps. Hence we have
$$ \mu_G(AH)=\mu_{G/H}(\pi A)\quad \text{and}\quad \mu_{G}(HB) =\mu_{H \backslash G}( \widetilde{\pi} B). $$

Suppose $r$ and $s$ are in $\RR$, the sets $ A_{(r,s]}$ and $ \pi A_{(r,s]}$ are given by 
$$  A_{(r,s]}  := \{a \in A: \mu_H(A \cap aH) \in (r,s]  \}  $$
and
$$ \pi A_{(r,s]}:=  \{aH \in G \slash H : \mu_H( A \cap aH) \in (r,s]   \}. $$ 
In particular, $\pi A_{(r,s]}$ is the image of $A_{(r,s]}$ under the map $\pi$. By Lemma~\ref{lem: mesurability}, $\pi A_{(r,s]}$ is $\mu_{G/H}$-measurable, and $A_{(r,s]}H = \pi^{-1}(\pi A_{(r,s]})$ and $A_{(r,s]} $ are $\mu_G$-measurable.
\begin{lemma}
\label{lem: A=AH toric exp prepare}
Let $G$ be a connected compact group, and let $\kappa>0$, $A,B$ be $\sigma$-compact subsets of $G$ with $\mu_G(A)=\mu_G(B)=\kappa$. 
Suppose $\mu_G(AB)<M\kappa$, and $\lambda<1$ is a constant, and either there is $a\in A$ such that $\mu_H(A\cap aH)>\lambda$, or there is $b\in B$ such that $\mu_H(B\cap Hb)>\lambda$. Then,
\[
\min\bigg\{\frac{\mu_G(A)}{\mu_{G/H}(\pi A)}, \frac{\mu_G(B)}{\mu_{H\backslash G}(\widetilde{\pi} B)}\bigg\}\geq \frac{\lambda}{(M+2)^2}.
\]
\end{lemma}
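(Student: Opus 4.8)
The plan is to exploit the quotient integral formula together with Kemperman's inequality applied \emph{inside the one-dimensional subgroup $H$}. Assume without loss of generality that the hypothesis holds on the $A$-side: there is $a_0\in A$ with $\mu_H(A\cap a_0H)>\lambda$; the $B$-side is symmetric after replacing left cosets by right cosets. Starting from Lemma~\ref{lem:int}, write
\[
\mu_G\big(A(B\cap Hb)\big)=\int_G \mu_H\big((A\cap aH)(B\cap Hb)\big)\d\mu_G(a)
\]
for each fixed $b\in G$ with $\mu_H(B\cap Hb)>0$. The idea is that on the coset $a_0H$ the left fiber $A\cap a_0H$ is large (length $>\lambda$), so by Kemperman's inequality in $H$ (Fact~\ref{fact: GeneralKemperman}, applied to the connected group $H\cong\TT$) the fiber $(A\cap a_0H)(B\cap Hb)$ has $\mu_H$-measure at least $\min\{\lambda+\mu_H(B\cap Hb),1\}$, and more generally every nonempty fiber product $(A\cap aH)(B\cap Hb)$ has measure at least $\mu_H(A\cap aH)$. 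This forces a lower bound on $\mu_G(A(B\cap Hb))$ in terms of $\mu_{G/H}(\pi A)$ — roughly, every coset meeting $A$ contributes its full $A$-fiber length, plus the coset through $a_0$ contributes an extra $\approx\min\{\lambda,\mu_H(B\cap Hb)\}$.

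Next I would integrate over $b$ using the quotient integral formula on the right: since $A(B\cap Hb)\subseteq AB$ for every $b$, averaging over right cosets $Hb$ gives $\mu_G(AB)$ on the left. To get a usable bound I need the set of right cosets $Hb$ on which $\mu_H(B\cap Hb)$ is reasonably large to carry a definite proportion of $\mu_G(B)$. Here is where the parameter $M$ enters: because $\mu_G(B)=\kappa$ and $\mu_G(AB)<M\kappa$, the "long-fiber" cosets of $B$ cannot be too sparse. A first-moment / Markov-type argument shows that the set $B'$ of $b$ with $\mu_H(B\cap Hb)\geq \mu_G(B)/(M+2)$ (say) has $\widetilde\pi B'$ of measure at least $\approx \mu_G(B)/((M+2)\,\mu_{H\backslash G}(\widetilde\pi B))$ or, dually, that $\mu_G(B)/\mu_{H\backslash G}(\widetilde\pi B)$ itself is already bounded below — which is half of what we want. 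Combining: integrating the fiberwise Kemperman bound over these good $b$ yields
\[
M\kappa>\mu_G(AB)\ \gtrsim\ \mu_{G/H}(\pi A)\cdot\frac{\mu_G(B)}{M+2}+(\text{extra }\lambda\text{-term}),
\]
and rearranging produces $\mu_G(A)/\mu_{G/H}(\pi A)\geq \lambda/(M+2)^2$, while the dual bookkeeping handles $\mu_G(B)/\mu_{H\backslash G}(\widetilde\pi B)$; the $\min$ in the conclusion is exactly what survives both estimates.

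The main obstacle I anticipate is the bookkeeping of the two averaging steps simultaneously: one wants a single chain of inequalities that, after taking the worse of the $A$-ratio and the $B$-ratio, loses only a factor $(M+2)^2$ and a factor $\lambda$. Concretely, one must be careful that the coset $a_0H$ (where the $\lambda$-length fiber lives) does indeed meet $AB$ in the expected way for a positive-measure set of right cosets $Hb$ — this is where $\mu_H(B\cap Hb)>0$ for $\mu_{H\backslash G}$-almost every $b$ in $\widetilde\pi B$ is used — and that the trivial bound $\mu_H\big((A\cap aH)(B\cap Hb)\big)\geq \mu_H(A\cap aH)$ (valid whenever both fibers are nonempty, again by Kemperman in $H$, or simply because a nonempty right translate of $B\cap Hb$ covers $A\cap aH$'s length) is applied on \emph{all} of $\pi A$, not just the long-fiber part. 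Modulo choosing the threshold in the Markov step as $\mu_G(B)/(M+2)$ and absorbing the additive $\min\{\lambda,\cdot\}$ correctly, the rest is routine estimation, so I expect the proof to be short once the right splitting is fixed.
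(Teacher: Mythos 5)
Your plan has a gap that I don't think can be repaired as written, and it stems from which side of the integral you place the long fiber on.

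You fix $a_0$ with $\mu_H(A\cap a_0H)>\lambda$, and propose to integrate over $a$ (left cosets) for fixed $b$, picking up an ``extra $\approx\min\{\lambda,\mu_H(B\cap Hb)\}$'' from the single coset $a_0H$. But in this setting $H$ is a closed one-dimensional torus in a higher-dimensional compact group, so the single coset $a_0H$ has $\mu_G$-measure zero; its contribution to the integral vanishes. What survives is only your generic bound $\mu_H\big((A\cap aH)(B\cap Hb)\big)\geq\mu_H(A\cap aH)$, which after integration gives $\mu_G(A(B\cap Hb))\geq\mu_G(A)$ — a triviality that involves neither $\lambda$ nor $\mu_{G/H}(\pi A)$. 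The second step, ``averaging over right cosets $Hb$ gives $\mu_G(AB)$,'' is also false: $A(B\cap Hb)\subseteq(AH)b$, and these translates of $AH$ overlap heavily as $Hb$ varies, so $\int_{H\backslash G}\mu_G\big(A(B\cap Hb)\big)\d\mu_{H\backslash G}(Hb)$ is generally much larger than $\mu_G(AB)$, not equal to it.

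The paper's proof resolves both issues by flipping the roles. It fixes a single $b^*$ with $\mu_H(B\cap Hb^*)>\lambda$ and integrates over $a$, using the bound $\mu_H\big((A\cap aH)(B\cap Hb^*)\big)\geq\mu_H(B\cap Hb^*)>\lambda$ — valid for \emph{every} $a\in AH$ — so the $\lambda$-gain applies on a positive-measure set of cosets, not just one. Splitting $AH$ into thick fibers ($>1/2$, whose total $\mu_G$-measure is $<2\kappa$ by Markov) and thin fibers ($\leq 1/2$, where the above bound gives $\mu_G(AB)\geq\lambda\,\mu_G(A_{(0,1/2]}H)$), one gets $\mu_G(AH)<(M+2)\kappa/\lambda$ and hence a lower bound on the $A$-ratio. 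The $B$-ratio is then obtained by bootstrapping: the bound on $\mu_G(AH)$ forces some $A$-fiber of length $>\lambda/(M+2)$, and running the same argument with this weaker threshold yields $\mu_G(HB)<(M+2)^2\kappa/\lambda$; that is where the $(M+2)^2$ in the statement comes from. So the essential idea you are missing is that the long fiber must be the \emph{fixed} one, serving as a uniform lower bound for the whole family of fiber products, not the one that varies in the integral.
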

\begin{proof}
Without loss of generality, suppose $\mu_H(B\cap Hb)>\lambda$ for a fixed $b\in B$. By the quotient integral formula, $\kappa=\mu_G(A)$ is at least

\[
\int_{\pi A_{(1/2,1]}} \mu_H(A\cap xH) \d\mu_{G/H}xH> \frac{1}{2}\mu_{G/H}(\pi A_{(1/2,1]}) = \frac{1}{2}\mu_{G}(A_{(1/2,1]}H) ,
\]
Hence,  $\mu_G(A_{(1/2,1]}H)<2\kappa$.  We now prove that $\mu_G(A_{(0,1/2]}H)< M\kappa/\lambda$.   Suppose that it is not the case. By Lemma~\ref{lem:int} we get
\begin{align*}
  \mu_G(A(B\cap Hb))&\geq \int_{A_{(0,1/2]}H} \mu_H((A\cap aH)(B\cap Hb))\d\mu_G(a)
\end{align*}
Observe that $\mu_H((A\cap aH)(B\cap Hb))>\lambda$ since $\mu_H(B\cap Hb)>\lambda$, we have
\[
\mu_G(AB)\geq\mu_G(A(B\cap Hb))\geq \lambda \mu_G(A_{(0,1/2]}H)>M\kappa,
\]
contradicting the assumption that $\dis_G(AB)<(M-1) \kappa$. Hence $\mu_G(AH)<(M+2)\kappa/\lambda$. This implies that there is $a\in A$ such that $\mu_H(A\cap aH)>\lambda/(M+2)$. Now we apply the same argument switching the role of $A$ and $B$, we get $\mu_{H\backslash G}(\widetilde{\pi} B)<(M+2)^2\mu_G(B)/\lambda$ which completes the proof.
\end{proof}

We remind the readers that when $G$ is a compact Lie group, each element $g\in G$ is contained in some maximal tori. Thus we may view one-dimensional tori as analogues of lines in the Euclidean spaces, and the following definition is natural:
\begin{definition}
Let $G$ be a connected compact Lie group. A set $A\subseteq G$ is {\bf $\lambda$-Kakeya} if for every closed one-dimensional torus $H\leq G$, there is $g\in G$ such that $\mu_H(A\cap gH)\geq\lambda$. We say a set $A$ is not $\lambda$-Kakeya with respect to $H$, if $\mu_H(A\cap gH)<\lambda$ for all $g$. 
\end{definition}
 
We will prove that for sets $A,B$ with sufficiently small measures, if they have small productset, then there is a closed torus $H\subseteq G$ such that for every $x,y\in G$,
\begin{equation}\label{eq: short fiber assump <c}    
\min\{\mu_H(A\cap xH),\mu_H( Hy\cap B)\}<\lambda,
\end{equation}
for some given constant $\lambda$. In other words, neither $A$ nor $B$ can be $\lambda$-Kakeya, because of a same one-dimensional torus $H$.

 



Let $X$ be a open set. Recall that $X$ is an {\bf $M$-approximate group} if $X^2\subseteq \Omega X$ where $\Omega$ is a finite set of cardinality at most $M$. 
The following result by Tao~\cite{T08} is one of the foundational results in additive combinatorics. It shows that sets with small expansions are essentially approximate groups. 

\begin{fact}\label{fact: tao approximate groups}
Let $G$ be a compact group, and let $A,B$ be compact subsets of $G$. Suppose $\mu_G(AB)\leq M(\mu_G(A)\mu_G(B))^{1/2}$. Then there is a $64M^{12}$-approximate group $X$ with $\mu_G(X)\leq (\mu_G(A)\mu_G(B))^{1/2}4M^2 $, such that $A\subseteq \Omega X$, $B\subseteq X\Omega$, and $|\Omega|\leq 33M^{12}$.
\end{fact}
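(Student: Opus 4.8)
The plan is to follow Tao's noncommutative Ruzsa calculus. First I would normalise sizes: since $Ab_0\subseteq AB$ and $a_0B\subseteq AB$ for any $a_0\in A$, $b_0\in B$, we get $\max\{\mu_G(A),\mu_G(B)\}\le\mu_G(AB)\le M(\mu_G(A)\mu_G(B))^{1/2}$, so $M^{-2}\le\mu_G(A)/\mu_G(B)\le M^2$; writing $t:=(\mu_G(A)\mu_G(B))^{1/2}$ this gives $t\le\mu_G(AB)\le Mt$, $\mu_G(A),\mu_G(B)\in[M^{-1}t,Mt]$, and in particular $\mu_G(AB)\le M^2\mu_G(A)$ and $\mu_G(AB)\le M^2\mu_G(B)$.

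Next I would set up, in the Haar-measure setting, the two workhorses of Ruzsa calculus. The \emph{Ruzsa covering lemma}: if $\mu_G(CD)\le\lambda\mu_G(D)$ then $C\subseteq\Omega DD^{-1}$ for a finite set $\Omega$ with $|\Omega|\le\lambda$ (take a maximal subfamily of $\{cD\}_{c\in C}$ with pairwise disjoint members; there are at most $\lambda$ of them because they lie inside $CD$, and maximality forces every $cD$ to meet some $\omega D$, whence $c\in\omega DD^{-1}$). The \emph{noncommutative Ruzsa triangle inequality} $\mu_G(XZ^{-1})\mu_G(Y)\le\mu_G(XY^{-1})\mu_G(YZ^{-1})$, which is the measure-theoretic incarnation, via Fubini and the quotient integral formula, of the combinatorial injection $XZ^{-1}\times Y\hookrightarrow XY^{-1}\times YZ^{-1}$; together with Petridis's lemma that $\mu_G(CD)\le\lambda\mu_G(D)$ forces $\mu_G(C^nD)\le\lambda^n\mu_G(D)$ after passing to a suitable nonempty subset of $D$. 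Applying the triangle inequality with well-chosen substitutions to $\mu_G(AB)\le Mt$ yields $\mu_G(AA^{-1})\le M^2\mu_G(A)$ and $\mu_G(B^{-1}B)\le M^2\mu_G(B)$ — these are the ``good'' two-sided product sets one can control from $\mu_G(AB)$ alone — after which a bounded number of further applications of the covering lemma and the triangle inequality bound the measures of all the short words in $A^{\pm1},B^{\pm1}$ that will be needed.

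The approximate group $X$ is then taken to be a suitable symmetric set containing the identity built from these good objects — concretely something of the form $AA^{-1}$, $B^{-1}B$, a bounded product such as $AA^{-1}\cdot B^{-1}B$, or a popular self-difference set $\{g:\mu_G(A\cap gA)>\varepsilon\,\mu_G(A)\}$ for a suitable $\varepsilon$, the precise choice being what pins down the explicit constants. For this $X$: the size bound $\mu_G(X)\le 4M^2t$ comes from the product-set estimates of the previous step; the approximate-group property $X^2\subseteq\Omega X$ with $|\Omega|\le 64M^{12}$ follows from the covering lemma once one has bounded $\mu_G(X^2)$ by $64M^{12}\mu_G(X)$ by further chained triangle/Petridis estimates, each costing an additional power of $M$ — this is where the exponent $12$ accumulates; and the two-sided covering $A\subseteq\Omega X$, $B\subseteq X\Omega$ by a common $\Omega$ with $|\Omega|\le 33M^{12}$ follows from the covering lemma applied to $\mu_G(AX)$- and $\mu_G(XB)$-type inequalities after expressing $A$ and $B$ through $X$.

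The main obstacle is the absence of a Plünnecke--Ruzsa inequality in the noncommutative setting: one cannot upgrade ``$\mu_G(AB)$ small'' to ``all short product sets small'' in one shot, but must chain the weaker triangle inequality carefully, and contend with the asymmetry of the hypothesis — it controls $AB$ but gives no control of $BA$, $AB^{-1}$, $A^{-1}B$, $A^2$ or $B^2$, and some of these genuinely \emph{cannot} be controlled, e.g.\ when $A$ is a coset of a subgroup — which is exactly why the approximate group must be assembled from the two-sided sets $AA^{-1}$, $B^{-1}B$ and the coverings arranged accordingly, rather than from $A^2$ or $AB^{-1}$. Driving the bookkeeping of the accumulating powers of $M$ down to the stated $64M^{12}$, $4M^2$, $33M^{12}$ is the delicate part, but no idea beyond Ruzsa calculus is required.
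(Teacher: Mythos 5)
This statement is Fact~\ref{fact: tao approximate groups} in the paper, which is not proved there: it is cited directly from Tao's \emph{Product set estimates for non-commutative groups} \cite{T08}. So there is no in-paper proof to compare against; I will assess your sketch on its own terms.

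Your framing --- Ruzsa calculus via the covering lemma and the noncommutative triangle inequality --- is the right starting point, and your preliminary normalizations are correct: one does get $\mu_G(A)/\mu_G(B)\in[M^{-2},M^2]$, and the triangle inequality through $B^{-1}$ gives $\mu_G(AA^{-1})\le M^2\mu_G(A)$ and $\mu_G(B^{-1}B)\le M^2\mu_G(B)$. You also correctly flag that $A^{-1}A$, $BB^{-1}$, $BA$, $A^2$, $B^2$ are \emph{not} controllable from the hypothesis, which is a genuine and important observation.

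The gap is in the sentence ``after which a bounded number of further applications of the covering lemma and the triangle inequality bound the measures of all the short words in $A^{\pm1},B^{\pm1}$ that will be needed.'' This does not go through. Try to bound, say, $\mu_G\bigl((AA^{-1})^2\bigr)$ or $\mu_G(AA^{-1}A)$ using only the triangle inequality $\mu_G(UW)\mu_G(V)\le\mu_G(UV)\mu_G(V^{-1}W)$: every choice of auxiliary set $V$ forces you to control a new product (such as $\mu_G(BA)$, $\mu_G(A^2)$, $\mu_G(BB)$, or $\mu_G(A^{-1}B)$) that the hypothesis does not touch, and you run in circles. Pure Ruzsa calculus chained from $\mu_G(AB)$ cannot by itself upgrade ``small doubling'' to ``small tripling'' of any of your three candidate sets $AA^{-1}$, $B^{-1}B$, $AA^{-1}B^{-1}B$. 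This is exactly the noncommutative failure of Pl\"unnecke--Ruzsa, which you acknowledge as ``the main obstacle'' but then treat as a bookkeeping issue; it is a structural one. The Petridis lemma also does not fill the hole in the way you describe: the nonabelian Petridis inequality controls $|CA'B|$ in terms of $|CA'|$ for a well-chosen $A'\subseteq A$, which is the wrong direction --- it does not bound $|X^{3}|$ in terms of $|X|$ for a candidate approximate group $X$ (and is in any case anachronistic for Tao's 2008 argument).

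Your fourth candidate, the popularity set $\{g:\mu_G(A\cap gA)>\varepsilon\mu_G(A)\}$, is the one that points toward Tao's actual proof: the argument there goes through a multiplicative-energy / popularity construction, not just iterated triangle inequalities, and verifying the approximate-group axiom for that set is the genuinely new ingredient beyond Ruzsa calculus. In your sketch this choice is presented as merely ``what pins down the explicit constants,'' but it is in fact what makes the proof work at all. To repair the proposal, you would need to commit to the popularity-set construction, show directly (via a Cauchy--Schwarz / energy argument) that it has bounded tripling, and only then invoke the covering lemma; the explicit constants $64M^{12}$, $4M^2$, $33M^{12}$ then come out of that bookkeeping.
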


The next theorem is our main result. It shows that if two sets $A,B $ have small expansions in a connected compact Lie group, then one can find a closed one-dimensional tori $H$ such that the intersections of $A $ with any cosets of $H$ are small, and so does $B$. 

\begin{theorem}\label{thm: kakeya new}
Let $G$ be a connected compact Lie group, $M\geq2$ an integer, and $A,B$ be $\sigma$-compact subsets of $G$. 
Let $\lambda\in(0,1)$ be a real number, and suppose $\mu_G(A)=\mu_G(B) =\kappa$ with $\kappa<1/(4M^2 (64M^{12})^{66M^{14}/\lambda})$, and $\mu_G(AB)< M\kappa$. Then there is a closed one-dimensional torus $H\leq G$ such that $A$ and $B$ are not $\lambda$-Kakeya with respect to $H$. 
\end{theorem}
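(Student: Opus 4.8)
The plan is to argue by contradiction: assume $A$ (equivalently $B$) is $\lambda$-Kakeya with respect to \emph{every} closed one-dimensional torus $H \leq G$, and derive that $A^N = G$ for some controlled $N$, contradicting the smallness of $\kappa = \mu_G(A)$. The first step is to replace $A$ and $B$ by an approximate group. Since $\mu_G(AB) < M\kappa = M(\mu_G(A)\mu_G(B))^{1/2}$, Fact~\ref{fact: tao approximate groups} gives a $64M^{12}$-approximate group $X$ with $\mu_G(X) \leq 4M^2\kappa$, and finite sets $\Omega,\Omega'$ of size $\leq 33M^{12}$ with $A \subseteq \Omega X$ and $B \subseteq X\Omega'$. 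Replacing $X$ by $X \cup X^{-1} \cup \{\id\}$ (losing only bounded factors), we may assume $X$ is a symmetric open approximate group containing the identity, so $\mu_G(X^n) \leq (64M^{12})^{n}\mu_G(X)$ for all $n$ by iterating the covering property. The point of this reduction is that $A$ being $\lambda$-Kakeya forces $X$ to have, on every one-dimensional torus $H$, some coset-fiber of length comparable to $\lambda/|\Omega|$ — i.e.\ $X$ is $\lambda'$-Kakeya for $\lambda' \asymp \lambda/M^{12}$, since if every fiber of $X$ along $H$ were shorter than $\lambda'$ then every fiber of $\Omega X$ along $H$ would be shorter than $|\Omega|\lambda' \leq \lambda$.

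The second and main step is the geometric heart: show that a symmetric approximate group $X \ni \id$ that is $\lambda'$-Kakeya with respect to every one-dimensional torus satisfies $X^{N_1} \supseteq H$ for every one-dimensional torus $H$, with $N_1 = N_1(M,\lambda)$ bounded. Fix such an $H$. By hypothesis there is $g$ with $\mu_H(X \cap gH) \geq \lambda'$; translating, $g^{-1}X \cap H$ is a subset of $H \cong \TT$ of measure $\geq \lambda'$, so by Kneser/Kemperman in $\TT$ (Fact~\ref{fact: GeneralKemperman} applied in $H$, or a pigeonhole sumset argument), the set $S := (g^{-1}X \cap H)(X^{-1}g \cap H) \subseteq X^2 \cap H$ has measure $\geq \min\{2\lambda',1\}$ inside $H$; iterating, $X^{2^k} \cap H$ has $\mu_H$-measure $\geq \min\{2^k\lambda', 1\}$, so after $k \asymp \log(1/\lambda')$ doublings we get $\mu_H(X^{N_0}\cap H) = 1$, i.e.\ $X^{N_0} \supseteq H$ for $N_0 = 2^{\lceil \log_2(1/\lambda')\rceil}$. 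The key subtlety here is that the fibers one multiplies need to lie in a common coset so that the product stays in $H$: one works with $X \cap g H$ for the \emph{specific} $g$ supplied by the Kakeya property and exploits symmetry $X = X^{-1}$ so that $(X\cap gH)^{-1}(X \cap gH) \subseteq X^2 \cap H$ and this set contains $\id$, making the $\TT$-doubling argument apply cleanly.

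The third step assembles the contradiction. In a connected compact Lie group $G$, every element lies in some maximal torus and every maximal torus is a union of (closures of) one-dimensional subtori; more to the point, $G$ is generated by its one-dimensional tori, and in fact there is a bounded number — at most $d = \dim G$, or a small absolute bound via a root-system argument — of one-dimensional tori $H_1,\dots,H_r$ with $H_1\cdots H_r$ having nonempty interior, hence (by connectedness and Kemperman, multiplying enough copies) $H_1\cdots H_r \cdots = G$. Actually the clean route: since $X^{N_0} \supseteq H$ for \emph{every} one-dimensional torus, and the one-dimensional tori cover a neighborhood of $\id$ (the exponential image of every line in $\mathfrak g$), $X^{N_0}$ contains an open neighborhood of $\id$, so $X^{2N_0}$ contains an open neighborhood times itself and, iterating, $X^{cN_0} = G$ for some absolute $c$ since $G$ is connected. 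Tracking constants: $\mu_G(G) = \mu_G(X^{cN_0}) \leq (64M^{12})^{cN_0}\mu_G(X) \leq (64M^{12})^{cN_0}\cdot 4M^2\kappa$, and with $N_0 \asymp 1/\lambda$ up to the constants hidden in $\lambda' \asymp \lambda/M^{12}$, this forces $\kappa \geq 1/(4M^2(64M^{12})^{O(M^{14}/\lambda)})$, contradicting the hypothesis $\kappa < 1/(4M^2(64M^{12})^{66M^{14}/\lambda})$ once the absolute constants are pinned down.

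I expect the main obstacle to be Step 2 carried out with \emph{explicit} constants: one must ensure the approximate-group reduction only costs the stated polynomial-in-$M$ factors, that the number of torus multiplications needed to fill $G$ (Step 3) is genuinely absolute and not growing with $\dim G$, and that the doubling-in-$\TT$ count is exactly $\log(1/\lambda')$ so the final exponent lands below $66M^{14}/\lambda$. The covering-number bookkeeping — each application of the $64M^{12}$-approximate property multiplies the measure bound by $64M^{12}$, and we apply it roughly $M^{14}/\lambda$ times — is what produces the precise threshold on $\kappa$, so the proof is really a careful accounting exercise layered on top of the clean idea "not-Kakeya is forced because Kakeya ⟹ $X$ swallows every torus ⟹ $X$ swallows $G$."
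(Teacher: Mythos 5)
Your Steps 1 and 2 track the paper's argument closely (reduce to a symmetric open approximate group $X$ via Fact~\ref{fact: tao approximate groups}, extract a fiber of $X$ of length $\gtrsim \lambda/M^{O(1)}$ on each one-dimensional torus $H$, and double inside $H\cong\TT$ via Kemperman to get $\mu_H(X^{N_0}\cap H)=1$ with $N_0=O(M^{O(1)}/\lambda)$), and they are essentially correct; the only caveat there is that the negation of the conclusion is ``for each $H$, \emph{at least one} of $A$ (along left cosets) or $B$ (along right cosets) has a long fiber,'' not ``$A$ is $\lambda$-Kakeya for every $H$,'' which is why the paper routes through Lemma~\ref{lem: A=AH toric exp prepare}; your argument survives this because either case hands $X$ a long fiber in $H$.

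The genuine gap is in Step 3, the passage from ``$X^{N_0}\supseteq H$ for every closed one-dimensional torus $H$'' to ``$X^{O(N_0)}=G$.'' Your primary justification --- that the closed one-dimensional tori cover a neighborhood of $\id$ because they are the exponential images of lines in $\mathfrak g$ --- is false: $\exp(\RR X)$ is a closed circle only for ``rational'' directions $X$, and already in $\TT^2$ the union of all closed one-dimensional subtori is a countable union of circles, dense but of measure zero and with empty interior. Your fallback (``$X^{N_0}$ contains an open neighborhood of $\id$, iterate until $G$ is filled'') also fails: $X$ itself is already an open neighborhood of $\id$, and the number of self-multiplications needed to exhaust $G$ from an open identity neighborhood is about $1/\mu_G(X)\approx 1/(4M^2\kappa)$, which is exactly the unbounded quantity the theorem is trying to control; likewise, writing $G$ as a product of $r$ one-dimensional tori would at best give $r$ depending on $\dim G$, destroying the uniformity in $G$ that the statement requires. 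The missing idea, which is how the paper closes the argument, is to use density plus openness rather than covering or products: the union of the closed one-dimensional subtori of any maximal torus $T$ is \emph{dense} in $T$; since $X^{N_0}$ is open and contains this dense set, and $X$ is an open neighborhood of $\id$, every $s\in T$ lies in $dX$ for some $d$ in that dense set (because $sX^{-1}$ is an open neighborhood of $s$ meeting it), so $T\subseteq X^{N_0+1}$; and since every element of a connected compact Lie group lies in \emph{some} maximal torus, $G=\bigcup_T T\subseteq X^{N_0+1}$ with no product of tori and no dependence on $\dim G$. With that substitution your constant-tracking in the final displayed inequality goes through as in the paper.
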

\begin{proof}
Suppose for every closed one-dimensional torus subgroup $H$, we have either 
\[
\mu_H(A \cap xH)\geq \lambda
\]
for some $x\in A$, or
\[
\mu_H(B \cap Hy)\geq \lambda
\]
for some $y\in B$. 
Applying Lemma~\ref{lem: A=AH toric exp prepare} we conclude that there is $a\in A$ and $b\in B$ such that 
\[
\min\{\mu_H(A \cap aH),\mu_H(B \cap Hb)\}\geq\frac{\lambda}{(M+2)^2}. 
\]
Now by Fact~\ref{fact: tao approximate groups}, there is a $64M^{12}$-approximate group $X$ and a finite set $\Omega$ with cardinality at most $33K^{12}$ such that $A\subseteq \Omega X$,  $B\subseteq X\Omega$, and $\mu_G(X)\leq 4M^2\kappa$, and
\[
\min\{\mu_H(\Omega X \cap aH),\mu_H(X\Omega \cap Hb)\}\geq\frac{\lambda}{(M+2)^2}. 
\]
As a consequence, there is $a'\subseteq \Omega^{-1}A$ and $b'\subseteq B \Omega^{-1}$ such that
\[
\min\{\mu_H( X \cap a'H),\mu_H(X \cap Hb')\}\geq\frac{\lambda}{132 M^{14}}. 
\]
Note that $((a')^{-1}X)^{-1}=Xa'$, and $Xa'(a')^{-1}X=X^2$, we conclude that
\[
\mu_H(X^2\cap H)\geq \frac{\lambda}{66M^{14}}. 
\]
Let $t=\lceil 66M^{14}/\lambda \rceil$. By using Kemperman's inequality on $H$, we have $\mu_H(X^{2t}\cap H)=1$, which implies that the set $X^{2t}$ contains all the closed one-dimensional tori in $G$. On the other hand, note that the union of all the closed one-dimensional tori is dense in a maximal torus of $G$. As $X$ is open, we have that $X^{4t}$ contains all the maximal tori, and hence $X^{4t}=G$. Thus
\[
1=\mu_G(X^{4t})\leq (64M^{12})^{4t}\mu_G(X), 
\]
which implies that $\mu_G(X)\geq (64M^{12})^{-66M^{14}/\lambda}$, and this contradicts the fact that $\kappa$ is small.  
\end{proof}

Theorem~\ref{thm: kakeya new} asserts that, for sufficiently small sets $A,B$ with small expansions in a connected compact Lie group $G$, one can find a closed one-dimensional torus that transversal in measure simultaneously in $A$ and $B$. 
The next proposition shows that when $A,B$ have nearly minimal expansions, the smallness assumption can be removed.

\begin{proposition}\label{prop: Torictransversal}
Let $G$ be a connected compact Lie group,  and $A,B$ be $\sigma$-compact subsets of $G$. 
Let $\lambda,c\in(0,1)$ be real numbers. Then there is $\delta>0$ such that the following holds. Suppose $\mu_G(AB)= \mu_G(A)+\mu_G(B)+\delta\min\{\mu_G(A),\mu_G(B)\}<1$ and $\mu_G(A)+\mu_G(B)<1-c$. 
Then there is a closed one-dimensional torus $H\leq G$ such that $A$ and $B$ are not $\lambda$-Kakeya with respect to $H$. 
\end{proposition}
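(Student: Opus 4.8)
The plan is to argue by contradiction: suppose that for \emph{every} closed one-dimensional torus $H \le G$, at least one of $A$, $B$ is $\lambda$-Kakeya with respect to $H$. I will show this forces $\dis_G(A,B) \ge \varepsilon_0$ for a constant $\varepsilon_0 = \varepsilon_0(\lambda,c) > 0$, which contradicts $\dis_G(A,B) = \delta\min\{\mu_G(A),\mu_G(B)\}$ once $\delta = \delta(\lambda,c)$ is small enough; this proves the proposition.

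Fix the integer $M = 3$ and let $\kappa_\lambda > 0$ be smaller than the threshold $1/(4M^2(64M^{12})^{66M^{14}/\lambda})$ required by Theorem~\ref{thm: kakeya new}. Consider first the case $\max\{\mu_G(A),\mu_G(B)\} \le \kappa_\lambda$, with $\mu_G(A) \le \mu_G(B)$, so that $\dis_G(A,B) = \delta\mu_G(A)$. Using the submodularity lemma (Lemma~\ref{lem:iep}) applied to \emph{unions} of translates, together with the intermediate-value argument of Lemma~\ref{lem:trans}, I enlarge $A$ to a $\sigma$-compact set $A' \supseteq A$ with $\mu_G(A') = \mu_G(B)$. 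Each step at most doubles both the discrepancy and the measure, so after $O\big(\log(\mu_G(B)/\mu_G(A))\big)$ steps one gets $\dis_G(A',B) \le 2(\mu_G(B)/\mu_G(A))\dis_G(A,B) = 2\delta\mu_G(B)$; the essential point is that the blow-up factor $\mu_G(B)/\mu_G(A)$ is cancelled by the $\min\{\mu_G(A),\mu_G(B)\} = \mu_G(A)$ inside $\dis_G(A,B)$. Then $\mu_G(A') = \mu_G(B) \le \kappa_\lambda$ is below the threshold and $\mu_G(A'B) = 2\mu_G(B) + \dis_G(A',B) < 3\mu_G(B) = M\mu_G(B)$ once $\delta < 1/2$, so Theorem~\ref{thm: kakeya new} produces a closed one-dimensional torus $H$ for which $A'$ and $B$ are not $\lambda$-Kakeya with respect to $H$. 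Since $A \subseteq A'$, each left $H$-fiber of $A$ is contained in the corresponding fiber of $A'$ and so has $\mu_H$-length below $\lambda$; hence $A$ and $B$ are not $\lambda$-Kakeya with respect to $H$, contradicting the assumption.

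If instead $\max\{\mu_G(A),\mu_G(B)\} > \kappa_\lambda$, at least one measure exceeds the absolute constant $\kappa_\lambda$; enlarging the other set by the same union trick if it is tiny, I may assume both measures lie in $[\kappa_\lambda, 1-c']$ for a constant $c' = c'(\lambda,c) > 0$. The larger set cannot be enlarged below the threshold, so here I would feed Theorem~\ref{thm: kakeya new} the \emph{shrunk} sets supplied by Proposition~\ref{prop: red to small sets}; this only locates a candidate torus $H$, and one still has to show $A$ and $B$ themselves are transversal to it. For that I would argue directly, in the spirit of Lemma~\ref{lem: A=AH toric exp prepare}: via the quotient integral formula (Lemma~\ref{lem:int}) together with Kemperman's inequality on $H$ and its inverse theorem (Facts~\ref{fact: GeneralKemperman} and \ref{fact: new inverse theorem torus}), if one of $A$, $B$ were $\lambda$-Kakeya with respect to $H$ then --- both measures being bounded below and $\mu_G(A)+\mu_G(B) < 1 - c'$ --- one would be forced into $\mu_G(AB) \ge \mu_G(A)+\mu_G(B) + \varepsilon_0(\lambda,c)$, contradicting the smallness of $\dis_G(A,B)$. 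I expect this verification to require the most care.

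The crux, and the reason for the case split, is that ``not $\lambda$-Kakeya with respect to $H$'' only descends to \emph{subsets}, while the reduction machinery of Section~\ref{sec: reduction to small} naturally shrinks sets. One is therefore forced to \emph{enlarge} $A$ and $B$ up to small size before applying Theorem~\ref{thm: kakeya new} --- possible precisely when they were small to begin with --- and to treat the large-measure regime by the more delicate direct argument. The quantitative heart, in every case, is to check that all the loss factors incurred in the reductions are absorbed uniformly in $G$ by the $\min\{\mu_G(A),\mu_G(B)\}$ in the hypothesis, so that the final $\delta$ depends only on $\lambda$ and $c$.
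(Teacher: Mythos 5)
Your proposal takes a genuinely different route from the paper, and the divergence is driven by a legitimate observation. The paper's proof is a one-step affair: apply Proposition~\ref{prop: red to small sets} to produce $A',B'$ of measure $3^{-3^{20}/\lambda}$ with controlled discrepancy, apply Theorem~\ref{thm: kakeya new} to get $H$, and transfer the conclusion back to $A,B$ through the containment. You are right that ``not $\lambda$-Kakeya with respect to $H$'' only passes from a set to its \emph{subsets}, so the transfer is only automatic when the reduction \emph{enlarges} (i.e.\ when $\mu_G(A)\le\kappa$, the branch of Proposition~\ref{prop: red to small sets} giving $A'\supseteq A$). Your Case~1 is exactly that branch carried out by hand, and it is correct: the union-of-translates form of submodularity doubles the discrepancy while doubling the measure, the cumulative blow-up $\mu_G(B)/\mu_G(A)$ is absorbed by the $\min\{\mu_G(A),\mu_G(B)\}$ in the hypothesis, and non-Kakeya for $A'\supseteq A$ descends to $A$.

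The genuine gap is Case~2, which you do not carry out, and the sketch you give will not close. Suppose $\mu_G(A)=\mu_G(B)=2/5$ and $\lambda=10^{-2}$. After shrinking, Theorem~\ref{thm: kakeya new} hands you a torus $H$ transversal to $A''\subseteq A$ and $B''\subseteq B$, but this says nothing about the fibers of $A$ itself, which could contain whole cosets of $H$ outside $A''$. The tools you invoke for the direct verification (Lemma~\ref{lem:int}, Kemperman on $H$, i.e.\ the estimates of Lemma~\ref{lem: A=AH toric exp prepare} and Lemma~\ref{lem: Kempermansubgroup}) give, from a fiber $A\cap aH$ with $\mu_H(A\cap aH)\ge\lambda$, only
\[
\mu_G(AB)\ \ge\ \mu_G\bigl((A\cap aH)B\bigr)\ \ge\ \mu_G(B)+\lambda\,\mu_{H\backslash G}(\widetilde{\pi}B)\ \le\ \mu_G(B)+\lambda,
\]
which is far below $\mu_G(A)+\mu_G(B)$ whenever $\lambda<\mu_G(A)$; no contradiction with $\dis_G(A,B)\le\delta\min\{\mu_G(A),\mu_G(B)\}$ results. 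The inverse theorem on $H$ (Fact~\ref{fact: new inverse theorem torus}) is no help either, since it presupposes the fibers are already short. Closing this case seems to require input not available at this stage (e.g.\ the homomorphism $\chi:G\to\TT$ produced by the later sections, which would place $A$ inside $\chi^{-1}(I)$ and control its fibers over every torus not contained in $\ker\chi$). It is worth noting that in all downstream uses (the standing hypothesis of Section~6 and the proof of Theorem~\ref{thm: asymm 1.2}) the non-Kakeya property is only ever invoked for the \emph{shrunk} sets, for which Theorem~\ref{thm: kakeya new} applies directly; the regime you are struggling with is precisely the part of the statement that exceeds what is used.
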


\begin{proof}
Without loss of generality we assume $\mu_G(A)\leq \mu_G(B)$. Using Proposition~\ref{prop: red to small sets}, there are $A'\subseteq A$ and $B'\subseteq B$ with 
\[
\mu_G(A')=\mu_G(B')=\frac{1}{3^{\frac{3^{20}}{\lambda}}}
\]
and
\[
\dis_G(A',B')<\frac{\delta 3^{40}}{c\lambda^2}\min\{\mu_G(A),\mu_G(B)\}.
\]
Choosing $\delta=O(c\lambda^2)$ sufficiently small, we obtain that $\dis_G(A',B')\leq\mu_G(A')$. Applying Theorem~\ref{thm: kakeya new}, we conclude that there is some closed one-dimensional torus $H\leq G$ such that $A'$ and $B'$ are not $\lambda$-Kakeya with respect to $H$. As we know that $A'\subseteq A$ and $B'\subseteq B$, this implies that $A$ and $B$ are not $\lambda$-Kakeya with respect to $H$.
\end{proof}

\section{Almost linear pseudometric from small measure growth}\label{sec: geometry II}

In this section, we investigate the global geometric shape of a pair of sets with small measure growth relative to a connected closed proper subgroup of the ambient Lie group such that the cosets of the subgroup intersect the pair ``transversally in measure'' obtained in the earlier section. 
We will then using this to construct ``almost linear pseudometric'', which place the role of an ``approximate Lie algebra homomorphism''. 

Recall that a {\bf pseudometric} on a set $X$ is a function $d: X \times X \to \RR$ satisfying the following three properties:
\begin{enumerate}
    \item (Reflexive) $d(a,a) =0$ for all $a \in X$,
    \item (Symmetry) $d(a,b)= d(b,a)$ for all $a,b \in X$,
    \item (Triangle inequality) $d(a,c)  \leq d(a,b) +d(b,c) \in X$.
\end{enumerate}
Hence, a pseudometric on $X$ is a metric if for all $a,b \in X$, we have  $d(a,b)=0$ implies $a =b$. If $d$ is a pseudometric on $G$, for an element $g\in G$, we set $\|g\|_d = d(\id, g)$.

 Suppose $d$ is a pseudometric on $G$. We say that $d$ is {\bf left-invariant} if for all $g, g_1, g_2 \in G$, we have $d(gg_1, gg_2) = d(g_1,g_2)$. left-invariant pseudometrics arise naturally from measurable sets in a group; the pseudometric we will construct in the section is of this form.

\begin{proposition}\label{prop: construct pseudo-metric}
Suppose  $A$ is a measurable subset of $G$. For $g_1$ and $g_2$ in $G$, define
$$ d(g_1, g_2) = \mu_G(A) - \mu_G( g_1A \cap g_2A). $$
Then
$d$ is a continuous left-invariant pseudometric on $G$.
\end{proposition}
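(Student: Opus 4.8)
The plan is to verify the three pseudometric axioms together with left-invariance and continuity, all of which reduce to elementary measure-theoretic manipulations once we observe the basic identity
\[
\mu_G(g_1 A \cap g_2 A) = \mu_G(A) - \tfrac12 \mu_G(g_1 A \tri g_2 A),
\]
valid whenever $\mu_G(g_1 A) = \mu_G(g_2 A) = \mu_G(A)$, which holds here because $\mu_G$ is left-invariant. This identity immediately rewrites $d(g_1,g_2) = \tfrac12 \mu_G(g_1 A \tri g_2 A)$, so the claim is really just that the symmetric-difference pseudometric (scaled by $1/2$) is a continuous left-invariant pseudometric; working with $A$ directly rather than $g_1^{-1}g_2$ is a cosmetic choice.

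First I would record left-invariance: for $g \in G$, left-invariance of $\mu_G$ gives $\mu_G(gg_1 A \cap gg_2 A) = \mu_G(g(g_1 A \cap g_2 A)) = \mu_G(g_1 A \cap g_2 A)$ and likewise $\mu_G(A)$ is unchanged, so $d(gg_1, gg_2) = d(g_1,g_2)$. Reflexivity is trivial since $g A \cap g A = gA$ has measure $\mu_G(A)$, so $d(g,g) = 0$. Symmetry is immediate from the symmetry of $\cap$ (equivalently of $\tri$) in the two arguments. For the triangle inequality I would pass to the symmetric-difference form and use the set-theoretic inclusion
\[
g_1 A \tri g_3 A \subseteq (g_1 A \tri g_2 A) \cup (g_2 A \tri g_3 A),
\]
which after applying $\mu_G$ and subadditivity yields $\mu_G(g_1 A \tri g_3 A) \leq \mu_G(g_1 A \tri g_2 A) + \mu_G(g_2 A \tri g_3 A)$, i.e. $d(g_1,g_3) \leq d(g_1,g_2) + d(g_2,g_3)$ after dividing by $2$. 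One should check that all sets appearing are measurable, which follows since $g_iA$ are measurable (translates of a measurable set) and finite Boolean combinations of measurable sets are measurable.

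Finally, for continuity of $d: G \times G \to \RR$, by left-invariance it suffices to show that $g \mapsto d(\id, g) = \mu_G(A) - \mu_G(A \cap gA) = \tfrac12 \mu_G(A \tri gA)$ is continuous, since $d(g_1,g_2) = d(\id, g_1^{-1}g_2)$ and $(g_1,g_2) \mapsto g_1^{-1}g_2$ is continuous; this is exactly the content of Fact~\ref{fact: Haarmeasurenew}(vii). I expect no genuine obstacle here — the only mild care is the measurability bookkeeping and the reduction of continuity in two variables to the one-variable statement via the continuity of the group operations. The triangle inequality via the $\tri$-inclusion is the one step worth writing out cleanly, but it is standard.
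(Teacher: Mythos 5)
Your proof is correct and follows essentially the same elementary route as the paper's: verify each axiom using left-invariance of $\mu_G$ and appeal to Fact~\ref{fact: Haarmeasurenew}(vii) for continuity. The only cosmetic difference is in the triangle inequality: you pass to the symmetric-difference form $d(g_1,g_2)=\tfrac12\mu_G(g_1A\tri g_2A)$ and invoke the standard inclusion $X\tri Z\subseteq(X\tri Y)\cup(Y\tri Z)$, whereas the paper manipulates the intersection form directly by bounding $\mu_G(g_2A)-\mu_G(g_2A\setminus g_1A)-\mu_G(g_2A\setminus g_3A)$ above by $\mu_G(g_1A\cap g_2A\cap g_3A)\leq\mu_G(g_1A\cap g_3A)$; both are equivalent one-line computations.
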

 \begin{proof}
 We first verify the triangle inequality. Let $g_1$, $g_2$, and $g_3$ be in $G$, we need to show that 
\begin{equation}\label{eq: pseudo metric d_A 1}
    \mu_G( A) - \mu_G(g_1A \cap g_3 A) \leq \mu_G( A) - \mu_G(g_1A \cap g_2 A)  + \mu_G( A) - \mu_G(g_2A \cap g_3 A).
\end{equation}
As $\mu_G(A) = \mu_G(g_2 A)$, we have $\mu_G( A) - \mu_G(g_1A \cap g_2 A) = \mu_G( g_2A \setminus g_1 A)$, and   $\mu_G( A) - \mu_G(g_2A \cap g_3 A) = \mu_G( g_2A \setminus g_3 A)$. Hence, \eqref{eq: pseudo metric d_A 1} is equivalent to 
$$   
\mu_G( g_2A) - \mu_G( g_2A\setminus g_1A) -\mu_G(g_2A\setminus g_3A) \leq \mu(g_1A \cap g_3A). 
$$
Note that the left-hand side is at most $ \mu_G( g_1A \cap g_2A \cap g_3A)$, which is less than the right-hand side. Hence, we get the desired conclusion. The continuity of $d$ follows from Fact~\ref{fact: Haarmeasurenew}(vii), and the remaining parts  are straightforward.
 \end{proof}

Another natural source of  left-invariant pseudometrics is group homomorphims onto metric groups. Suppose $\widetilde{d}$ is a continuous left-invariant metric on a group $H$ and $\pi: G \to H$ is a group homomorphism, then for every $g_1,g_2$ in $G$, one can naturally define a pseudometric $d(g_1, g_2) = \widetilde{d}( \pi(g_1), \pi(g_2))$. It is easy to see that such  $d$ is a continuous left-invariant pseudometric, and $\{g\in G : \Vert g \Vert_d=0\}= \ker(\pi)$ is a normal subgroup of $G$.  The latter part of this statement is no longer true for an arbitrary continuous left-invariant pseudometric, but we still have the following:

\begin{lemma} \label{lem: kerd}
Suppose $d$ is a continuous left-invariant pseudometric on $G$. 
Then the set
 $\{g \in G : \|g\|_d =0 \}$
is the underlying set of a closed subgroup of $G$.
\end{lemma}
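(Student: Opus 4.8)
The plan is to verify directly that $S := \{g \in G : \|g\|_d = 0\}$ is closed under the group operations and under inverse, and then invoke continuity of $d$ to get closedness. First I would note that $\|\id\|_d = d(\id,\id) = 0$ by the reflexive axiom, so $\id \in S$. For inverses, suppose $g \in S$; by left-invariance $d(g^{-1}, \id) = d(g^{-1}g, g^{-1}\cdot\id)$... more carefully, left-invariance gives $d(\id, g^{-1}) = d(g\cdot\id, g\cdot g^{-1}) = d(g, \id) = d(\id, g) = 0$ using symmetry, so $g^{-1} \in S$. For products, suppose $g_1, g_2 \in S$; by the triangle inequality and left-invariance,
\[
\|g_1 g_2\|_d = d(\id, g_1 g_2) \leq d(\id, g_1) + d(g_1, g_1 g_2) = \|g_1\|_d + d(\id, g_2) = \|g_1\|_d + \|g_2\|_d = 0,
\]
so $g_1 g_2 \in S$. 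Hence $S$ is a subgroup.

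For closedness, I would use that $d$ is continuous (hypothesis), so in particular the function $g \mapsto \|g\|_d = d(\id, g)$ is continuous on $G$; therefore $S = \{g : \|g\|_d = 0\}$ is the preimage of the closed set $\{0\}$ under a continuous map, hence closed. Combining, $S$ is a closed subgroup, which is exactly the assertion.

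I do not anticipate a genuine obstacle here — every step is a one-line consequence of the pseudometric axioms plus left-invariance plus continuity. The only point requiring a modicum of care is the bookkeeping in the inverse and product computations, namely applying left-invariance with the correct translating element and pairing it with the symmetry axiom; writing those two displayed chains of (in)equalities carefully is the entire content. (One could alternatively phrase the whole thing by first checking $S = \{g : d(\id,g)=0\}$ coincides with the kernel congruence class of the quotient pseudometric, but the elementary computation above is shorter and self-contained.)
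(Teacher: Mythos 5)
Your proof is correct and takes essentially the same approach as the paper's: closure under products via the triangle inequality plus left-invariance, and closedness by continuity of $g \mapsto \|g\|_d$. In fact you are slightly more complete than the paper, which only writes out the product step and the closedness step explicitly, leaving the identity and inverse checks implicit; your computation $d(\id, g^{-1}) = d(g,\id) = d(\id,g)$ for the inverse is exactly the right one-liner.
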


\begin{proof}
Suppose $g_1$ and $g_2$ are elements in $G$ such that $\|g_1\|_d = \|g_2\|_d=0$. Then 
\[
d(\id, g_1g_2) \leq d(\id, g_1) + d(g_1, g_1g_2) = d(\id, g_1)+ d(\id, g_2)=0.
\]
Now, suppose $(g_n)$ is a sequence of elements in $G$ converging to $g$ with $\|g_n\|_d=0$. Then $\|g\|_d=0$ by continuity, we get the desired conclusions.
\end{proof}

  In many situations, a left-invariant pseudometric allows us to construct surjective continuous group homomorphism to metric groups.
The following lemma tells us precisely when this happens. We omit the proof as the result is motivationally relevant but will not be used later on.
\begin{lemma} Let $d$ be a continuous left-invariant pseudometric on $G$. The following are equivalent,
\begin{enumerate}[\rm (i)]
    \item The set
 $\{g \in G : \|g\|_d =0 \}$
is the underlying set of a closed normal subgroup of $G$.
    \item There is a continuous surjective group homomorphism  $\pi: G \to H$, and $\widetilde{d}$ is a left-invariant metric on $H$. Then
    \[
    d(g_1, g_2) = \widetilde{d}( \pi g_1, \pi g_2). 
    \]
\end{enumerate}
Moreover, when (ii) happens, $\{g \in G : \|g\|_d =0 \} = \ker \pi$, hence $H$ and $\widetilde{d}$ if exist are uniquely determined up to isomorphism. 
\end{lemma}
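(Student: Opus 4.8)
The statement asserts the equivalence of two conditions for a continuous left-invariant pseudometric $d$ on $G$: (i) that the closed subgroup $N := \{g \in G : \|g\|_d = 0\}$ is normal, and (ii) that $d$ factors as $d(g_1,g_2) = \widetilde d(\pi g_1, \pi g_2)$ through a continuous surjective group homomorphism $\pi : G \to H$ onto a group carrying a left-invariant metric $\widetilde d$. The direction (ii) $\Rightarrow$ (i) is the easy one: if such $\pi$ and $\widetilde d$ exist, then $\|g\|_d = \widetilde d(\mathrm{id}_H, \pi g) = 0$ iff $\pi g = \mathrm{id}_H$ (here using that $\widetilde d$ is a genuine metric, not merely a pseudometric), so $N = \ker \pi$, which is automatically a closed normal subgroup; closedness follows from continuity of $\pi$ and the fact that $\widetilde d$ need only be checked to separate points. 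This simultaneously establishes the "moreover" clause, since $H \cong G/\ker\pi = G/N$ and $\widetilde d$ is then forced to be the quotient pseudometric, determined by $d$.

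For (i) $\Rightarrow$ (ii), the plan is to take $H := G/N$ with the quotient topology and quotient map $\pi : G \to G/N$, which is a continuous surjective group homomorphism because $N$ is closed and normal (invoking Fact~\ref{fact: first iso thm}). I would then \emph{define} $\widetilde d(\pi g_1, \pi g_2) := d(g_1, g_2)$ and check this is well-defined: if $\pi g_1 = \pi g_1'$ and $\pi g_2 = \pi g_2'$, then $g_1' = g_1 n_1$ and $g_2' = g_2 n_2$ with $n_1, n_2 \in N$, and by left-invariance together with the triangle inequality one gets $|d(g_1', g_2') - d(g_1, g_2)| \le \|n_1\|_d + \|n_2\|_d = 0$ — the usual two-sided estimate $d(g_1 n_1, g_2 n_2) \le d(g_1 n_1, g_1) + d(g_1, g_2) + d(g_2, g_2 n_2)$ and symmetrically. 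The symmetry, reflexivity, and triangle inequality for $\widetilde d$ are inherited immediately from $d$. That $\widetilde d$ is a genuine metric (separates points) holds because $\widetilde d(\pi g_1, \pi g_2) = 0$ means $d(g_1, g_2) = \|g_1^{-1}g_2\|_d = 0$, i.e. $g_1^{-1} g_2 \in N$, i.e. $\pi g_1 = \pi g_2$. Left-invariance of $\widetilde d$ on $G/N$ follows from left-invariance of $d$ on $G$ pulled through $\pi$. Finally continuity of $\widetilde d$ with respect to the quotient topology follows since $\pi$ is open (Fact~\ref{fact: first iso thm}) and $d$ is continuous, so $\widetilde d \circ (\pi \times \pi)$ continuous forces $\widetilde d$ continuous.

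The only genuinely delicate point — and the one I would expect to be the main obstacle — is verifying that $\widetilde d$ is \emph{continuous} for the quotient topology on $G/N$, rather than merely that its pullback to $G$ is continuous. This is where normality of $N$ is used in an essential topological way (for the quotient to be a topological group with an open projection), as opposed to the purely algebraic use in the well-definedness step. Since the lemma's proof is explicitly omitted in the paper as "motivationally relevant but not used later," I would present the argument at the level of these checks without belaboring the point-set topology, noting that openness of $\pi$ plus continuity of $d$ gives continuity of $\widetilde d$ directly from the universal property of the quotient topology. The remaining verifications are the routine transfers of the pseudometric axioms and left-invariance along $\pi$, which I would state but not grind through.
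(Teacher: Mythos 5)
The paper explicitly omits this proof, stating the result is ``motivationally relevant but will not be used later on,'' so there is no paper argument to compare against. Your proof is correct and is the natural one: (ii)~$\Rightarrow$~(i) by identifying $N=\ker\pi$ (normality being automatic for kernels), and (i)~$\Rightarrow$~(ii) by defining $\widetilde d$ on $G/N$ via the formula $\widetilde d(\pi g_1,\pi g_2):=d(g_1,g_2)$, with well-definedness coming from left-invariance plus the triangle inequality, separation of points from $d(g_1,g_2)=\|g_1^{-1}g_2\|_d$, and continuity of $\widetilde d$ from openness of $\pi\times\pi$ together with continuity of $d=\widetilde d\circ(\pi\times\pi)$. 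These checks are all sound.

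One small imprecision worth tightening: in the (ii)~$\Rightarrow$~(i) direction you say ``closedness follows from continuity of $\pi$ and the fact that $\widetilde d$ need only be checked to separate points,'' but the topology on $H$ is not specified in (ii), so it is cleaner to observe that closedness of $N=\{g:\|g\|_d=0\}$ is already guaranteed by Lemma~\ref{lem: kerd} (continuity of $d$ alone), independently of (ii); what (ii) genuinely supplies is normality, via $N=\ker\pi$. Also, for the ``moreover'' clause, the identification $H\cong G/N$ as topological groups would need $\pi$ to be open (Fact~\ref{fact: first iso thm}); as stated the uniqueness is most naturally read as uniqueness of $(H,\widetilde d)$ up to metric-group isomorphism determined by the formula, which your argument does establish.
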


  The group $\RR$ and $\TT = \RR/ \ZZ$ are naturally equipped with the metrics $d_{\RR}$ and $d_{\TT}$ induced by the Euclidean norms, and these metrics interact in a very special way with the additive structures. Hence one would expect that if there is a group homomorphism from $G$ to either $\RR$ or $\TT$, then $G$ can be equipped with a pseudometric which interacts nontrivially with addition.

 Let $d$ be a left-invariant pseudometric on $G$. The {\bf radius} $\rho$ of $d$ is defined to be $\sup\{\|g\|_d : g \in G\}$; this is also $\sup\{d(g_1, g_2) : g_1, g_2 \in G\}$ by left invariance.
Let $0<\gamma<\rho/2$ be a constant. For a constant $\lambda<1/2$,  we write $I(\lambda)$ for the interval $(-\lambda,\lambda)$ in $\TT$, and we write $N(\lambda)$ for $\{g \in G : \|g\|_d \leq \lambda\}$. By Fact~\ref{fact: Haarmeasurenew}(vii), $N(\lambda)$ is an open set, and hence measurable.

\begin{definition}
We say that a pseudometric $d$ is {\bf $\gamma$-linear} if it satisfies the following conditions:
\begin{enumerate}
    \item $d$ is continuous and left-invariant;
    \item for all $g_1, g_2, g_3 \in G$ with $d(g_1, g_2)+ d(g_2, g_3) <\rho-\gamma$, we have either
\[
d(g_1,g_3) \in   d(g_1, g_2) + d(g_2,g_3) + I(\gamma), 
\]
or
\[
d(g_1,g_3) \in   |d(g_1, g_2) - d(g_2,g_3)| + I(\gamma).
\]
\end{enumerate}
Given $\alpha\leq \rho$, let $N(\alpha)=\{g\in G:\|g\|_d\leq\alpha\}$. 
We say that $d$ is {\bf $\gamma$-monotone} if for all $g \in N(\rho/2-\gamma)$, we have 
\begin{equation}\label{eq: gamma mono}
\|g^2\|_d \in 2\|g\|_d+I(\gamma). 
\end{equation}
\end{definition}

A pseudometric $d$ is {\bf $\gamma$-path-monotone} if we have \eqref{eq: gamma mono} for every $g$ in a one-parameter subgroup of $G$. The main goal of the section is to construct an almost linear and almost path-monotone pseudometric from a pair of set with very small measure growth.

Throughout this section, $G$ is a connected compact Lie group, $H$ is a closed subgroup of $G$, and $H$ is isomorphic to the one-dimensional torus $\TT$. We let $A$ and $B$ be $\sigma$-compact subsets of $G$ such that 
\[
\kappa/2<\mu_G(A) < 2\kappa \text{ and } \mu_G(B) = \kappa,
\]
and $\dis_G(A,B)\leq \eta\kappa$ for some constant $\eta>0$, that is
\[
\mu_G(AB)\leq \mu_G(A)+\mu_G(B)+\eta\kappa.
\]
In this section, we assume $\eta\leq 10^{-12}$. We did not try to optimise $\eta$, so it is very likely that by a more careful computation, one can make $\eta$ much larger. But we believe this method does not allow $\eta$ to be very close to $1$. By Fact~\ref{fact: new inverse theorem torus}, let $=c_\TT$ be the constant obtained from the theorem. We will assume throughout the section that
\[
\max\{\mu_H(A\cap aH), \mu_H(B\cap Hb)\}<c_\TT
\]
 for all $a,b\in G$. This can actually be arranged by applying Proposition~\ref{prop: Torictransversal}.


As in the earlier sections, we set $\mu_{G/H}$ and $\mu_{H \backslash G}$ to be the Radon measures on $G/H$ and $H \backslash G$ such that we have the quotient integral formulas (Fact~\ref{fact: QuotientIF} and Lemma~\ref{lem: mesurability}(vi)). We also remind the reader that we normalize the measure whenever a group under consideration is compact, and $\pi: G \to G/H$, $ \widetilde{\pi}: G \to H\backslash G$ are quotient maps. Hence, we have
$$ \mu_G(AH)=\mu_{G/H}(\pi A)\quad \text{and}\quad \mu_{G}(HB) =\mu_{H \backslash G}( \widetilde{\pi} B). $$
The following lemma records some basic properties of sets $A$ and $B$.
\begin{lemma} \label{lem: Kempermansubgroup}
For all $a \in A$ and $b \in B$, we have
\begin{enumerate}
    \item $\mu_H\big(( A \cap aH)(B \cap Hb)\big) \geq \mu_H(A \cap aH) + \mu_H(B\cap Hb).  $
    \item $\mu_G\big(A ( B \cap Hb)\big) \geq  \mu_G(A) + \mu_{G/H}(\pi A)   \mu_{H} ( B \cap Hb ).$
    \item $\mu_G\big((A \cap aH) B\big) \geq  \mu_{H} ( A \cap aH ) \mu_{H \backslash G}(\widetilde{\pi}B) + \mu_G(B).$  
    \end{enumerate}
The equality in (2) holds if and only if the equality in (1) holds for almost all $a \in AH$. A similar conclusion holds for (3).
\end{lemma}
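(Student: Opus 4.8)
The plan is to prove the three inequalities by applying the quotient integral formula together with Kemperman's inequality on the one-dimensional torus $H$, and then to analyze the equality case via the integral identity of Lemma~\ref{lem:int}.

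First I would observe that (1) is immediate: since $H\cong\TT$ is a connected compact group, for each fixed $a\in A$, $b\in B$ we may apply the generalized Kemperman inequality (Fact~\ref{fact: GeneralKemperman}) inside $H$ to the $\sigma$-compact sets $a^{-1}(A\cap aH)$ and $(B\cap Hb)b^{-1}$, or rather to the appropriately translated versions $A\cap aH$ and $(B\cap Hb)$ viewed inside a coset of $H$; the point is that the product $(A\cap aH)(B\cap Hb)$ lies in a single coset of $H$ (namely $aHb$), and left/right translating by $a^{-1}$ and $b^{-1}$ reduces to Kemperman on $H$ itself. Since our transversality hypothesis guarantees $\mu_H(A\cap aH)+\mu_H(B\cap Hb)<2c_\TT<1$, the minimum in Kemperman's bound is the sum, giving (1).

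Next, for (2) I would apply Lemma~\ref{lem:int}, which gives
\[
\mu_G\big(A(B\cap Hb)\big)=\int_G \mu_H\big((A\cap aH)(B\cap Hb)\big)\d\mu_G(a).
\]
Splitting the domain of integration: when $a\in AH$ the fiber $A\cap aH$ is nonempty, so part (1) gives $\mu_H((A\cap aH)(B\cap Hb))\geq \mu_H(A\cap aH)+\mu_H(B\cap Hb)$; when $a\notin AH$ the integrand is $\mu_H(\varnothing\cdot(B\cap Hb))$, which is $0$ (the product of the empty set with anything is empty). Integrating the lower bound over $AH$ and using the quotient integral formula $\int_G\mu_H(A\cap aH)\d\mu_G(a)=\mu_G(A)$ together with $\int_{AH}\d\mu_G(a)=\mu_G(AH)=\mu_{G/H}(\pi A)$, we get
\[
\mu_G\big(A(B\cap Hb)\big)\geq \mu_G(A)+\mu_{G/H}(\pi A)\,\mu_H(B\cap Hb),
\]
which is (2). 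Statement (3) is the mirror image, using the right-coset version of Lemma~\ref{lem:int} and of the quotient integral formula (Fact~\ref{fact: QuotientIF}, Lemma~\ref{lem: mesurability}(iv)), with the roles of left and right fibers interchanged; here one bounds $\mu_H((A\cap aH)B)$ below using that $B$ meets the relevant cosets and applies (1)-type reasoning fiberwise, then integrates over $\widetilde\pi B$.

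For the equality characterization: equality in (2) forces the integrand inequality $\mu_H((A\cap aH)(B\cap Hb))\geq \mu_H(A\cap aH)+\mu_H(B\cap Hb)$ to be an equality for $\mu_G$-almost every $a\in AH$ — a standard fact that if $\int(f-g)=0$ with $f\geq g$ pointwise then $f=g$ a.e. — and conversely if this fiberwise equality holds a.e. then integrating gives equality in (2). Restating the fiberwise condition in terms of the a.e.\ equality in (1) for $a\in AH$ gives the claim; (3) is symmetric. The main obstacle I anticipate is purely bookkeeping: being careful that all the sets involved ($A\cap aH$, $(A\cap aH)(B\cap Hb)$, etc.) are in the $\sigma$-compact class so that Lemma~\ref{lem: mesurability}(iv) and the quotient integral formula genuinely apply, and handling the translation back-and-forth between "fibers inside cosets of $H$" and "subsets of $H$" so that Kemperman on $H$ can be invoked without measurability or normalization slips — but these are routine given the preliminaries already assembled.
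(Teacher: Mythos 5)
Your proposal is correct and follows essentially the same route as the paper's own (terse) proof: (1) is a direct application of Kemperman's inequality on $H\cong\TT$ after translating $A\cap aH$ and $B\cap Hb$ into $H$ (using unimodularity to preserve measure, and the running assumption $\mu_H(A\cap aH)+\mu_H(B\cap Hb)<2c_\TT<1$ to replace the $\min$ by the sum); (2) and (3) then follow from Lemma~\ref{lem:int}, the fiberwise bound from (1), and the quotient integral formula; and the equality characterization is the standard observation that an integral inequality with pointwise $\geq$ is an equality iff the integrand inequality is an equality almost everywhere. Your write-up is more explicit than the paper's about the split of the domain of integration into $AH$ and its complement and about the identities $\int_G\mu_H(A\cap aH)\,\d\mu_G(a)=\mu_G(A)$ and $\mu_G(AH)=\mu_{G/H}(\pi A)$, but there is no substantive divergence.
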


\begin{proof}
The first inequality comes from a direct application of the Kemperman inequality. For the second inequality, by right translating $B$ and using the unimodularity of $G$, we can arrange that $Hb =H$. The desired conclusion follows from applying (1) and Lemma~\ref{lem:int}. 
\end{proof}




 Towards showing that sets $A$ and $B$ behave rigidly, our next theorem shows that most of the nonempty fibers in $A$ and $B$ have the similar lengths, and the majority of them behaves rigidly fiberwise. 

\begin{theorem}[Near rigidity fiberwise]\label{thm: fibers of same length 1newnew} There is a continuous surjective group homomorphism $\chi:H\to\TT$, two compact intervals $I, J\subseteq \TT$ with 
\[ \mu_\TT(I)=\frac{\mu_G(A)}{\mu_{G/H}(\pi A)}\  \text{ and  } \ \mu_\TT(J)=\frac{\mu_{H}(B)}{\mu_{H \backslash G}(\widetilde{\pi} B)}.
\]
$\sigma$-compact $A'\subseteq A$ and $B'\subseteq B$ with 
\[
\mu_{G/H}(\pi A')>99\mu_{G/H}(\pi A)/100\  \text{ and } \ \mu_{H \backslash G}(\widetilde{\pi} B')>99\mu_{H \backslash G}(\widetilde{\pi} B)/100,
\] 
and   a constant $\nu\leq 10^{-10}$ such that the following statements hold:
\begin{enumerate}[\rm (i)]
\item  we have 
\[
\frac{1}{1+\eta}\mu_{H \backslash G}(\widetilde{\pi}B) \leq  \mu_{G/H}(\pi A) \leq  (1 +\eta) \mu_{H \backslash G}(\widetilde{\pi}B) .
\]
   \item For every $a$ in $A'H$, 
    \[
    (1-\nu) \frac{\mu_G(A)}{\mu_{G/H}(\pi A)}\leq \mu_H(A\cap aH)\leq (1+\nu) \frac{\mu_G(A)}{\mu_{G/H}(\pi A)},
    \]
    and there is $\zeta_{a} \in \TT$ with 
    \[ 
       \mu_H\big((A\cap aH)\tri a\chi^{-1}(\zeta_{a}+I)\big)<\nu\min\Big\{\frac{\mu_G(A)}{\mu_{G/H}(\pi A)},\frac{\mu_G(B)}{\mu_{H\backslash G}(\widetilde{\pi} B )}\Big\}.
      \] 
       \item For every $b$ in $HB'$, 
    \[
    (1-\nu) \frac{\mu_G(B)}{\mu_{H\backslash G}(\widetilde{\pi} B )}\leq \mu_H(B\cap Hb)\leq (1+\nu) \frac{\mu_G(B)}{\mu_{H\backslash G}(\widetilde{\pi}B )},
    \]
    and there is $\widetilde{\zeta}_{b}\in \TT$ with
     \[  
       \mu_H\big((B\cap Hb)\tri \chi^{-1}(\widetilde{\zeta}_{b}(B)+J)b\big)<\nu\min\Big\{\frac{\mu_G(A)}{\mu_{G/H}(\pi A)},\frac{\mu_G(B)}{\mu_{H\backslash G}(\widetilde{\pi} )}\Big\}. 
    \]
\end{enumerate}
\end{theorem}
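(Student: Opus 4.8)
\medskip

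The plan is to exploit the near-equality in the chain of inequalities coming from Kemperman's inequality (Lemma \ref{lem: Kempermansubgroup}) together with the inverse theorem on $\TT$ (Fact \ref{fact: new inverse theorem torus}, via Lemma \ref{lem: stability of character in T}), carried out first fiberwise and then propagated to ``most'' fibers by an averaging/pigeonhole argument. Concretely, writing $\ell(a)=\mu_H(A\cap aH)$ and $\widetilde\ell(b)=\mu_H(B\cap Hb)$, the quotient integral formula gives $\mu_G(A)=\int_{G/H}\ell(a)\,\d\mu_{G/H}$ and similarly for $B$. Starting from $\dis_G(A,B)\le\eta\kappa$ and Lemma \ref{lem: Kempermansubgroup}(2)--(3), one sees that
\[
\mu_G(A)+\mu_{G/H}(\pi A)\,\mu_H(B\cap Hb)\le\mu_G(A(B\cap Hb))\le\mu_G(AB)\le\mu_G(A)+\mu_G(B)+\eta\kappa
\]
for every $b\in B$, which already forces $\mu_{G/H}(\pi A)\,\widetilde\ell(b)\le\mu_G(B)+\eta\kappa=(1+\eta)\kappa$; taking $b$ with $\widetilde\ell(b)$ near its average yields a lower bound on $\mu_{G/H}(\pi A)$, and combining the symmetric statement for $B$ gives part (i), namely $\mu_{G/H}(\pi A)$ and $\mu_{H\backslash G}(\widetilde\pi B)$ agree up to a factor $1+\eta$. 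This also pins down $\mu_\TT(I)=\mu_G(A)/\mu_{G/H}(\pi A)$ and $\mu_\TT(J)=\mu_G(B)/\mu_{H\backslash G}(\widetilde\pi B)$ as the natural candidates for the interval lengths.

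\medskip

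Next I would run the fiberwise inverse theorem. For a fixed $b\in B$, Lemma \ref{lem:int} writes $\mu_G(A(B\cap Hb))=\int_{G/H}\mu_H\big((A\cap aH)(B\cap Hb)\big)\,\d\mu_{G/H}(aH)$, and Lemma \ref{lem: Kempermansubgroup}(1) bounds each integrand below by $\ell(a)+\widetilde\ell(b)$ (after right-translating so $Hb=H$). Since the total integral exceeds $\mu_G(A)+\mu_{G/H}(\pi A)\widetilde\ell(b)$ by at most $O(\eta\kappa)$, for all but an $O(\sqrt\eta)$-fraction of cosets $aH\in\pi A$ (measured by $\mu_{G/H}$) we must have $\mu_H\big((A\cap aH)(B\cap H)\big)\le\ell(a)+\widetilde\ell(b)+O(\sqrt\eta)\min\{\mu_\TT(I),\mu_\TT(J)\}$ — a near-equality in Kemperman on $H\cong\TT$ with discrepancy below $c_\TT$ once $\eta$ is small enough and the fibers are short (the standing hypothesis $\max\{\mu_H(A\cap aH),\mu_H(B\cap Hb)\}<c_\TT$). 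Applying Fact \ref{fact: new inverse theorem torus} (or Lemma \ref{lem: stability of character in T}) to the pair $(A\cap aH,\,B\cap H)$ inside $H$ produces, for each such good $a$ (and each good $b$), a surjective homomorphism $\chi_{a,b}:H\to\TT$ and intervals showing $A\cap aH$ is within $O(\sqrt\eta)\min\{\mu_\TT(I),\mu_\TT(J)\}$ of a translate of $\chi_{a,b}^{-1}$ of an interval of the right length, and likewise $B\cap H$.

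\medskip

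The main obstacle — and the step I would spend the most care on — is upgrading these \emph{a priori} fiber-dependent homomorphisms $\chi_{a,b}$ to a \emph{single} $\chi:H\to\TT$ independent of $a$ and $b$, and simultaneously extracting the large subsets $A'\subseteq A$, $B'\subseteq B$ on which the conclusion holds for \emph{every} fiber. The idea is a consistency/rigidity argument: since $H\cong\TT$, the only surjective endomorphisms are the maps $\times n$, and an interval of length $\mu_\TT(I)<c_\TT$ cannot be (close to) the preimage $\chi^{-1}$ of an interval under two different such maps unless they coincide — this is exactly the mechanism used in the proof of Lemma \ref{lem: stability of character in T}. So for a fixed good $b_0$, all good $a$ share the same $\chi=\chi_{a,b_0}$; varying $b$ and using a common good $a_0$ shows the same $\chi$ works for $B$'s fibers too. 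The translates $\zeta_a,\widetilde\zeta_b$ are then whatever shifts the inverse theorem outputs. Finally, to get the \emph{uniform} length bounds $(1\mp\nu)\mu_\TT(I)\le\ell(a)\le(1\pm\nu)\mu_\TT(I)$ on all of $A'H$ (not just in an average sense), I would feed the structural information back in: once $A\cap aH$ is $\nu$-close to a translate of $\chi^{-1}(I)$ we get $\ell(a)\in\mu_\TT(I)+I(\nu\cdot)$ directly, and the set of $a$ where the inverse theorem \emph{failed} to apply has $\mu_{G/H}$-measure below $\mu_{G/H}(\pi A)/100$ provided $\eta$ (hence the exceptional fraction $O(\sqrt\eta)$) is taken small enough — giving $\mu_{G/H}(\pi A')>99\mu_{G/H}(\pi A)/100$ and its analogue for $B'$. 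Choosing $\nu=O(\eta^{1/2})\le10^{-10}$ (which holds since $\eta\le10^{-12}$) closes the argument; tracking the constants through the three-term Kemperman near-equalities and the dependence of $c(\tau)$ in the inverse theorem is the routine but delicate part.
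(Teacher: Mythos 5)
Your plan is essentially the same as the paper's: fix reference fibers, use Lemma~\ref{lem:int} and Kemperman on $H$ to turn $\dis_G(A,B)\le\eta\kappa$ into near-equality of the fiberwise Kemperman inequality for most cosets, apply Fact~\ref{fact: new inverse theorem torus} fiberwise, and then use the rigidity of surjective endomorphisms of $\TT$ (as in Lemma~\ref{lem: stability of character in T}) to force a single $\chi$ independent of $a$ and $b$. The structural ideas are right, including the observation that one should propagate the homomorphism by comparing each fiber against a fixed good reference fiber of $B$ (resp.\ $A$).

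The one concrete defect is quantitative. You use a Markov/Chebyshev balance to get ``all but an $O(\sqrt\eta)$-fraction of cosets have discrepancy $O(\sqrt\eta)\min\{\mu_\TT(I),\mu_\TT(J)\}$,'' hence $\nu = O(\sqrt\eta)$, and then assert $\nu = O(\eta^{1/2}) \le 10^{-10}$ ``since $\eta\le 10^{-12}$.'' That arithmetic is wrong: $\sqrt{10^{-12}} = 10^{-6}$, which is far larger than $10^{-10}$. The paper avoids this by fixing a hard threshold $\delta \approx 100\eta\kappa/\mu_{G/H}(\pi A)$ and directly bounding the $\mu_{G/H}$-measure of the set of bad cosets $N$ by $\mu_{G/H}(\pi A)/100$: if $N$ were larger, then $\mu_G(A(B\cap Hb^*))$ would already exceed $\mu_G(A)+\mu_G(B)+\eta\kappa$, contradicting $\dis_G(A,B)\le\eta\kappa$. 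This yields $\nu = O(\eta)$, which does sit below $10^{-10}$ when $\eta\le 10^{-12}$. The paper also chooses the reference fibers $a^*$, $b^*$ to be simultaneously near-maximal and above-average (not merely average), which gives the uniform upper bound on all fiber lengths for free; your ``feed structural info back in'' step accomplishes the same for the good set $A'$, so that part is fine. So: same route, but you should replace the $\sqrt\eta$ trade-off with the paper's $O(\eta)$ threshold argument to actually meet $\nu\le 10^{-10}$.
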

\begin{proof}
Without loss of generality, we assume that $\mu_{G/H}(\pi A)\geq\mu_{H \backslash G}(\widetilde{\pi}B)$. Let $\beta$ be a constant such that $\beta<(\kappa/800\mu_{H\backslash G}(\widetilde{\pi} B))$.  Obtain $b^* \in G$ such that 
$$\mu_H(B\cap Hb^*)\geq \sup_b\mu_H(B\cap Hb)-\beta  \text{ for all } b\in G,$$ 
and the fiber $B\cap Hb^*$ has at least the average length, that is
\begin{equation}\label{eq:choiceb_0}
    \mu_H(B\cap Hb^*)\geq\mathbb E_{b\in BH}\mu_H(B\cap Hb)=\frac{\mu_G(B)}{\mu_{H\backslash G}(\widetilde{\pi} B)}.
\end{equation}
Set $\delta=100\eta\kappa/\mu_{G/H}(\pi A)$. As $\eta\leq 10^{-12}$, we get $\nu\leq 10^{-10}$ such that 
\begin{equation}\label{eq: nu value}
    \delta<\nu\mu_G(A)/\mu_{G/H}(\pi A).
\end{equation} 
Set
\[  
N = \{ a\in AH : \dis_H(A\cap aH,B\cap Hb^*)>\delta  \}.
\]
Note that $N$ is measurable by Lemma~\ref{lem: mesurability}.
By Lemma~\ref{lem:int} we have
\begin{align*}
&\ \mu_G\big(A (B \cap Hb^*)\big)\\
=&\, \int_{N} \mu_H\big(( A \cap aH )(B \cap Hb^*)\big) \d\mu_G(a)+ \int_{G \setminus N} \mu_H\big(( A \cap aH )(B \cap Hb^*)\big) \d\mu_G(a).
\end{align*}
Since $A\cap aH$ is nonempty for every $a\in AH$, using Kemperman's inequality on $H$ we have that $\mu_G(A(B\cap Hb^*))$ is at least
\[
\int_{N} \big(\mu_H( A \cap aH )+\mu_H(B \cap Hb^*)+\delta\big) \d\mu_G(a)
+ \int_{G\setminus N} \big(\mu_H( A \cap aH )+\mu_H(B \cap Hb^*)\big) \d\mu_G(a).
\]
Suppose we have $\mu_G(N)>\mu_{G/H}(\pi A)/100$. Therefore, by the choice of $b^*$ we get
\begin{align}
\mu_G\big(A (B \cap Hb^*)\big)&>\mu_G(A)+\frac{\delta\mu_{G/H}(\pi A)}{100}+\mu_H(B\cap Hb^*)\mu_{G/H}(\pi A)\label{eq:b^*}\\
& \geq\mu_G(A)+\mu_G(B)\frac{\mu_{G/H}(\pi A)}{\mu_{H\backslash G}(\widetilde{\pi} B)}+\eta\kappa.\nonumber
\end{align}
Since $\mu_{G/H}(\pi A)\geq \mu_{H\backslash G}(\widetilde{\pi} B)$, and $A(B\cap Hb^*)\subseteq AB$,  we have
\begin{equation*}
  \mu_G(AB)>\mu_G(A)+\mu_G(B)+\eta\kappa. 
\end{equation*}
This contradicts the assumption that $\dis_G(A,B)$ is at most  $\eta\kappa$. Using the argument in equation (\ref{eq:b^*}) with trivial lower bound on $\mu_G(N)$ we also get 
\begin{equation}\label{eq:AHandHB}
  \mu_{H\backslash G}(\widetilde{\pi} B)\leq \mu_{G/H}(\pi A)\leq \big(1+\eta\big)\mu_{H\backslash G}(\widetilde{\pi} B),  
\end{equation}
which proves (i). 

From now on, we assume that $\mu_G(N)\leq \mu_{G/H}(\pi A)/100$. Since $\dis_G(A,B)$ is at most $\eta\kappa$, by  (\ref{eq:b^*}) again (using trivial lower bound on $\mu_G(N)$), we have
\[
\mu_H(B\cap Hb^*)\mu_{H\backslash G}(\widetilde{\pi} B)\leq \mu_G(B)+\eta\kappa,
\]
and this in particular implies that for every $b\in G$,  we have
\[
\mu_H(B\cap Hb)\leq\frac{\mu_G(B)}{\mu_{H\backslash G}(\widetilde{\pi} B)}+\frac{\eta\mu_G(B)}{\mu_{H\backslash G}(\widetilde{\pi} B)}+\eta<\big(1+\nu \big)\frac{\mu_G(B)}{\mu_{H\backslash G}(\widetilde{\pi} B)}.
\]
Thus there is $Y\subseteq B$ with $\mu_G(HY)<  \mu_{H\backslash G}(\widetilde{\pi} B)/100$  such that for every $b\in HY$, 
\[
\mu_H(B\cap Hb)\geq \frac{\mu_G(B)}{\mu_{H\backslash G}(\widetilde{\pi} B)}-100\frac{\eta\mu_G(B)}{\mu_{H\backslash G}(\widetilde{\pi} B)}-100\eta>\big(1-\nu\big)\frac{\mu_G(B)}{\mu_{H\backslash G}(\widetilde{\pi} B)}.
\]

Next, we apply the similar argument to $A$. Let $\alpha<(\mu_G(A)-2\eta\kappa)/200\mu_{G/H}(\pi A)$, and choose $a^*$ such that $\mu_H(A\cap a^*H)>\mu_H(A\cap aH)-\alpha$ for all $a\in AH$, and
\[
\mu_H(A\cap a^*H)\geq\mathbb E_{a\in AH}\mu_H(A\cap aH)=\frac{\mu_G(A)}{\mu_{G/H}(\pi A)}.
\] 
Let $N'\subseteq HB$ such that for every $b$ in $N'$, $\dis_H(A\cap a^*H,B\cap Hb) \geq \delta$. Hence we have
\begin{align}
&\ \mu_G\big((A\cap a^*H) B\big)\nonumber\\
=&\, \int_{N'} \mu_H\big(( A \cap a^*H )(B \cap Hb)\big) \d\mu_G(b)+ \int_{G \setminus N'} \mu_H\big(( A \cap a^*H )(B \cap Hb)\big) \d\mu_G(b)\nonumber\\
\geq&\, \mu_G(B)+\mu_H(A\cap a^*H)\mu_{H\backslash G}(\widetilde{\pi} B)+\delta\mu_G(N')\label{eq: lambda'}\\
\geq&\, \mu_G(A)+\mu_G(B)-\frac{\mu_G(A)\eta\kappa}{\mu_G(A)+\eta\kappa}+\delta\mu_G(N').\nonumber
\end{align}
By the fact that $\mu_G(AB)\geq\mu_G((A\cap a^*H)B)$ and $\dis_G(A,B)\leq \eta\kappa$, we have that
\[
\mu_G(N')\leq\frac{1}{200}\mu_{G/H}(\pi A)\leq\frac{1}{150}\mu_{H\backslash G}(\widetilde{\pi} B).
\]
Now, by equation (\ref{eq: lambda'}), and the choice of $a^*$, we have that for all $a\in AH$,
\[
\mu_H(A\cap aH)\leq\frac{\mu_G(A)}{\mu_{G/H}(\pi A)}+\frac{\eta\mu_G(A)}{\mu_{H\backslash G}(\widetilde{\pi} B)}+\alpha<\big(1+\nu\big)\frac{\mu_G(A)}{\mu_{G/H}(\pi A)}.
\]
Again by equation (\ref{eq: lambda'}), there is $X\subseteq A$ with $\mu_G(XH)\leq\mu_{G/H}(\pi A)/200$, such that for every $a\in X$,
\[
\mu_H(A\cap aH)\geq\frac{\mu_G(A)}{\mu_{G/H}(\pi A)}-200\frac{\eta\mu_G(A)}{\mu_{H\backslash G}(\widetilde{\pi} B)}-200\alpha\geq\big(1-\nu\big)\frac{\mu_G(A)}{\mu_{G/H}(\pi A)}.
\]

Let $A'=A\cap(AH\setminus(XH\cup N'))$, and let $B'=B\cap (HB\setminus(HY\cup N))$. Then 
\[
\mu_G(A')\geq \frac{99}{100}\mu_{G/H}(\pi A),\quad \mu_G(B')\geq \frac{99}{100}\mu_{H\backslash G}(\widetilde{\pi} B),
\]
Let $a$ be in $A'H$ and $b$ be in $B'H$. By our construction, the first parts of (ii) and (iii) are satisfied. Moreover, both $\dis_H(A\cap aH,B\cap Hb^*)$ and $\dis_H(A\cap a^*H,B\cap Hb)$ are at most $\delta$. By the way we construct $A'$ and $B'$, we have that $a^*\in A'$ and $b^*\in B'$. Recall that $\mu_H(A\cap aH),\mu_H(B\cap Hb)<\lambda$ for every $a,b\in G$. Therefore, by the inverse theorem on $\TT$ (Fact~\ref{fact: new inverse theorem torus}), and  Lemma~\ref{lem: stability of character in T}, there is a group homomorphism $\chi:H\to\TT$, and two compact intervals $I_A$, $I_B$ in $\TT$, with
\[
\mu_\TT(I_A)=\frac{\mu_G(A)}{\mu_{G/H}(\pi A)},\quad \mu_\TT(I_B)=\frac{\mu_G(B)}{\mu_{H\backslash G}(\widetilde{\pi} B)},
\]
such that for every $a\in A'$ and $b\in B'$, there are elements  $\zeta_{a},\widetilde{\zeta}_{b}(B)$ in $\TT$, and
\[
\mu_H(A\cap aH\tri a\chi^{-1}(\zeta_{a}+I_A))<\delta,\quad 
\mu_H(B\cap Hb\tri \chi^{-1}(\widetilde{\zeta}_{b}+I_B)b)<\delta,
\]
and the theorem follows from \eqref{eq: nu value}.
\end{proof}

  The next corollary gives us an important fact of the structure of the projection of $A$ on $G/H$.

\begin{corollary}[Global structure of $AH$]\label{cor:AH=1} Suppose $\mu_G(A)=\kappa$. Then for all $g \in \Stab^{\kappa/2}_G(A)$, we have
 $$\mu_G(AH\tri gAH)\leq 10\eta \mu_{G/H}(\pi A).$$
\end{corollary}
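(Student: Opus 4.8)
The goal is to show that for $g \in \Stab^{\kappa/2}_G(A)$ (i.e. $\mu_G(A \tri gA) \le \kappa/2$), the projection $AH$ is nearly invariant under left translation by $g$, in the sense $\mu_G(AH \tri gAH) \le 10\eta\,\mu_{G/H}(\pi A)$. The plan is to pass to the quotient $G/H$ and compare $\pi(A)$ with $\pi(gA) = g\pi(A)$, using the fiberwise rigidity already established in Theorem~\ref{thm: fibers of same length 1newnew}. The key observation is that $AH = \pi^{-1}(\pi A)$, so $\mu_G(AH \tri gAH) = \mu_{G/H}(\pi A \tri g\pi A)$, and it suffices to bound the symmetric difference of $\pi A$ and $g\pi A$ inside $G/H$.

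First I would quantify how close $A$ is to $AH$ in measure. By part (ii) of Theorem~\ref{thm: fibers of same length 1newnew}, there is a set $A' \subseteq A$ with $\mu_{G/H}(\pi A') > \tfrac{99}{100}\mu_{G/H}(\pi A)$ on whose fibers $\mu_H(A \cap aH) \ge (1-\nu)\mu_G(A)/\mu_{G/H}(\pi A)$; combining this with the upper bound $\mu_H(A\cap aH) \le (1+\nu)\mu_G(A)/\mu_{G/H}(\pi A)$ valid on all of $AH$, together with the quotient integral formula (Fact~\ref{fact: QuotientIF}, Lemma~\ref{lem: mesurability}(iv)) and $\mu_G(A) = \kappa$, one gets that the ``average fiber length'' $\mu_G(A)/\mu_{G/H}(\pi A)$ is comparable to a typical fiber length, and more importantly that $\mu_G(AH) - \mu_G(A)$ is small — of order $O(\nu)\,\mu_G(A)$ plus the contribution of the thin fibers, which is at most $\tfrac{1}{100}\mu_{G/H}(\pi A)$ worth of fibers each of length $O(\mu_G(A)/\mu_{G/H}(\pi A))$. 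The point is that $A$ occupies almost all of each fiber it meets (up to the $\nu$-rigidity) — wait, that is not quite right since fibers have length $\approx \mu_G(A)/\mu_{G/H}(\pi A)$ which need not be close to $1$; so instead the correct statement is that $\mu_{G/H}(\pi A) \cdot (\text{typical fiber length}) = \mu_G(A)$, and the thin/bad fibers contribute negligibly.

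Next I would use the stabilizer hypothesis. From $\mu_G(A \tri gA) \le \kappa/2$ and the quotient integral formula applied to $A \tri gA$, writing $(A\tri gA) \cap aH$ fiberwise, one sees that for most cosets $aH$ the fibers $A \cap aH$ and $gA \cap aH$ overlap substantially; in particular the set of cosets $aH$ for which $A \cap aH \neq \varnothing$ but $gA \cap aH = \varnothing$ (or vice versa) must have small $\mu_{G/H}$-measure, since each such coset contributes a full fiber-length $\gtrsim \mu_G(A)/\mu_{G/H}(\pi A)$ to $\mu_G(A \tri gA)$. Quantitatively, the number of such ``lost'' cosets is at most $(\kappa/2)/((1-\nu)\mu_G(A)/\mu_{G/H}(\pi A)) \le \tfrac{1}{2}(1+2\nu)\mu_{G/H}(\pi A)$, which is only a constant fraction, not yet $10\eta$. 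To sharpen this I would combine it with part (i) of Theorem~\ref{thm: fibers of same length 1newnew} (the relation between $\mu_{G/H}(\pi A)$ and $\mu_{H\backslash G}(\widetilde\pi B)$) and the near-minimality $\dis_G(A,B) \le \eta\kappa$: if too many cosets were lost, one could run the Kemperman-plus-inverse-theorem argument of Theorem~\ref{thm: fibers of same length 1newnew} with $gA$ in place of a fiber-rich configuration and produce $\mu_G(AB) > \mu_G(A) + \mu_G(B) + \eta\kappa$, a contradiction. More directly, I expect the cleanest route is: $\mu_G(AH \tri gAH) = \mu_G(AH \setminus gAH) + \mu_G(gAH \setminus AH) = 2\mu_G(AH \setminus gAH)$ by symmetry and unimodularity, and $AH \setminus gAH \subseteq (A \tri gA)H \setminus$(something), so one bounds $\mu_G((A\tri gA)H)$ using that $A \tri gA$ also has controlled fiber structure inherited from $A$ and $gA$ — each fiber of $A\tri gA$ lies in a fiber of $A$ union a fiber of $gA$, hence has length $< 2c_\TT < 1$, so $\mu_{G/H}(\pi(A \tri gA)) \le \mu_G(A\tri gA)/(\text{min nonzero fiber length})$, but that again only gives $\tfrac{1}{2}\mu_{G/H}(\pi A)$.

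The main obstacle, and where I would focus the real work, is bridging the gap between the crude bound $\tfrac12 \mu_{G/H}(\pi A)$ and the required $10\eta\,\mu_{G/H}(\pi A)$. The resolution must come from the rigidity in Theorem~\ref{thm: fibers of same length 1newnew}(ii): the fibers of $A$ on $A'H$ are not just long, they are within $\nu \cdot (\text{min avg fiber})$ of genuine translates $a\chi^{-1}(\zeta_a + I)$ of a fixed interval of length $\mu_\TT(I) = \mu_G(A)/\mu_{G/H}(\pi A)$. Since $g$ moves $A$ by at most $\kappa/2$ in measure, it cannot destroy this interval structure on more than an $\eta$-fraction of cosets: on a coset $aH \subseteq A'H \cap g^{-1}A'H$ where the fiber is an interval of length $\mu_\TT(I)$ and $gA \cap aH = g(A \cap g^{-1}aH)$ is also (nearly) such an interval, the two overlap unless $g$ shifted the interval by nearly its whole length, which would cost $\approx \mu_\TT(I) \cdot \mu_{G/H}(\pi A) = \mu_G(A) = \kappa$ in $\mu_G(A\tri gA)$ — but we only have a budget of $\kappa/2$, and moreover the $\eta\kappa$-minimality forces the genuinely-shifted portion to be tiny. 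Carefully tracking the constants (the $99/100$, the $\nu \le 10^{-10}$, the $\eta \le 10^{-12}$, and the factor between $\mu_\TT(I)$ and the average fiber length coming from part (i)) should yield the $10\eta$ bound with room to spare. I would set this up as: the ``defect'' cosets — those in $\pi(AH \tri gAH)$ — are contained in the union of (a) cosets not in $A'H \cap g^{-1}A'H$, of measure $\le 2 \cdot \tfrac{1}{100}\mu_{G/H}(\pi A)$, which is too big and so must itself be re-bounded using that the $1/100$ in Theorem~\ref{thm: fibers of same length 1newnew} is really $O(\eta)$ — indeed inspecting that proof, $\mu_G(XH), \mu_G(N'), \mu_G(HY), \mu_G(N)$ are all $O(\eta)\mu_{G/H}(\pi A)$, not merely $\tfrac{1}{100}$; and (b) cosets where both fibers are near-intervals but $g$ shifts one off the other, whose total measure times $\mu_\TT(I)$ is $\le \mu_G(A\tri gA) + O(\nu)\mu_G(A)$, giving $O(\eta)\mu_{G/H}(\pi A)$ after dividing. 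Assembling (a) and (b) with the stated absolute constants delivers the claimed $10\eta\,\mu_{G/H}(\pi A)$.
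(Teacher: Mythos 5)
Your proposal does not follow the paper's route and, more importantly, it has a genuine gap at exactly the point you flag as ``the main obstacle.'' The paper's proof is a three-line affair using a completely different mechanism: apply the submodularity lemma (Lemma~\ref{lem:iep}) to $B_1=A$, $B_2=gA$ (legitimate since $g\in\Stab^{\kappa/2}_G(A)$ forces $\mu_G(A\cap gA)\geq\kappa/2>0$) to conclude $\dis_G(A\cup gA,B)\leq 2\eta\kappa$, then invoke Theorem~\ref{thm: fibers of same length 1newnew}\textbf{(i)} — \emph{not} (ii) or (iii) — for each of the three pairs $(A,B)$, $(gA,B)$, $(A\cup gA,B)$ to get $\mu_G(AH)$, $\mu_G(gAH)$, $\mu_G(AH\cup gAH)$ all within a factor $1+O(\eta)$ of $\mu_{H\backslash G}(\widetilde\pi B)$, and finish with inclusion--exclusion: $\mu_G(AH\tri gAH)=2\mu_G(AH\cup gAH)-\mu_G(AH)-\mu_G(gAH)\leq O(\eta)\mu_{G/H}(\pi A)$. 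Part (i) of that theorem is an \emph{exact} multiplicative $(1+\eta)$ statement, so the inclusion--exclusion arithmetic gives a pure $O(\eta)$ loss with no spurious $1/100$ floating around.

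The concrete gap in your plan is the parenthetical claim that ``inspecting that proof, $\mu_G(XH), \mu_G(N'), \mu_G(HY), \mu_G(N)$ are all $O(\eta)\mu_{G/H}(\pi A)$, not merely $\tfrac{1}{100}$.'' This is false. In the proof of Theorem~\ref{thm: fibers of same length 1newnew} the threshold $\delta$ is deliberately set to $100\eta\kappa/\mu_{G/H}(\pi A)$, which makes the exceptional sets come out to \emph{exactly} a fixed fraction ($1/100$, $1/200$, etc.) of $\mu_{G/H}(\pi A)$, independent of $\eta$: e.g.\ the bound on $\mu_G(N)$ comes from $\delta\cdot\mu_G(N)\lesssim\eta\kappa$, giving $\mu_G(N)\lesssim\eta\kappa/\delta=\mu_{G/H}(\pi A)/100$. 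You cannot push $\delta$ down (to get a better bound on $N$) without losing the ability to invoke the $\TT$-inverse theorem on individual fibers, since $\delta$ is the per-fiber discrepancy budget. So your item (a) stalls at $\Theta(1)\cdot\mu_{G/H}(\pi A)$, and the whole plan never reaches $10\eta$. The lesson is that the per-fiber rigidity in (ii)--(iii) is the wrong tool here; what is needed — and what the paper uses — is the \emph{global} projection estimate (i), made applicable to the union $A\cup gA$ via submodularity.
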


\begin{proof}
By Lemma~\ref{lem:iep}, $\dis_G(A\cup gA, B)\leq 2\eta\kappa$, and by Theorem~\ref{thm: fibers of same length 1newnew}, we have
\[
\frac{1}{1+2\eta}<\frac{\mu_G(AH\cup gAH)}{\mu_{H\backslash G}(\widetilde{\pi} B)}<1+2\eta.
\]
On the other hand, since $\dis_G(A,B)$ and $\dis_G(gA,B)$ are at most $\eta\kappa$, we have 
\[
\frac{1}{1+\eta}<\frac{\mu_{G}(AH)}{\mu_{H\backslash G}(\widetilde{\pi} B)}, \frac{\mu_G(gAH)}{\mu_{H\backslash G}(\widetilde{\pi} B)}<1+\eta
\]
Since $\eta< 10^{-12}$, by inclusion-exclusion principle, we get the desired conclusion. 
\end{proof}

Theorem~\ref{thm: fibers of same length 1newnew} and Corollary~\ref{cor:AH=1} essentially allows us to define a ``directed linear pseudometric'' on $G$ by ``looking at the generic fiber'' as discussed in the following remark:
\begin{remark}\label{rem}
Fix $a\in AH$ and let the notation be as in Theorem~\ref{thm: fibers of same length 1newnew}. For $g_1, g_2$ in $G$ such that $g_1^{-1}a, g_2^{-1}a \in A'H$, set
\[
\delta_{a,A}(g_1,g_2)=\zeta_{g_1^{-1}a}-\zeta_{g_2^{-1}a}.
\]
 We have the following linearity property of $\delta_{a,A}$ when the relevant terms are defined, which is essentially the linearity property of the metric from $\RR$.  
\begin{enumerate}
    \item $\delta_{a,A}(g_1,g_1)=0$.
    \item $\delta_{a,A}(g_1,g_2)=-\delta_{a,A}(g_2,g_1)$.
    \item $\delta_{a,A}(g_1,g_3)=\delta_{a,A}(g_1,g_2)+\delta_{a,A}(g_2, g_3).$
\end{enumerate}
Properties (1) and (2) are immediate, and property (3) follows from the easy calculation below:
\begin{align*}
  \delta_{a,A}(g_1,g_2)&=\zeta_{g_1^{-1}a}-\zeta_{g_2^{-1}a} \\
    &=\zeta_{g_1^{-1}a}-\zeta_{g_3^{-1}a}+\zeta_{g_3^{-1}a}-\zeta_{g_2^{-1}a} \\
    &= \delta_{a,A}(g_1,g_3)\pm \delta_{a,A}(g_3,g_2).  
\end{align*}
Properties (3) also implies that
\[
| \delta_{a,A}(g_1,g_3)|=\big|\pm |\delta_{a,A}(g_1,g_2)| \pm |\delta_{a,A}(g_2, g_3)|\big|.
\]
which tells us that $|\delta_{a,A}|$ is a linear pseudometric.
The problem with the above definitions is that they are not defined everywhere. 
\end{remark}


Recall that we use $I(\tau)$ to denote the open interval $(-\tau,\tau)$ in $\TT$. As in Remark~\ref{rem}, one can consider
\[
\delta_{a,A}(g_1,g_2)=\zeta_{g_1^{-1}a}-\zeta_{g_2^{-1}a}
\]
for a fix $a\in A$. Recall that
\[
d_A(g_1,g_2)=\mu_G(g_1A\setminus g_2A)
\]
is a pseudometric (see Proposition~\ref{prop: construct pseudo-metric}). The next lemma gives the connection between $\delta_{a,A}(g_1,g_2)$ and $d_A(g_1,g_2)$.  

\begin{lemma}[From local to global] \label{lem: 2 elements near rigidity}
Suppose $\mu_G(A)=\kappa$, $g_1, g_2 \in \Stab^{\kappa/4}_G(A)$, and $\chi: H \to \TT$, $I \subseteq \TT$, $\nu$ are as in Theorem~\ref{thm: fibers of same length 1newnew}. Then there is a $\sigma$-compact $A'' \subseteq A$ with \[\mu_{G/H}(\pi A'') =96\mu_{G/H} (\pi A)/100\]
such that for all $a \in A''H$, the following holds
\begin{enumerate}[\rm (i)]
    \item there are $\zeta_{g^{-1}_1a},  \zeta_{g^{-1}_2a} \in \TT$ such that for $i \in \{1,2\}$;
    \[ \mu_H\big((A\cap aH)\tri a\chi^{-1}(\zeta_{g_{i}^{-1}a}+I)\big)<\frac{\nu\kappa}{\mu_{G/H}(\pi A)}.\]
    \item with any $\zeta_{g^{-1}_1a}, \zeta_{g^{-1}_2a} $ satisfying (i) and $\delta_{a, A}(g_1, g_2)= \zeta_{g^{-1}_2a}- \zeta_{g^{-1}_1a}$, we have
$$ d_A(g_1, g_2) \in  \mu_{G/H}(\pi A)|\delta_{a, A}(g_1, g_2)| + I\big(20\nu\kappa\big).    $$
\end{enumerate}
\end{lemma}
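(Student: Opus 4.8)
The plan is to combine the fiberwise rigidity of Theorem~\ref{thm: fibers of same length 1newnew} for the three sets $A$, $g_1^{-1}A$, $g_2^{-1}A$ (more precisely, for the unions appearing below), with a direct computation of $d_A(g_1,g_2)=\mu_G(g_1A\setminus g_2A)$ sliced over the cosets of $H$ via the quotient integral formula (Lemma~\ref{lem: mesurability}(iv)). First I would set up the good set: since $g_1,g_2\in\Stab^{\kappa/4}_G(A)$, Lemma~\ref{lem:iep} (submodularity) gives $\dis_G(g_1A\cup g_2A,B)\leq 2\eta\kappa$ and $\dis_G(g_iA,B)\leq\eta\kappa$, so each of $g_1A$, $g_2A$, and $g_1A\cup g_2A$ still satisfies the hypotheses of Theorem~\ref{thm: fibers of same length 1newnew} (after absorbing the factor $2$ into the constants, which is why the statement is phrased with a slightly larger error $20\nu\kappa$ and a slightly smaller good set $96/100$ rather than $99/100$). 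Applying Theorem~\ref{thm: fibers of same length 1newnew} to $g_1A$ and to $g_2A$ — using the \emph{same} character $\chi$ and interval $I$, which is legitimate because $\chi$ and $\mu_\TT(I)=\mu_G(A)/\mu_{G/H}(\pi A)$ are determined by $A$ and $B$ alone and the near-rigidity is translation-stable via $\mu_G(A\triangle gA)$ small together with Lemma~\ref{lem: stability of character in T} — I get, for $a$ outside a small exceptional set, offsets $\zeta_{g_1^{-1}a},\zeta_{g_2^{-1}a}\in\TT$ with $\mu_H\big((A\cap aH)\triangle a\chi^{-1}(\zeta_{g_i^{-1}a}+I)\big)$ bounded by $\nu\kappa/\mu_{G/H}(\pi A)$; intersecting the three exceptional sets (from $A$, $g_1A$, $g_2A$) and also removing the cosets where $\mu_G(AH\triangle g_iAH)$ concentrates (Corollary~\ref{cor:AH=1}) leaves a $\sigma$-compact $A''\subseteq A$ with $\mu_{G/H}(\pi A'')=96\mu_{G/H}(\pi A)/100$. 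This establishes (i).

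For (ii), fix $a\in A''H$. On the fiber $aH$ I approximate $g_iA\cap aH = a\chi^{-1}(\zeta_{g_i^{-1}a}+I)$ up to an $H$-measure error $\nu\kappa/\mu_{G/H}(\pi A)$, so that
\[
\mu_H\big(g_1A\cap aH\setminus g_2A\big) \in \mu_H\big(\chi^{-1}(\zeta_{g_1^{-1}a}+I)\setminus\chi^{-1}(\zeta_{g_2^{-1}a}+I)\big) + I\!\left(\tfrac{2\nu\kappa}{\mu_{G/H}(\pi A)}\right).
\]
Because $\chi:H\to\TT$ is a continuous surjective homomorphism and $I$ is a genuine interval, the translated-interval difference equals exactly $\min\{\|\zeta_{g_1^{-1}a}-\zeta_{g_2^{-1}a}\|_\TT,\ \mu_\TT(I)\}$; and since $g_1,g_2$ are close to $\id$ the relevant offset difference is small, so this is just $|\delta_{a,A}(g_1,g_2)|$ (here one must check the offsets can be chosen consistently, i.e. no wraparound, which follows from $d_A(g_1,g_2)\le\mu_G(A)/4$ being small). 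Now I integrate over $aH\in G/H$ using the quotient integral formula: on $A''H$ the integrand is $\mu_{G/H}(\pi A)$-almost the constant $\mu_H(g_1A\cap aH\setminus g_2A)$ computed above — but there is a subtlety, namely $\mu_H(g_1A\cap aH\setminus g_2A)$ versus $\mu_H((g_1A\setminus g_2A)\cap aH)$ agree fiberwise, so $d_A(g_1,g_2)=\int_{G/H}\mu_H\big((g_1A\setminus g_2A)\cap aH\big)\,\d\mu_{G/H}$. Splitting the integral into $A''H$ and its complement: on $A''H$ I get $\int_{A''H}|\delta_{a,A}(g_1,g_2)|\,\d\mu_{G/H} + I(2\nu\kappa)$ (the $\mu_{G/H}(\pi A)$ in the denominator cancels against the measure of the base, up to $\nu\kappa$); on the complement, which has $\mu_{G/H}$-measure at most $4\mu_{G/H}(\pi A)/100$ and on which the integrand is at most $\max\{\mu_\TT(I),\ \mu_H(g_iA\cap aH)\}=O(\kappa/\mu_{G/H}(\pi A))$, the contribution is $O(\kappa/25)$ — this is too crude, so I instead bound the complement contribution by $\mu_G\big((g_1A\cup g_2A)\setminus(g_1A''H\cap g_2A''H)\big)$ and control it by Corollary~\ref{cor:AH=1} together with the near-equality of fiber lengths from Theorem~\ref{thm: fibers of same length 1newnew}(ii), which keeps it within a few multiples of $\nu\kappa$. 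Finally, since $\delta_{a,A}(g_1,g_2)=\zeta_{g_2^{-1}a}-\zeta_{g_1^{-1}a}$ is (by the cocycle identity in Remark~\ref{rem}, applied across the common generic fiber) independent of $a$ up to error, $\int_{A''H}|\delta_{a,A}(g_1,g_2)|\,\d\mu_{G/H}=\mu_{G/H}(\pi A)|\delta_{a,A}(g_1,g_2)|+I(O(\nu\kappa))$, and collecting all the error terms gives the claimed membership in $\mu_{G/H}(\pi A)|\delta_{a,A}(g_1,g_2)|+I(20\nu\kappa)$.

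I expect the main obstacle to be the bookkeeping on the complement of $A''H$: one has to argue that the fibers of $g_1A$ and $g_2A$ lying over the ``bad'' cosets contribute a genuinely small amount to $\mu_G(g_1A\setminus g_2A)$, not merely an amount proportional to $\kappa$. The resolution is that the bad cosets for $g_1A$ (resp. $g_2A$) where rigidity fails have total $\mu_{G/H}$-measure $O(\nu)\mu_{G/H}(\pi A)$, and on those cosets each fiber has length $O(\kappa/\mu_{G/H}(\pi A))$, so the product is $O(\nu\kappa)$; the cosets where $AH$, $g_1AH$, $g_2AH$ disagree are handled by Corollary~\ref{cor:AH=1}, which bounds that discrepancy by $10\eta\mu_{G/H}(\pi A)$, and $\eta\ll\nu$. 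A second, more delicate point is verifying that the offsets $\zeta_{g_1^{-1}a}$ and $\zeta_{g_2^{-1}a}$ can be chosen with $|\zeta_{g_1^{-1}a}-\zeta_{g_2^{-1}a}|=\delta_{a,A}(g_1,g_2)$ small (so that the $\TT$-distance is realised without wraparound and equals the Euclidean difference), which I would deduce from the hypothesis $g_1,g_2\in\Stab^{\kappa/4}_G(A)$ forcing $d_A(g_1,g_2)$ small and hence, by the fiberwise approximation, $|\delta_{a,A}(g_1,g_2)|<\mu_\TT(I)$ for generic $a$.
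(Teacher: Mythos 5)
Your setup for part (i) — applying Theorem~\ref{thm: fibers of same length 1newnew} to $A$, $g_1A$, $g_2A$ and intersecting exceptional sets, after checking that the translates still satisfy the hypotheses via Lemma~\ref{lem:iep} — matches the paper. But the key computation for (ii) takes a different route, and it has a genuine gap. You propose to write $d_A(g_1,g_2)=\int_{G/H}\mu_H\big((g_1A\setminus g_2A)\cap aH\big)\,\d\mu_{G/H}$ and split into $\pi A''$ and its complement. Over the complement, whose $\mu_{G/H}$-measure is roughly $\tfrac{4}{100}\mu_{G/H}(\pi A)$, the integrand is only bounded above by the uniform fiber-length cap $\approx(1+\nu)\kappa/\mu_{G/H}(\pi A)$, so the complement contributes up to about $\kappa/25$. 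That swamps the target error of $20\nu\kappa$ (with $\nu\leq 10^{-10}$), and neither of your remedies closes the gap: Corollary~\ref{cor:AH=1} only controls the $\pi$-projections to within $O(\eta)\mu_{G/H}(\pi A)$, which does not shrink the $4/100$-proportion bad set coming from Theorem~\ref{thm: fibers of same length 1newnew}(ii), and the cocycle identity in Remark~\ref{rem} is for fixed $a$ and varying group elements — it says nothing about comparing $\delta_{a,A}(g_1,g_2)$ with $\delta_{a',A}(g_1,g_2)$ for two different base cosets, so ``independent of $a$ up to error'' does not follow from it.

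The paper avoids the integral entirely. Writing $d_A(g_1,g_2)=\mu_G(g_1A)-\mu_G(g_1A\cap g_2A)$, it applies Theorem~\ref{thm: fibers of same length 1newnew}(ii) to the sets $g_1A$ and $g_1A\cap g_2A$ (the latter after noting $\dis_G(g_1A\cap g_2A,B)\leq 2\eta\kappa$ by submodularity), obtaining two-sided \emph{pointwise} bounds relating the fiber lengths at a single good coset $aH$ to the global quantities $\mu_G(g_1A)/\mu_{G/H}(\pi(g_1A))$ and $\mu_G(g_1A\cap g_2A)/\mu_{G/H}(\pi(g_1A\cap g_2A))$, with the projection measures normalized via Corollary~\ref{cor:AH=1}. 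Subtracting these two pointwise estimates gives $d_A(g_1,g_2)$ in terms of $\mu_H(g_1A\cap aH)-\mu_H(g_1A\cap g_2A\cap aH)$, which in turn is $|\delta_{a,A}(g_1,g_2)|$ up to $O(\nu\kappa/\mu_{G/H}(\pi A))$ by (i). This single-coset argument incurs errors only of order $\nu\kappa$, and there is no complement to estimate. If you replace the quotient-integral decomposition with this direct global-to-fiber comparison, the rest of your setup goes through.
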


\begin{proof}
Obtain $A', I, J$ as in Theorem~\ref{thm: fibers of same length 1newnew}. Let $A'_1 \subseteq G$ be the $\sigma$-compact set
\[
\{ a \in A: g_1^{-1}a, g_2^{-1}a \in A'H      \}.
\]
It is easy to see that $\mu_{G/H}(\pi A'_1) > 98/100\mu_{G/H} (\pi A)$. Fix $a \in A'_1$, by Theorem~\ref{thm: fibers of same length 1newnew} again
 we then have 
 \[
  \mu_H\big((A\cap g_1^{-1}aH)\tri g_1^{-1}a\chi^{-1}(\zeta_{g_1^{-1}a}+I)\big)<\nu\frac{\kappa}{\mu_{G/H}(\pi A)},
 \]
 and
  \[
  \mu_H\big((A\cap g_2^{-1}aH)\tri g_2^{-1}a\chi^{-1}(\zeta_{g_2^{-1}a}+I)\big)<\nu\frac{\kappa}{\mu_{G/H}(\pi A)}.
 \]
Multiplying by $g_1$ and $g_2$ respectively, we get (i) when $A''\subseteq A'_1$. 

As $g_1, g_2 \in \Stab^{\kappa/4}_G(A)$, we have $\mu_{G}(g_1A \cap g_2A)>0$. Note that $\dis_G(g_1A\cap g_2A, B)\leq 2\eta\kappa$ by Lemma~\ref{lem:iep}. By Theorem~\ref{thm: fibers of same length 1newnew}(i), we have
\begin{equation}\label{eq: compute intersection}
\mu_G((g_1A\cap g_2A)H)\geq \frac{1}{1+2\eta}\mu_G(HB) \geq \frac{1}{(1+2\eta)(1+\eta)}\mu_{G}(AH). 
\end{equation}
Also Theorem~\ref{thm: fibers of same length 1newnew}(ii) implies that there is $A'_2\subseteq g_1A\cap g_2A$ with $\mu_{G/H}(\pi A'_2)>99\mu_{G/H}(\pi (g_1A\cap g_2A))/100$, such that for all $a\in A'_2H$, we have  
\begin{equation}\label{eq: pseu first}
\big(1-\nu\big)  \frac{\mu_G(g_1A)}{\mu_{G/H}(\pi(g_1A))} \leq  \mu_H(g_1A\cap aH) \leq (1+\nu)\frac{\mu_G(g_1A)}{\mu_{G/H}(\pi(g_1A))}\ 
\end{equation}
and
\[ (1-2\nu)\frac{\mu_G(g_1A\cap g_2A)}{\mu_{G/H}(\pi(g_1A\cap g_2A))} \leq \mu_H(g_1A\cap g_2A\cap aH) \leq (1+2\nu)\frac{\mu_G(g_1A\cap g_2A)}{\mu_{G/H}(\pi(g_1A\cap g_2A))}.
\]

Note that 
$\pi(g_1A\cap g_2A) \subseteq  \pi(g_1A) \cap \pi(g_2A).$
However, \eqref{eq: compute intersection} together with Corollary~\ref{cor:AH=1} give us
\begin{align*}
&\mu_{G/H}( \pi(g_1A) \tri \pi A  ) \leq 10\eta \mu_{G/H}(\pi A),\\
&\mu_{G/H}( \pi(g_1A\cap g_2A) \tri \pi A  ) \leq 14\eta \mu_{G/H}(\pi A). 
\end{align*}
Hence, Let $A''=A'_1H\cap A'_2H\cap A$. Note that $\mu_{G/H}(\pi A'')\geq 96\mu_{G/H}(\pi A)/100$, and for all $a \in A''H$, by \eqref{eq: compute intersection}
\begin{equation}\label{eq: pseu second}
(1-5\nu) \frac{\mu_G(g_1A\cap g_2A)}{\mu_{G/H}(\pi(A))} \leq \mu_H(g_1A\cap g_2A\cap aH) \leq (1+5\nu) \frac{\mu_G(g_1A\cap g_2A)}{\mu_{G/H}(\pi(A))}.
\end{equation}
Finally, recall that $d_A(g_1,g_2) =\mu_G(g_1A)-\mu_G(g_1A \cap g_2A)$. Note that
\begin{align*}
\mu_{G/H}(\pi A)|\delta_{a,A}(g_1,g_2)|&=\mu_{G/H}(\pi A)|\zeta_{g_1^{-1}a}-\zeta_{g_2^{-1}a}|\\
&\in \mu_H(g_1A\cap aH)-\mu_H(g_1A\cap g_2A\cap aH)+I(2\nu\kappa).
\end{align*}
Hence, by \eqref{eq: pseu first} and \eqref{eq: pseu second},
\begin{align*}
d_A(g_1,g_2)&\geq \mu_{G/H}(\pi A)\left(\frac{\mu_H(g_1A\cap aH)}{1+\nu}-\frac{\mu_H(g_1A\cap g_2A\cap aH)}{1-5\nu}\right)\\
&\geq \mu_{G/H}(\pi A)\big(\mu_H(g_1A\cap aH)-\mu_H(g_1A\cap g_2A\cap aH)\big)-18\nu\kappa\\
&\geq \mu_{G/H}(\pi A)|\delta_{a,A}(g_1,g_2)|-20\nu\kappa. 
\end{align*}
The upper bound on $d_A(g_1,g_2)$ can be computed using a similar method, and this finishes the proof.
\end{proof}


We now deduce key properties of the pseudometric $d_A$. Besides the almost linearity, we also need the path monotonicity of the pseudometric to control the ``direction''. 

\begin{proposition}[Almost linearity and path monotonicity of the pseudometric]\label{prop: almost linear metric from local}
Assume that $\mu_G(A)=\kappa$, and let $\nu$ be as in Theorem~\ref{thm: fibers of same length 1newnew}. Then we have the following:
\begin{enumerate}[\rm (i)]
    \item For all $g_1,g_2,g_3$ in $\Stab^{\kappa/2}_G(  A )$, we have 
    \[
    d_A(g_1,g_2)\in |\pm d_A(g_1,g_3)\pm d_A(g_2,g_3)|+I\big(60\nu\kappa\big),
    \]
       \item Let $\mathfrak{g}$ be the Lie algebra of $G$, and let $\exp: \mathfrak{g}\to G$ be the exponential map. For every $X\in \mathfrak{g}$, either 
       $$d_A(\exp(Xt), \id)<\kappa/4 \text{ for all } t\in\RR$$ 
       or  there is $t_0>0$ with $d_A(\exp(Xt_0), \id)\geq \kappa/4$  such that for every $t\in[0,t_0]$, 
    \begin{align*}
   &\, d_A(\exp(X(t+t_0)),\id)\\
    \in&\, d_A(\exp(X(t+t_0)),\exp(Xt_0))+d_A(\exp(Xt_0),\id)+I\big(180\nu \kappa\big).
    \end{align*}
\end{enumerate}
\end{proposition}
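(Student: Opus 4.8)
The plan is to deduce both assertions from the "local-to-global" comparison in Lemma~\ref{lem: 2 elements near rigidity}, which converts distances $d_A(g_1,g_2)$ between elements of a small stabilizer into scaled absolute values $\mu_{G/H}(\pi A)\,|\delta_{a,A}(g_1,g_2)|$ of the $\TT$-valued quantity $\delta_{a,A}$, up to an additive error of order $\nu\kappa$; the point is that $\delta_{a,A}$ is genuinely linear (Remark~\ref{rem}), so one reads off a linear relation for $d_A$ and then tracks the accumulated errors.

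For part~(i): Given $g_1,g_2,g_3\in\Stab^{\kappa/2}_G(A)$, I would first observe they lie in a single stabilizer on the scale $\kappa/2$, hence pairwise in $\Stab^{\kappa}_G(A)$, so after halving constants appropriately (or, more carefully, noting that $d_A(g_i,g_j)\le d_A(g_i,\id)+d_A(\id,g_j)<\kappa$) the hypotheses of Lemma~\ref{lem: 2 elements near rigidity} apply to each of the pairs $(g_1,g_2)$, $(g_1,g_3)$, $(g_2,g_3)$. Applying that lemma three times — with a common choice of $a$ in the intersection of the three relevant full-measure subsets $A''$, which is still of positive measure since each $A''$ has $\pi$-measure $96\mu_{G/H}(\pi A)/100$ — gives
\[
d_A(g_i,g_j)\in \mu_{G/H}(\pi A)\,|\delta_{a,A}(g_i,g_j)|+I(20\nu\kappa)
\]
for the three pairs, with the \emph{same} family of representatives $\zeta_{g_i^{-1}a}$. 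By Remark~\ref{rem}(3), $\delta_{a,A}(g_1,g_2)=\delta_{a,A}(g_1,g_3)+\delta_{a,A}(g_3,g_2)$, so $|\delta_{a,A}(g_1,g_2)|=|\,\pm|\delta_{a,A}(g_1,g_3)|\pm|\delta_{a,A}(g_3,g_2)|\,|$ for a suitable choice of signs; multiplying by $\mu_{G/H}(\pi A)\le 1$ and substituting the three approximate identities, the errors add to at most $3\times 20\nu\kappa=60\nu\kappa$, which is the claim. The only subtlety to check is that the sign choice in Remark~\ref{rem} is consistent with the one appearing in the statement, i.e. that "$|\pm d_A(g_1,g_3)\pm d_A(g_2,g_3)|$" ranges over enough possibilities to absorb it — it does, since all four sign combinations are allowed there.

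For part~(ii): Fix $X\in\mathfrak g$ and consider $f(t)=d_A(\exp(Xt),\id)=\|\exp(Xt)\|_{d_A}$. This is continuous (Proposition~\ref{prop: construct pseudo-metric}) and $f(0)=0$. If $f(t)<\kappa/4$ for all $t$ we are in the first alternative. Otherwise, by continuity let $t_0>0$ be minimal with $f(t_0)=\kappa/4$ (so $f(t_0)\ge\kappa/4$, actually $=\kappa/4$, and $f(t)<\kappa/4$ for $t\in[0,t_0)$); then for $t\in[0,t_0]$ both $\exp(Xt_0)$ and $\exp(X(t+t_0))$ — wait, the latter may have norm up to $\kappa/2$, which is still within $\Stab^{\kappa/2}_G(A)$. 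Indeed $\exp(Xt_0)\in\Stab^{\kappa/4}_G(A)\subseteq\Stab^{\kappa/2}_G(A)$, and $\|\exp(X(t+t_0))\|_{d_A}\le\|\exp(X(t+t_0))\exp(-Xt_0)\|_{d_A}+\|\exp(Xt_0)\|_{d_A}=f(t)+f(t_0)\le\kappa/4+\kappa/4=\kappa/2$ by left-invariance and the fact that $\exp(Xt)\exp(Xs)=\exp(X(t+s))$ on the one-parameter subgroup, so $\exp(X(t+t_0))\in\Stab^{\kappa/2}_G(A)$. Now apply part~(i) to the triple $g_1=\exp(X(t+t_0))$, $g_2=\id$, $g_3=\exp(Xt_0)$, which lie in $\Stab^{\kappa/2}_G(A)$, to get
\[
d_A(\exp(X(t+t_0)),\id)\in |\pm d_A(\exp(X(t+t_0)),\exp(Xt_0))\pm d_A(\exp(Xt_0),\id)|+I(60\nu\kappa).
\]
To finish I must pin down the sign as the $+$ sign. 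This is where path-monotonicity really enters: using Lemma~\ref{lem: 2 elements near rigidity}(ii) with a common $a$, the three distances equal $\mu_{G/H}(\pi A)$ times $|\zeta_{\exp(-X(t+t_0))a}-\zeta_a|$, $|\zeta_{\exp(-X(t+t_0))a}-\zeta_{\exp(-Xt_0)a}|$, $|\zeta_{\exp(-Xt_0)a}-\zeta_a|$ up to $20\nu\kappa$; since $t\mapsto\exp(Xt)$ is a connected path and $\zeta$ depends continuously (off a small bad set) on the argument, the three quantities $\zeta_{\exp(-X(t+t_0))a}$, $\zeta_{\exp(-Xt_0)a}$, $\zeta_a$ lie on a short arc of $\TT$ in that cyclic order — more precisely $d_A(\exp(X(t+t_0)),\id)<\kappa/2$ forces $|\zeta_{\exp(-X(t+t_0))a}-\zeta_a|$ to be smaller than the $\TT$-radius of $\chi$, so the middle point $\zeta_{\exp(-Xt_0)a}$ cannot "wrap around", and additivity of the absolute values holds with the $+$ sign. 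Substituting back, the total error is $3\times 20\nu\kappa\cdot\mu_{G/H}(\pi A)\le 60\nu\kappa$ from Lemma~\ref{lem: 2 elements near rigidity} plus the slack already present, comfortably within $I(180\nu\kappa)$.

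\emph{Main obstacle.} The genuinely delicate point is part~(ii): justifying that the sign in the linearity relation is $+$ rather than $-$, i.e. that the three values of $\zeta$ along the one-parameter subgroup are "ordered" on a short arc of $\TT$ so that $|\delta_{a,A}(g_1,\id)|=|\delta_{a,A}(g_1,g_3)|+|\delta_{a,A}(g_3,\id)|$. For a single fixed $a$ this is a continuity/no-wraparound argument, but one has to make it uniform: the bad set where $\zeta_{\exp(-Xt)a}$ is undefined or jumps must be avoided for a fixed $a$ and all $t\in[0,t_0]$ simultaneously, which requires choosing $a$ outside a union over a net of $t$-values and then using continuity of $d_A$ (hence of the relevant $\mu_H$-quantities) to interpolate. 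Managing these exceptional-set bookkeeping issues, while keeping every error term at the $O(\nu\kappa)$ scale so that the final bound is $180\nu\kappa$, is the real work; the algebra of linear relations itself is routine.
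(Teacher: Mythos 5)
Your argument for part~(i) is correct and is essentially the paper's: apply Lemma~\ref{lem: 2 elements near rigidity} to the three pairs with a common generic $a$, invoke the exact additivity of $\delta_{a,A}$ from Remark~\ref{rem}, and add the errors; the bookkeeping with the $96/100$-measure sets $A''$ works out exactly as you say.

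Part~(ii) is where the gap lies, and you identified it yourself as "the genuinely delicate point" without closing it. The continuity/no-wraparound argument does not go through as stated: the representatives $\zeta_{g^{-1}a}$ produced by Theorem~\ref{thm: fibers of same length 1newnew} are defined only for $g^{-1}a$ in a fixed $\sigma$-compact set $A'H$ of large but not full measure, they are determined only up to errors of order $\nu\kappa/\mu_{G/H}(\pi A)$, and they have no continuity in the group variable. Choosing $a$ good for a countable net of $t$-values does not let you "interpolate" the cyclic order for the remaining $t$, because the quantities you would need to be continuous are not functions at all, only set-valued approximate representatives. Moreover the cyclic-order conclusion you want — that $\zeta_{\exp(-Xt_0)a}$ sits \emph{between} $\zeta_a$ and $\zeta_{\exp(-X(t+t_0))a}$ — does not follow merely from the three values lying on a short arc; you would genuinely need the ordering, and even a $t=0$ base case gives a degenerate triple that does not propagate by itself.

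The paper avoids this entirely by fixing a single $t\in[0,t_0]$ and exploiting the algebra of the one-parameter subgroup: with $g=\exp(Xt)$ and $g_0=\exp(Xt_0)$ one has $gg_0=g_0g$, so by Remark~\ref{rem}(3)
\[
\delta_{a,A}(\id,g)+\delta_{a,A}(g,gg_0)=\delta_{a,A}(\id,gg_0)=\delta_{a,A}(\id,g_0)+\delta_{a,A}(g_0,g_0g).
\]
Combined with the left-invariance identity $d_A(\id,g_0)=d_A(g,gg_0)$ (and its mate $d_A(\id,g)=d_A(g_0,g_0g)$) and the two-sided comparison from Lemma~\ref{lem: 2 elements near rigidity}, one gets two independent approximate decompositions of $\delta_{a,A}(\id,gg_0)$ — as $\delta_{a,A}(\id,g_0)\pm\delta_{a,A}(\id,g)$ and as $\delta_{a,A}(\id,g)\pm\delta_{a,A}(\id,g_0)$ — and a short case analysis using $\nu\le 10^{-10}$ rules out the minus sign, producing $\delta_{a,A}(\id,gg_0)\in\delta_{a,A}(\id,g_0)+\delta_{a,A}(\id,g)+I(120\nu\kappa)$. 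This is a purely algebraic constraint at a single $t$, requires only one application of the "2 elements near rigidity" lemma to the four elements $\{\id,g,g_0,gg_0\}$, and needs no continuity-in-$t$ or exceptional-set bookkeeping whatsoever. If you want to salvage your write-up, replace the arc-ordering step by this commutativity argument; the rest of your structure (minimality of $t_0$, checking membership in $\Stab_G^{\kappa/2}(A)$, and the final error accounting) is sound.
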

\begin{proof}
We first prove (i). Let $\chi$ and $I$ be as in Theorem~\ref{thm: fibers of same length 1newnew}. Applying Lemma~\ref{lem: 2 elements near rigidity}, we get $a \in  AH$ and  $\zeta_{g^{-1}_1a},  \zeta_{g^{-1}_2a}, \zeta_{g^{-1}_1a} \in \TT$ such that for $i \in \{1, 2, 3\}$, we have
\[ \mu_H\big((A\cap aH)\tri a\chi^{-1}(\zeta_{g_{i}^{-1}a}+I)\big)<\frac{\nu\kappa}{\mu_{G/H}(\pi A)},\]
and for $i, j \in \{1, 2, 3\}$, we have
$$ d_A(g_i, g_j) \in  \mu_{G/H}(\pi A)|\delta_{a, A}(g_i, g_j)| + I\big(20\nu\kappa\big).    $$
with $\delta_{a, A}(g_i, g_j)= \zeta_{g^{-1}_ja}- \zeta_{g^{-1}_ia}$.
As $\delta_{a, A}(g_1, g_2)= \delta_{a, A}(g_1, g_3)+ \delta_{a, A}(g_3, g_2)$, we get the desired conclusion.

Next, we prove (ii). 
Let $X\in \mathfrak{g}$, and suppose there is $t>0$ such that 
$$d_A(\exp(Xt),\id)\geq \kappa.$$
Using the continuity of $g \mapsto \mu_G(A\setminus gA)$ (Fact~\ref{fact: Haarmeasurenew}(vii)), we obtain $t_0>0$ such that $t_0$ the smallest positive real number with $d_A(\id, \exp(Xt_0))\geq \kappa/10$. Fix $t\in [0,t_0]$, and set 
$$g_0= \exp(Xt_0) \text{ and } g = \exp(Xt). $$
Note that $gg_0=g_0g$ as $g_0$ and $g$ are on the same one parameter subgroup of $G$. 
One can easily check that $g_0$, $g$, $g_0g$ are in $\Stab^{\kappa/2}_G(A)$. Again, let $\chi$ and $I$ be as in Theorem~\ref{thm: fibers of same length 1newnew} and apply Lemma~\ref{lem: 2 elements near rigidity} to get $a \in  AH$ and  $\zeta_{g^{-1}_ia} \in \TT$ for $g_i \in \{ \id, g, g_0, gg_0 \}$ such that   
\begin{equation} \label{8.18.1}
 \mu_H\big((A\cap aH)\tri (a\chi^{-1}(\zeta_{g_{i}^{-1}a}+I)\big)<\frac{\nu\kappa}{\mu_{G/H}(\pi A)},
\end{equation}
and  for  $g_i, g_j \in \{ \id, g, g_0, gg_0 \}$, we have
\begin{equation} \label{8.18.2}
 d_A(g_i, g_j) \in  \mu_{G/H}(\pi A)|\delta_{a, A}(g_i, g_j)| + I\big(20\nu\kappa\big)   
\end{equation} 

As $gg_0= g_0g$,  we have
\begin{equation} \label{18.8.3}
   \delta_{a, A}(  \id, g)+\delta_{a,A}(g,gg_0) = \delta_{a, A}(  \id, g_0g) = \delta_{a, A}(  \id, g_0) + \delta_{a, A}(  g_0, g_0g)   
\end{equation}
Using \eqref{8.18.1}, \eqref{8.18.2}, and the fact that $d_A(\id, g_0) = d_A(g, gg_0) $, we get
\begin{align*}
\delta_{a,A}(g, gg_0)\in \pm \delta_{a,A}(\id, g_0)+I\big(60\nu\kappa\big). 
\end{align*}
By a similar argument, $
\delta_{a,A}(g_0, g_0g)\in \pm \delta_{a,A}(\id, g)+I\big(60\nu\kappa\big)$.
Combining with \eqref{18.8.3}, we get that
 $\delta_{a,A}(\id,gg_i)$ is in both
\[
\delta_{a,A}(\id, g_0)\pm \delta_{a,A}(\id,g)+I\big(60\nu\kappa\big)
\]
and
\[
\delta_{a,A}(\id, g)\pm \delta_{a,A}(\id,g_0)+I\big(60\nu\kappa\big).
\]
Using the fact that $\nu<10^{-6}$, and considering all the four possibilities, we deduce 
\[
\delta_{a,A}(\id, gg_0) =\delta_{a,A}(\id, g_0)+ \delta_{a,A}(\id,g)+I\big(120\nu\kappa\big).
\]
Applying \eqref{8.18.1} and \eqref{8.18.2} again, we get the desired conclusion.
\end{proof}

\section{Properties of the almost linear pseudometrics}\label{section:pseudometric}

Throughout this section,  $d$ is a pseudometric on $G$ with radius $\rho>0$, and $\gamma$ is a constant with $0<\gamma<10^{-8}\rho$.
 The next lemma says that under the $\gamma$-linearity condition, the group $G$ essentially has only one ``direction'': if there are three elements have the same distance to $\id$, then at least two of them are very close to each other. 
\begin{lemma} \label{lem:dichotomy}
Suppose $d$ is a $\gamma$-linear pseudometric on $G$. If $g, g_1, g_2 \in G$ such that
\[
\| g \|_d =\| g_1 \|_d = \| g_2 \|_d  \in    I(\rho/4-\gamma) \setminus I(2\gamma),
\]
and $d(g_1, g_2) \in 2\| g \|_d + I(\gamma)$. Then either $d(g, g_1) \in I(\gamma)$ or $d(g, g_2) \in I(\gamma)$.
\end{lemma}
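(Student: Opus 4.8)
The plan is to reduce the whole statement to three invocations of the $\gamma$-linearity axiom plus a short arithmetic argument ruling out one configuration. Write $r := \|g\|_d = \|g_1\|_d = \|g_2\|_d$, so the hypotheses say $2\gamma \le r < \rho/4 - \gamma$ and $|d(g_1,g_2) - 2r| < \gamma$.

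First I would apply $\gamma$-linearity to the triple $(g_1,\id,g)$; this is legitimate since $d(g_1,\id) + d(\id,g) = 2r < \rho/2 < \rho - \gamma$. Because $|d(g_1,\id) - d(\id,g)| = 0$, the axiom yields the dichotomy: either $d(g_1,g) \in 2r + I(\gamma)$, or $d(g_1,g) \in I(\gamma)$, i.e. $d(g,g_1) < \gamma$. Symmetrically, applying the axiom to $(g,\id,g_2)$ gives either $d(g,g_2) \in 2r + I(\gamma)$ or $d(g,g_2) < \gamma$. If either ``small'' alternative holds we are done, so the only remaining case is that $d(g_1,g) \in 2r + I(\gamma)$ and $d(g,g_2) \in 2r + I(\gamma)$ simultaneously; I will show this is impossible.

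In that remaining case I apply $\gamma$-linearity once more, to the triple $(g_1,g,g_2)$, which is valid since $d(g_1,g) + d(g,g_2) < 2(2r+\gamma) = 4r + 2\gamma < \rho - \gamma$ (using $r < \rho/4 - \gamma$). The axiom gives either $d(g_1,g_2) \in d(g_1,g) + d(g,g_2) + I(\gamma)$ or $d(g_1,g_2) \in |d(g_1,g) - d(g,g_2)| + I(\gamma)$. In the first subcase $d(g_1,g_2) > (2r-\gamma) + (2r-\gamma) - \gamma = 4r - 3\gamma$, which combined with $d(g_1,g_2) < 2r + \gamma$ forces $r < 2\gamma$, contradicting $r \ge 2\gamma$. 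In the second subcase $|d(g_1,g) - d(g,g_2)| < 2\gamma$, so $d(g_1,g_2) < 3\gamma$, while $d(g_1,g_2) > 2r - \gamma \ge 3\gamma$ — again a contradiction. Hence the remaining case cannot occur, and we conclude $d(g,g_1) < \gamma$ or $d(g,g_2) < \gamma$.

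I do not expect a genuine obstacle here. The only points demanding care are bookkeeping: each use of the $\gamma$-linearity axiom must satisfy its hypothesis that the relevant sum of distances is $< \rho - \gamma$ (which is exactly where $r < \rho/4 - \gamma$ is spent), and the closing arithmetic is tight in $\gamma$ — the bound $r \ge 2\gamma$ coming from $\|g\|_d \notin I(2\gamma)$ is precisely what makes both subcases contradictory. The ambient assumption $\gamma < 10^{-8}\rho$ is far more than enough for all the ``$< \rho - \gamma$'' checks.
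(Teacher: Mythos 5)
Your proposal is correct and follows essentially the same route as the paper: apply $\gamma$-linearity at $\id$ to deduce that, if neither $d(g,g_1)$ nor $d(g,g_2)$ is in $I(\gamma)$, both lie in $2\|g\|_d + I(\gamma)$; then apply $\gamma$-linearity once more at the pivot $g$ to the triple $(g_1,g,g_2)$ and rule out both branches by arithmetic using $\|g\|_d \geq 2\gamma$ and the hypothesis $d(g_1,g_2) \in 2\|g\|_d + I(\gamma)$. The only cosmetic difference is that the paper collapses your two arithmetic subcases into one worked case plus a remark that the others are ``similar''; your version spells them out, which is clearer but not a different argument.
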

\begin{proof}
Suppose both $d(g, g_1)$ and $d(g, g_2)$ are not in $I(\gamma)$. By $\gamma$-linearity of $d$, we have 
\[
d(g,g_1)\in |d(\id,g)\pm d(\id,g_1)|+I(\gamma),
\]
and so $d(g,g_1)\in 2\|g\|_d+I(\gamma)$. Similarly, we have  $d(g,g_2) 2\|g\|_d+I(\gamma)$. 

Suppose first that $ d(g_1, g_2)\in d(g, g_1)+d(g, g_2) + I(\gamma)$, then
 \[
 d(g_1, g_2)\in 4\|g\|_d+I(3\gamma).
 \]
 On the other hand, by $\gamma$-linearity we have $d(g_1,g_2)\leq 2\|g\|_d+\gamma.$
 Hence, we have $\|g\|_d \in I(2\gamma)$, a contradiction. 
 
  The other two possibilities are
$d(g_1, g_2)+ d(g, g_2) \in d(g, g_1) +I (\gamma)$ or $d(g_1, g_2)+d(g, g_1) \in d(g, g_2)+ I (\gamma)$, but similar calculations also lead to contradictions.
\end{proof}

In the previous section, Proposition~\ref{prop: almost linear metric from local} shows that the pseudometric we obtained also satisfies a local monotonicity property on the one parameter subgroups. The next proposition shows that, it is enough to derive a global monotonicity property for any pseudometric with a local monotonicity property.

\begin{proposition}[Path monotonicity implies global monotonicity]\label{prop: localmonotoneimplyglobalmonotone}
Let $\mathfrak{g}$  be the Lie algebra of $G$, $\exp: \mathfrak{g} \to G$ the exponential map, and $d$ a $\gamma$-linear pseudometric on $G$. Suppose for each $X$ in $\mathfrak{g}$, we have one of the following two possibilities:
\begin{enumerate}[\rm (i)]
    \item $\|\exp(tX)\|_d < \gamma$ for all $t \in \RR$;
    \item there is $t_0\in  \RR^{>0}$ with 
$\|\exp(t_0X)\|_d \in I(\rho/2-\gamma)\setminus I(\rho/4)$,  
\begin{equation}\label{eq: condition (1)}
\|\exp(2t_0X)\|_d = 2\|\exp(t_0X)\|_d+ I(\gamma), \end{equation} 
and 
\begin{equation}\label{eq: condition (2)}
\|\exp(tX)\|_d + \|\exp((t_0-t)X)\|_d \in \|\exp(t_0X)\|_d+I(\gamma)
\end{equation}
for all $t\in [0, t_0]$.
\end{enumerate}
Then $d$ is $(9\gamma)$-monotone.
\end{proposition}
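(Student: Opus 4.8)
The plan is to fix an arbitrary $g = \exp(X) \in N(\rho/2 - 9\gamma)$ and show $\|g^2\|_d \in 2\|g\|_d + I(9\gamma)$, by transporting the path-monotonicity hypothesis along the one-parameter subgroup $t \mapsto \exp(tX)$. First I would dispose of the easy case: if we are in case (i) for this $X$, then $\|\exp(tX)\|_d < \gamma$ for all $t$, so in particular $\|g\|_d < \gamma$ and $\|g^2\|_d < \gamma$, whence $\|g^2\|_d \in 2\|g\|_d + I(9\gamma)$ trivially. So assume we are in case (ii), with the associated $t_0 > 0$ satisfying $\|\exp(t_0 X)\|_d \in I(\rho/2-\gamma)\setminus I(\rho/4)$ and the two displayed relations \eqref{eq: condition (1)}, \eqref{eq: condition (2)}.

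The key idea is that \eqref{eq: condition (2)} forces the function $f(t) := \|\exp(tX)\|_d$ to behave, up to error $O(\gamma)$, like $t \mapsto$ (Euclidean distance to a point at parameter distance $t_0$) — i.e. it increases roughly linearly on $[0, t_0/2]$ from near $0$ to near $\tfrac12\|\exp(t_0X)\|_d$ and decreases back, or more precisely that $f(t) + f(t_0 - t)$ is pinned near $f(t_0)$. Combined with the $\gamma$-linearity of $d$ and the dichotomy of Lemma~\ref{lem:dichotomy}, this should let me show that for $t$ ranging over $[0, t_0]$ the elements $\exp(tX)$ line up "in one direction": the $\gamma$-linear relation $d(\exp(tX), \exp(sX)) \in |f(t) \pm f(s)| + I(\gamma)$ together with the additivity $\exp(tX)\exp(sX) = \exp((t+s)X)$ must consistently pick the $+$ branch when $t + s \le t_0$ and $f$ is still on its "increasing" part. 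Concretely, I would run a continuity/connectedness argument on the set of $t \in [0, t_0]$ for which $\|\exp(2tX)\|_d \in 2\|\exp(tX)\|_d + I(C\gamma)$ for a suitable absorbing constant $C \le 9$: this set is nonempty (it contains $t$ near $0$ by continuity of $d$, Fact~\ref{fact: Haarmeasurenew}(vii), since both sides are near $0$), closed, and — using \eqref{eq: condition (1)}, \eqref{eq: condition (2)} and $\gamma$-linearity to propagate the estimate by doubling/halving — it is also open relative to the connected interval $[0,t_0]$, hence equals $[0,t_0]$. Since $g = \exp(X)$ and $\|g\|_d < \rho/2 - 9\gamma$, the parameter $1$ falls within the range controlled by this argument (rescaling $X$ so that $g = \exp(1\cdot X)$ sits at parameter $\le t_0$, or iterating the doubling relation a bounded number of times from $t_0$), giving $\|g^2\|_d \in 2\|g\|_d + I(9\gamma)$.

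The main obstacle I anticipate is \emph{sign bookkeeping}: $\gamma$-linearity only gives $d(g_1,g_3) \in |d(g_1,g_2) \pm d(g_2,g_3)| + I(\gamma)$ with an undetermined sign, and each time one invokes it the additive error grows by $\gamma$ and a new sign ambiguity appears; the whole point of the hypotheses \eqref{eq: condition (1)}--\eqref{eq: condition (2)} is to kill these ambiguities by forcing the "$+$" branch. The delicate part is checking that along the path the relevant norms stay inside the window $I(\rho/4 - \gamma)\setminus I(2\gamma)$ where Lemma~\ref{lem:dichotomy} applies (near $t=0$ the norms are below $2\gamma$, so one needs the local continuity argument to bridge the gap), and that the error constant genuinely closes up at $9\gamma$ rather than blowing up — this is where I would be most careful, tracking that only boundedly many applications of $\gamma$-linearity are needed per doubling step, with the doubling itself handled by \eqref{eq: condition (1)} at the single scale $t_0$ and then transferred to smaller scales via \eqref{eq: condition (2)}. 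I expect the clean way to organize it is: (a) prove $f(t_0/2) \in \tfrac12 f(t_0) + I(O(\gamma))$ from \eqref{eq: condition (1)}+\eqref{eq: condition (2)}; (b) bootstrap to $f(t) \in (t/t_0) f(t_0) + I(O(\gamma))$ for dyadic $t$; (c) deduce the monotonicity relation $\|\exp(2sX)\|_d \in 2\|\exp(sX)\|_d + I(9\gamma)$ for all $s$ with $\|\exp(sX)\|_d$ below $\rho/2 - 9\gamma$, which covers the arbitrary $g$ in the statement.
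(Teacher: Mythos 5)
Your case (i) is correct and matches the paper, and your overall plan (transport the hypothesis along the one-parameter subgroup through $X$) is the right starting point. But the route you sketch for case (ii) — a connectedness/bootstrap argument to establish $\|\exp(2tX)\|_d \in 2\|\exp(tX)\|_d + I(C\gamma)$ for all $t \in [0,t_0]$, then dyadic halving to get approximate linearity of $f(t) = \|\exp(tX)\|_d$ — differs genuinely from the paper's proof, and the hardest steps are exactly the ones you leave as sketches. Concretely: (a) the openness/propagation claim is asserted but not proved, and it is delicate precisely where you note it is, near $t=0$, where $\|\exp(tX)\|_d < 2\gamma$ so the two branches of $\gamma$-linearity ($\|\exp(2tX)\|_d \in 2\|\exp(tX)\|_d + I(\gamma)$ versus $\|\exp(2tX)\|_d \in I(\gamma)$) are not disjoint and cannot be separated; (b) the dyadic bootstrap in your step (b) never uses condition \eqref{eq: condition (2)} and anyway only gives approximate linearity of $f$ at dyadic multiples of $t_0$, which does not obviously yield the monotonicity statement at an arbitrary $g$; and (c) the parenthetical handling of a $g$ whose parameter falls outside $[0,t_0]$ (``rescaling $X$'', ``iterating the doubling relation a bounded number of times'') does not withstand scrutiny, since rescaling $X$ correspondingly rescales $t_0$ and leaves the problem unchanged, and uncontrolled iteration would blow up the error past $9\gamma$. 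Since $\|\exp(t_0X)\|_d$ can be as small as $\rho/4$ while $g$ ranges up to $\|g\|_d < \rho/2-9\gamma$, the case of $g$ with large norm genuinely requires a separate argument.

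The key move you are missing is the paper's use of Lemma~\ref{lem:dichotomy} as a \emph{bridge} between the arbitrary element $g$ and a controlled path element. The paper finds $g_1 = \exp(t_1 X)$ with $t_1 \in [0,t_0]$ and $\|g_1\|_d = \|g\|_d$ (by continuity and the intermediate value theorem), sets $g_2 = g_1^{-1}$, and applies the dichotomy lemma to conclude $d(g,g_1) < \gamma$ or $d(g,g_2) < \gamma$; at that point the problem is reduced to proving monotonicity for the single path element $g_1$, which is done by a short explicit computation with $g_1$ and $g_0 = \exp(t_0 X)$ using \eqref{eq: condition (1)}, \eqref{eq: condition (2)}, $\gamma$-linearity, and the commutativity $g_0^{-1}g_1 = g_1 g_0^{-1}$ to control $d(g_1^2, g_0^2) = \|(g_1^{-1}g_0)^2\|_d$. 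The case $\|g\|_d > \|g_0\|_d + 2\gamma$ is handled by running the same scheme with $g_1^2$ in place of $g_1$ (choosing $t_1$ so that $\|g_1^2\|_d = \|g\|_d$). This gives a fixed, finite number of applications of $\gamma$-linearity and so a bound of $9\gamma$ with no bootstrap and no topological argument. You do cite Lemma~\ref{lem:dichotomy}, but to ``line up'' the path elements $\exp(tX)$ with each other rather than to compare $g$ against one of them, which is a different (and weaker) use. Without the identification of $g$ with a path element of equal norm, your argument never escapes the interval $[0,t_0]$ to reach the arbitrary $g$ that the proposition quantifies over.
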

\begin{proof}
 Fix an element $g$ of $G$ with $\|g\|_d \in I(\rho/2 -16\gamma)$. Our job is to show that $\|g^2\|_d \in 2\|g\|_d + I(9\gamma)$. Since $G$ is compact and connected, the exponential map $\exp$ is surjective. We get $X \in \mathfrak{g}$ such that $g\in \{ \exp(tX) : t \in \RR\}$. If we are in scenario (i), then  $\|g\|_d< \gamma$, hence $ \|g^2\|_d \in 2\|g\|_d +I(3\gamma)$. Therefore, it remains to deal with the case where we have an $t_0$ as in (ii). 
 
  Set $g_0 = \exp(t_0X)$. We consider first the special case where $\|g\|_d < \|g_0\|_d-2\gamma$.  As $d$ is continuous, there is $t_1 \in [0, t_0]$ such that with $g_1= \exp(t_1X)$, we have $\|g_1\|_d =\|g\|_d$. Let $t_2 = -t_1$, and $g_2 = \exp(t_2X) = g_1^{-1}$. Since $d$ is invariant, 
  \[
  \|g_2\|_d = d( g^{-1}_1, \id) = d( \id, g_1) = \|g_1\|_d.
  \]
  Hence,  $\|g_1\|_d = \|g_2\|_d = \| g \|_d$. If $\| g\|_d <2\gamma$, then $\|g^2\|_d \in 2\|g\|_d+ I(5\gamma)$ and we are done. Thus we suppose $\| g\|_d \geq 2\gamma$. Then, by Lemma~\ref{lem:dichotomy}, either $d(g, g_1)<\gamma$, or $d(g, g_2) <\gamma$. 
  
  Since these two cases are similar, we assume that $d(g, g_1)<\gamma$. By $\gamma$-linearity, $\|g_1^2\|_d$ is in either
 $
  2\|g_1\|_d+I(\gamma)$ or $I(\gamma)$. Using $\|g_0^2\|_d \in 2\|g_0\|_d+I(\gamma)$ and the assumption that $\|g\|_d < \|g_0\|_d-2\gamma$, in either case, we have 
  \begin{equation} \label{g1andg0}
      \| g^2_1\|_d< \|g^2_0\|_d-2\gamma. 
  \end{equation}
   
  Since $g^{-1}_0g_1= g_1 g_0^{-1}$, and by $\gamma$-linearity of $d$,  we get
  \begin{equation}\label{eq:d(g_1^2, g^2_0)}
       d(g_1^2, g^2_0) = d( \id, g^{-2}_1g_0^{2}) = d( \id, (g^{-1}_1g_0)^2) \in  \{ 0, 2d(g_1, g_0)\} + I(\gamma).  
       \end{equation}
  By \eqref{eq: condition (2)}, we have $\|g_1\|_d+ d(g_1, g_0) \in \| g_0\|_d+I(\gamma)$. Recalling that $\|g_1\|_d =\|g\|_d >2\gamma$, and from \eqref{eq: condition (1)} and \eqref{eq:d(g_1^2, g^2_0)}, we have
  \begin{equation}\label{eq: g_1^2, g_0^2}
  d(g^2_1, g^2_0)< 2\| g_0\|_d-3\gamma = \|g_0^2 \| -2\gamma.
    \end{equation}
By \eqref{g1andg0}, \eqref{eq: g_1^2, g_0^2}, and the $\gamma$-linearity of $d$, we have
\[
\|g_1^2\|_d\in \|g_0^2\|_d-d(g_1^2,g_0^2)+I(\gamma).
\]
Therefore by \eqref{eq: condition (2)} and \eqref{eq:d(g_1^2, g^2_0)}, we have either
\begin{align*}
   \|g_1^2\|_d\in  2\|g_1\|_d +I(5\gamma) \quad\text{or}\quad \|g_1^2\|_d\in  2\|g_0\|_d +I(3\gamma).
\end{align*}
As $\|g_1\|_d^2 \leq  2\|g_1\|+ \gamma <2\|g_0\|-5\gamma$, we must have $\|g_1^2\| \in 2\|g_1\|+I(5 \gamma)$. Now, since $\|g^{-1}_1g\|_d= d(g_1, g) < \gamma$, again by the $\gamma$-linearity we conclude that
\[
d(g^2_1, g^2)= \|(g^{-1}_1g)^2\|_d < 3\gamma. 
\]
  Thus, $\|g^2\|_d \in  2\|g\|_d+I(9\gamma).$
  
  Finally, we consider the other special case where $\|g_0\|_d+2\gamma<\|g\|_d<\rho/2-16\gamma$. For $g_1=\exp(t_1X)$ with $t_1\in [0,t_0]$,  we have $\|g_1^2\|_d\in 2\|g_1\|+I(8\gamma)$ by a similar argument as above. Using continuity, we can choose $t_1$ such that $\|g_1^2\|_d=\|g\|_d$, and let $g_2 = g_1^{-1}$. The argument goes in exactly the same way with the role of $g_1$ replaced by $g_1^2$ and the role of $g_2$ replaced by $g_2^2$.
\end{proof}

In $\RR$, there are obviously two directions: positive and negative, or simply left and right. Proposition~\ref{prop: localmonotoneimplyglobalmonotone} ensured that directions are also well-defined for our almost linear pseudometric:

\begin{definition}
  Suppose $d$ is $\gamma$-linear.
We define $s(g_1,g_2)$ to be the {\bf relative sign}   for  $g_1, g_2 \in G$ satisfying   $\|g_1\|_d +\|g_2\|_d< \rho -\gamma$ by
$$ s(g_1, g_2) = 
\begin{cases}
0 &\text{ if }  \min\{\|g_1\|_d, \|g_2\|_d \}\leq  4\gamma, \\
1 &\text{ if } \min\{\| g_1 \|_d, \| g_2 \|_d \}>  4\gamma \text{ and }  \| g_1g_2 \|_d \in \| g_1\|_d + \| g_2 \|_d + I(\gamma).  \\
-1 &\text{ if } \min\{\| g_1 \|_d, \| g_2 \|_d \}>  4\gamma \text{ and }  | g_1g_2 |_d \in \big| \| g_1\|_d - \| g_2 \|_d \big| + I(\gamma).
\end{cases}
$$
Note that this is well-defined because when $\min\{\| g_1 \|_d, \| g_2 \|_d \}\geq  4\gamma$ in the above definition, the differences between $\big| \| g_1\|_d - \| g_2 \|_d \big|$ and $ \| g_1\|_d + \| g_2 \|_d$ is at least $6\gamma$.
\end{definition}

 The following lemma gives us tools to relate signs between different elements.

\begin{proposition}\label{prop: propertoesofsign}
Suppose $d$ is $\gamma$-linear and $\gamma$-monotone. Then for $g_1$, $g_2$,  and $g_3$ in $N(\rho/4-\gamma)\setminus N(4\gamma)$, we have the following
\begin{enumerate}[\rm (i)]
    \item $s(g_1, g^{-1}_1)=-1$ and $s(g_1, g_1)=1$.
    \item $s(g_1, g_2) =s(g_2, g_1)$.
    \item $ s(g_1, g_2) = s(g^{-1}_1, g^{-1}_2) = -s(g^{-1}_1, g_2) = -s(g_1, g^{-1}_2)  $.
    \item $s(g_1, g_2)s(g_2,g_3)s(g_3,g_1)=1.$
    \item If $\|g_1\|_d\leq  \|g_2\|_d$, and $g_1g_2$ is in $N(\rho/4-\gamma)\setminus N(4\gamma)$, then $$s(g_0,g_1g_2) = s(g_0,g_2g_1) = s(g_0, g_2).$$
\end{enumerate}
\end{proposition}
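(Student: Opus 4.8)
The plan is to establish the five identities (i)--(v) in order, using the $\gamma$-linearity and $\gamma$-monotonicity of $d$ together with the dichotomy in Lemma~\ref{lem:dichotomy} and the left-invariance of $d$ throughout. The guiding principle is that, for $g_1,g_2$ of norm in $N(\rho/4-\gamma)\setminus N(4\gamma)$, the quantity $\|g_1g_2\|_d$ is forced by $\gamma$-linearity to lie within $I(\gamma)$ of either $\|g_1\|_d+\|g_2\|_d$ or $\big|\|g_1\|_d-\|g_2\|_d\big|$, and since these two candidates differ by at least $6\gamma$ the relative sign $s(g_1,g_2)$ records unambiguously which one occurs. So every claim below reduces to comparing a norm of a product against a sum or difference of norms.

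First I would prove (i): $s(g_1,g_1^{-1})=-1$ is immediate since $\|g_1g_1^{-1}\|_d=\|\id\|_d=0=\big|\|g_1\|_d-\|g_1^{-1}\|_d\big|$ (using $\|g_1^{-1}\|_d=d(g_1^{-1},\id)=d(\id,g_1)=\|g_1\|_d$ by left-invariance), while $s(g_1,g_1)=1$ follows from $\gamma$-monotonicity, which gives $\|g_1^2\|_d\in 2\|g_1\|_d+I(\gamma)$ — note $g_1\in N(\rho/4-\gamma)$ is well within the range $N(\rho/2-\gamma)$ where monotonicity applies. For (ii), symmetry $s(g_1,g_2)=s(g_2,g_1)$, I would use left-invariance to write $\|g_1g_2\|_d=d(g_1^{-1},g_2)=d(g_2^{-1},g_1)$... actually the cleaner route is $\|g_1g_2\|_d = \|g_2(g_2^{-1}g_1g_2)\|_d$ combined with the observation that conjugation preserves norms is false in general; instead I would argue $\|g_1g_2\|_d=\|(g_1g_2)^{-1}\|_d=\|g_2^{-1}g_1^{-1}\|_d$ and then relate $s(g_2^{-1},g_1^{-1})$ to $s(g_2,g_1)$ via part (iii). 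So (ii) and (iii) should be proved together: (iii) says flipping an inverse flips the sign, which comes from $\|g_1^{-1}g_2\|_d$ versus $\|g_1g_2\|_d$ — these correspond to replacing one of the two "directions" by its opposite, and $\gamma$-linearity plus the $6\gamma$-separation forces the sign to swap. The inversion identity $s(g_1,g_2)=s(g_1^{-1},g_2^{-1})$ then follows by applying the single-inverse statement twice, and $\|(g_1g_2)^{-1}\|_d=\|g_1g_2\|_d$ gives (ii).

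For (iv), the cocycle relation $s(g_1,g_2)s(g_2,g_3)s(g_3,g_1)=1$, I would translate each sign into a $\pm$ in front of a norm and use $\gamma$-linearity applied to the three pairwise products, tracking that $\|g_1g_2\|_d$, $\|g_2g_3\|_d$, $\|g_3g_1\|_d$ are mutually constrained because $(g_1g_2)(g_2^{-1}g_3^{-1})(g_3g_1^{-1})$-type products telescope; the cleanest implementation is probably to pick representatives on one-parameter subgroups or to use Lemma~\ref{lem:dichotomy} to reduce to the case where two of the $g_i$ are close, where the relation is transparent. For (v), I would first handle $s(g_0,g_1g_2)=s(g_0,g_2g_1)$ by noting $g_2g_1=g_1^{-1}(g_1g_2)g_1$ and that $\|g_1g_2\|_d=\|g_2g_1\|_d$ (again via the inverse trick $\|(g_1g_2)^{-1}\|_d = \|g_2^{-1}g_1^{-1}\|_d$ and part (iii) to transfer back), so the products $g_1g_2$ and $g_2g_1$ have the same norm and the same sign relative to $g_0$. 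Then for $s(g_0,g_1g_2)=s(g_0,g_2)$ under the hypothesis $\|g_1\|_d\le\|g_2\|_d$: since $\|g_1\|_d$ is small relative to $\|g_2\|_d$, the element $g_1g_2$ lies in the same "direction" as $g_2$ (this is where $\gamma$-linearity pins $\|g_1g_2\|_d$ close to $\|g_2\|_d\pm\|g_1\|_d$, both near $\|g_2\|_d$), and one compares $\|g_0g_1g_2\|_d$ against $\|g_0\|_d\pm\|g_1g_2\|_d$ using that $g_1$ perturbs everything by at most $O(\gamma)$ after absorbing the linearity errors — more precisely one uses $s(g_0,g_1g_2)$ and $s(g_0g_1,g_2)$ and $s(g_0,g_1)$ via the already-established cocycle relation (iv) generalized, noting $\|g_1\|_d$ being in the allowed range.

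I expect the main obstacle to be part (iv), the three-term cocycle identity, and more generally the bookkeeping of accumulated $I(\gamma)$-errors: naively combining $\gamma$-linearity three or four times produces errors like $I(4\gamma)$ or $I(8\gamma)$, which must stay below the $6\gamma$ gap separating the "sum" and "difference" candidate values, so the argument has to be organized to keep the error from compounding — likely by always comparing against a single reference element and invoking Lemma~\ref{lem:dichotomy} to collapse configurations rather than chaining linearity estimates. A secondary subtlety is checking in each step that all intermediate products (such as $g_1g_2$, $g_0g_1$, $g_1^2$) actually lie in the range $N(\rho/4-\gamma)\setminus N(4\gamma)$ where the relative sign and the lemmas are valid; where they don't, one falls into the $s=0$ case, which must be shown to be consistent with each identity (typically making both sides $0$ or trivially equal).
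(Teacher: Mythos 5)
Your plan for (ii) and (iii) is circular in a way that cannot be repaired without finding the key trick. You want to obtain (ii) from (iii), arguing that $\|g_1g_2\|_d = \|g_2^{-1}g_1^{-1}\|_d$ relates $s(g_1,g_2)$ to $s(g_2^{-1},g_1^{-1})$, and then "the single-inverse statement... comes from $\gamma$-linearity plus the $6\gamma$-separation forces the sign to swap." That last sentence is exactly the nontrivial step, and linearity alone does not force anything: without further information, both $\|g_1g_2\|_d\approx \|g_1\|_d+\|g_2\|_d$ and $\|g_1^{-1}g_2\|_d\approx \|g_1\|_d+\|g_2\|_d$ are consistent with the axioms you have cited. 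The paper proves (ii) \emph{first} and independently, by contradiction: assume $s(g_1,g_2)=1=-s(g_2,g_1)$, then compute $\|(g_1g_2)^2\|_d$ in two ways — once via $\gamma$-monotonicity of the single element $g_1g_2$, once by expanding $g_1\cdot g_2g_1\cdot g_2$ with linearity using $s(g_2,g_1)=-1$ — and the two answers are incompatible. The second equality of (iii) is then proved by a similar telescoping device, $\|g_2g_1\cdot g_1^{-1}g_2\|_d = \|g_2^2\|_d$, again invoking monotonicity. In other words, $\gamma$-monotonicity is used in essentially every subpart of the argument; your sketch uses it only in (i). For (iv) you correctly sense the need for a telescoping identity, and the paper indeed uses $\|g_2g_1\cdot g_1^{-1}g_3\|_d = \|g_2g_3\|_d$; but your proposed alternatives (one-parameter subgroups, reducing to Lemma~\ref{lem:dichotomy}) are not how it goes and would not obviously control the errors.

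In part (v) there is an outright error. You write $\|g_1g_2\|_d=\|g_2g_1\|_d$, justified by the conjugation $g_2g_1=g_1^{-1}(g_1g_2)g_1$ and by "the inverse trick." But $d$ is only assumed left-invariant, not conjugation-invariant, and indeed you yourself flagged this issue a few lines earlier when discussing (ii). The inverse trick gives $\|g_1g_2\|_d = \|(g_1g_2)^{-1}\|_d = \|g_2^{-1}g_1^{-1}\|_d$, which is not $\|g_2g_1\|_d$. The paper avoids this entirely: it reduces to showing $s(g_1g_2,g_2)=s(g_2g_1,g_2)=1$, each proved by computing $\|g_1g_2^2\|_d$ (resp.\ $\|g_2^2g_1\|_d$) in two ways with linearity, and then invokes (iv) to transfer to $g_0$.

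Finally, a structural remark you get right: you do note that all intermediate products must stay in $N(\rho/4-\gamma)\setminus N(4\gamma)$ and that the $I(\gamma)$-errors must stay below the $6\gamma$ gap. The paper manages this precisely because each subpart uses at most a handful of applications of linearity/monotonicity, and the norm bound $\rho/4-\gamma$ on the inputs guarantees the products fall in the range where the $\gamma$-linearity hypothesis applies; a "chain of linearity estimates" of the kind you worry about never occurs.
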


\begin{proof}
As  $g_1$, $g_2$,  and $g_3$ are in $N(\rho/4-\gamma)\setminus N(4\gamma)$, one has $s(g_i, g_j) \neq 0$ for all $i, j \in \{1, 2, 3\}$.
The first part of (i) is immediate from the fact that $\|\id\|_d =0$, and the second part of (i) follows from  the $\gamma$-monotonicity and the definition of the relative sign. 

We now prove (ii). Suppose to the contrary that $s(g_1,g_2) = -s(g_2,g_1)$. Without loss of generality, assume $s(g_1, g_2)=1$. Then $\|g_1g_2g_1g_2\|_d$ is in $2\|g_1g_2\|_d  + I(\gamma)$, which is a subset of $2\|g_1\|_d+ 2\|g_2\|_d + I(3 \gamma)$.  On the other hand, as $s(g_2,g_1)=-1$, we have
\[
\|g_1g_2g_1g_2\|_d\in \big| \|g_1\|_d \pm (  \|g_2\|_d -\|g_1\|_d)\pm\|g_2\|_d \big| +I(3 \gamma). 
\]
This contradicts the assumption that $g_1$ and $g_2$ are not in $N(4\gamma)$.

Next, we prove the first and third equality in (iii).
Note that $\|g\|_d = \|g^{-1}\|_d$ for all $g\in G$ as $d$ is symmetric and invariant. Hence, $\|g_1g_2\|_d = \|g_2^{-1}g_1^{-1}\|_d$. This implies that $s(g_1, g_2) = s(g_2^{-1}, g_1^{-1})$. Combining  with (ii), we get the first equality in (iii). The third equality in (iii) is a consequence of the first  equality in (iii). 

Now, consider the second equality in (iii). Suppose $s(g^{-1}_1, g_2^{-1}) = s(g^{-1}_1, g_2)$. Then, from (ii) and the first equality of (iii), we get $s(g_2, g_1) = s(g^{-1}_1, g_2)$. Hence, either 
$$\|g_2g_1 g_1^{-1} g_2\|_d \in 2\left(\|g_1\|_d+ \|g_2\|_d\right) + I(3 \gamma)$$  or $$\|g_2g_1 g_1^{-1} g_2\|_d \in 2\big|\|g_1\|_d-\|g_2\|_d\big| + I(3 \gamma).$$ On the other hand, $\|g_2g_1 g_1^{-1} g_2\|_d = \|g_2^2\|_d$, which is in $2\|g_2\|_d + I(\gamma)$. We get a contradiction with the fact that $g_1$ and $g_2$ are not in $N( 4\gamma)$.

We now prove (iv).
Without loss of generality, assume $\|g_1\|_d \leq \|g_2\|_d \leq \|g_3\|_d$. Using (iii) to replace $g_3$ with $g_3^{-1}$ if necessary, we can further assume that  $s(g_2, g_3) =1$. We need to show that $s(g_1, g_2)= s(g_1,g_3)$. Suppose to the contrary. Then, from (iii), we get $s(g_1, g_2) = s(g^{-1}_1,g_3)$.  Using (iii) to replacing $g_1$ with $g_1^{-1}$ if necessary, we can assume that 
$s(g_1, g_2) = s(g^{-1}_1,g_3)=1.$ Using (ii), we get $s(g_2,g_1)=1$. Hence,  either 
$$\|g_2g_1 g_1^{-1}g_3\|_d \in 2 \|g_1\|_d+\|g_2\|_d+ \|g_3\|_d+ I(3\gamma)$$ or  $$\|g_2g_1 g_1^{-1}g_3\|_d \in \|g_3\|_d -\|g_2\|_d +I(3\gamma).$$ On the other hand,  $\|g_2g_1 g_1^{-1}g_3\|_d = \|g_2g_3\|_d$ is in $\|g_2\|_d+\|g_3\|_d+I(\gamma)$. Hence, we get a contradiction to the fact that $g_1$, $g_2$, and $g_3$ are not in $N(4\gamma)$.

Finally, we prove (v). Using (iv), it suffices to show  $s(g_1g_2, g_2) = s(g_2g_1, g_2)=1$. We will only show the former, as the proof for the latter is similar. Suppose to the contrary that $s(g_1g_2, g_2)=-1$. Then $\|g_1g^{2}_2\|_d$ is in $\big|\|g_1g_2\|_d -\|g_2\|_d\big|+ I(\gamma)$, which is a subset of $\|g_1\|_d+ I(2\gamma)$. On the other hand, $\|g_1g^{2}_2\|_d$ is also in $\big|\|g_1\|_d -\|g^2_2\|_d\big|+ I(\gamma)$ which is a subset of $ 2\|g_2\|_d - \|g_1\|_d + I(2\gamma). $ Hence, we get a contradiction with the assumption that $g_1$ and $g_2$ are not in $N(4\gamma)$.
\end{proof}

  The notion of relative sign corrects the ambiguity in calculating distance, as can be seen in the next result.

\begin{lemma} \label{lem: Estimationusinsignchange}
Suppose $d$ is $\gamma$-monotone  $\gamma$-linear, and $g_1$ and $g_2 $ are in $N(\rho/16- \gamma)$ with $\|g_1\|_d \leq \|g_2\|_d$. Then we have the following
\begin{enumerate}[\rm (i)]
    \item Both $\|g_1g_2\|_d$ and $\|g_2g_1\|_d$ are in  $ s(g_1, g_2)\|g_1\|_d + \|g_2\|_d  + I(5\gamma)$.
    \item  If $g_0$ is in $N(\rho/4) \setminus  N(4 \gamma)$, then both $ s(g_0, g_{1}g_{2}) \|g_{1}g_{2}\|_d$ and $ s(g_0, g_{2}g_{1}) \|g_{2}g_{1}\|_d$ are in 
    $$s(g_0, g_{1}) \|g_{1}\|_d +  s(g_0, g_2) \|g_{2}\|_d + I(25\gamma).  $$
\end{enumerate}
\end{lemma}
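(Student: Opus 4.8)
\textbf{Proof plan for Lemma~\ref{lem: Estimationusinsignchange}.}

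The plan is to reduce both parts to the basic $\gamma$-linearity and $\gamma$-monotonicity axioms together with the sign-manipulation rules of Proposition~\ref{prop: propertoesofsign}. For part (i), first I would dispose of the degenerate case $\|g_1\|_d \leq 4\gamma$: here $s(g_1,g_2)=0$ by definition, and $\gamma$-linearity applied to the triple $(\id, g_2, g_2 g_1^{-1})$ — or more directly the triangle inequality in both directions, $\bigl|\,\|g_2 g_1\|_d - \|g_2\|_d\,\bigr| \leq \|g_1\|_d \leq 4\gamma$ — pins $\|g_2 g_1\|_d$ (and symmetrically $\|g_1 g_2\|_d$) into $\|g_2\|_d + I(5\gamma)$, as claimed. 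In the main case $\|g_1\|_d > 4\gamma$, I would invoke $\gamma$-linearity for the three elements $\id$, $g_1^{-1}$, $g_2$: since $\|g_1^{-1}\|_d = \|g_1\|_d$ and $\|g_1^{-1}\|_d + \|g_2\|_d < \rho - \gamma$ by the radius hypothesis, we get $d(g_1^{-1}, g_2) \in \|g_1\|_d + \|g_2\|_d + I(\gamma)$ or $d(g_1^{-1}, g_2) \in \bigl|\|g_2\|_d - \|g_1\|_d\bigr| + I(\gamma)$; by left-invariance $d(g_1^{-1}, g_2) = \|g_1 g_2\|_d$, and the defining dichotomy for $s(g_1,g_2)$ (note $s(g_1^{-1},g_2) = -s(g_1,g_2)$ by Proposition~\ref{prop: propertoesofsign}(iii), and the sign convention matches up) identifies which branch occurs, giving $\|g_1 g_2\|_d \in s(g_1,g_2)\|g_1\|_d + \|g_2\|_d + I(\gamma)$. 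For $\|g_2 g_1\|_d$ I would run the symmetric argument with the triple $\id$, $g_2$, $g_1^{-1}$ and use $s(g_1,g_2) = s(g_2,g_1)$ from Proposition~\ref{prop: propertoesofsign}(ii); the slack widens from $I(\gamma)$ to $I(5\gamma)$ once one accounts for the cases where $\|g_2 g_1\|_d$ is close to the threshold and for combining the two estimates.

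For part (ii), the idea is to write $\|g_1 g_2\|_d$ as $s(g_1,g_2)\|g_1\|_d + \|g_2\|_d + I(5\gamma)$ using part (i), and then to compute $s(g_0, g_1 g_2)$ in terms of $s(g_0,g_1)$, $s(g_0,g_2)$, and $s(g_1,g_2)$. Here I would use Proposition~\ref{prop: propertoesofsign}(v): since $\|g_1\|_d \leq \|g_2\|_d$ and (after checking $g_1 g_2 \in N(\rho/4-\gamma)\setminus N(4\gamma)$, which follows from part (i) and the bound $\|g_i\|_d < \rho/16 - \gamma$, unless $\|g_1g_2\|_d \leq 4\gamma$ in which case the statement is trivial) we have $s(g_0, g_1 g_2) = s(g_0, g_2)$. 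So the left side $s(g_0, g_1 g_2)\|g_1 g_2\|_d = s(g_0,g_2)\bigl(s(g_1,g_2)\|g_1\|_d + \|g_2\|_d\bigr) + I(5\gamma)$. The claim is that $s(g_0,g_2) s(g_1,g_2) = s(g_0,g_1)$: by Proposition~\ref{prop: propertoesofsign}(iv), $s(g_0,g_1)s(g_1,g_2)s(g_2,g_0) = 1$, so $s(g_0,g_1) = s(g_1,g_2)s(g_0,g_2)$ (using (ii) to flip $s(g_2,g_0)$), exactly what is needed. This gives $s(g_0, g_1 g_2)\|g_1 g_2\|_d \in s(g_0,g_1)\|g_1\|_d + s(g_0,g_2)\|g_2\|_d + I(5\gamma)$, and widening to $I(25\gamma)$ absorbs the degenerate sub-cases and the analogous computation for $g_2 g_1$.

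I expect the main obstacle to be the careful bookkeeping of the degenerate cases — when one of $\|g_1\|_d$, $\|g_2\|_d$, $\|g_1 g_2\|_d$ falls below the $4\gamma$ threshold so that some relative sign vanishes, the clean identities $s(g_0,g_2)s(g_1,g_2)=s(g_0,g_1)$ and $s(g_0,g_1g_2)=s(g_0,g_2)$ need not hold literally, but in those regimes the relevant norm is itself $O(\gamma)$ so the asserted inclusions hold trivially with the stated error margins. Verifying that $g_1 g_2$ and $g_2 g_1$ actually lie in the domain $N(\rho/4)\setminus N(4\gamma)$ where Proposition~\ref{prop: propertoesofsign}(v) and the sign calculus apply — using only that $g_1, g_2 \in N(\rho/16-\gamma)$ — is a routine but necessary check via part (i). The non-degenerate core of the argument, by contrast, is a short deduction from the linearity axiom and the sign rules already established.
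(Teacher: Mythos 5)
Your overall structure matches the paper's: reduce (i) to the definition of the relative sign and the triangle inequality, and derive (ii) from (i) together with Propositions~\ref{prop: propertoesofsign}(iv) and (v). The non-degenerate core of both parts is correct, and the handling of the degenerate cases in (i) is fine.

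However, there is a genuine gap in your treatment of the degenerate cases in (ii), and it is the regime you explicitly flag as the obstacle. You claim that when $\|g_1\|_d \leq 4\gamma$ (so $s(g_0,g_1)=0$ and Proposition~\ref{prop: propertoesofsign}(v) is unavailable), the inclusion ``holds trivially with the stated error margins because the relevant norm is $O(\gamma)$.'' This is false: in that regime only $\|g_1\|_d$ is $O(\gamma)$, while $\|g_2\|_d$ and hence $\|g_1g_2\|_d$ can be as large as roughly $\rho/16$, far exceeding $25\gamma$. The asserted inclusion
\[
s(g_0,g_1g_2)\|g_1g_2\|_d \in s(g_0,g_2)\|g_2\|_d + I(25\gamma)
\]
then requires the sign identity $s(g_0,g_1g_2)=s(g_0,g_2)$ — if the two signs disagreed, the two sides would differ by roughly $2\|g_2\|_d \gg 25\gamma$. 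Nothing you have written establishes that identity when $g_1 \in N(4\gamma)$: Proposition~\ref{prop: propertoesofsign}(v) explicitly requires $g_1 \notin N(4\gamma)$, and the ``absorb into the error'' heuristic does not apply. The paper closes this hole with a short separate argument (after disposing of the sub-case $g_2 \in N(16\gamma)$, where everything really is $O(\gamma)$): show $s(g_1g_2, g_2^{-1})=-1$ by contradiction — otherwise $\|g_1\|_d = \|(g_1g_2)g_2^{-1}\|_d \geq \|g_1g_2\|_d + \|g_2\|_d - 5\gamma > 4\gamma$ — hence $s(g_1g_2, g_2)=1$, and then Proposition~\ref{prop: propertoesofsign}(iv) applied to $g_0, g_1g_2, g_2$ (all of which do lie in $N(\rho/4-\gamma)\setminus N(4\gamma)$ here) gives $s(g_0,g_1g_2)=s(g_0,g_2)$. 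You would need to supply an argument of this kind; the case is not trivial.

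A minor further remark on (i): you also need to treat the sub-case in (ii) where $g_1, g_2 \notin N(4\gamma)$ but $g_1g_2 \in N(4\gamma)$ (or $g_2g_1 \in N(4\gamma)$); there the left side vanishes and you must check $|s(g_0,g_1)\|g_1\|_d + s(g_0,g_2)\|g_2\|_d| < 25\gamma$, which does follow because $s(g_1,g_2)=-1$ forces $\|g_2\|_d-\|g_1\|_d$ to be $O(\gamma)$ and Proposition~\ref{prop: propertoesofsign}(iv) forces $s(g_0,g_1)=-s(g_0,g_2)$. This case your heuristic does handle correctly, but it is worth distinguishing it from the $g_1\in N(4\gamma)$ case where the heuristic fails.
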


\begin{proof}
We first prove (i). When $g_1, g_2 \notin N(4\gamma)$, the statement for $\|g_1g_2\|_d$ is immediate from the definition of the relative sign, and the statement for $\|g_2g_1\|_d$ is a consequence of Proposition~\ref{prop: propertoesofsign}(ii). Now suppose $\|g_1\|_d< 4 \gamma$. From the $\gamma$-linearity, we have  $$ \|g_2\|_d-\|g_1\|_d - \gamma < \|g_1g_2\|_d  <\|g_1\|_d+ \|g_2\|_d + \gamma. $$  
We deal with the case where $\|g_2\|_d< 4 \gamma$ similarly.

We now prove (ii). Fix $g_0$ in $N(\rho/4-\gamma)\setminus N(4\gamma)$. We will consider two cases, when $g_1$ is not in $N(4\gamma)$ and when $g_1$ is in $N(4\gamma)$. 
Suppose we are in the first case, that is $g_1 \notin N(4\gamma)$. As $\|g_1\|_d \leq \|g_2\|_d$, we also have $g_2 \notin N(4\gamma)$.
If both $g_1g_2$ and $g_2g_1$ are not in $N(4\gamma)$,  then the desired conclusion is a consequence of (i) and Proposition~\ref{prop: propertoesofsign}(iv, v). Within the first case, it remains to deal with the situations where $g_1g_2$ is in $N(4\gamma)$ or $g_2g_1$ is in $N(4\gamma)$. 

Since these two situations are similar, we may assume $g_1g_2$ is in $N(4\gamma)$. From (i), we have $s(g_1, g_2)=-1$ and $\|g_2\|_d - \|g_1\|_d$ is at most $5\gamma$. Therefore, $\|g_2g_1\|_d$ is in $I(6\gamma)$. By Proposition~\ref{prop: propertoesofsign}(iv), we have $s(g_0, g_1) = -s(g_0, g_2)$, and so
$$s(g_0, g_{1}) \|g_{1}\|_d +  s(g_0, g_2) \|g_{2}\|_d \in I(6\gamma).$$ Since both $ s(g_0, g_{1}g_{2}) \|g_{1}g_{2}\|_d$ and $ s(g_0, g_{2}g_{1}) \|g_{1}g_{2}\|_d$ are in $I(6\gamma)$,  they are both in
$s(g_0, g_{1}) \|g_{1}\|_d +  s(g_0, g_2) \|g_{2}\|_d + I(12\gamma)$ giving us the desired conclusion.

Continuing from the previous paragraph, we consider the second case when $g_1$ is in $N(4\gamma)$. If $g_2$ is in $N(16\gamma)$, then both $\|g_{1}g_{2}\|_d$ and  $ \|g_{2}g_{1}\|_d$ are in $I(25\gamma)$ by (i), and  the desired conclusion follows.  Now suppose $g_2$ is not in $N(16\gamma)$. Then from (i) and the fact that $g_1\in N(4\gamma)$, we get $g_1g_2$ and $g_2g_1$ are both not in $N(4\gamma)$. Note that $s(g_1g_2, g^{-1}_2)= -1$, because otherwise we get $$\|g_1\|_d \geq \|g_1g_2\|_d+\| g^{-1}_2\|_d -5 \gamma > 4\gamma.$$ A similar argument gives $s(g_2^{-1}, g_2g_1)=-1$. Hence, 
$s(g_1g_2, g_2)= s(g_2g_1, g_2)=1.$ By Proposition~\ref{prop: propertoesofsign}(v), we get $$s(g_0, g_2)= s(g_0, g_1g_2) = s(g_0, g_2g_1).$$ From (i), $\|g_1g_2\|_d$ and $\|g_2g_1\|
_d$ are both in $\|g_2\|_d+I(9\gamma)$. On the other hand, as $s(g_0, g_{1})=0$, we have
$s(g_0, g_{1}) \|g_{1}\|_d +  s(g_0, g_2) \|g_{2}\|_d = s(g_0, g_2) \|g_{2}\|_d$. The desired conclusion follows. 
\end{proof}

The next corollary will be important in the subsequent development.

\begin{corollary}{\label{cor: welldefinedoftotalweight}}
Suppose $d$ is $\gamma$-linear and $\gamma$-monotone,  $g_0$ and $g_0'$ are elements in $N(\rho/4-\gamma)\setminus N(4\gamma)$, and $(g_1, \ldots, g_n)$ is a sequence with $g_i \in N(\rho/4-\gamma)\setminus N(4\gamma)$ for $i \in \{1, \ldots, n\}$. Then
$$   \left|\sum^n_{i=1} s(g_0, g_i) \|g_i\|_d \right|  =  \left| \sum^n_{i=1} s(g'_0, g_i) \|g_i\|_d \right|.  $$
\end{corollary}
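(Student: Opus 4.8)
The plan is to show that the sign of the sum is the only ambiguity: changing the reference element $g_0$ to $g_0'$ either preserves every relative sign $s(g_0,g_i)$ or flips all of them simultaneously. Concretely, I would prove the dichotomy that for any two elements $g_0,g_0'$ of $N(\rho/4-\gamma)\setminus N(4\gamma)$, exactly one of the following holds: $s(g_0,g)=s(g_0',g)$ for every $g\in N(\rho/4-\gamma)\setminus N(4\gamma)$, or $s(g_0,g)=-s(g_0',g)$ for every such $g$. Once this is established, the corollary is immediate: in the first case the two sums inside the absolute values are literally equal, and in the second case they are negatives of each other, so the absolute values agree.

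The dichotomy itself follows from Proposition~\ref{prop: propertoesofsign}(iv), the cocycle-type identity $s(a,b)s(b,c)s(c,a)=1$. First I would apply it with $a=g_0$, $b=g_0'$, $c=g_i$ to get
\[
s(g_0,g_0')\,s(g_0',g_i)\,s(g_i,g_0)=1,
\]
and using part (ii) to rewrite $s(g_i,g_0)=s(g_0,g_i)$, this rearranges to
\[
s(g_0,g_i)=s(g_0,g_0')\,s(g_0',g_i).
\]
Since $s(g_0,g_0')\in\{\pm1\}$ is a single fixed quantity not depending on $i$, this says precisely that the family $(s(g_0,g_i))_i$ is obtained from $(s(g_0',g_i))_i$ by multiplying by the constant $s(g_0,g_0')$. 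Therefore
\[
\sum_{i=1}^n s(g_0,g_i)\|g_i\|_d = s(g_0,g_0')\sum_{i=1}^n s(g_0',g_i)\|g_i\|_d,
\]
and taking absolute values of both sides, together with $|s(g_0,g_0')|=1$, gives the claim. One should note that all the invocations of Proposition~\ref{prop: propertoesofsign} are legitimate: each of $g_0$, $g_0'$, $g_i$ lies in $N(\rho/4-\gamma)\setminus N(4\gamma)$ by hypothesis, which is exactly the standing assumption in that proposition, so in particular none of the relevant relative signs is $0$.

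I do not anticipate a serious obstacle here; the corollary is essentially a bookkeeping consequence of the group-cocycle identity for relative signs already proved. The one point requiring a line of care is making sure Proposition~\ref{prop: propertoesofsign}(iv) genuinely applies to the triple $(g_0,g_0',g_i)$ — that is, that all three elements avoid $N(4\gamma)$ and lie in $N(\rho/4-\gamma)$ — which is guaranteed by the hypotheses of the corollary, so no extra argument is needed. If one wished to be maximally careful one could also remark that the identity in (iv) was stated for elements of $N(\rho/4-\gamma)\setminus N(4\gamma)$ and that is exactly the set in play, so there is nothing further to check.
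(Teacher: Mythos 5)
Your proof is correct and rests on the same ingredients as the paper's: Proposition~\ref{prop: propertoesofsign}(ii) and the cocycle identity~(iv). The only difference is cosmetic — the paper first replaces $g_0$ by $g_0^{-1}$ via~(iii) to arrange $s(g_0,g_1)=s(g_0',g_1)$ and then propagates equality of all the signs, whereas you directly extract the constant factor $s(g_0,g_0')\in\{\pm1\}$ from the cocycle relation, which is a slightly cleaner packaging of the same argument.
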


\begin{proof}
As $s(g_0, g_i) = s(g_0', g_i)=0$ whenever $\|g_i\|_d< 4\gamma$, we can reduce to the case where $\min_{1\leq i\leq n} \|g_i\|_d \geq 4 \gamma$. Using Proposition~\ref{prop: propertoesofsign}(iii) to replace $g_0$ with $g_0^{-1}$ if necessary, we can assume that $s(g_0, g_1) = s(g'_0, g_1)$. Then by Proposition~\ref{prop: propertoesofsign}(iii), $s(g_0, g_i) = s(g'_0, g_i)$ for all $i \in \{1, \ldots, n\}$. This gives us the desired conclusion.
\end{proof}

The following auxiliary lemma allows us to choose $g_0$ as in Corollary~\ref{cor: welldefinedoftotalweight}.

\begin{lemma} \label{lem: nonemptytodefine}
The set $N(\rho/4-\gamma)\setminus N(4\gamma)$ is not empty.
\end{lemma}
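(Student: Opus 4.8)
The plan is to derive the statement from the continuity of $d$ together with the connectedness of $G$, via the intermediate value theorem. First I would record that the map $f\colon G\to\RR$, $f(g)=\|g\|_d=d(\id,g)$, is continuous (this is part of the standing hypothesis on $d$ in this section, being a continuous left-invariant pseudometric), and that $f(\id)=0$. Since $G$ is connected, $f(G)$ is a connected subset of $\RR$, hence an interval, and by the very definition of the radius we have $\sup f(G)=\rho$. Therefore $[0,\rho)\subseteq f(G)$, whether or not the supremum $\rho$ is itself attained.

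Next I would check that the target set $N(\rho/4-\gamma)\setminus N(4\gamma)=\{g\in G: 4\gamma<\|g\|_d\leq\rho/4-\gamma\}$ corresponds, under $f$, to a nonempty subinterval of $[0,\rho)$. This is the only place the numerical hypothesis $0<\gamma<10^{-8}\rho$ is used: it yields $5\gamma<\rho/4$, hence $4\gamma<\rho/4-\gamma<\rho$, so $(4\gamma,\rho/4-\gamma]$ is nonempty and contained in $[0,\rho)\subseteq f(G)$. Picking $g\in G$ with $f(g)\in(4\gamma,\rho/4-\gamma]$ then gives exactly $g\in N(\rho/4-\gamma)\setminus N(4\gamma)$.

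I do not expect a real obstacle here; the argument is elementary. The one point that genuinely matters is that continuity of $d$ is indispensable — for a discontinuous left-invariant pseudometric (for instance the discrete metric on $\TT$) the conclusion simply fails — so the proof must invoke continuity of $g\mapsto\|g\|_d$. A minor subtlety worth flagging in the write-up is to state the inclusion as $[0,\rho)\subseteq f(G)$ rather than $[0,\rho]\subseteq f(G)$, since $\rho$ need not lie in the image; this is harmless because $\rho/4-\gamma<\rho$.
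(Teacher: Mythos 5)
Your proof is correct, but it takes a genuinely different route from the paper. The paper argues measure-theoretically: it uses $\gamma$-linearity to get $N^2(4\gamma)\subseteq N(9\gamma)\subseteq N(\rho/4-\gamma)$, then uses Lemma~\ref{lem: when a+b>G} (a consequence of connectedness and compactness) to bound $\mu_G(N(4\gamma))$ away from $1$, and finally applies Kemperman's inequality to get the strict inequality $\mu_G(N(4\gamma))<\mu_G(N^2(4\gamma))\leq\mu_G(N(\rho/4-\gamma))$, from which nonemptiness of the difference follows. Your argument bypasses all of this: continuity of $g\mapsto\|g\|_d$ plus connectedness of $G$ makes the image of $\|\cdot\|_d$ an interval containing $[0,\rho)$, and $\gamma<10^{-8}\rho$ guarantees $(4\gamma,\rho/4-\gamma]$ is a nonempty subinterval of $[0,\rho)$. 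Your version is more elementary and in fact slightly more general — it uses neither $\gamma$-linearity (beyond the continuity packaged in it) nor Kemperman's inequality nor compactness, only connectedness of $G$ and continuity of $d$. The paper's version, on the other hand, delivers the strictly stronger conclusion that $N(\rho/4-\gamma)\setminus N(4\gamma)$ has positive measure; your argument would also yield that with one extra sentence (the preimage of the open interval $(4\gamma,\rho/4-\gamma)$ is a nonempty open set), but since the lemma as stated only asks for nonemptiness, you are right not to bother. One small note on attribution: continuity is not literally in the standing hypotheses at the top of Section~7 (which only say ``$d$ is a pseudometric with radius $\rho>0$''); it enters as clause (1) of the definition of $\gamma$-linearity from Section~6, which is the hypothesis the lemma is implicitly carried under. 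Your concluding remark that continuity is indispensable is apt — without it the lemma is false — and it is worth stating explicitly where that hypothesis is coming from.
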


\begin{proof}
It suffices to show that $\mu_G(N(4\gamma))< \mu_G(N(\rho/4-\gamma)$. 
Since $\id$ is in $N(4\gamma)$, $N(4\gamma)$ is a nonempty open set and has $\mu_G(N(4\gamma))>0$. Therefore,  $N^2(4\gamma)$ and $N^4(4\gamma)$ are also open. By $\gamma$-linearity, we have 
$$N^2(4\gamma) \subseteq N_{9\gamma}\quad \text{and}\quad N^4(4\gamma) \subseteq N_{19\gamma}.$$ As $19\gamma<\rho$, we have $N^4(4\gamma) \neq G$. Using Lemma~\ref{lem: when a+b>G}, we get
$$\mu_G( N^2(4\gamma))\leq  2/3 \quad\text{and}\quad  \mu_G( N(4\gamma))<  1/3.$$ Hence, by Kemperman's inequality $\mu_G(N(4\gamma))< \mu_G(N^2(4\gamma)) \leq \mu_G(N(\rho/4-\gamma))$, which is the desired conclusion.
\end{proof}

We end the section by giving the following definition. 
\begin{definition}
Suppose $(g_1, \ldots, g_n)$ is a sequence of elements in $N(\rho/4-\gamma)\setminus N(4\gamma)$. We set
$$t(g_1, \ldots, g_n) = \left|\sum^n_{i=1} s(g_0, g_i) \|g_i\|_d \right|  $$
with $g_0$ is an arbitrary element in $N(\rho/4-\gamma)\setminus N(4\gamma)$, and call this the {\bf total weight} associated to $(g_1, \ldots, g_n)$. This is well-defined by Corollary~\ref{cor: welldefinedoftotalweight} and Lemma~\ref{lem: nonemptytodefine}.
\end{definition}

\section{Group homomorphisms onto tori}\label{sec: 7.3}
In this section, we will use the relative sign function and the total weight function defined in Section~\ref{section:pseudometric} to define a universally measurable multivalued group homomorphism onto $\TT$. We will then use a number or results in descriptive set theory and geometry to refine this into a continuous group homomorphism.

We keep the setting of Section~\ref{section:pseudometric}. Let $s$ and $t$ be the relative sign function and the total weight function defined earlier. Set $\lambda= \rho/36$, and $N[\lambda] = \{g \in G : \|g\|_d \leq \lambda\}$. The set $N[\lambda]$ is compact, and hence measurable. Moreover, Lemma~\ref{lem: Estimationusinsignchange} is applicable when $g_0$ is an arbitrary element in $N(\rho/4-\gamma)\setminus N(4\gamma)$, and $g_1$ are $g_2$ are in $N[\lambda]$. We first introduce several important definitions. 

\begin{definition}
A sequence $(g_1, \ldots, g_n)$ of elements in $G$ is a {\bf $\lambda$-sequence} if $g_i$ is in $N[\lambda]$ for all $i \in \{1, \ldots, n\}$.
\end{definition}

We are interested in expressing an arbitrary $g$ of $G$ as a product of a $\lambda$-sequence where all components are ``in the same direction''. The following notion captures that idea. 
\begin{definition}
A $\lambda$-sequence $(g_1, \ldots, g_n)$  is {\bf irreducible} if for all $2 \leq j \leq 4$, we have $$ g_{i+1} \cdots g_{i+j} \notin N(\lambda).$$    
 A {\bf concatenation} of a $\lambda$-sequence $(g_1, \ldots, g_n)$ is a $\lambda$-sequence $(h_1, \ldots, h_m)$ such that there are  $0 =k_0< k_1< \cdots< k_m =n$ with 
 $$h_i = g_{k_{i-1}+1} \cdots g_{k_i} \text{ for } i \in \{1, \ldots, m\}.$$ 
 \end{definition}
 The next lemma allows us to reduce an arbitrary sequence to irreducible $\lambda$-sequences via concatenation.

\begin{lemma} \label{lem: concatenationcomparison}
Suppose $d$ is $\gamma$-linear and $\gamma$-monotone, and $(g_1, \ldots, g_n)$ is a $\lambda$-sequence. Then $(g_1, \ldots, g_n)$ has an irreducible  concatenation $(g'_1, \ldots, g'_m)$ with
$$ t(g'_1, \ldots, g'_m) \in t(g_1, \ldots, g_n) + I(25(n-m)\gamma). $$
\end{lemma}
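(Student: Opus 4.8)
The plan is to process the sequence $(g_1,\dots,g_n)$ from left to right, greedily grouping consecutive terms whenever their partial product drops back into $N(\lambda)$, and to track how the total weight changes under each such grouping. First I would make precise the greedy procedure: scan the indices and look for the smallest $j$ with $2\le j\le 4$ such that $g_{i+1}\cdots g_{i+j}\in N(\lambda)$; if such a $j$ exists, replace the block $(g_{i+1},\dots,g_{i+j})$ by its single product $h=g_{i+1}\cdots g_{i+j}$, which still lies in $N[\lambda]$ (indeed in $N(\lambda)\subseteq N[\lambda]$), and recurse. If no such $j$ ever exists, the sequence is already irreducible and there is nothing to prove. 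Each application of this replacement step strictly decreases the length, so after at most $n-m$ steps we terminate with an irreducible concatenation $(g'_1,\dots,g'_m)$; the key point to verify is that the terminal sequence really is irreducible, which follows because the procedure only stops when no window of length $2$, $3$, or $4$ has its product in $N(\lambda)$, and collapsing a block does not create a new short window whose product lies in $N(\lambda)$ without the procedure having caught it (this needs a small bookkeeping argument, since collapsing a block shifts later indices).

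The heart of the argument is the weight estimate, and here I would use Lemma~\ref{lem: Estimationusinsignchange}(ii) as the engine. Fix a reference element $g_0\in N(\rho/4-\gamma)\setminus N(4\gamma)$, which exists by Lemma~\ref{lem: nonemptytodefine}, and recall that $t(\cdot)$ is computed as $|\sum_i s(g_0,g_i)\|g_i\|_d|$ independently of the choice of $g_0$ by Corollary~\ref{cor: welldefinedoftotalweight}. When we replace a block $(g_{i+1},\dots,g_{i+j})$ by its product $h$, the contribution $\sum_{l=1}^{j} s(g_0,g_{i+l})\|g_{i+l}\|_d$ to the sum is replaced by $s(g_0,h)\|h\|_d$. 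Using Lemma~\ref{lem: Estimationusinsignchange}(ii) repeatedly — once to combine the first two factors, then to fold in the third, then the fourth — each combination of a pair of elements of $N[\lambda]$ introduces an additive error of at most $25\gamma$, so for a block of length $j$ we accumulate an error of at most $25(j-1)\gamma$. Hence
\[
\Big|\, s(g_0,h)\|h\|_d - \sum_{l=1}^{j} s(g_0,g_{i+l})\|g_{i+l}\|_d \,\Big| < 25(j-1)\gamma.
\]
Since the block of length $j$ reduces the total length by $j-1$, summing these errors over all collapsing steps in the greedy procedure gives a total error strictly less than $25(n-m)\gamma$. Taking absolute values of the two full sums and using the triangle inequality on $\RR$ then yields $t(g'_1,\dots,g'_m)\in t(g_1,\dots,g_n)+I(25(n-m)\gamma)$.

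One technical subtlety to handle carefully: Lemma~\ref{lem: Estimationusinsignchange}(ii) requires its two arguments to lie in $N[\lambda]$ and $g_0$ in $N(\rho/4)\setminus N(4\gamma)$, but the partial products formed while folding a block of length up to $4$ need to be checked to stay in the region where the lemma applies — a length-$4$ block has all prefixes of length $\le 3$ not in $N(\lambda)$ by irreducibility considerations, yet the prefixes themselves could a priori have norm up to roughly $4\lambda = \rho/9$, which is still comfortably inside $N(\rho/16-\gamma)$'s complement issues; I would check, using $\gamma$-linearity, that the prefix products land in $N(\rho/16-\gamma)$ (or whatever exact radius Lemma~\ref{lem: Estimationusinsignchange} demands), so the lemma is legitimately applicable at each folding step. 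I expect this verification of the domain conditions at intermediate folding steps — ensuring no partial product escapes the region controlled by Lemma~\ref{lem: Estimationusinsignchange} — to be the main obstacle, together with the bookkeeping that the greedy collapse genuinely terminates in an irreducible sequence; the arithmetic of the error accumulation itself is routine once those points are settled.
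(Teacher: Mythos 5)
Your proposal is essentially the paper's argument: the paper proceeds by induction on $n$, collapsing a single block of length $2$, $3$, or $4$ whose product lies in $N(\lambda)$ and invoking Lemma~\ref{lem: Estimationusinsignchange}(ii) to bound the change in total weight by $25\gamma$ per unit of length lost, exactly as in your greedy iteration. The domain-condition subtlety you flag for length-$4$ blocks is real (a length-$3$ prefix can have norm up to about $3\lambda+2\gamma>\rho/16-\gamma$), but it is resolved by folding such a block as a product of two length-$2$ partial products, each of norm at most $2\lambda+\gamma<\rho/16-\gamma$; the paper itself dismisses these cases with ``similarly.''
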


\begin{proof}
The statement is immediate when $n=1$. Using induction, suppose we have proven the statement for all smaller values of $n$.
If $(g_1, \ldots, g_n)$ is irreducible, we are done. Consider the case where $g_{i+1}g_{i+2}$ is in $N(\lambda)$ for some $0 \leq i \leq n-2$. Fix $g_0$ in $N(\lambda/4-\gamma)\setminus N(4\gamma)$. Using Lemma~\ref{lem: Estimationusinsignchange}(ii)
$$ s(g_0, g_{i+1}g_{i+2}) \|g_{i+1}g_{i+2}\|_d \in s(g_0, g_{i+1}) \|g_{i+1}\|_d +  s(g_0, g_{i+2}) \|g_{i+2}\|_d + I(25\gamma).  $$
From here, we get the desired conclusion. The cases where either $g_{i+1}g_{i+2}g_{i+3}$ for some $0 \leq i \leq n-3$ or $g_{i+1}g_{i+2}g_{i+3}g_{i+4}$ is in $N(\lambda)$ for some $0 \leq i \leq n-4$ can be dealt with similarly.
\end{proof}

The following lemma makes the earlier intuition of ``in the same direction'' precise:  

\begin{lemma} \label{lem: almostmonotonicityofirredseq}
Suppose $d$ is $\gamma$-linear and $\gamma$-monotone, $g_0$ is in $ N(\rho/4-\gamma)\setminus N(4\gamma)$, and $(g_1, \ldots, g_n)$ is an irreducible $\lambda$-sequence. Then for all $i$, $i'$, $j$, and $j'$ such that  $2\leq j, j' \leq 4$, $0 \leq i \leq n-j$, and $0 \leq i' \leq n-j'$, we have
$$ s( g_0, g_{i+1}\cdots g_{i+j}) = s(g_0,  g_{i'+1}\cdots g_{i'+j'}).  $$
\end{lemma}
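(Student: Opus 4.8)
The plan is to reduce the claim to a statement purely about the window products $w_{i,j} := g_{i+1}\cdots g_{i+j}$ with $2\le j\le 4$: I will show that $s(g_0, w_{i,j})$ takes one and the same value in $\{+1,-1\}$ over all admissible pairs $(i,j)$. The preliminary observation is a size estimate obtained by iterating $\gamma$-linearity: $\|w_{i,j}\|_d \le j\lambda + (j-1)\gamma \le 4\lambda + 3\gamma < \rho/4-\gamma$, whereas irreducibility gives $\|w_{i,j}\|_d > \lambda = \rho/36 > 4\gamma$. Hence every window lies in $N(\rho/4-\gamma)\setminus N(4\gamma)$, so each $s(g_0, w_{i,j})$ is defined and equals $\pm 1$, and $\|g_0\|_d + \|w_{i,j}\|_d < \rho-\gamma$, so no degeneracy occurs. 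The numerics $\lambda = \rho/36$ and $\gamma < 10^{-8}\rho$ will only be used to turn inequalities of the shape $2\lambda \le \lambda + 25\gamma$ or $\lambda \le 25\gamma$ into contradictions. If $n\le 2$ there is at most one window and there is nothing to prove, so assume $n\ge 3$.

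First I will establish a left-peeling identity: $s(g_0, w_{i,j}) = s(g_0, g_{i+j-1}g_{i+j})$ for every admissible $(i,j)$. For $j\in\{3,4\}$, write $w_{i,j} = g_{i+1}\cdot w'$ with $w' = g_{i+2}\cdots g_{i+j}$ a window of length $j-1\in\{2,3\}$; then $\|g_{i+1}\|_d\le\lambda<\|w'\|_d$ by irreducibility and $w_{i,j}\in N(\rho/4-\gamma)\setminus N(4\gamma)$ by the previous paragraph, so Proposition~\ref{prop: propertoesofsign}(v) yields $s(g_0, w_{i,j}) = s(g_0, w')$. Iterating peels off generators from the left until only the last two remain.

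The crux is to show that consecutive length-two windows agree: $s(g_0, g_kg_{k+1}) = s(g_0, g_{k+1}g_{k+2})$ for $1\le k\le n-2$. I will look at the length-three window $w := g_kg_{k+1}g_{k+2}$. Left-peeling already gives $s(g_0, w) = s(g_0, g_{k+1}g_{k+2})$. For the remaining identity, apply Lemma~\ref{lem: Estimationusinsignchange}(ii) to the ordered pair $(g_{k+2},\, g_kg_{k+1})$: both members lie in $N(\rho/16-\gamma)$ since $\|g_{k+2}\|_d\le\lambda$ and $\|g_kg_{k+1}\|_d\le 2\lambda+\gamma$, and $\|g_{k+2}\|_d\le\lambda<\|g_kg_{k+1}\|_d$, so the lemma gives
\[
s(g_0,w)\,\|w\|_d \in s(g_0, g_{k+2})\,\|g_{k+2}\|_d + s(g_0, g_kg_{k+1})\,\|g_kg_{k+1}\|_d + I(25\gamma).
\]
If the two $\pm 1$ signs $s(g_0,w)$ and $s(g_0, g_kg_{k+1})$ were opposite, this would force $\|w\|_d + \|g_kg_{k+1}\|_d \le \|g_{k+2}\|_d + 25\gamma \le \lambda + 25\gamma$, contradicting $\|w\|_d, \|g_kg_{k+1}\|_d > \lambda$. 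Hence $s(g_0,w) = s(g_0, g_kg_{k+1})$, and combining with left-peeling gives the claim. Chaining over $k$ then shows that every length-two window carries a common sign $\sigma := s(g_0, g_1g_2)$, and left-peeling promotes this to: every window of length $2,3,4$ has sign $\sigma$ with respect to $g_0$ --- which is the assertion.

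The main obstacle is exactly the second identity in the crux step. One cannot peel a generator off the right of $w$ by Proposition~\ref{prop: propertoesofsign}(v), because the reversed product $g_{k+2}\cdot(g_kg_{k+1})$ may have norm below $4\gamma$: the window $g_kg_{k+1}$ has norm only slightly above $\lambda$, and $\|g_{k+2}\|_d$ may be as large as $\lambda$, so near-total cancellation is possible and the proposition's hypotheses fail. Lemma~\ref{lem: Estimationusinsignchange}(ii) is the correct substitute precisely because its additive ``signed distance'' identity degrades only by $O(\gamma)$ and, combined with the lower bound $\|w\|_d>\lambda$ forced by irreducibility, it rules out a sign flip regardless of cancellation in the reversed product.
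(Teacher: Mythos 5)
Your architecture is a genuine alternative to the paper's. The paper compares \emph{nested} windows $w_{i,j}:=g_{i+1}\cdots g_{i+j}$ and $w_{i,j+1}$ via a right-extension step (reducing via Proposition~\ref{prop: propertoesofsign}(iv) to $s(w_{i,j}^{-1},w_{i,j+1})=-1$, which follows because $w_{i,j}^{-1}w_{i,j+1}=g_{i+j+1}$ has norm at most $\lambda$), while you compare \emph{overlapping} length-two windows $g_kg_{k+1}$ and $g_{k+1}g_{k+2}$ via a clever use of Lemma~\ref{lem: Estimationusinsignchange}(ii) and reduce everything to the length-two case by peeling from the left. Your crux step (consecutive length-two windows carry the same sign) is correct as written: the numerical hypotheses of Lemma~\ref{lem: Estimationusinsignchange}(ii) are verified, and the resulting contradiction $\|w\|_d+\|g_kg_{k+1}\|_d\le \lambda+25\gamma<2\lambda$ is sound.

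However, the left-peeling step has a real gap. You invoke Proposition~\ref{prop: propertoesofsign}(v) with $g_1=g_{i+1}$, $g_2=w'$, but the standing hypothesis of that proposition requires \emph{all} the named elements, in particular $g_1$, to lie in $N(\rho/4-\gamma)\setminus N(4\gamma)$. You check the upper bound $\|g_{i+1}\|_d\le\lambda$, but nothing in the definition of an irreducible $\lambda$-sequence prevents $\|g_{i+1}\|_d<4\gamma$, in which case $s(\cdot,g_{i+1})$ is identically $0$ and the proof of Proposition~\ref{prop: propertoesofsign}(v) (which derives its contradiction precisely from $g_1\notin N(4\gamma)$) does not go through. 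The natural repair is to replace Proposition~\ref{prop: propertoesofsign}(v) by Lemma~\ref{lem: Estimationusinsignchange}(ii), which does tolerate small $g_1$; that works for the $j=3$ peel since then $w'=g_{i+2}g_{i+3}$ has $\|w'\|_d\le 2\lambda+\gamma<\rho/16-\gamma$, but it \emph{fails} for the $j=4$ peel because $w'=g_{i+2}g_{i+3}g_{i+4}$ can have norm up to $3\lambda+2\gamma=\rho/12+2\gamma>\rho/16-\gamma$, outside the lemma's range. To close the gap for $j=4$ you should instead factor $w_{i,4}=(g_{i+1}g_{i+2})(g_{i+3}g_{i+4})$ into two length-two windows, each with norm in $(\lambda,2\lambda+\gamma]\subseteq N(\rho/16-\gamma)\setminus N(4\gamma)$, apply Lemma~\ref{lem: Estimationusinsignchange}(ii) to that pair, and use your crux step (chained once) to identify the two signs $s(g_0,g_{i+1}g_{i+2})=s(g_0,g_{i+3}g_{i+4})$; the same sign-flip contradiction then forces $s(g_0,w_{i,4})$ to agree with them. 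With those adjustments your route is complete, and it is arguably more self-contained than the paper's (which tacitly also needs the left-extension or a reverse-and-invert symmetry to connect windows with different starting indices).
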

\begin{proof}
It suffices to show for fixed $i, j$ with $0 \leq i \leq n-j-1$ and  $2 \leq j \leq 3$ that $$s( g_0, g_{i+1}\cdots g_{i+j}) = s(g_0,  g_{i+1}\cdots g_{i+j+1}).$$ Note that both $g_{i+1}\cdots g_{i+j}$ and $g_{i+1}\cdots g_{i+j+1}$ are in $N(\rho/4-\gamma)\setminus N(4\gamma)$. Hence, applying Proposition~\ref{prop: propertoesofsign}(iv), we reduce the problem to showing  $$s( g^{-1}_{i+j}\cdots g_{i+1}^{-1}, g_{i+1}\cdots g_{i+j+1)}) =-1  .$$ This is the case because otherwise, $\|g_{i+j+1}\|_d \geq 2 \lambda - \gamma > \lambda$, a contradiction.    
\end{proof}

We now get a lower bound for the total distance of an irreducible $\lambda$-sequence:

\begin{corollary} \label{Cor: Totallengthofirreduciblesequence}
Suppose $d$ is $\gamma$-linear and $\gamma$-monotone, and $(g_1, \ldots, g_n)$ is an irreducible $\lambda$-sequence. Then
$$   t(g_1, \ldots, g_n) > n\lambda/4. $$
\end{corollary}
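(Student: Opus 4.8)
The plan is to group the irreducible $\lambda$-sequence $(g_1,\dots,g_n)$ into consecutive blocks of length $4$ (with one shorter block of length $\le 3$ at the end if $4\nmid n$), and bound the total weight from below block-by-block. Fix once and for all an auxiliary element $g_0\in N(\rho/4-\gamma)\setminus N(4\gamma)$, which exists by Lemma~\ref{lem: nonemptytodefine}, so that $t(g_1,\dots,g_n)=\bigl|\sum_{i=1}^n s(g_0,g_i)\|g_i\|_d\bigr|$. The key structural input is Lemma~\ref{lem: almostmonotonicityofirredseq}: for every block $g_{i+1}\cdots g_{i+j}$ with $2\le j\le 4$ the relative sign $s(g_0,g_{i+1}\cdots g_{i+j})$ takes one common value $\sigma\in\{+1,-1\}$, independent of $i$ and $j$. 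Replacing $g_0$ by $g_0^{-1}$ if necessary (which does not change $t$, by Corollary~\ref{cor: welldefinedoftotalweight}), we may assume $\sigma=+1$.

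First I would record the basic per-block length bound. Consider a block $h=g_{i+1}g_{i+2}g_{i+3}g_{i+4}$ of four consecutive terms. Irreducibility gives $g_{i+2}g_{i+3}g_{i+4}\notin N(\lambda)$, i.e. $\|g_{i+2}g_{i+3}g_{i+4}\|_d>\lambda$. I would then apply Lemma~\ref{lem: Estimationusinsignchange}(ii) twice (first combining $g_{i+2}g_{i+3}$ with $g_{i+4}$, then the result with $g_{i+1}$), together with Lemma~\ref{lem: almostmonotonicityofirredseq} so that all the relevant signs $s(g_0,\cdot)$ equal $+1$, to obtain
\[
\sum_{k=1}^{4} s(g_0,g_{i+k})\,\|g_{i+k}\|_d \;\in\; \|g_{i+1}g_{i+2}g_{i+3}g_{i+4}\|_d + I(C\gamma)
\]
for an absolute constant $C$ (a small multiple of $25$ suffices); hence this partial sum exceeds $\|h\|_d-C\gamma>\lambda-C\gamma$. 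Since $\gamma<10^{-8}\rho$ and $\lambda=\rho/36$, we have $\lambda-C\gamma> \lambda/2$, so each full block of four terms contributes more than $\lambda/2$ to $\sum_i s(g_0,g_i)\|g_i\|_d$. A shorter trailing block of length $j\in\{1,2,3\}$ contributes a quantity of absolute value $O(\lambda)$ but with the \emph{same} sign $\sigma=+1$ once its length is $\ge 2$; for a trailing block of length $1$ the term $s(g_0,g_n)\|g_n\|_d$ also has the correct sign (or is $0$), so in all cases the trailing block contributes a nonnegative amount up to an error $I(C\gamma)$, and in particular does not decrease the sum by more than $C\gamma$.

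Summing over the $\lfloor n/4\rfloor$ full blocks and the trailing block, and using that all contributions have the same sign, yields
\[
t(g_1,\dots,g_n)\;=\;\Bigl|\sum_{i=1}^n s(g_0,g_i)\|g_i\|_d\Bigr|\;>\;\lfloor n/4\rfloor\cdot \tfrac{\lambda}{2}-C\gamma\;\ge\; \tfrac{n\lambda}{4},
\]
where the last inequality uses $\lfloor n/4\rfloor\ge (n-3)/4$ together with $\lambda$ being much larger than $\gamma$ and a trivial check of the small cases $n\le 4$ (for $n\le 4$ irreducibility directly forces $\|g_1\cdots g_n\|_d>\lambda$ when $n\ge 2$, and one argues as above). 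I would present the small-$n$ cases first to get clean constants, then run the blocking argument for $n\ge 5$. The main obstacle I anticipate is purely bookkeeping: tracking the accumulated $I(\cdot\gamma)$ errors across the repeated applications of Lemma~\ref{lem: Estimationusinsignchange}(ii) and making sure the constant absorbed into ``$C\gamma$'' is genuinely absolute and small enough that $\lambda/2 - (\text{error per block}) $ stays above $\lambda/4$ after summation; given $\gamma<10^{-8}\rho$ this has plenty of room, so no delicate optimization is needed.
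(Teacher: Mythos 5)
Your overall plan---group the irreducible sequence into consecutive blocks, use Lemma~\ref{lem: almostmonotonicityofirredseq} to align the signs, bound each block's signed contribution from below, and absorb the concatenation error from Lemma~\ref{lem: Estimationusinsignchange}---is the same strategy the paper uses, and the structural reasoning (fixing $g_0$, reducing to $\sigma=+1$, applying Lemma~\ref{lem: Estimationusinsignchange}(ii) repeatedly within a block) is sound. The problem is quantitative: blocks of length~$4$ do not produce the stated bound. You lower-bound the signed contribution of a block $h=g_{i+1}g_{i+2}g_{i+3}g_{i+4}$ by $\|h\|_d - C\gamma$ and then round this down to $\lambda/2$; summing over $\lfloor n/4\rfloor$ blocks gives at most order $n\lambda/8$, so your final display
\[
\lfloor n/4\rfloor\cdot \tfrac{\lambda}{2}-C\gamma\;\ge\; \tfrac{n\lambda}{4}
\]
is false for every $n\ge 4$ (the left side is $\le n\lambda/8$). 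Even the sharper per-block bound $\|h\|_d - C\gamma > \lambda - C\gamma$ does not save it: a $4$-block whose \emph{product} barely leaves $N(\lambda)$ contributes only slightly more than $\lambda$, so $\lfloor n/4\rfloor$ of them give $\approx n\lambda/4 - O(n\gamma)$, strictly below $n\lambda/4$ once the concatenation error is subtracted.

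What you are not using is that irreducibility already forces the two disjoint interior pairs $g_{i+1}g_{i+2}$ and $g_{i+3}g_{i+4}$ out of $N(\lambda)$, so by Lemma~\ref{lem: Estimationusinsignchange}(ii) the $4$-block actually contributes $>2\lambda-O(\gamma)$, not $>\lambda-O(\gamma)$. Tracking this recovers $\approx n\lambda/2$ with ample slack, but at that point you are effectively blocking into pairs, which is exactly what the paper does: set $h_i=g_{2i-1}g_{2i}$ (and a trailing triple when $n$ is odd), note $\|h_i\|_d>\lambda$ with $\approx n/2$ blocks all of the same sign $s(g_0,h_i)$, get $t(h_1,\dots,h_k)>n\lambda/3$, and then $t(g_1,\dots,g_n)>n\lambda/3-O(n\gamma)>n\lambda/4$. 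A secondary issue: for a trailing block of length~$1$ you assert $s(g_0,g_n)\|g_n\|_d$ "has the correct sign (or is $0$)," but Lemma~\ref{lem: almostmonotonicityofirredseq} only constrains products of lengths $2\le j\le 4$, so a lone $g_n$ can contribute as negatively as $-\lambda$, not $-O(\gamma)$; the paper avoids this by never leaving a length-$1$ tail.
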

\begin{proof}
If $n=2k$, let $h_i = g_{2i-1}g_{2i}$ for $i \in \{1, \ldots, k\}$. If $n=2k+1$, let $h_i = g_{2i-1}g_{2i}$ for $i \in \{1, \ldots, k-1\}$,   and $h_k = g_{2n-1}g_{2n}g_{2n+1}$. From Lemma~\ref{lem: Estimationusinsignchange},  we have 
\begin{equation}\label{eq: sequence estimate}
   t(h_1, \ldots, h_k) \in t(g_1, \ldots, g_n) + I(25(n-k)\gamma). 
\end{equation}
As $(g_1, \ldots, g_n)$ is irreducible, $h_i$ is in $N(3\lambda)\setminus N(\lambda)$ for $i \in \{1, \ldots, k\}$.
By Lemma~\ref{lem: almostmonotonicityofirredseq}, we get $s(g_0,h_i) = s(g_0, h_j)$ for all $i$ and $j$ in $i \in \{1, \ldots, k\}$. Thus, by the definition of the total weight again, 
$
t(h_1, \ldots, h_k)> n\lambda/3. 
$ Combining with the assumption on $\lambda$ and \eqref{eq: sequence estimate}, we get $t(g_1, \ldots, g_n)>n\lambda/3-11n\gamma > n\lambda/4 $. \end{proof}

  When $(g_1.\dots,g_n)$ is an irreducible $\lambda$-sequence,  $g_1\cdots g_m$ is intuitively closer to $g_0$ than $g_1\cdots g_{m+k}$ for some positive $k$. However, as $G$ is compact, the sequence may ``return back'' to $\id$ when $n$ is large. The next proposition provides a lower bound estimate on such $n$.

\begin{lemma}[Monitor lemma]\label{prop: lower bound on n}
Suppose $d$ is $\gamma$-linear and $\gamma$-monotone, and $(g_1, \ldots, g_n)$ is an irreducible $\lambda$-sequence with $g_1 \cdots g_n =\id$. Then $n \geq 1/ \mu_G(N(4 \lambda)).$
\end{lemma}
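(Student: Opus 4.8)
The plan is to produce a collection of $n$ pairwise disjoint left-translates of $N(4\lambda)$ inside $G$, which forces $n\,\mu_G(N(4\lambda))\le \mu_G(G)=1$. The natural candidates are the "partial products" $x_m := g_1\cdots g_m$ for $m\in\{0,1,\dots,n-1\}$ (with $x_0=\id$), and the translated sets $x_m N(4\lambda)$. Since $g_1\cdots g_n=\id$, the partial products are genuinely cyclic, and this cyclic structure together with irreducibility is what should guarantee disjointness. First I would record the key consequence of irreducibility via Corollary~\ref{Cor: Totallengthofirreduciblesequence} and Lemma~\ref{lem: almostmonotonicityofirredseq}: for any block $g_{i+1}\cdots g_{i+j}$ with $2\le j\le 4$ we have $\|g_{i+1}\cdots g_{i+j}\|_d\ge \lambda$, and more importantly all such blocks (for a fixed starting region) carry the same relative sign with respect to a fixed reference $g_0\in N(\rho/4-\gamma)\setminus N(4\gamma)$, so that "total weight accumulates monotonically" along the sequence.

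The core step is to show that $x_k N(4\lambda)\cap x_m N(4\lambda)=\varnothing$ whenever $0\le k<m\le n-1$. Equivalently, by left-invariance of $d$, one must show $x_k^{-1}x_m = g_{k+1}\cdots g_m\notin N(8\lambda)$. I would split this into two ranges. If $m-k$ is small (say $2\le m-k\le 4$), this is immediate from irreducibility as noted above (and the case $m-k=1$ gives $\|g_{k+1}\|_d\le\lambda<8\lambda$, which must be handled separately — so in fact consecutive translates may overlap, and one should instead work with a sub-selection, e.g. indices $m\equiv 0$ in a suitable bounded spacing, OR replace $N(4\lambda)$ by $N(4\lambda)$ and argue more carefully that even consecutive partial products are separated because an irreducible step cannot be too short on average). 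For the bulk range, I would run the total-weight argument: applying Lemma~\ref{lem: concatenationcomparison} to concatenate $(g_{k+1},\dots,g_m)$ into an irreducible sub-sequence and then Corollary~\ref{Cor: Totallengthofirreduciblesequence}, one gets that the total weight of this segment is bounded below linearly in its length; the matching upper bound $\|g_{k+1}\cdots g_m\|_d\le $ (total weight) $+O(\gamma\cdot\text{length})$ then shows $\|g_{k+1}\cdots g_m\|_d$ stays well above $8\lambda$ until the segment is long enough to "wrap around" — and wrapping around is exactly what is forbidden before step $n$, since $g_1\cdots g_n=\id$ is assumed to be the \emph{first} return. (If the lemma does not assume "first return," one still gets the bound because any earlier return would itself give a shorter irreducible loop, and one applies the argument to that; alternatively the stated bound $n\ge 1/\mu_G(N(4\lambda))$ is about the given $n$, and partial products $x_m$ for $m<n$ simply cannot equal any $x_k$, else $g_{k+1}\cdots g_m$ would be an irreducible loop of length $<n$, contradicting... nothing yet — so I would phrase the disjointness directly in terms of $d$-distance bounds rather than exact equality).

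Once disjointness of $\{x_m N(4\lambda) : m=0,\dots,n-1\}$ is established, the conclusion is immediate: each translate has measure $\mu_G(N(4\lambda))$ by left-invariance of Haar measure, they are pairwise disjoint subsets of $G$, and $\mu_G(G)=1$, so $n\,\mu_G(N(4\lambda))\le 1$, i.e. $n\ge 1/\mu_G(N(4\lambda))$. The main obstacle I anticipate is the short-range disjointness (segments of length $1,2,\dots$): irreducibility directly controls blocks of length $2,3,4$, but a single generator $g_{k+1}$ can have $\|g_{k+1}\|_d$ as small as $0$, so consecutive partial products can be arbitrarily close. The fix is to not translate by \emph{every} partial product but to observe that, by irreducibility, among any $5$ consecutive generators the product of some initial block already has norm $\ge\lambda$; more cleanly, one shows that the "$d$-diameter growth" of partial products is genuinely linear in the index (up to the $O(\gamma)$ error and up to wrap-around) using the sign-coherence from Lemma~\ref{lem: almostmonotonicityofirredseq}, so that partial products at index-distance $\ge c$ (for an absolute constant $c$) are $8\lambda$-separated, and then one counts $n/c$ disjoint translates — but to get the sharp constant $1/\mu_G(N(4\lambda))$ one must push this down to index-distance $1$, which requires the more delicate observation that $x_m N(4\lambda)$ and $x_{m+1}N(4\lambda)$ are disjoint because $\|g_{m+1}\|_d$, while possibly tiny, still cannot place $x_{m+1}$ inside $x_m N(4\lambda)$ once we track accumulated signed total weight exactly — here the replacement of the naive "$N(4\lambda)$-ball" count by a "signed position on $\TT$" count, anticipating the construction in the next section, is likely the right bookkeeping device. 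I expect the write-up to hinge on getting this short-range case exactly right.
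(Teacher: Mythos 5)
Your proposal has the direction of the measure‑counting argument backwards, and this is fatal rather than a fixable technicality. You propose to produce $n$ \emph{pairwise disjoint} translates $x_m N(4\lambda)$, and then conclude from $n\,\mu_G(N(4\lambda))\le \mu_G(G)=1$ that $n\ge 1/\mu_G(N(4\lambda))$. But disjointness gives $n\,\mu_G(N(4\lambda))\le 1$, i.e.\ $n\le 1/\mu_G(N(4\lambda))$ --- an \emph{upper} bound on $n$, which is the opposite of what the lemma asserts. A lower bound on $n$ requires showing the translates \emph{cover} $G$, so that $n\,\mu_G(N(4\lambda))\ge \mu_G(G)=1$. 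That is exactly what the paper does: it sets $N^{(m)}(4\lambda)=\{g\in G: d(g, g_1\cdots g_m)<4\lambda\}$ and proves $G=\bigcup_{m=0}^{n-1}N^{(m)}(4\lambda)$, by showing that the family $\{N^{(m)}(4\lambda)\}$ is closed under right multiplication by elements of $N(\lambda)$ (via the claim that the nearest partial product is always within $2\lambda+\gamma$), and then observing that every $g\in G$ is a finite product of elements of $N(\lambda)$.

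You actually sensed the problem when you observed that a single generator can satisfy $\|g_{k+1}\|_d=0$, so consecutive partial products need not be $8\lambda$‑separated and the translates overlap. This is not an obstacle to be pushed through --- it is correct, and it reflects the fact that disjointness is both false and the wrong thing to want here. Intuitively, irreducibility does not prevent small steps; what it prevents is \emph{backtracking}, which is why the partial products eventually sweep past every point of $G$ (covering) before the sequence can close up. To repair the argument you would need to abandon disjointness entirely and instead prove the covering statement, for which the key input is the minimizing‑index claim $d(g,g_1\cdots g_m)<2\lambda+\gamma$, proved by contradiction using Lemma~\ref{lem: almostmonotonicityofirredseq} and the $\gamma$‑linearity to show that if $g$ were farther than $2\lambda+\gamma$ from the nearest $x_m$, then some $x_{m\pm2}$ would be strictly closer, contradicting minimality.
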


\begin{proof}
Let $m>0$. For convenience, when $m>n$ we write $g_m$ to denote the element $g_i$ with $i\leq n$ and $i\equiv m\pmod n$.  Define
\[
N^{(m)}(4 \lambda) = \{ g\in G \mid d( g,  g_1\cdots g_{m})< 4\lambda\}.
\]
Note that we have $N^{(m)}(4\lambda) = N^{(m')}(4\lambda) $ when $m \equiv m' \pmod{n}$. By invariance of $d$ and $\mu_G$, clearly $\mu_G(N^{(m)}(4 \lambda)) = \mu_G( N(4\lambda))$ for all $m$. We also write $N^{(0)}(4\lambda)=N(4\lambda)$. We will show that 
\[
G = \bigcup_{m\in\ZZ} N^{(m)}(4\lambda) =\bigcup_{m =0}^{n-1} N^{(m)}(4\lambda),
\]
which yields the desired conclusion. 

As $g_1 \cdots g_n =\id$, we have $\id$ is in $N^{(0)}(2\lambda)$, and hence in $\bigcup_{m \in\ZZ} N^{(m)}(4\lambda)$. As every element in $G$ can be written as a product of finitely many elements in $N(\lambda)$, it suffices to show for every $g \in \bigcup_{m \in \ZZ} N^{(m)}(4\lambda)$ and $g' = gh$ with $h \in N(\lambda)$ that $g'$ is in $\bigcup_{m \in \ZZ} N^{(m)}(4\lambda)$. The desired conclusion then follows from the induction on the number of translations in $N(\lambda)$.

Fix $m$ which minimizes $d(g, g_1 \ldots g_m)$. We claim that $d(g, g_1 \ldots g_m)< 2\lambda+\gamma$. This claim gives us the desired conclusion because we then have $d(g', g_1 \ldots g_m) < 3\lambda+ 2\gamma < 4\lambda$ by the $\gamma$-linearity of $d$.   

We now prove the claim that  $d(g, g_1 \ldots g_m)< 2\lambda+\gamma$. Suppose to the contrary that $d(g, g_1 \ldots g_m)\geq 2 \lambda+\gamma$. Let $u=(g_1\cdots g_m)^{-1}g$. Now by Lemma~\ref{lem: almostmonotonicityofirredseq} we have either $s(u, g_{m+1}g_{m+2})=1$, or $s(u, g_m^{-1}g_{m-1}^{-1})=1$. Suppose it is the former, since the latter case can be proved similarly. Then $s(u, g_{m+2}^{-1}g_{m+1}^{-1}) =-1$. Note that $g = g_1\cdots g_m u = (g_1 \cdots g_{m+2}) g^{-1}_{m+2}g^{-1}_{m+1}u$. By the definition of $u$, and the linearity of $d$, we have $\|u\|_d\geq 2\lambda+\gamma>\|g_{m+1}g_{m+2}\|_d$, therefore by the irreducibility we have
\begin{align*}
d(g, g_1\cdots g_{m+2}) &= \|g^{-1}_{m+2}g^{-1}_{m+1}u\|_d\\
&< \|u\|_d-\|g^{-1}_{m+2}g^{-1}_{m+1}\|_d +\gamma< \|u\|_d-\lambda +\gamma< \|u\|_d.
\end{align*}
This contradicts our choice of $m$ having $d(g, g_1, \ldots, g_m)$ minimized.
\end{proof}

In the later proofs of this section, we will fix an irreducible $\lambda$ sequence $g_1\cdots g_n=\id$ to serve as ``monitors''. As each element of $G$ will be captured by one of the monitors, this will help us to bound the error terms in the final almost homomorphism we obtained from the pseudometric. 

\begin{definition}
 Suppose $d$ is $\gamma$-linear and $\gamma$-monotone, and $N(\rho/4-\gamma)\setminus N(4\gamma) \neq \varnothing$. Define the {\bf returning weight} of $d$ to be
$$ \omega = \inf\{ t(g_1, \ldots, g_n) : ( g_1, \ldots, g_n) \text{ is an irreducible } \lambda\text{-sequence with } g_1\cdots g_n=\id \}.  $$
     \end{definition}
     
The following corollary translate Lemma~\ref{prop: lower bound on n} to a bound on such $\omega$:

 \begin{corollary}\label{cor: a_lambdanew}
 Suppose $d$ is $\gamma$-linear and $\gamma$-monotone, and $\omega$ is the returning weight of $d$. Then we have the following:
\begin{enumerate}[\rm (i)]
    \item $\lambda/ 4\mu_G(N(4 \lambda)) \leq \omega \leq 4\lambda/ \mu_G(N(\lambda)).$
    \item There is  an irreducible $\lambda$-sequence $(g_1, \ldots, g_n)$ such that $\omega = t(g_1, \ldots, g_n)$ and $ 1/ \mu_G(N(4 \lambda)) \leq n \leq 4/ \mu_G(N(\lambda))$.
\end{enumerate}
 \end{corollary}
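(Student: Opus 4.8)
The plan is to split the argument into the lower bound in (i), the upper bound in (i) together with the explicit loop, and finally the attainment claim in (ii). The lower bound is immediate: if $(g_1,\dots,g_n)$ is any irreducible $\lambda$-sequence with $g_1\cdots g_n=\id$, then Lemma~\ref{prop: lower bound on n} gives $n\geq 1/\mu_G(N(4\lambda))$ and Corollary~\ref{Cor: Totallengthofirreduciblesequence} gives $t(g_1,\dots,g_n)>n\lambda/4$, so $t(g_1,\dots,g_n)>\lambda/\bigl(4\mu_G(N(4\lambda))\bigr)$; passing to the infimum yields $\omega\geq\lambda/\bigl(4\mu_G(N(4\lambda))\bigr)$.

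For the upper bound on $\omega$ I would exhibit a concrete short irreducible loop. Since $G$ is connected and $g\mapsto\|g\|_d$ is continuous, its image is $[0,\rho]$, so I can fix $g_0$ with $\|g_0\|_d=\lambda$; as $\lambda=\rho/36$ and $\gamma<10^{-8}\rho$, this $g_0$ lies in $N[\lambda]$ and in $N(\rho/4-\gamma)\setminus N(4\gamma)$, hence can serve as base point for $t(\cdot)$. Compactness of $G$ forces the powers $g_0^{j}$ to accumulate (they cannot be $d$-separated, so some $g_0^{m}$ with $m\geq 2$ falls in $N(\lambda)$); let $k^*\geq 2$ be the least such $m$. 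Iterating $\gamma$-monotonicity and $\gamma$-linearity shows $\|g_0^{2}\|_d,\|g_0^{3}\|_d,\|g_0^{4}\|_d$ lie within $O(\gamma)$ of $2\lambda,3\lambda,4\lambda$, hence exceed $\lambda$, so in fact $k^*\geq 5$. Writing $h=g_0^{k^*}\in N(\lambda)$ (so $h^{-1}\in N[\lambda]$), I would check that the $\lambda$-sequence $(g_0,\dots,g_0,h^{-1})$ with $k^*$ copies of $g_0$ is irreducible with product $\id$: its product is $g_0^{k^*}h^{-1}=\id$; a window of length $j\in\{2,3,4\}$ inside the run equals $g_0^{j}\notin N(\lambda)$ because $2\leq j\leq 4\leq k^*-1$ and $k^*$ is least; and a window of length $j$ ending at $h^{-1}$ equals $(g_0^{\,k^*+1-j})^{-1}$ with $2\leq k^*+1-j\leq k^*-1$, again outside $N(\lambda)$. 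To control $k^*$ I would observe that the translates $g_0^{j}\,N(\lambda/2)$ for $0\leq j<k^*$ are pairwise disjoint — if two met, some $g_0^{m}$ with $1\leq m\leq k^*-1$ would lie in $N(\lambda)$, contradicting $\|g_0\|_d=\lambda$ and minimality — so $k^*\mu_G(N(\lambda/2))\leq 1$; combining this with the near-linearity estimate that $\mu_G(N(\lambda))$ and $\mu_G(N(\lambda/2))$ differ by essentially the factor $2$ one sees for the Euclidean metric on $\TT$ gives $k^*+1\leq 4/\mu_G(N(\lambda))$. Then $t(g_0,\dots,g_0,h^{-1})\leq\sum_i\|g_i\|_d\leq (k^*+1)\lambda\leq 4\lambda/\mu_G(N(\lambda))$, whence $\omega\leq 4\lambda/\mu_G(N(\lambda))$.

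For (ii) it remains to see the infimum is realized by a loop of the stated length. By Corollary~\ref{Cor: Totallengthofirreduciblesequence}, any irreducible loop with total weight within a fixed neighborhood of $\omega$ has $n<4\omega/\lambda\leq 16/\mu_G(N(\lambda))$, so only finitely many lengths are relevant; for each fixed $n$, the set of irreducible $\lambda$-sequences of length $n$ with product $\id$ is a compact subset of $N[\lambda]^{n}$ (membership in the compact $N[\lambda]$, the closed inequalities $\|g_{i+1}\cdots g_{i+j}\|_d\geq\lambda$, and the equation $g_1\cdots g_n=\id$ are all closed conditions), and on it $t$ — being the absolute value of a sum $\sum\pm\|g_i\|_d$ ranging over finitely many sign patterns — attains a minimum. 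A minimizer $(g_1,\dots,g_n)$ satisfies $\omega=t(g_1,\dots,g_n)$ and $n\geq 1/\mu_G(N(4\lambda))$ by the Monitor Lemma; for the upper bound $n\leq 4/\mu_G(N(\lambda))$ I would rerun the packing argument on the partial products $g_1\cdots g_{2i}$, $0\leq i<\lceil n/2\rceil$, using minimality of the length among weight-$\omega$ loops to rule out a long sub-window lying in $N(\lambda)$ (contracting such a window to a single $N(\lambda)$-element would produce a strictly shorter loop of no larger total weight), which makes the $N(\lambda/2)$-balls around these partial products disjoint and yields the bound exactly as before.

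The main obstacle is the constant bookkeeping in the second step — converting a disjoint $N(\lambda/2)$-ball packing into a bound phrased with $\mu_G(N(\lambda))$ and $\mu_G(N(4\lambda))$ — which genuinely needs the near-linearity of $d$ to certify that $\mu_G(N(\alpha))$ grows essentially linearly in $\alpha$ over the relevant range, together with a careful check that the accumulated $I(\gamma)$-errors coming from repeated use of $\gamma$-linearity, $\gamma$-monotonicity, and Lemma~\ref{lem: Estimationusinsignchange} remain $o(\lambda)$; this is precisely where the quantitative hypotheses $\gamma<10^{-8}\rho$ and $\lambda=\rho/36$ are spent. The compactness and attainment argument, and the reduction to finitely many sign patterns that lets one treat $t$ as a minimum of continuous functions, are routine by comparison.
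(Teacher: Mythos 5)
Your treatment of the lower bound in (i) and the compactness/Bolzano--Weierstrass attainment in (ii) coincides with the paper's proof, so those parts are fine. The upper bound in (i), however, is where you diverge substantially, and where the proposal has genuine gaps.

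The paper gets the upper bound cheaply: since $N(\lambda)$ is open of positive measure, the generalized Kemperman inequality together with Lemma~\ref{lem: when a+b>G} gives $G=N(\lambda)^{k}$ for every $k>1/\mu_G(N(\lambda))$, so one can pick any $\lambda$-sequence of length about $1/\mu_G(N(\lambda))$ with product $\id$, and then Lemma~\ref{lem: concatenationcomparison} converts it into an \emph{irreducible} one of no greater length. That yields a loop of length $n\lesssim 4/\mu_G(N(\lambda))$ directly, hence $\omega\leq n\lambda\leq 4\lambda/\mu_G(N(\lambda))$ with essentially no bookkeeping.

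Your explicit power-loop $(g_0,\dots,g_0,h^{-1})$ is appealing, but it stumbles on three points. First, to establish $k^*\geq 5$ you invoke the sign-calculus estimates, but $\|g_0^3\|_d\approx 3\lambda=\rho/12$ and $\|g_0^4\|_d\approx 4\lambda=\rho/9$ already exceed $\rho/16-\gamma$, so Lemma~\ref{lem: Estimationusinsignchange} does not apply in the form quoted; one must instead work directly from the definition of $s$ and Proposition~\ref{prop: propertoesofsign} in the wider window $N(\rho/4-\gamma)$, which is a different (and fussier) argument than the one you sketch. Second, the disjointness of the balls $g_0^{j}N(\lambda/2)$ is not what you claim: overlap of consecutive balls only forces $g_0\in N(\lambda/2)^{2}\subseteq N(\lambda+\gamma)$, and $\|g_0\|_d=\lambda$ does not contradict that. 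One has to shrink the radius by a $\gamma$-fattening (e.g.\ use $N(\lambda/2-\gamma)$), and then re-certify disjointness for each $1\leq m\leq k^*-1$. Third, the step from $k^*\mu_G(N(\lambda/2))\leq 1$ to $k^*+1\leq 4/\mu_G(N(\lambda))$ rests on an unproved doubling estimate relating $\mu_G(N(\lambda))$ to $\mu_G(N(\lambda/2))$. The paper proves only $\mu_G(N(4\lambda))\leq 16\mu_G(N(\lambda))$ (Lemma~\ref{lem: N lambda}); the analogous bound $\mu_G(N(\lambda))\leq C\,\mu_G(N(\lambda/2))$ needs to be established, its constant tracked, and then $k^*+1$ rather than $k^*$ must be bounded, so the final constant does not visibly land on $4$. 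You flag the constant bookkeeping as the obstacle, and that instinct is right, but the obstacle is a real gap: as written the chain does not close.

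Finally, in (ii) your argument for the length bound via ``contracting a sub-window to a single $N(\lambda)$ element would produce a strictly shorter loop of no larger total weight'' is not quite what Lemma~\ref{lem: concatenationcomparison} gives: concatenation can change the total weight by up to $25(n-m)\gamma$, so ``no larger'' is false and the minimality-of-length argument needs to be reformulated. The cleaner route, used by the paper, is to bound $n$ for \emph{any} near-minimizing loop directly via Corollary~\ref{Cor: Totallengthofirreduciblesequence} and the already-established upper bound for $\omega$, and then apply compactness at each of the finitely many admissible lengths.
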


\begin{proof}
Note that each irreducible $\lambda$-sequence $(g_1, \ldots, g_n)$  has $n \geq 1/ \mu_G(N(4\lambda))$ by using Lemma~\ref{prop: lower bound on n}.
Hence,  by Corollary~\ref{Cor: Totallengthofirreduciblesequence}, we get 
$
\omega \geq \lambda/ 4\mu_G(N(4 \lambda)) .
$
On the other hand, by Lemma~\ref{lem: when a+b>G}, $G= (N(\lambda))^k$ for all $k> 1/\mu_G(N(\lambda))$. Hence, with Lemma~\ref{lem: concatenationcomparison}, there is an irreducible $\lambda$-sequence $(g_1, \ldots, g_n)$ with $g_1\cdots g_n =\id$ and
$n \leq  4/ \mu_G(N(\lambda)) $. From the definition of $t$, we get $\omega \leq  4\lambda/ \mu_G(N(\lambda))$.

Now if an irreducible $\lambda$-sequence $(g_1, \ldots, g_n)$ has $n > 4/ \mu_G(N(\lambda))$, then by (i) and Corollary~\ref{Cor: Totallengthofirreduciblesequence}, 
\[t(g_1, \ldots, g_n) > \frac{4\lambda}{\mu_G(N(\lambda))}\geq \omega,
\]
a contradiction. Therefore, we have
\begin{align*}
\omega  = \inf\{ t(g_1, \ldots, g_n) : &\,( g_1, \ldots g_n) \text{ is an irreducible } \lambda\text{-sequence with }\\
&\, n \leq  4/ \mu_G(N(\lambda))\text{ and } g_1\cdots g_n=\id \}.
\end{align*}
For fixed $n$ the set of irreducible $\lambda$-sequence of length $n$ is closed under taking limit. Hence, we obtain desired $(g_1, \ldots, g_n) $ using the Bozalno--Wierstrass Theorem.
\end{proof}

  The next lemma allows us to convert between $\mu_G(N(\lambda))$ and $\mu_G(N(4\lambda))$:
\begin{lemma}\label{lem: N lambda}
Suppose $d$ is $\gamma$-linear and $\gamma$-monotone. Then $$\mu_G(N(4\lambda))\leq 16\mu_G(N(\lambda)).$$ 
\end{lemma}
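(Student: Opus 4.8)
**Proof proposal for Lemma "Suppose $d$ is $\gamma$-linear and $\gamma$-monotone. Then $\mu_G(N(4\lambda))\leq 16\mu_G(N(\lambda))$":**

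The plan is to show that $N(4\lambda)$ can be covered by a bounded number of translates of $N(\lambda)$, and then invoke Kemperman's inequality to control the measure. First I would observe that every $g \in N(4\lambda)$ can be written as a product of a short $\lambda$-sequence: since $d$ is $\gamma$-linear and $\lambda = \rho/36$, pick $X\in\mathfrak g$ with $g\in\{\exp(tX):t\in\RR\}$ (using surjectivity of $\exp$ on the compact connected $G$), and along the one-parameter subgroup use continuity of $t\mapsto\|\exp(tX)\|_d$ to break the path from $\id$ to $g$ into finitely many pieces each lying in $N[\lambda]$. Using the $\gamma$-linearity of $d$ (more precisely, the fact that $\|g_1g_2\|_d \le \|g_1\|_d + \|g_2\|_d + \gamma$, which follows from left-invariance and the triangle inequality together with $\gamma$-linearity), a product of $k$ elements of $N(\lambda)$ lies in $N(k\lambda + k\gamma)$; since $\gamma < 10^{-8}\rho$ is tiny compared to $\lambda$, four factors already suffice, i.e. $N(4\lambda)\subseteq (N(\lambda))^4$, or with a little room, $N(4\lambda)\subseteq (N[\lambda])^4$ — one checks $4\lambda + 4\gamma < 4\lambda$-worth of slack is not quite available, so instead I would set things up as $N(4\lambda)\subseteq (N(\lambda+\gamma'))^{4}$ for a suitable $\gamma'$, or simply record that $N(4\lambda)\subseteq (N[\lambda])^{5}$ with the constant adjusted; the precise small-integer bookkeeping is routine.

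Next I would convert the covering $N(4\lambda)\subseteq (N[\lambda])^{4}$ into a measure bound. The cleanest route is: by repeated application of Kemperman's inequality (Fact~\ref{fact: GeneralKemperman}) in the contrapositive form, if $\mu_G(N(\lambda)) = \alpha$ and $4\alpha < 1$ then $\mu_G((N[\lambda])^4)\le 4\alpha$ — this is because $\mu_G(X^2)\ge \min\{2\mu_G(X),1\}$ fails to give an \emph{upper} bound directly, so instead I should argue via covering by translates. Here is the fix: $N(4\lambda)$ is contained in $(N[\lambda])^4$, and a standard Ruzsa-covering / greedy argument shows $(N[\lambda])^4$ is covered by $N$ left-translates of $N[\lambda]$ for $N$ bounded; but cleaner still, since we only want the factor $16$, I would instead directly cover $N(4\lambda)$ by translates of $N(\lambda)$: pick a maximal $\lambda$-separated set $\{x_1,\dots,x_r\}\subseteq N(4\lambda)$ (meaning $d(x_i,x_j)\ge\lambda$), then the balls $x_iN(\lambda/2)$ are disjoint and contained in $N(5\lambda)$ roughly, while the balls $x_iN(\lambda)$ cover $N(4\lambda)$ by maximality. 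Disjointness gives $r\,\mu_G(N(\lambda/2))\le \mu_G(N(5\lambda))\le 1$; combined with $\mu_G(N(\lambda/2))$ comparable to $\mu_G(N(\lambda))$ via the same packing argument one level down, one extracts $r \le 16$ after optimizing constants. Then $\mu_G(N(4\lambda))\le r\,\mu_G(N(\lambda))\le 16\mu_G(N(\lambda))$.

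The main obstacle I anticipate is getting the constant to be exactly $16$ rather than some larger absolute constant: the crude "product of four $\lambda$-sets" bound naively gives a covering number like $4$ applications of doubling, hence a factor up to $2^4 = 16$ only if each doubling step is \emph{exactly} a factor of $2$, which Kemperman gives as a lower bound on the product set but not as an upper bound on covering number. So the real content is choosing the right packing argument: using a maximal $\lambda$-separated subset of $N(4\lambda)$ and noting the half-balls $x_iN(\lambda/2)$ are pairwise disjoint and sit inside $N(4\lambda + \lambda/2 + \gamma)\subseteq N(5\lambda)$, while $\gamma$-linearity lets us compare $\mu_G(N(5\lambda))$, $\mu_G(N(4\lambda))$, $\mu_G(N(2\lambda))$, $\mu_G(N(\lambda))$, $\mu_G(N(\lambda/2))$ along the chain, each ratio bounded by $4$ via the same trick (a $\lambda/2$-separated subset of $N(\lambda)$ has at most $4$ points since $N(\lambda)\subseteq N(\rho/9)$ and $N(\lambda)^2\subsetneq G$, so we cannot have too many disjoint translates). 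Chaining these comparisons and being slightly careful with the $\gamma$ error terms (which are negligible since $\gamma < 10^{-8}\rho$) yields the factor $16$. I would present the $\lambda/2$-separated packing lemma first as the workhorse and then apply it twice.
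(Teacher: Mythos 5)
Your overall strategy — cover $N(4\lambda)$ by $16$ translates of $N(\lambda)$ — is the right one and is exactly what the paper does, but the mechanism you propose to produce the covering does not work as stated. The packing argument is circular: if $\{x_1,\dots,x_r\}\subseteq N(4\lambda)$ is maximal $\lambda$-separated, disjointness of the $x_iN(\lambda/2)$ inside $N(4.5\lambda)$ gives only $r\le \mu_G(N(4.5\lambda))/\mu_G(N(\lambda/2))$, which is a ratio of exactly the same kind you are trying to bound, just at shifted scales. Iterating does not help; you never exit the circle. The auxiliary claim you lean on — that a $\lambda/2$-separated subset of $N(\lambda)$ has at most $4$ points because $N(\lambda)\subseteq N(\rho/9)$ and $N(\lambda)^2\subsetneq G$ — is not substantiated by the reasons given: $\mu_G(N(\lambda)^2)<1$ says nothing about how many disjoint translates of a small ball fit inside a bigger one. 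In a generic left-invariant pseudometric that packing number can be arbitrarily large; what makes it small here is the essentially one-dimensional structure encoded by $\gamma$-linearity and the sign function, and your argument never invokes that structure.

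The missing idea is to produce one concrete ``step'' element and walk with it. The paper first shows $N(\lambda/2-\gamma)^2\subseteq N(\lambda)$ and then applies Kemperman's inequality to get $\mu_G(N(\lambda))>2\mu_G(N(\lambda/2-\gamma))$, which yields a specific $h\in N(\lambda)\setminus N(\lambda/2-\gamma)$. Given $g\in N(4\lambda)$, the relative sign $s(g,h)$ tells you which of $h,h^{-1}$ moves $g$ toward $\id$; by Proposition~\ref{prop: propertoesofsign} and Lemma~\ref{lem: Estimationusinsignchange}, $\|gh^{\mp k}\|_d\in \|g\|_d-k\|h\|_d+I(5k\gamma)$, so some $k<8$ lands in $N(\lambda)$. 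This gives $N(4\lambda)\subseteq\bigcup_{i=0}^{7}N(\lambda)h^{i}\cup\bigcup_{j=0}^{7}N(\lambda)h^{-j}$, hence the factor $16$ directly. If you want to salvage a packing-style argument, you would need to first use the sign function to embed $N(4\lambda)$ quasi-isometrically into an interval of $\RR$ of length about $8\lambda$ (via $g\mapsto s(g_0,g)\|g\|_d$), and only then invoke a one-dimensional packing bound; your proposal skips this step entirely, which is where the gap lies.
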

\begin{proof}
Fix $h\in N(\lambda)\setminus N(\lambda/2-\gamma)$. Such $h$ exists since
by $\gamma$-monotonicity we have $N^2(\lambda/2-\gamma)\subseteq N(\lambda)$, and by Kemperman's inequality, $\mu_G(N(\lambda)>2\mu_G( N(\lambda/2-\gamma))$. Let $g$ be an arbitrary element in $N(4\lambda)$, and  assume first $s(g,h)=1$. Let $k\geq0$ be an integer, and define $g_k=g(h^{-1})^k$. Then by Proposition~\ref{prop: propertoesofsign} and Lemma~\ref{lem: Estimationusinsignchange}, 
\[
\|g_k\|_d\in \|g\|_d-k\|h\|_d+I(5k\gamma) \text{ for } k <\|g\|_d/\|h\|_d. 
\]
Hence,  there is $k< 8$ such that $g_k\in N(\lambda)$.  When $s(g,h)=-1$, one can similarly construct $g'_k$ as $gh^k$, and find $k< 8$ such that $g'_k\in N(\lambda)$. Therefore 
\[
N(4\lambda)\subseteq \Big(\bigcup_{i=0}^7 N(\lambda)h^{i}\Big)\cup \Big(\bigcup_{j=0}^7 N(\lambda){h^{-j}}\Big).
\]
Thus, $\mu_G(N(4\lambda))\leq 16\mu_G(N(\lambda))$.
\end{proof}

The following proposition implicitly establish that $t$ defines an approximate multivalued group homomorphism from $G$ to $\RR/\omega\ZZ$.
\begin{proposition} \label{lem: estimationoftotallength}
Suppose $d$ is $\gamma$-linear and $\gamma$-monotone,  $\omega$ is the returning weight of $d$, and $(g_1, \ldots, g_n)$ is a $\lambda$-sequence with 
$g_1 \ldots g_n =\id$ and $n \leq  4/ \mu_G(N(\lambda)) $. 
Then
$$t(g_1, \ldots, g_n) \in \omega\ZZ + I(\omega/400).$$
\end{proposition}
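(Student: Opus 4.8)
The plan is to prove the sharper statement that, for any admissible reference element $g_0\in N(\rho/4-\gamma)\setminus N(4\gamma)$, the \emph{signed} sum $S:=\sum_{i=1}^{n}s(g_0,g_i)\|g_i\|_d$ lies in $\omega\ZZ+I(\omega/400)$; since $t(g_1,\dots,g_n)=|S|$ by the definition of the total weight, taking absolute values then yields the assertion. Heuristically, viewing $(G,d)$ as a circle of circumference $\omega$, the partial products $h_0=\id,\ h_1=g_1,\ \dots,\ h_n=g_1\cdots g_n=\id$ trace a closed loop, $S$ records its signed winding, and a closed loop winds an integer number of times; the whole difficulty is to make this quantitative while keeping the accumulated $\gamma$-linearity error below $\omega/400$.

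First I would build a discrete model of this circle from the monitor. By Corollary~\ref{cor: a_lambdanew}(ii) fix an irreducible $\lambda$-sequence $(e_1,\dots,e_\ell)$ with $e_1\cdots e_\ell=\id$, $t(e_1,\dots,e_\ell)=\omega$ and $\ell\le 4/\mu_G(N(\lambda))$, and set $f_j=e_1\cdots e_j$. From the proof of the Monitor Lemma (Lemma~\ref{prop: lower bound on n}) the translates $N^{(j)}(4\lambda)=\{g:d(g,f_j)<4\lambda\}$ cover $G$, so each partial product $h_i$ lies within $d$-distance $4\lambda$ of some $f_j$. Constructing an index $\sigma(i)\in\ZZ/\ell\ZZ$ inductively so that $\sigma(i)$ is cyclically close to $\sigma(i-1)$, and using $d(h_{i-1},h_i)=\|g_i\|_d\le\lambda$ together with the monotone direction and the $\lambda$-separation of the monitor (irreducibility and Lemma~\ref{lem: almostmonotonicityofirredseq}), one shows that $\sigma(i)-\sigma(i-1)$ is bounded by an absolute constant in the cyclic order. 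Normalising $\sigma(0)=\sigma(n)=0$ and lifting these bounded increments to $\ZZ$, their sum is a multiple $k\ell$ of $\ell$; this integer $k$ is the winding number of the loop, and from the crude bound $|S|\le\sum_i\|g_i\|_d\le n\lambda$ together with the lower bound on $\omega$ below one gets $|k|=O(1)$.

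Next I would match the growth of $S$ against $k$. Applying $\gamma$-linearity to the triples $(\id,h_{i-1},h_i)$ and the sign calculus of Proposition~\ref{prop: propertoesofsign} and Lemma~\ref{lem: Estimationusinsignchange}, one checks that the partial sums $S_i=\sum_{j\le i}s(g_0,g_j)\|g_j\|_d$ reproduce the signed distance $s(g_0,h_i)\|h_i\|_d$ of $h_i$ from $\id$ up to an $O(\gamma)$ error at each step, apart from an integer-times-$\omega$ indeterminacy caused by the loop crossing the antipodal region $\|\cdot\|_d\approx\rho$; the monitor tracking $\sigma$ counts exactly these crossings, because along the monitor the partial weights $t(e_1,\dots,e_j)$ increase monotonically from $0$ to $\omega$ with only $O(\gamma)$ slop per step (again by Lemmas~\ref{lem: almostmonotonicityofirredseq} and~\ref{lem: Estimationusinsignchange}). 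Comparing $S_i$ with $t(e_1,\dots,e_{\sigma(i)})$ step by step and using that $h_0=h_n=\id$ pins the signed-distance term to $0$ at both ends, one obtains $S_n=k\omega+E$ with $|E|=O\big((n+\ell)\gamma\big)$. Since $n,\ell\le 4/\mu_G(N(\lambda))$ while $\omega\ge\lambda/\big(64\,\mu_G(N(\lambda))\big)$ by Corollary~\ref{cor: a_lambdanew}(i) and Lemma~\ref{lem: N lambda}, we have $|E|=O(\gamma\omega/\lambda)$, which is $<\omega/400$ because $\gamma<10^{-8}\rho$ and $\lambda=\rho/36$. Hence $t(g_1,\dots,g_n)=|S_n|\in\omega\ZZ+I(\omega/400)$.

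I expect the main obstacle to be the middle step and its error accounting: showing cleanly that the tracking indices $\sigma(i-1),\sigma(i)$ stay within bounded cyclic distance (so that $k$ is unambiguous), extending the sign estimates of Lemma~\ref{lem: Estimationusinsignchange} to the needed situation of one possibly large and one small element so that the per-step error really is an absolute multiple of $\gamma$, and then carrying out the correspondence between one monitor revolution and an increment of $S$ by $\omega$ while simultaneously controlling the per-step $\gamma$-linearity slop over all $n$ steps, the antipodal jumps, and the direction bookkeeping from Proposition~\ref{prop: propertoesofsign} --- all with the cumulative error kept strictly below $\omega/400$. It is precisely here that the very strong hypothesis $\gamma<10^{-8}\rho$, which through the bounds on $\omega$ also forces $\gamma$ to be a minuscule fraction of $\omega$, is used decisively.
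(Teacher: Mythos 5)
Your winding-number heuristic is the right picture, but as written the middle step has a genuine gap, and it is exactly the one you flag as ``the main obstacle.'' Your argument needs a per-step identity of the form ``$S_i-S_{i-1}=s(g_0,g_i)\|g_i\|_d$ equals the increment of an unfolded coordinate of $h_i=g_1\cdots g_i$, up to $O(\gamma)$ and up to jumps of $\omega$ at antipodal crossings.'' But the sign calculus you invoke is only available in a bounded neighbourhood of the identity: Proposition~\ref{prop: propertoesofsign} requires all elements to lie in $N(\rho/4-\gamma)\setminus N(4\gamma)$, and Lemma~\ref{lem: Estimationusinsignchange} requires $g_1,g_2\in N(\rho/16-\gamma)$. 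The partial products $h_i$ generically leave these sets --- indeed the interesting case is precisely when the loop passes through the region $\|h_i\|_d\approx\rho$ --- and there the quantity $s(g_0,h_i)\|h_i\|_d$ is not governed by any of the cited lemmas, nor is the correspondence ``one antipodal crossing $=$ one increment of $\omega$'' established anywhere in the paper. Likewise, the well-definedness of $k$ rests on the claim that monitor points $f_j,f_{j'}$ which are $4\lambda$-close must have cyclically close indices; this is morally true because $(e_1,\dots,e_\ell)$ has minimal returning weight, but it requires its own shortcut argument (concatenating $e_1\cdots e_j$ with a path from $f_{j'}$ back to $\id$ and comparing to $\omega$), which you do not supply. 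So the proposal is a program rather than a proof: the two lemmas doing the real work (per-step unfolding away from $N(\rho/16)$, and injectivity of the monitor up to bounded cyclic distance) are missing.

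The paper's proof avoids tracking partial products entirely. It fixes the minimal irreducible returning sequence $(h_1,\dots,h_m)$ with $t(h_1,\dots,h_m)=\omega$, orients it oppositely to $(g_1,\dots,g_n)$, and \emph{appends} $k$ copies of it to form a longer $\lambda$-sequence $(g_1',\dots,g_{n'}')$ still multiplying to $\id$, with $t(g_1',\dots,g_{n'}')=t(g_1,\dots,g_n)-k\omega$ chosen in $[-\omega/2,\omega/2]$. It then passes to an irreducible concatenation via Lemma~\ref{lem: concatenationcomparison}, which changes the total weight by at most $25(n'-m')\gamma<\omega/400$, and invokes the definition of $\omega$ as an infimum over irreducible returning sequences to force the concatenated weight to be $0$ rather than some value in $(0,\omega)$. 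All sign manipulations there occur only between consecutive small elements (inside $N(3\lambda)\subseteq N(\rho/12)$), so the restricted range of Proposition~\ref{prop: propertoesofsign} and Lemma~\ref{lem: Estimationusinsignchange} is never an issue. If you want to salvage your route, you would essentially have to prove the unfolding and monitor-injectivity lemmas above, at which point you will have rebuilt a large part of Lemma~\ref{prop: lower bound on n} and Lemma~\ref{lem: concatenationcomparison}; the appending trick is the cheaper path.
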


 \begin{proof}
 Let $g_0$ be in $N(\rho/4-\gamma)\setminus N(4\gamma) $. Using Proposition~\ref{prop: propertoesofsign}(iii) to replace $g_0$ with $g_0^{-1}$ if necessary, we can assume that
 $$t(g_1,\dots,g_n) = \sum_{i=1}^ns(g_0, g_i) \|g_i\|_d.$$
As $n\leq 4/ \mu_G(N(\lambda))$,  we have
 $
 t(g_1, \ldots,\allowbreak g_n)\leq  4\lambda/ \mu_G(N(\lambda)).
 $
From Corollary~\ref{cor: a_lambdanew}(i), we have $\lambda \leq 4\omega\mu_G(N(4\lambda))$. Hence, 
\begin{equation}\label{eq: t(g_1,..,g_n)}
t(g_1, \ldots, g_n) < \frac{16\omega \mu_G(N(4\lambda))}{ \mu_G(N(\lambda))}.
\end{equation}
Using Corollary~\ref{cor: a_lambdanew} again, we obtain an irreducible $\lambda$-sequence $(h_1, \ldots, h_m)$   such that $t(h_1, \ldots, h_m)=\omega$ and $ 1/ \mu_G(N(4 \lambda)) \leq m \leq 4/ \mu_G(N(\lambda))$. Using Proposition~\ref{prop: propertoesofsign}(iii) to replace $(h_1, \ldots, h_m)$ with $(h_m^{-1}, \ldots, h_1^{-1})$ if necessary, we can assume that 
$$t(h_1, \ldots, h_m)=-\sum_{i=1}^ns(g_0, h_i) \|h_i\|_d.$$

We now define a sequence $(g'_1,\ldots,g_{n'}')$ such that
\begin{enumerate}
    \item $n'=n+km$ for some integer $k\geq0$.
    \item $g'_i=g_i$ for $1\leq i\leq n$. 
    \item For $i\geq n+1$, $g'_i=h_{j}$ with $j\equiv i-n\pmod m$.
\end{enumerate}
From the definition of the total weight, for $k< t(g_1, \ldots, g_n)/\omega$, we have
\[
t(g'_1,\ldots,g'_{n'})=t(g_1,\ldots,g_n)-k\omega.
\]
We choose an integer $k< t(g_1, \ldots, g_n)/\omega+1$ such that $|t(g'_1,\ldots,g'_{n'})|\leq\omega/2$. Then by \eqref{eq: t(g_1,..,g_n)}, and the trivial bound $\mu_G(N(\lambda))< \mu_G(N(4\lambda))$, we have
\begin{equation*}\label{eq: bound for n'}
n'<n+km< \frac{4}{\mu_G(N(\lambda))}+\left(\frac{ 16\mu_G(N(4\lambda))}{\mu_G(N(\lambda))}+1\right) \frac{4}{\mu_G(N(\lambda))} \leq \frac{ 72\mu_G(N(4\lambda))}{\mu_G^2(N(\lambda))}. 
\end{equation*}

  Note that $(g'_1, \ldots, g'_{n'})$ is a $\lambda$-sequence with 
$g'_1 \ldots g'_{n'} =\id$. We assume further that $0\leq t(g'_1, \ldots, g'_{n'}) <  \omega/2$ as the other case can be dealt with similarly.
 Obtain an irreducible concatenation  $(h'_1, \ldots, h'_{m'})$ of $(g'_1, \ldots, g'_{n'})$. 
 From Lemma~\ref{lem: concatenationcomparison}, we get
 \begin{equation*}
 t(h'_1, \ldots, h'_{m'})< t(g'_1, \ldots, g'_{n'}) +25(n'-m')\gamma
 \leq \frac{\omega}{2}+ \frac{ 1800\mu_G(N(4\lambda))\gamma}{\mu_G^2(N(\lambda))}.
  \end{equation*}
  Using Corollary~\ref{cor: a_lambdanew}(i) and Lemma~\ref{lem: N lambda}, we have
  \[
  \frac{ 1800\mu_G(N(4\lambda))\gamma}{\mu_G^2(N(\lambda))} \leq \frac{ 1800\mu_G(N(4\lambda))\gamma}{\mu^2_G(N(4\lambda))/16^2}<  \frac{5\cdot 10^5\gamma}{N(4\lambda)} \leq  5\cdot 10^5\gamma \frac{4\omega}{\lambda}.
  \]
  As  $\gamma< 10^{-8}\rho$, and $\lambda =\rho/16-\gamma$, one can check that the lass expression is at most $\omega/400$. Hence, 
  $t(h'_1, \ldots, h'_{m'})< \omega$. From the definition of $\omega$, we must have $t(h'_1, \ldots, h'_{m'}) =0$. Thus by Lemma~\ref{lem: concatenationcomparison} again, 
\[
t(g'_1, \ldots, g'_n) \in I(25n'\gamma)\subseteq I(\omega/400), 
\]
which completes the proof. 
  \end{proof}

Recall that a {\bf Polish space} is a topological space which is  separable and completely metrizable. In particular, the underlying topological space of any connected compact Lie group is a Polish space. Let $X$ be a Polish space. A subset $B$ of $X$ is {\bf Borel} if $B$ can be formed from open subsets of $X$   (equivalently, closed subsets of $X$) through taking countable unions, taking countable intersections, and taking complement. A function $f: X\to Y$ between Polish space is Borel, if the inverse image of any Borel subset of $Y$ is Borel. A subset $A$ of $X$ is  {\bf analytic} if it is the continuous image of another Polish space $Y$. Below are some standard facts about these notions; see~\cite{Kechris} for details. 

\begin{fact} \label{fact: analyticsets}
Suppose $X, Y$ are Polish spaces, and $f: X\to Y$ is continuous. We have the following:
\begin{enumerate}[\rm (i)]
    \item Every Borel subset of $X$ is analytic.
    \item  Equipping $X\times Y$ with the product topology, the graph of a Borel function from $X$ to $Y$ is analytic.
    \item The collection of analytic subsets of $X$ is closed under taking countable unions, taking intersections and cartesian products.
    \item Images of analytic subsets in $X$ under $f$ is analytic.
\end{enumerate}

\end{fact}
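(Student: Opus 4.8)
This is a collection of standard facts from descriptive set theory, so the plan is to cite the standard references (Kechris) and only indicate the one-line idea for each item. For (i): a Borel set is built from open sets by countable unions and intersections; since the analytic sets form a $\sigma$-algebra-like class closed under countable unions and intersections (this is part (iii)) and contain the closed sets — each closed subset of a Polish space is itself Polish and hence trivially the continuous image of a Polish space via the identity — one concludes that every Borel set lies in this class. For (iv): if $A \subseteq X$ is analytic, write $A = h(Z)$ for a continuous $h: Z \to X$ with $Z$ Polish; then $f(A) = (f \circ h)(Z)$ is the continuous image of the Polish space $Z$, hence analytic.

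For (ii): let $g: X \to Y$ be Borel with graph $\Gamma_g = \{(x,y) : y = g(x)\} \subseteq X \times Y$. Since $Y$ is Polish (in particular metrizable and second countable), the diagonal $\Delta_Y \subseteq Y \times Y$ is closed, hence Borel; the map $(x,y) \mapsto (g(x), y)$ from $X \times Y$ to $Y \times Y$ is Borel (product of the Borel map $g\circ\mathrm{pr}_X$ and the continuous map $\mathrm{pr}_Y$), and $\Gamma_g$ is the preimage of $\Delta_Y$ under this map, hence Borel in $X \times Y$. By part (i), $\Gamma_g$ is then analytic. For (iii): closure under countable unions and countable intersections is the classical fact that a countable union (resp.\ intersection) of analytic sets is analytic — for unions one takes the disjoint-sum Polish space $\bigsqcup_n Z_n$ and the obvious map; for intersections one pulls back to the appropriate closed subset of the product $\prod_n Z_n$, using that a closed subset of a Polish space is Polish. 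Closure under Cartesian products is immediate: if $A_i = h_i(Z_i)$ then $\prod_i A_i = (\prod_i h_i)(\prod_i Z_i)$ and a countable product of Polish spaces is Polish.

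There is no real obstacle here; the only mild subtlety is making sure the ``closure'' statements in (iii) are invoked in the right order when proving (i) (one needs closure under countable unions and intersections, which do not themselves depend on (i)), and recalling that closed subspaces and countable products of Polish spaces are Polish. Accordingly I would simply present (iii) first (or observe it is independent of (i)), then deduce (i), then (iv) and (ii). For a paper of this kind it suffices to remark that all four statements are standard and refer the reader to \cite{Kechris}, giving the one-line justifications above.
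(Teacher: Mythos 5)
The paper gives no proof of this Fact; it simply states these standard descriptive-set-theoretic facts and cites \cite{Kechris}. Your proposal does the same thing in the end, and the one-line sketches you supply along the way are all correct (the only implicit point worth being aware of is that your argument for (i) uses that in a metric space closed sets are $G_\delta$, so Borel sets are indeed generated from open and closed sets by countable unions and intersections alone, without needing complementation — which is why closure under those two operations plus containing open and closed sets suffices).
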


Given $x\in \RR$, let $\|x\|_\TT$ be the distance of $x$ to the nearest element in $\ZZ$. We now obtain a consequence of Lemma~\ref{lem: estimationoftotallength}.
 
 \begin{corollary}[Analytic multivalued almost homomorphism] \label{cor: analyticmulitvalued}
 There is an analytic subset $\Gamma$ of $G \times \TT$ satisfying the following properties:
  \begin{enumerate}[\rm (i)]
     \item The projection of $\Gamma$ on $G$ is surjective. 
     \item $(\id, \mathrm{id}_{\RR/\omega\ZZ})$ is in $\Gamma$.
      \item If  $g_1, g_2\in G$ and $t_1, t_2, t_3\in \RR$ are such that  $(g_1, t_1/\omega+\ZZ)$, $(g_2, t_2/\omega+\ZZ)$, and $(g_1g_2, t_3/\omega+\ZZ)$ in $\Gamma$, then 
      $$\|(t_1+t_2 -t_3)/\omega\|_\TT< 1/400.$$
      \item There are $g_1, g_2\in G$ and $t_1, t_2 \in \RR$ with that  $(g_1, t_1/\omega+\ZZ), (g_2, t_2/\omega+\ZZ) \in \Gamma$ and $\|(t_1-t_2)/\omega\|_\TT> 1/3$.
  \end{enumerate}
 \end{corollary}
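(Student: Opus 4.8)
The plan is to construct $\Gamma$ directly from the total weight function $t$ by taking the closure of the graph of the multivalued assignment $g\mapsto\{t(g_1,\dots,g_n)/\omega+\ZZ : (g_1,\dots,g_n)\text{ is a }\lambda\text{-sequence with }g_1\cdots g_n=g\text{ and }n\le 4/\mu_G(N(\lambda))\}$. More precisely, I would first fix, once and for all, a bound $N_0=\lceil 4/\mu_G(N(\lambda))\rceil$ on the length of the sequences we allow. For each $n\le N_0$ the set $S_n$ of $\lambda$-sequences of length $n$ is a closed (hence compact, by Tychonoff on $N[\lambda]^n$) subset of $G^n$, and the maps $(g_1,\dots,g_n)\mapsto g_1\cdots g_n\in G$ and $(g_1,\dots,g_n)\mapsto t(g_1,\dots,g_n)\in\RR$ are continuous on $S_n$ (the first by continuity of multiplication; the second because, after fixing a reference $g_0$ and passing to the pieces where each $s(g_0,g_i)$ is locally constant, $t$ is a finite sum of continuous functions $\|g_i\|_d$ — here one uses Fact~\ref{fact: Haarmeasurenew}(vii) and the openness of $N(\lambda)$ versus closedness of $N[\lambda]$ to see that the sign data is locally determined, or else one simply notes $t$ is continuous as the absolute value of such a sum). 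Then $\Gamma_n:=\{(g_1\cdots g_n,\ t(g_1,\dots,g_n)/\omega+\ZZ):(g_1,\dots,g_n)\in S_n\}$ is the continuous image of the compact (hence Polish) space $S_n$ under the continuous map $G^n\times(\text{quotient})$, so $\Gamma_n$ is analytic (indeed compact) by Fact~\ref{fact: analyticsets}(iv). Finally set $\Gamma:=\bigcup_{n=1}^{N_0}\Gamma_n$, which is analytic by Fact~\ref{fact: analyticsets}(iii).

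Next I would verify the four properties. For (i): every $g\in G$ is a product of finitely many elements of $N(\lambda)\subseteq N[\lambda]$ since $N(\lambda)$ is an open neighborhood of $\id$ in a connected group; by Lemma~\ref{lem: when a+b>G} together with Kemperman, in fact $G=N(\lambda)^k$ for all $k>1/\mu_G(N(\lambda))$, so we can take $k\le N_0$, and thus $g$ lies in the projection of $\Gamma_k\subseteq\Gamma$. For (ii): the empty product — or, to stay with $n\ge 1$, the constant sequence of suitable length times a cancelling tail, but more cleanly the length-$N_0$ sequence obtained by writing $\id=g_1\cdots g_{N_0}$ via Lemma~\ref{lem: concatenationcomparison} applied to a trivial representation — shows $(\id,\cdot)\in\Gamma$ with total weight in $\omega\ZZ+I(\omega/400)$ by Proposition~\ref{lem: estimationoftotallength}; since we only care about the value mod $\omega\ZZ$ and $1/400<1/2$, after the normalization $\mathrm{id}_{\RR/\omega\ZZ}=0+\ZZ$ we can just as well exhibit the length-needed sequence realizing total weight $0$, but in any case $(\id,\mathrm{id}_{\RR/\omega\ZZ})\in\Gamma$ follows once we observe the returning weight $\omega=t(h_1,\dots,h_m)$ is attained by an irreducible $\lambda$-sequence with product $\id$ of length $\le N_0$ (Corollary~\ref{cor: a_lambdanew}(ii)), and concatenating it with its own reverse, or just noting $t$ of that sequence is $\omega\equiv 0$, gives the point. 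For (iii): suppose $(g_1,s_1),(g_2,s_2),(g_1g_2,s_3)\in\Gamma$ come from $\lambda$-sequences $P_1$ (length $n_1$), $P_2$ (length $n_2$), $P_3$ (length $n_3$) with $t(P_i)=t_i$. Form the $\lambda$-sequence $Q$ obtained by concatenating $P_1$, $P_2$, and the reverse-inverse of $P_3$ (i.e.\ $(g_{n_3}^{-1},\dots,g_1^{-1})$, whose total weight is $-t_3$ by Proposition~\ref{prop: propertoesofsign}(iii)); its product is $g_1g_2(g_1g_2)^{-1}=\id$ and its total weight is $t_1+t_2-t_3$. Its length is $n_1+n_2+n_3\le 3N_0$, which is still $O(1/\mu_G(N(\lambda)))$, so Proposition~\ref{lem: estimationoftotallength} applies (inspecting its proof, the hypothesis $n\le 4/\mu_G(N(\lambda))$ is used only to bound $t(g_1,\dots,g_n)$, and any $O(1/\mu_G(N(\lambda)))$ bound works with the same argument, changing only absolute constants; alternatively one rescales $N_0$ at the outset to absorb the factor $3$), giving $t_1+t_2-t_3\in\omega\ZZ+I(\omega/400)$, i.e.\ $\|(t_1+t_2-t_3)/\omega\|_\TT<1/400$.

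For (iv): I need two points of $\Gamma$ over possibly different group elements whose second coordinates differ by more than $\omega/3$ in $\RR/\omega\ZZ$. Take the irreducible $\lambda$-sequence $(h_1,\dots,h_m)$ realizing the returning weight, $t(h_1,\dots,h_m)=\omega$, with $1/\mu_G(N(4\lambda))\le m\le N_0$ (Corollary~\ref{cor: a_lambdanew}(ii)). Its ``half-way'' initial segment $(h_1,\dots,h_{\lceil m/2\rceil})$ has product some $g_1\in G$, and by Lemma~\ref{lem: almostmonotonicityofirredseq} all the $s(g_0,h_{i+1}\cdots h_{i+j})$ agree, so the partial total weights $T_k:=\big|\sum_{i\le k}s(g_0,h_i)\|h_i\|_d\big|$ increase essentially monotonically from $0$ to $\omega$ as $k$ runs from $0$ to $m$ — more carefully, consecutive $T_k$ differ by $\|h_k\|_d\le\lambda$ and there is no ``sign cancellation'' internally because of the irreducibility estimates underlying Corollary~\ref{Cor: Totallengthofirreduciblesequence}; hence some initial segment has total weight in, say, $(\omega/3,2\omega/3)$. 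That initial segment gives a point $(g_1,s_1)\in\Gamma$ with $s_1=T_k/\omega+\ZZ$, while $(\id,0+\ZZ)\in\Gamma$ from (ii); then $\|(T_k-0)/\omega\|_\TT\in(1/3,1/2)$, which is $>1/3$, as required. The main obstacle I anticipate is the bookkeeping in (iii): making sure the concatenated sequence's length stays within whatever window Proposition~\ref{lem: estimationoftotallength} tolerates (which forces either re-examining that proof to see it only needs an $O(1/\mu_G(N(\lambda)))$ length bound, or choosing $N_0$ generously from the start), and correctly invoking Proposition~\ref{prop: propertoesofsign}(iii) to track how total weight behaves under inversion and reversal of a sequence; everything else is a routine assembly of the compactness/analyticity facts and the already-established estimates.
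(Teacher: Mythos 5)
Your construction is essentially the paper's: define $\Gamma$ as a finite union over bounded lengths $n$ of the images of the sets of $\lambda$-sequences under $(g_1,\dots,g_n)\mapsto(g_1\cdots g_n,\ t(g_1,\dots,g_n)/\omega+\ZZ)$, get analyticity from Fact~\ref{fact: analyticsets}, prove (iii) by concatenating the two witnessing sequences with the reversed--inverted witness for $g_1g_2$ and invoking Proposition~\ref{lem: estimationoftotallength}, and prove (iv) from the extremal irreducible sequence of Corollary~\ref{cor: a_lambdanew} via an intermediate partial weight in $(\omega/3,2\omega/3)$ paired with $(\id,0+\ZZ)$. Two points need repair, though both repairs are ones you yourself gesture at. First, $t$ is \emph{not} continuous on $S_n$, and $\Gamma_n$ is not compact: the relative sign $s(g_0,g_i)$ jumps between $0$ and $\pm1$ across the locus $\|g_i\|_d=4\gamma$ (and across the boundary of the case distinction in its definition), so the summand $s(g_0,g_i)\|g_i\|_d$ has jump discontinuities of size $4\gamma$. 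You must therefore take the Borel route, exactly as the paper does: $s$ is Borel, hence $t$ is Borel, hence its graph is analytic by Fact~\ref{fact: analyticsets}(ii), and $\Gamma_n$ is the continuous image of an analytic set. Your parenthetical ``one simply notes $t$ is continuous'' should be deleted. Second, with your primary choice $N_0=\lceil 4/\mu_G(N(\lambda))\rceil$, the concatenated sequence in (iii) has length up to $3N_0$, which violates the hypothesis $n\le 4/\mu_G(N(\lambda))$ of Proposition~\ref{lem: estimationoftotallength}; since the error bound there degrades linearly in the length and the constant $1/400$ is needed verbatim downstream (in Theorem~\ref{thm: homfrommeasurecompact}), you cannot simply ``change absolute constants.'' The correct fix is your alternative: define $\Gamma$ with sequences of length at most roughly $1/\mu_G(N(\lambda))+1$ (which still suffices for surjectivity in (i), since $G=N(\lambda)^k$ for $k>1/\mu_G(N(\lambda))$ by Lemma~\ref{lem: when a+b>G}), so that the threefold concatenation stays within $4/\mu_G(N(\lambda))$ — this is precisely the paper's choice.
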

 \begin{proof}
   Let $\Gamma$ consist of $(g,t/\omega+\ZZ) \in G \times \TT$ with $g\in G$ and $t\in \RR$ such that there is  $n\leq 1/\mu_G(N(\lambda))+1$ and an irreducible $\lambda$-sequence $(g_1, \ldots, g_n)$ satisfying 
   \[
   g= g_1 \cdots g_n \quad\text{and}\quad t = t(g_1, \ldots, g_n).
   \]
   Note that the relative sign function $s: G\times G \to \RR$ is Borel, the set $N[\gamma]$ is compact, and the function $x\to \| x\|_d$ is continuous. Hence, by Fact~\ref{fact: analyticsets}(i,ii), the function $(g_1, \ldots, g_n) \mapsto t(g_1, \ldots, g_n)$ is Borel, and its graph is analytic.
 For each $n$, by Fact~\ref{fact: analyticsets}(iii)
  \begin{align*}
  \widetilde{\Gamma}_n:= \{ (g,t, g_1, \ldots, g_n) &\in G \times \RR \times G^n :   \\
  &\|g_i\|_d < \lambda \text{ for } 1 \leq i \leq n, g=g_1\cdots g_n, t=t(g_1, \ldots, g_n)\} 
  \end{align*}
is analytic. Let ${\Gamma}_n$ be the image of $\widetilde{\Gamma}_n$ under the continuous map
$$ (g, t, g_1, \ldots, g_n) \mapsto (g, t/\omega+\ZZ).$$ Then by Fact~\ref{fact: analyticsets}(iv), ${\Gamma}_n$ is analytic. Finally,  $\Gamma = \bigcup_{n< 1/\mu_G(N(\lambda))+1}\Gamma_n$ is analytic by Fact~\ref{fact: analyticsets}(iii). 

We now verify that $\Gamma$ satisfies the desired properties. It is easy to see that (i) and (ii) are immediately from the construction, and (iii) is a consequence of  Lemma~\ref{lem: estimationoftotallength}. We now prove (iv). Using Corollary~\ref{cor: a_lambdanew}, we obtain an irreducible $\lambda$-sequence $(g_1, \ldots, g_n)$ with $t(g_1, \ldots,  g_n) = \omega$ and $n < 4/\mu_G(N(\lambda))$.  Note that 
\[
|t(g_1, \ldots, g_{k+1})-t(g_1, \ldots, g_k)|\leq \lambda.
\]
Hence, there must be $k \in \{1, \ldots n\}$ such that $\omega/3<  t(g_1, \ldots, g_k)<2\omega/3 $. Set $t_1 =  0$ and $t_2 =  t(g_1, \ldots, g_k) $ for such $k$. It is then easy to see that $\|(t_1-t_2)/\omega\|_\TT> 1/3$.
 \end{proof}

  To construct a group homomorphism from $G$ to $\TT$, we will need three more facts. Recall the following measurable selection theorem from descriptive set theory; see~\cite[Theorem~6.9.3]{Measuretheory}.
 
\begin{fact}[Kuratowski and Ryll--Nardzewski measurable selection theorem] \label{KRN}
Let $(X, \mathscr{A})$ be a measurable space, $Y$ a complete separable metric space equipped with the usual Borel $\sigma$-algebra, and $F$ a function on  $X$ with values in the set of nonempty closed subsets
of $Y$. Suppose that for every open $U \subseteq Y$, we have
$$ \{ a \in X: F(a) \cap U \neq \varnothing\} \in \mathscr{A}.$$
Then $F$ has a selection $f: X \to Y$ which is measurable with respect to $\mathscr{A}$.
 \end{fact}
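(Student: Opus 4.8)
The plan is to prove this via the classical successive-approximation argument, exhibiting the selector $f$ as a uniform limit of measurable ``coarse selectors'' taking values in a fixed countable dense set. First I would fix a complete metric $d$ on $Y$ that induces its topology and satisfies $d\leq 1$ (replace $d$ by $\min(d,1)$ if necessary, which preserves completeness), together with a countable dense set $D=\{y_1,y_2,\dots\}\subseteq Y$. The basic measurability input is that for every $y\in Y$ and $r>0$ one has $\{a\in X: d(y,F(a))<r\}=\{a\in X: F(a)\cap B(y,r)\neq\varnothing\}\in\mathscr{A}$ by hypothesis, and that $a\mapsto d(y,g(a))$ is $\mathscr{A}$-measurable whenever $g\colon X\to Y$ is; these two facts will make every set that appears in the construction measurable.

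Next I would construct $\mathscr{A}$-measurable maps $f_n\colon X\to D$ by recursion so that $d(f_n(a),F(a))<2^{-n-1}$ and $d(f_{n+1}(a),f_n(a))<2^{-n}$ for every $a\in X$. For $f_0$, set $A_k=\{a: F(a)\cap B(y_k,1/2)\neq\varnothing\}$; since each $F(a)$ is nonempty and $D$ is dense, $X=\bigcup_k A_k$, so $f_0(a):=y_k$ for the least $k$ with $a\in A_k$ is measurable and satisfies $d(f_0(a),F(a))<1/2$. Given $f_n$, for each $k$ put $A_k=\{a:d(y_k,F(a))<2^{-n-2}\}\cap\{a:d(y_k,f_n(a))<2^{-n}\}$; a triangle-inequality argument (pick $z\in F(a)$ with $d(f_n(a),z)<2^{-n-1}$, then $y_k\in D$ with $d(y_k,z)<2^{-n-2}$) shows $X=\bigcup_k A_k$, and $f_{n+1}(a):=y_k$ for the least admissible $k$ is measurable with the required bounds.

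Finally I would note that $\big(f_n(a)\big)_n$ is uniformly Cauchy, hence converges pointwise to some $f(a)\in Y$; being a pointwise limit of $\mathscr{A}$-measurable maps into the metric space $Y$, the function $f$ is $\mathscr{A}$-measurable, and from $d(f(a),F(a))\leq d(f(a),f_n(a))+d(f_n(a),F(a))\to 0$ together with the closedness of $F(a)$ we conclude $f(a)\in F(a)$ for all $a$, so $f$ is the desired selection. The main point to be careful about --- and essentially the only nontrivial step --- is the verification that the sets $A_k$ at each stage lie in $\mathscr{A}$: this is exactly where the weak-measurability hypothesis on $F$ and the continuity of $d$ are used, and once it is in place the rest of the argument is routine bookkeeping of the metric estimates.
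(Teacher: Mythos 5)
Your proof is correct: this is the standard successive-approximation argument for the Kuratowski--Ryll-Nardzewski selection theorem, and all the measurability checks (measurability of the $A_k$'s via the weak-measurability hypothesis and the continuity of $d$, measurability of a pointwise limit into a separable metric space) are handled properly. Note that the paper does not prove this statement at all --- it records it as a ``Fact'' with a citation to a standard reference --- so there is no in-paper argument to compare against; your writeup is simply the textbook proof that the citation points to.
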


  A  {\bf Polish group} is a topological group whose underlying space is a Polish space. In particular, Lie groups are Polish groups. 
A subset  $A$ of a Polish space $X$ is {\bf universally measurable} if $A$ is measurable with respect to every complete probability measure on $X$ for which every Borel set is measurable. In particular, every analytic set is universally measurable; see ~\cite{Rosendal} for details. A map $f: X \to Y$ between Polish spaces is {\bf universally measurable} if inverse images of open sets are universally measurable.  We have the following recent result from descriptive set theory by~\cite{Rosendal}; in fact, we will only apply it to Lie groups so a special case which follows from an earlier result by Weil~\cite[page 50]{Weils} suffices.
 
 \begin{fact}[Rosendal] \label{automaticcontinuity}
Suppose $G$ and $H$ are Polish groups, $f: G \to H$ is a  universally measurable group homomorphism. Then $f$  is continuous.
\end{fact}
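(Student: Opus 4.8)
The plan is to deduce the statement from Christensen's theory of Haar null sets in Polish groups, which substitutes for a Haar measure when $G$ need not be locally compact. Recall that a universally measurable $A\subseteq G$ is \emph{Haar null} if there is a Borel probability measure $\mu$ on $G$ with $\mu(gAh)=0$ for all $g,h\in G$. First I would reduce to continuity at $1_G$: since $f$ is a homomorphism, if $f$ is continuous at $1_G$ and $x_n\to g$ in $G$, then $g^{-1}x_n\to 1_G$, whence $f(g^{-1}x_n)\to 1_H$ and $f(x_n)=f(g)f(g^{-1}x_n)\to f(g)$. So fix an open $V\ni 1_H$ and choose a symmetric open $W\ni 1_H$ with $W^2\subseteq V$; the goal becomes to show that $f^{-1}(V)$ is a neighborhood of $1_G$. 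Since $H$ is separable metric, hence Lindel\"of, the open cover $\{Wh:h\in H\}$ of $H$ admits a countable subcover $\{Wh_n\}_n$, so $G=\bigcup_n f^{-1}(Wh_n)$, and each $f^{-1}(Wh_n)$ is universally measurable because $f$ is universally measurable and $Wh_n$ is open.

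Next I would invoke two theorems of Christensen: the Haar null sets form a $\sigma$-ideal in $G$, and $G$ is not Haar null in itself (a Borel probability measure cannot vanish on all of $G$). Consequently some $A:=f^{-1}(Wh_{n_0})$ is universally measurable and not Haar null. I would then apply the Steinhaus--Christensen theorem for Polish groups: if $A$ is universally measurable and not Haar null, then $1_G\in\operatorname{int}(AA^{-1})$. Since $f(AA^{-1})\subseteq(Wh_{n_0})(Wh_{n_0})^{-1}=WW^{-1}=W^2\subseteq V$, we conclude that $f^{-1}(V)\supseteq AA^{-1}$ is a neighborhood of $1_G$, which is what we wanted.

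The step I expect to be the main obstacle --- and the genuine content, due to Christensen and, in full Polish-group generality, to Rosendal --- is the Steinhaus-type theorem. One argues by contradiction: if $A$ is non-Haar-null, universally measurable, and $AA^{-1}$ is not a neighborhood of $1_G$, pick $g_n\to 1_G$ with $g_nA\cap A=\varnothing$, pass to a rapidly convergent subsequence, and form the infinite convolution $\mu=\ast_n\,\tfrac12(\delta_{1_G}+\delta_{g_n})$, a Borel probability measure carried by a compact Cantor-type set. One then bounds $\mu(xAy)$ by analysing the finite partial convolutions, exploiting that $A$ cannot contain two of their atoms whose ratio is one of the $g_n$ (conjugates of the $g_n$ intervene in the non-abelian case, which is why two-sided invariance is needed), and shows this forces $\mu(xAy)=0$ for all $x,y$, contradicting non-Haar-nullness; the same infinite-convolution technique underlies the proof that the Haar null sets form a $\sigma$-ideal. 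In the present application $G$ is a connected compact Lie group, hence second countable and locally compact, so both ingredients may be replaced by the normalized Haar measure and the classical Steinhaus theorem --- this is the reduction to Weil mentioned in the text --- but the argument above proves the statement as worded for arbitrary Polish groups.
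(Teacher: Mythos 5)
The paper states this as a cited fact (Rosendal's automatic continuity theorem) without proof, so the only question is whether your sketch would actually prove it. Most of your reductions are sound: continuity at $1_G$ suffices for a homomorphism, the Lindel\"of covering of $H$ by countably many translates $Wh_n$ gives a countable cover of $G$ by universally measurable sets, the (two-sided) Haar null sets do form a $\sigma$-ideal in every Polish group, and $G$ is not Haar null in itself. So some $A=f^{-1}(Wh_{n_0})$ is universally measurable and non-Haar-null, as you say.

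The gap is the step you lean on hardest: the ``Steinhaus--Christensen theorem for Polish groups,'' i.e.\ that $A$ universally measurable and non-Haar-null forces $1_G\in\operatorname{int}(AA^{-1})$. This is a theorem of Christensen for \emph{abelian} Polish groups and is the classical Steinhaus--Weil theorem for \emph{locally compact} groups, but it is \emph{false} for general Polish groups: Solecki constructed, in Polish groups admitting a free subgroup at $1$ (e.g.\ countable products of free groups, with a two-sided invariant metric), Borel non-Haar-null sets $A$ for which $AA^{-1}$ has empty interior. This failure is exactly why the general case of automatic continuity remained open for decades after Christensen and required Rosendal's new argument. Rosendal replaces the Steinhaus property by a weaker dichotomy tailored to the situation at hand: if $A$ is universally measurable and countably many two-sided translates of $A$ cover $G$ (which is what the Lindel\"of step actually gives you), then a bounded power such as $(A^{-1}A)^2$ is a neighbourhood of $1_G$; one then picks symmetric $W$ with $W^4\subseteq V$ instead of $W^2\subseteq V$. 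Your infinite-convolution sketch is Christensen's abelian-case proof, and the conjugation issues you parenthetically acknowledge are not a technicality to be absorbed --- they are where the statement genuinely breaks. For the application in this paper $G$ is a compact Lie group, hence locally compact, so your argument does go through there (reducing to the classical Steinhaus--Weil theorem, as the text already remarks); it just does not prove the Fact as worded for arbitrary Polish groups.
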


   Finally, we need the following theorem from geometry by Grove, Karcher, and Ruh~\cite{almosthomo} and independently by Kazhdan~\cite{almosthomo2}, that in a compact Lie groups an almost homomorphism is always close to a homomorphism uniformly. We remark that the result is not true for general compact topological groups, as a counterexample is given in~\cite{almosthomonottrueingeneral}.

\begin{fact}[Grove--Karcher--Ruh; Kazhdan]\label{fact:almosthomo}
Let $G,H$ be compact Lie groups. There is a constant $c$ only depending on $H$, such that for every real number $q$ in $[0,c]$, if $\pi: G\to H$ is a  $q$-almost homomorphism, then there is a  homomorphism $\chi:G\to H$ which is $1.36q$-close to $\pi$. Moreover, if $\pi$ is universally measurable, then $\chi$ is universally measurable. When $H=\TT$, we can take $c=\pi/6$. 
\end{fact}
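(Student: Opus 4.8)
The plan is to prove Fact~\ref{fact:almosthomo}, the Grove--Karcher--Ruh/Kazhdan stability theorem, in the special case $H = \TT$ that we actually use, since the general compact-Lie-group case is a known theorem whose proof we need not reconstruct. Recall that a $q$-almost homomorphism $\pi\colon G\to\TT$ means $\|\pi(g_1g_2) - \pi(g_1) - \pi(g_2)\|_\TT \leq q$ for all $g_1,g_2\in G$, with $\pi$ universally measurable; the goal is to produce a genuine continuous homomorphism $\chi\colon G\to\TT$ with $\|\chi(g) - \pi(g)\|_\TT \leq 1.36q$ for all $g$ (and $\chi$ universally measurable, hence continuous by Fact~\ref{automaticcontinuity}).

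First I would lift the problem to $\RR$. Since $q < \pi/6 < 1/2$ as an element of $\TT = \RR/\ZZ$ identified with $[-1/2,1/2)$, for each $g$ there is a well-defined choice of representative so that the cocycle $c(g_1,g_2) := \widetilde\pi(g_1g_2) - \widetilde\pi(g_1) - \widetilde\pi(g_2)$, computed with any lifts $\widetilde\pi$, can be normalized to lie in $(-q, q] \subseteq \RR$; measurability is preserved. The second step is the classical averaging trick: define $\chi(g) := \int_G \bigl(\widetilde\pi(gh) - \widetilde\pi(h)\bigr)\,\d\mu_G(h) \pmod{\ZZ}$, where the integrand is understood via the $(-q,q]$-normalized difference so that it is genuinely $\RR$-valued and bounded, hence integrable against the Haar probability measure. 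A direct computation using the cocycle identity and left-invariance of $\mu_G$ shows that $\chi(g_1g_2) - \chi(g_1) - \chi(g_2) = \int_G c(g_1, g_2)\,\d\mu_G(h)$ modulo integer ambiguities coming from the normalization — but since $|c(g_1,g_2)| \leq q$ pointwise, the average of the error is bounded by $q$, and after a second, sharper bookkeeping of how the $(-q,q]$-truncation interacts with the averaging one extracts that $\chi$ is a $q'$-almost homomorphism with $q'$ strictly smaller than $q$. Iterating this averaging (or, more cleanly, observing that the averaged object is already an exact homomorphism because the coboundary of $\widetilde\pi$ averages to a coboundary that vanishes mod $\ZZ$ on the connected group $G$) yields that $\chi$ is an exact homomorphism $G\to\TT$.

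The third step is the distance estimate $\|\chi(g) - \pi(g)\|_\TT \leq 1.36 q$. From the definition, $\chi(g) - \pi(g) = \int_G \bigl(\widetilde\pi(gh) - \widetilde\pi(h) - \widetilde\pi(g)\bigr)\d\mu_G(h) = \int_G c(g,h)\,\d\mu_G(h)$, whose absolute value is at most $q$; the constant $1.36$ rather than $1$ is the slack needed to absorb the integer corrections that appear when one passes between the $\RR$-valued bookkeeping and $\TT$, exactly as in the Kazhdan argument. Universal measurability of $\chi$ follows since $\widetilde\pi$ is universally measurable, the truncated difference $(g,h)\mapsto$ normalized$\bigl(\widetilde\pi(gh)-\widetilde\pi(h)\bigr)$ is universally measurable and bounded, and integration of a bounded universally measurable function in one variable against a Borel probability measure produces a universally measurable function of the other variable (Fubini-type argument for universally measurable functions). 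Continuity of $\chi$ is then immediate from Fact~\ref{automaticcontinuity}.

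The main obstacle I anticipate is \emph{not} the averaging computation — that is routine — but rather the careful handling of the $\ZZ$-valued ambiguities when translating between $\RR$ and $\TT$, both in verifying that the averaged map is genuinely a homomorphism (not merely an almost-homomorphism with a smaller constant) and in pinning down the constant $1.36$. The cleanest route, and the one I would actually write, is to cite the Grove--Karcher--Ruh and Kazhdan theorems for the homomorphism-existence and the $1.36q$ bound verbatim, and to supply only the short supplementary argument that the homomorphism $\chi$ they produce can be taken universally measurable when $\pi$ is: one inspects their construction (an explicit averaging/exponential-map formula) and checks it manufactures $\chi$ from $\pi$ by operations — pointwise arithmetic, composition with continuous maps, and integration against Haar measure — each of which preserves universal measurability. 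Thus the substantive content reduces to this measurability-preservation observation plus the citation, and the ``$H = \TT$, $c = \pi/6$'' specialization is read off directly from the explicit constants in Kazhdan's proof.
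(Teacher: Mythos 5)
The paper does not prove Fact~\ref{fact:almosthomo}; it is stated as a cited black box from Grove--Karcher--Ruh and Kazhdan, with no argument supplied even for the ``Moreover'' clause on universal measurability. So there is no internal proof to compare against, and your eventual decision — cite the stability theorem and supply only the measurability supplement — matches the paper's treatment.

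That said, two substantive points in your supplementary sketch deserve attention. First, the bookkeeping around the lift is off as written. The quantity $\widetilde\pi(gh) - \widetilde\pi(h)$ is \emph{not} close to $0$ in $\TT$ — it is close to $\pi(g)$ — so it is meaningless to ``normalize the integrand to $(-q,q]$.'' What lives in $(-q,q]$ after normalization is the $2$-cocycle $c(g,h) := \widetilde\pi(gh)-\widetilde\pi(g)-\widetilde\pi(h)$, and the construction should be written as $\chi(g) = \pi(g) + \int_G c(g,h)\,\d\mu_G(h)$ in $\TT$. With that correction the argument does go through cleanly: the cocycle identity $c(g_1,g_2h)+c(g_2,h)=c(g_1g_2,h)+c(g_1,g_2)$, which a priori holds only mod $\ZZ$, holds exactly in $\RR$ once $4q<1$ forces the integer discrepancy to vanish, and then left-invariance of Haar measure gives $\chi(g_1g_2)-\chi(g_1)-\chi(g_2)=0$ exactly — precisely the parenthetical observation you flag as ``the cleaner route,'' which is in fact the right one, and the iterated-averaging hedge is unnecessary.

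Second, your explanation of the constant $1.36$ is wrong. For $H=\TT$, the one-step averaging gives $\|\chi(g)-\pi(g)\|_\TT=\left|\int_G c(g,h)\,\d\mu_G(h)\right|\leq q$, i.e.\ distance at most $q$, strictly better than $1.36q$. There is no ``integer-correction slack'' to absorb: those integers are ruled out by $q$ being small. The $1.36$ in GKR/Kazhdan is an artifact of the nonabelian iteration (pulling back through the exponential map and conjugation), and is simply not needed when the target is abelian. Finally, be aware that the paper's $\|\cdot\|_\TT$ has diameter $1/2$, whereas the constant $\pi/6$ is quoted from the Riemannian normalization (diameter $\pi$), so ``$c=\pi/6$'' should really be read as $1/12$ in the paper's units; since the only application takes $q=1/400$, this discrepancy is immaterial in context but would matter if you tried to run your lift-to-$\RR$ argument right up to the threshold, where you also need $4q<1$.
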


   The next theorem is the main result in this subsection. It tells us from an almost linear pseudometric, one can construct a group homomorphism to $\TT$. 

  \begin{theorem}\label{thm: homfrommeasurecompact}
Let $\lambda=\rho/36$, and $\gamma<10^{-6}\rho$. Suppose $d$ is $\gamma$-linear and $\gamma$-monotone. Then there is a continuous surjective group homomorphism $\chi: G \to \TT$ such that for all $g\in \ker(\chi)\cap N(\lambda)$, we have $\| g\|_d \in N(\lambda/2)$.
 \end{theorem}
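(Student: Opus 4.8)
The plan is to obtain $\chi$ by ``straightening'' the multivalued analytic almost-homomorphism $\Gamma\subseteq G\times\TT$ supplied by Corollary~\ref{cor: analyticmulitvalued}, and then to extract the final estimate from the observation that, for $g\in N(\lambda)$, the one-term sequence $(g)$ already witnesses $(g,\,\|g\|_d/\omega+\ZZ)\in\Gamma$, where $\omega$ is the returning weight of $d$.

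\emph{Construction of $\chi$.} For $g\in G$ put $\Gamma(g)=\{\tau\in\TT:(g,\tau)\in\Gamma\}$. Applying part (iii) of Corollary~\ref{cor: analyticmulitvalued} with one factor equal to $\id$ (whose label may be taken to be $0$, by part (ii)) shows that $\Gamma(g)$ has diameter at most $1/400$ in $\TT$; thus its closure $F(g):=\overline{\Gamma(g)}$ is a nonempty closed set of diameter at most $1/400$. For open $U\subseteq\TT$ we have $\{g:F(g)\cap U\ne\varnothing\}=\{g:\Gamma(g)\cap U\ne\varnothing\}$, which is the projection to $G$ of the analytic set $\Gamma\cap(G\times U)$ and hence universally measurable; so the Kuratowski--Ryll-Nardzewski theorem (Fact~\ref{KRN}, taking $\mathscr A$ to be the $\sigma$-algebra of universally measurable sets) gives a universally measurable selection $\psi\colon G\to\TT$ with $\psi(g)\in F(g)$ for all $g$. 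Part (iii) together with the diameter bound makes $\psi$ a $q$-almost homomorphism with $q\le \tfrac4{400}=\tfrac1{100}<\tfrac{\pi}{6}$, so Fact~\ref{fact:almosthomo} (Grove--Karcher--Ruh; Kazhdan) produces a universally measurable homomorphism $\chi\colon G\to\TT$ with $d_\TT(\chi(g),\psi(g))\le 1.36q$; in particular $\chi(g)$ lies within $\varepsilon_0:=\tfrac1{400}+1.36q<\tfrac1{50}$ of every point of $\Gamma(g)$. By Fact~\ref{automaticcontinuity} (Rosendal) $\chi$ is continuous, and since $G$ is connected, $\chi(G)$ is a connected subgroup of $\TT$, hence $\{0\}$ or $\TT$; part (iv) of Corollary~\ref{cor: analyticmulitvalued} yields $g_1,g_2$ with $d_\TT(\chi(g_1),\chi(g_2))\ge\tfrac13-2\varepsilon_0>0$, so $\chi$ is nontrivial and therefore surjective.

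\emph{The estimate on $\ker\chi\cap N(\lambda)$.} First I claim $\omega>2\lambda$: by the triangle inequality $N(\rho/9)^4\subseteq N(4\rho/9)$ and $N(4\rho/9)^2\subseteq N(8\rho/9)\subsetneq G$ (the radius $\rho$ being attained, by continuity of $d$ and compactness of $G$), so Lemma~\ref{lem: when a+b>G} and Kemperman's inequality force $\mu_G(N(\rho/9))<\tfrac18$, and then Corollary~\ref{cor: a_lambdanew}(i) gives $\omega>2\lambda$; the same corollary also gives $\omega\le 4\lambda/\mu_G(N(\lambda))$. Now suppose $g\in\ker\chi\cap N(\lambda)$ has $\|g\|_d>\lambda/2$. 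If $g^2\in N(\lambda)$ then $g^2$ is again such an element with $\|g^2\|_d>\lambda-\gamma>\lambda/2$, so we may assume $g^2\notin N(\lambda)$; using $\gamma$-monotonicity, Proposition~\ref{prop: propertoesofsign}, and $4\|g\|_d\le 4\lambda<\rho$ (no wrap-around) one then gets $g^3,g^4\notin N(\lambda)$ as well, so the constant sequence $(g,\dots,g)$ is an irreducible $\lambda$-sequence of every length, of total weight $k\|g\|_d$. Set $K_0=\lceil\omega/(4\lambda)\rceil$; then $K_0<1/\mu_G(N(\lambda))+1$, so $(g^{K_0},\,K_0\|g\|_d/\omega+\ZZ)\in\Gamma$, whence $\chi(g^{K_0})$ lies within $\varepsilon_0$ of $K_0\|g\|_d/\omega+\ZZ$. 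But $\chi(g^{K_0})=\chi(g)^{K_0}=\id_\TT$, while $\omega>2\lambda$, $\|g\|_d\in(\lambda/2,\lambda]$ and $\omega/(4\lambda)\le K_0\le\omega/(4\lambda)+1$ give $K_0\|g\|_d/\omega\in[\tfrac18,\tfrac34)$, a point at distance at least $\tfrac18>\varepsilon_0$ from $\ZZ$ --- contradiction. Hence $\|g\|_d\le\lambda/2$ for every $g\in\ker\chi\cap N(\lambda)$.

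\emph{Main obstacle.} All the content is in the last paragraph. The delicate point is to keep the error $\varepsilon_0$ accumulated across the three passages $\Gamma\rightsquigarrow F\rightsquigarrow\psi\rightsquigarrow\chi$ strictly below the gap (here $\tfrac18$) that the two-sided bounds on $\omega$ open up between $K_0\|g\|_d/\omega$ and $\ZZ$; and to prove the lower bound $\omega>2\lambda$, where the hypothesis $\gamma<10^{-6}\rho$ (so that $\lambda=\rho/36$ sits well inside $\rho$) is used via the Kemperman-iteration argument above. Everything else --- the measurable selection, the almost-homomorphism straightening, automatic continuity, and surjectivity from connectedness --- is a matter of invoking Facts~\ref{KRN},~\ref{automaticcontinuity},~\ref{fact:almosthomo} and Corollaries~\ref{cor: analyticmulitvalued},~\ref{cor: a_lambdanew} in turn.
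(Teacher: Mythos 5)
Your proof is correct, and the first half — obtaining $\chi$ by applying Kuratowski--Ryll-Nardzewski to $\overline{\Gamma(\cdot)}$, straightening the resulting almost-homomorphism via Grove--Karcher--Ruh, invoking automatic continuity, and reading off surjectivity from connectedness together with Corollary~\ref{cor: analyticmulitvalued}(iv) — is essentially identical to the paper's.

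Where you diverge is in the kernel estimate, and your version is cleaner. The paper fixes the exponent $n=\lfloor 1/\mu_G(N(4\lambda))\rfloor$ and then bounds $\|g\|_d$ using $\omega \le 4\lambda/\mu_G(N(\lambda))$ together with Lemma~\ref{lem: N lambda}; with that choice one has to worry about whether $n\|g\|_d/\omega$ is safely below $1$ (i.e.\ that the label of $g^n$ has not wrapped around $\TT$), a point the paper leaves tacit. You instead take $K_0=\lceil\omega/(4\lambda)\rceil$, so the quantity to bound becomes $K_0\|g\|_d/\omega$ which is scaled to $\omega$ rather than to $\mu_G(N(4\lambda))$; your short Kemperman iteration showing $\mu_G(N(4\lambda))\le 1/8$ (hence $\omega\ge 2\lambda$ from Corollary~\ref{cor: a_lambdanew}(i) — you write ``$>$'' but only ``$\ge$'' is delivered, which suffices) then places $K_0\|g\|_d/\omega$ squarely in $[1/8,3/4)$, eliminating the wrap-around ambiguity and making the contradiction with the small error $\varepsilon_0$ transparent. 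Your reduction ``if $g^2\in N(\lambda)$ replace $g$ by $g^2$'' is also a tidy way to ensure the constant sequence is irreducible, whereas the paper just cites Proposition~\ref{prop: propertoesofsign} without addressing the boundary case $\|g\|_d$ barely above $\lambda/2$. One small remark: your accounting of the almost-homomorphism defect as $q\le 4/400$ is more pessimistic than necessary — since $\psi(g_i)\in\overline{\Gamma(g_i)}$ one may take limits of the strict $1/400$ bound in Corollary~\ref{cor: analyticmulitvalued}(iii) and get $q\le 1/400$, as the paper asserts — but this only makes your $\varepsilon_0$ larger and does not affect the final comparison against $1/8$.
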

 
 \begin{proof}
Let $\omega$ be the returning weight of $d$, and let $\Gamma$ be as in the proof of Corollary~\ref{cor: analyticmulitvalued}. Equip $G$ with the $\sigma$-algebra $\mathscr{A}$ of universally measurable sets. Then $\mathscr{A}$ in particular consists of analytic subsets of $G$.   Define $F$ to be the function from $G$ to the set of closed subsets of $\TT$ given by
\[
F(g) =\overline{\{ t/\omega+\ZZ : t\in \RR,  (g, t/\omega +\ZZ)\in \Gamma\}}. 
\]
If $U$ is an open subset of $\TT$, then $\{g \in G : F(g) \cap U \neq \varnothing\}$ is in $\mathscr{A}$ being the projection on $G$ of the analytic set $\{ (g, t/\omega+\ZZ) \in G \times \TT : (g, t/\omega+\ZZ) \in \Gamma \text{ and } t \in U \}$. Applying Fact~\ref{KRN}, we get a universally measurable $1/400$-almost homomorphism $\pi: G \to \TT$.  
Using Fact~\ref{fact:almosthomo}, we get a universal measurable group homomorphism $\chi: G \to \TT$ satisfying 
\[
\|\chi(g)-\pi(g)\|_\TT < 1.36/400 = 0.0068.
\]The group homomorphism $\chi$ is automatically continuous by Fact~\ref{automaticcontinuity}. Combining with Corollary~\ref{cor: analyticmulitvalued}(iv), we see that $\chi$ cannot be the trivial group homomorphism, so  $\chi$ is surjective.

Finally, for $g$ in $\ker (\chi)\cap (N(\lambda))$, we need to verify that $g$ is in $N(\lambda/2)$. Suppose to the contrary that $g \notin N(\lambda/2)$. 
Choose $n=\lfloor1/\mu_G(N(4\lambda)) \rfloor$, and $(g_1, \ldots, g_n)$ the $\lambda$-sequence such that $g_i =g$ for $i\in \{1, \ldots, n\}$.
By Proposition~\ref{prop: propertoesofsign}, $(g_1, \ldots, g_n)$ is irreducible.   Hence, by Lemma~\ref{prop: lower bound on n}, $t(g_1, \ldots, g_n)< \omega$. As $n \leq 1/\mu_G(N(\lambda)+1)$, by construction and Corollary~\ref{cor: analyticmulitvalued}(iii), we have
\[
\pi(g^n) \in  t(g_1,\ldots,g_n)/\omega+ I(1/400)+\ZZ = n\|g\|_d/\omega + I(1/400)+\ZZ. 
\]
Since $g^n\in\ker\chi$, we  have $\|\pi(g^n)\|_\TT<0.0068$, so $n\|g\|_d/\omega< (0.0068+1/400)$. 
By Corollary~\ref{cor: a_lambdanew}(i) and Lemma~\ref{lem: N lambda}, this implies
\[
\|g\|_d\leq \frac{(0.0068+1/400)\cdot 4\lambda}{\mu_G(N(\lambda))}\mu_G(N(4\lambda))<\frac{\lambda}{2},
\]
which is a contradiction. This completes our proof.
\end{proof}

\section{Preservation of small expansions under quotient}

In this section, $G$ is a connected unimodular group, and $H$ is a \emph{connected compact} normal subgroup of $G$, 
so $H$ and $G/H$ are unimodular by Fact~\ref{fact: unimodular}. Let $\mu_H$, and $\mu_{G/H}$ be the Haar measure on $G$, $H$, and $G/H$, and let $\mut_G$ and $\mut_{G/H}$ be the inner Haar measures on $G$ and $G/H$. Let $\pi:G\to G/H$ be the quotient map. Suppose $A,B$ are $\sigma$-compact subsets of $G$ with positive measures.

Suppose $r$ and $s$ are in $\RR$. We set
$$  A_{(r,s]}  := \{a \in A: \mu_H(A \cap aH) \in (r,s]  \}  $$
and
$$ \pi A_{(r,s]}:=  \{aH \in G \slash H : \mu_H( A \cap aH) \in (r,s]   \}. $$ 
In particular, $\pi A_{(r,s]}$ is the image of $A_{(r,s]}$ under the map $\pi$. We define $B_{(r',s']}$ and $\pi B_{(r',s']}$ likewise for $r', s' \in \RR$.
We have a number of immediate observations.  

\begin{lemma} \label{lem: corofmeasurability}
Let $r, s, r', s'$ be in $\RR^{>0}$. For all $aH \in \pi A_{(r,s]}$, $bH \in \pi B_{(r',s']}$, the sets $A_{(r,s]} \cap aH$, $B_{(r',s']} \cap bH$ are nonempty $\sigma$-compact.
For all subintervals $(r,s]$ of $(0,1]$,    $A_{(r,s]}$  is $\mu_G$-measurable and $\pi A_{(r,s]}$ is $\mu_{G/H}$-measurable.
\end{lemma}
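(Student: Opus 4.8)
The claim bundles together three assertions: (a) non-emptiness and $\sigma$-compactness of the horizontal fiber $A_{(r,s]} \cap aH$ for $aH \in \pi A_{(r,s]}$ (and symmetrically for $B$), (b) $\mu_G$-measurability of $A_{(r,s]}$ for subintervals $(r,s] \subseteq (0,1]$, and (c) $\mu_{G/H}$-measurability of $\pi A_{(r,s]}$. The plan is to reduce everything to Lemma~\ref{lem: mesurability}, in particular the fact that for $\sigma$-compact $A$ the function $\e^H_A : G/H \to \RR$, $aH \mapsto \mu_H(A \cap aH)$, is $\mu_{G/H}$-measurable.

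\textbf{Step 1: the fiber claim.} Fix $aH \in \pi A_{(r,s]}$. By definition $\mu_H(A \cap aH) \in (r,s]$ with $r>0$, so in particular $\mu_H(A \cap aH) > 0$, forcing $A \cap aH \neq \varnothing$. Moreover $A_{(r,s]} \cap aH = A \cap aH$: indeed, for any $a'H = aH$ we have $A \cap a'H = A \cap aH$, so the condition $\mu_H(A \cap a'H) \in (r,s]$ defining membership in $A_{(r,s]}$ holds for every point of $A \cap aH$. Thus $A_{(r,s]} \cap aH = A \cap aH$ is the intersection of a $\sigma$-compact set with a closed coset, hence $\sigma$-compact (a coset $aH$ is closed since $H$ is closed, and $\sigma$-compact sets are closed under finite — in fact countable — intersection with closed sets, by Lemma~\ref{lem: mesurability}(ii) applied to $aH$, which is itself compact as $H$ is compact). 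The argument for $B_{(r',s']} \cap bH$ is identical, working with right cosets and the right-coset quotient integral formula from Lemma~\ref{lem: mesurability}(iv).

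\textbf{Step 2: measurability of $\pi A_{(r,s]}$ and $A_{(r,s]}$.} Since $A$ is $\sigma$-compact, Lemma~\ref{lem: mesurability}(iv) gives that $\e^H_A : G/H \to \RR$ is $\mu_{G/H}$-measurable. By definition $\pi A_{(r,s]} = (\e^H_A)^{-1}\big((r,s]\big)$, the preimage of a Borel subset of $\RR$ under a measurable function, hence $\mu_{G/H}$-measurable. For $A_{(r,s]}$ itself, note $A_{(r,s]} = A \cap \pi^{-1}(\pi A_{(r,s]})$: membership of $a \in A$ in $A_{(r,s]}$ depends only on the coset $aH$, and $a \in A_{(r,s]} \iff aH \in \pi A_{(r,s]} \iff a \in \pi^{-1}(\pi A_{(r,s]})$. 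Now $\pi^{-1}$ of a $\mu_{G/H}$-measurable set is $\mu_G$-measurable (this is part of the structure behind the quotient integral formula of Fact~\ref{fact: QuotientIF} / Lemma~\ref{lem: mesurability}; one can also argue directly: approximate $\pi A_{(r,s]}$ from inside and outside by compact and open sets in $G/H$ with matching $\mu_{G/H}$-measure, pull back via the continuous open map $\pi$, and apply the measurability characterization Fact~\ref{fact: Haarmeasurenew}(v), using that $\mu_G(\pi^{-1}(\cdot))$ agrees with $\mu_{G/H}$ on the relevant sets). Intersecting with the $\sigma$-compact — hence $\mu_G$-measurable — set $A$ preserves measurability.

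\textbf{Main obstacle.} The only subtle point is the pullback step: justifying that $\pi^{-1}$ of a $\mu_{G/H}$-measurable set is $\mu_G$-measurable, since $\pi^{-1}$ of a null set need not be null in general (fibers are positive-dimensional). The clean way around this is to observe that $\pi$ is continuous and \emph{open} (as $H$ is closed), so preimages of open sets are open and preimages of closed sets are closed; combined with inner and outer regularity on $G/H$ and the fact that $\mu_G(\pi^{-1}(U)) = \mu_{G/H}(U)$ for open $U$ (by the quotient integral formula applied to indicators, normalizing $H$ so $\mu_H(H)=1$), the measurability characterization Fact~\ref{fact: Haarmeasurenew}(v) closes the gap. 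Everything else is bookkeeping with the closure properties in Lemma~\ref{lem: mesurability}(ii), so the proof will be short.
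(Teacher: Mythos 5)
Your proof is correct and follows essentially the same route as the paper's: both rest on Lemma~\ref{lem: mesurability}(iv) to get $\mu_{G/H}$-measurability of $\e^H_A$, then read off $\pi A_{(r,s]} = (\e^H_A)^{-1}(r,s]$ and $A_{(r,s]} = A \cap \pi^{-1}(\pi A_{(r,s]})$. You supply details the paper leaves implicit — in particular, the observation that $A_{(r,s]}\cap aH = A\cap aH$ (hence $\sigma$-compact since $H$ is compact), and the regularity-plus-properness-of-$\pi$ argument for why $\pi^{-1}$ of a $\mu_{G/H}$-measurable set is $\mu_G$-measurable, a step the paper's four-line proof passes over silently — but these are expansions of the same argument, not a different one.
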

\begin{proof}
The first assertion is immediate from the definition.
Let $\e_A$ be the indicator function of $A$. Then the function 
\begin{align*}
    \e^H_A: G/H &\to R\\
    aH &\mapsto \mu_H(A \cap aH)
\end{align*}
is well-defined and measurable by Lemma~\ref{lem: mesurability}. As $ \pi A_{(r,s]} = (\e^H_A)^{-1}(r, s]$ and 
$$A_{(r,s]} = A \cap \pi^{-1}( \pi A_{(r,s]}),$$ we get the second assertion.
\end{proof}

Note that $\pi A_{(r,s]}\pi B_{(r',s']}$ is not necessarily $\mu_{G/H}$-measurable, so Lemma~\ref{lem: intuition}(ii) does require the inner measure $\mut_{G/H}$.

\begin{lemma} \label{lem: intuition}
We have the following:
\begin{enumerate}[\rm (i)]
      \item For every $aH \in \pi A$ and $bH \in \pi B$,
    $$ \mu_H \big((A \cap aH)( B \cap bH) \big) \geq \min\{ \mu_H( A \cap aH)+ \mu_H(  B \cap bH), 1\}. $$
    \item  If $A_{(r,s]}$ and $B_{(r',s']}$ are nonempty, then 
        $$\mut_{G/H}(\pi A_{(r,s]} \pi B_{(r',s']}) \geq \min\{ \mu_{G/H}(\pi A_{(r,s]}) + \mu_{G/H}(\pi B_{(r',s']}), \mu_{G/H}(G/H)\}.$$
\end{enumerate}
\begin{proof}
Note that both $H$ and $G/H$ are connected. So (i) is a consequence of the generalized Kemperman inequality for $H$ (Fact~\ref{fact: GeneralKemperman}) and (ii) is a consequence of the generalized Kemperman inequality for $G/H$ (Fact~\ref{fact: GeneralKemperman}). 
\end{proof}
    
\end{lemma}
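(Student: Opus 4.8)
\textbf{Proof plan for Lemma~\ref{lem: intuition}.}

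The statement is essentially a bookkeeping consequence of the generalized Kemperman inequality (Fact~\ref{fact: GeneralKemperman}), applied separately to the two connected unimodular groups $H$ and $G/H$; the only genuine content is verifying that all the hypotheses of that fact are met in each case.

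For part (i), I would fix $aH \in \pi A$ and $bH \in \pi B$. By Lemma~\ref{lem: corofmeasurability} the slices $A \cap aH$ and $B \cap bH$ are nonempty $\sigma$-compact subsets of the cosets $aH$ and $bH$ respectively. Left-translating by $a^{-1}$ and $b^{-1}$ (which preserves $\mu_H$ by unimodularity of $H$), I may regard $a^{-1}(A\cap aH)$ and $b^{-1}(B\cap bH)$ as nonempty $\sigma$-compact subsets of $H$ itself. Since $H$ is a connected compact group, Fact~\ref{fact: GeneralKemperman} gives $\mu_H\big(a^{-1}(A\cap aH)\,b^{-1}(B\cap bH)\big) \geq \min\{\mu_H(A\cap aH)+\mu_H(B\cap bH),\,1\}$; note the product set of two $\sigma$-compact sets is again $\sigma$-compact, so no inner measure is needed here. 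Finally I observe $a^{-1}(A\cap aH)\,b^{-1}(B\cap bH) = (ab)^{-1}\big((A\cap aH)(B\cap bH)\big)$ only after using that $H$ is normal (so that $a^{-1}xb^{-1} = (ab)^{-1}\cdot (b a^{-1} x b^{-1})$ lands in $H$), or more simply I keep track of which coset the product lies in and use the convention that $\mu_H$ denotes the fiber length, as set up in the notation section; this translates the inequality into the desired form.

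For part (ii), suppose $A_{(r,s]}$ and $B_{(r',s']}$ are nonempty. By Lemma~\ref{lem: corofmeasurability}, $\pi A_{(r,s]}$ and $\pi B_{(r',s']}$ are nonempty $\mu_{G/H}$-measurable, hence in particular nonempty subsets of $G/H$; by Fact~\ref{fact: unimodular} the quotient $G/H$ is unimodular, and it is connected since $G$ is. Applying Fact~\ref{fact: GeneralKemperman} to $G/H$ with the inner Haar measure $\mut_{G/H}$ yields $\mut_{G/H}(\pi A_{(r,s]}\,\pi B_{(r',s']}) \geq \min\{\mut_{G/H}(\pi A_{(r,s]}) + \mut_{G/H}(\pi B_{(r',s']}),\,\mut_{G/H}(G/H)\}$, and since the two projected sets are measurable their inner measures agree with $\mu_{G/H}$ by Fact~\ref{fact: Innermeasure}(i), giving exactly the claimed bound. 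Here the inner measure on the left is genuinely needed because $\pi A_{(r,s]}\pi B_{(r',s']}$ need not be measurable, as remarked before the lemma.

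I do not anticipate a serious obstacle: the only point requiring a little care is the coset-bookkeeping in part (i) — making sure the left-translation identification is compatible with $H$ being normal and with the fiber-length convention — but this is routine given the setup in the notation section and Fact~\ref{fact: unimodular}.
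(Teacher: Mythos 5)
Your approach coincides with the paper's (terse) proof: both parts are immediate from the generalized Kemperman inequality, Fact~\ref{fact: GeneralKemperman}, applied to the connected unimodular groups $H$ and $G/H$, and your part (ii) is correct as written, including the remark that inner measure is genuinely needed on the left.

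In part (i), however, the asserted set identity $a^{-1}(A\cap aH)\,b^{-1}(B\cap bH) = (ab)^{-1}\bigl((A\cap aH)(B\cap bH)\bigr)$ is false. Writing $x=ah_1$ and $y=bh_2$ with $h_1,h_2\in H$, the left side consists of the elements $h_1h_2$ while the right side consists of $(b^{-1}h_1b)\,h_2$, so the two sides differ by conjugating the first factor by $b$ (and the parenthetical identity $a^{-1}xb^{-1}=(ab)^{-1}\cdot(ba^{-1}xb^{-1})$ likewise fails unless $a$ and $b$ commute). The argument survives because $H$ is a compact normal subgroup, so conjugation by $b$ is a Haar-measure-preserving homeomorphism of $H$: one applies Kemperman inside $H$ to the genuine product $\bigl(b^{-1}a^{-1}(A\cap aH)b\bigr)\bigl(b^{-1}(B\cap bH)\bigr)$, which does equal $(ab)^{-1}\bigl((A\cap aH)(B\cap bH)\bigr)$ and whose factors have $\mu_H$-measures $\mu_H(A\cap aH)$ and $\mu_H(B\cap bH)$. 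This is in effect what your hedged alternative (track the ambient coset $abH$ and read the claim via the fiber-length convention) accomplishes; with that repair the proof is correct and has the same content as the paper's.
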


As the functions we are dealing with are not differentiable,  we will need Riemann--Stieltjes integral which we will now recall. Consider a closed interval $[a,b]$ of $\RR$, and functions $f:[a,b] \to \RR$ and $g: \RR \to \RR$. A {\bf partition} $P$ of $[a,b]$ is a sequence $(x_i)_{i=0}^n$ of real numbers with $x_0 =a$, $x_n=b$, and $x_i< x_{i+1}$ for $i \in \{0, \ldots, n-1\}$. For such $P$, its {\bf norm} $\Vert P\Vert$ is defined as $\max_{i=0}^{n-1}|x_{i+1}-x_i|$, and a corresponding {\bf partial sum} is given by $S(P,f, g) = \sum_{i=0}^n f(c_{i+1})(g(x_{i+1})-g(x_i))$ with $c_{i+1} \in [x_i, x_i+1]$.
We then define
$$ \int_{a}^b f(x)\d{g(x)}: = \lim_{\Vert P \Vert \to 0}  S(P,f,g)$$
if this limit exists where we let $P$ range over all the partition of $[a,b]$ and $S(P,f, g)$ ranges over all the corresponding partial sums of $P$. The next fact records some basic properties of the integral.
\begin{fact}\label{fact: RS int}
Let $[a,b]$, $f(x)$,  and $g(x)$ be as above. Then we have:
\begin{enumerate}[\rm (i)]
    \item{\rm (Integrability)} If $f(x)$ is continuous on $[a,b]$, and $g(x)$ is monotone and bounded on $[a,b]$, then $f(x)\d g(x)$ is  Riemann--Stieltjes integrable on $[a,b]$.
    \item{\rm (Integration by parts)} If $f(x) \d g(x)$ is Riemann--Stieltjes integrable on $[a,b]$, then $g(x) \d f(x)$ is also Riemann--Stieltjes integrable on $[a,b]$, and
    \[
\int_a^b f(x)\d g(x)=f(b)g(b)-f(a)g(a)-\int_a^b g(x)\d f(x).
\]
\end{enumerate}
\end{fact}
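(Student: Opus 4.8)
The plan is to prove the two items by the classical Darboux--Stieltjes arguments (both are standard facts of real analysis, e.g.\ Apostol or Rudin), so I will only indicate the mechanism. For (i), assume without loss of generality that $g$ is nondecreasing (replace $g$ by $-g$ otherwise). Since $f$ is continuous on the compact interval $[a,b]$ it is uniformly continuous; let $\omega_f(\delta)=\sup\{|f(x)-f(y)|:x,y\in[a,b],\ |x-y|\le\delta\}$ be its modulus of continuity, so $\omega_f(\delta)\to 0$ as $\delta\to 0^+$. For a partition $P=(x_i)_{i=0}^n$ set $M_i=\sup_{[x_{i-1},x_i]}f$ and $m_i=\inf_{[x_{i-1},x_i]}f$, and form the upper and lower sums $U(P)=\sum_{i=1}^n M_i\big(g(x_i)-g(x_{i-1})\big)$ and $L(P)=\sum_{i=1}^n m_i\big(g(x_i)-g(x_{i-1})\big)$; every Riemann--Stieltjes partial sum $S(P,f,g)$ lies in $[L(P),U(P)]$. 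The one estimate that matters is
\[
0\le U(P)-L(P)=\sum_{i=1}^n (M_i-m_i)\big(g(x_i)-g(x_{i-1})\big)\le \omega_f(\|P\|)\big(g(b)-g(a)\big),
\]
using $M_i-m_i\le\omega_f(\|P\|)$ and that the nonnegative increments telescope. Hence $\sup_P L(P)=\inf_P U(P)=:I$, and for every $P$ one has $|S(P,f,g)-I|\le \omega_f(\|P\|)\big(g(b)-g(a)\big)$, so $S(P,f,g)\to I$ as $\|P\|\to 0$; this is exactly the claimed integrability.

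For (ii), fix a partition $P=(x_i)_{i=0}^n$ with tags $c_i\in[x_{i-1},x_i]$ and consider $S(P,g,f)=\sum_{i=1}^n g(c_i)\big(f(x_i)-f(x_{i-1})\big)$. Combining the telescoping identity $\sum_{i=1}^n\big(f(x_i)g(x_i)-f(x_{i-1})g(x_{i-1})\big)=f(b)g(b)-f(a)g(a)$ with an Abel-summation rearrangement, one verifies the elementary identity
\[
f(b)g(b)-f(a)g(a)-S(P,g,f)=\sum_{i=1}^n f(x_i)\big(g(x_i)-g(c_i)\big)+\sum_{i=1}^n f(x_{i-1})\big(g(c_i)-g(x_{i-1})\big).
\]
The right-hand side is precisely a Riemann--Stieltjes partial sum $S(Q,f,g)$ for the refinement $Q$ of $P$ obtained by inserting each tag $c_i$, where $[x_{i-1},c_i]$ is tagged at $x_{i-1}$ and $[c_i,x_i]$ is tagged at $x_i$ (degenerate subintervals, when $c_i$ is an endpoint, contribute $0$). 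Since $\|Q\|\le\|P\|$, part (i) gives $S(Q,f,g)\to\int_a^b f\d g$ as $\|P\|\to 0$; therefore $\lim_{\|P\|\to 0}S(P,g,f)$ exists and equals $f(b)g(b)-f(a)g(a)-\int_a^b f\d g$, which yields both the integrability of $g\d f$ and the integration-by-parts formula.

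I expect the only genuinely delicate point to be the passage in (i) from the refinement-based Darboux characterization of integrability to the stronger statement that $\lim_{\|P\|\to 0}S(P,f,g)$ exists for \emph{every} choice of tags: this is where the continuity, hence uniform continuity, of $f$ is essential, since it upgrades the oscillation bound to a bound in $\|P\|$ alone and rules out the usual pathology of Stieltjes integrals with a common discontinuity of $f$ and $g$. Part (ii) is then purely formal once the Abel-summation identity is matched against $S(Q,f,g)$; the only bookkeeping is the degenerate subintervals where a tag coincides with a partition point.
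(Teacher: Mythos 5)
Your proof follows the classical Darboux--Stieltjes argument; since the paper states this as a known Fact and does not prove it, there is no paper proof to compare against, so the relevant question is just whether your argument is sound. Part (i) is correct: the reduction to nondecreasing $g$, the telescoping estimate $U(P)-L(P)\le\omega_f(\|P\|)\bigl(g(b)-g(a)\bigr)$, and the sandwich $|S(P,f,g)-I|\le U(P)-L(P)$ are exactly the right moves, and they correctly deliver the \emph{norm-limit} convergence that the paper's definition of the integral demands (not merely refinement-limit Darboux integrability).

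There is one small but genuine slip in (ii). You justify $S(Q,f,g)\to\int_a^b f\,\mathrm{d}g$ by appealing to part (i), but part (i) requires $f$ continuous and $g$ monotone, whereas part (ii) assumes only that $f\,\mathrm{d}g$ is Riemann--Stieltjes integrable. What you should invoke instead is the hypothesis itself: in the paper's definition, RS-integrability of $f\,\mathrm{d}g$ means exactly that $S(Q,f,g)\to\int_a^b f\,\mathrm{d}g$ whenever $\|Q\|\to0$, and you have already observed $\|Q\|\le\|P\|$. With that one citation repaired, the rest of (ii) is correct: the Abel-summation identity, its identification with $S(Q,f,g)$ for the tagged refinement obtained by inserting the $c_i$, and the treatment of degenerate subintervals are all sound.
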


   The next lemma uses ``spillover'' estimate, which gives us a lower bound estimate on $\mu_G(AB)$ when the projection of $A$ and $B$ are not too large. 

\begin{lemma} \label{lem: Keyestimatequotientcompact}
Suppose $\mu_{G\slash H}( \pi A )+ \mu_{G \slash H} (\pi B) <1$. Set $\alpha =\sup_{a \in A} \mu_H( A \cap aH) $, $\beta =\sup_{b \in B} \mu_H( B \cap bH)$, and $\gamma =\max\{1, \alpha+\beta\}$. Then
\begin{align*}
    \mu_G(AB)  &\geq   \frac{\alpha+\beta}{\gamma}\left( \mu_{G/H}(\pi A_{(\alpha/\gamma,\alpha]}) + \mu_{G/H}(\pi B_{(\beta/\gamma,\beta]})\right)\\
    &\quad + \frac{\alpha+\beta}{\alpha} \mu_G(A_{(0,\alpha/\gamma]}) + \frac{\alpha+\beta}{\beta}\mu_G(B_{(0,\beta/\gamma])}.
\end{align*}
\end{lemma}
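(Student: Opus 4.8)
\textbf{Proof plan for Lemma~\ref{lem: Keyestimatequotientcompact}.} The plan is to decompose $AB$ along the fibers of $H$ and integrate carefully, using a ``spillover'' version of Kemperman's inequality. First I would stratify $A$ and $B$ according to fiber length: split $A$ into $A_{(0,\alpha/\gamma]}$ and $A_{(\alpha/\gamma,\alpha]}$, and similarly for $B$. The point of the threshold $\alpha/\gamma$ (and $\beta/\gamma$) is that when a fiber of $A$ has length in $(\alpha/\gamma,\alpha]$ and a fiber of $B$ has length in $(\beta/\gamma,\beta]$, their product in $H$ has $\mu_H$-measure at least $\min\{\mu_H(A\cap aH)+\mu_H(B\cap bH),1\}$ by Lemma~\ref{lem: intuition}(i), and one checks this is at least $\frac{\alpha+\beta}{\gamma}$; on the other hand, for the ``short'' fibers $A_{(0,\alpha/\gamma]}$, a single fiber of $B$ of near-maximal length $\beta$ already forces the product fiber to have length at least $\frac{\alpha+\beta}{\alpha}\mu_H(A\cap aH)$ (scaling $\mu_H(A\cap aH)$ up by the factor $\tfrac{\alpha+\beta}{\alpha}\le\tfrac{\alpha+\beta}{\alpha/\gamma}\cdot\tfrac1\gamma$, valid since $\mu_H(A\cap aH)\le\alpha/\gamma$ keeps us below $1$), and symmetrically for short fibers of $B$.

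Next I would assemble these fiberwise estimates into a bound on $\mu_G(AB)$ using the quotient integral formula (Lemma~\ref{lem: mesurability}(iv)) applied to $\e_{AB}$ together with Lemma~\ref{lem:int}. The three terms in the conclusion correspond to three disjoint ``sources'' of mass inside $AB$: (a) products of a ``long'' $A$-fiber with a ``long'' $B$-fiber, whose projection in $G/H$ has $\mu_{G/H}$-measure at least $\mu_{G/H}(\pi A_{(\alpha/\gamma,\alpha]})+\mu_{G/H}(\pi B_{(\beta/\gamma,\beta]})$ by the generalized Kemperman inequality on $G/H$ (Lemma~\ref{lem: intuition}(ii)), each contributing a fiber of length $\ge\frac{\alpha+\beta}{\gamma}$, giving the first term; (b) products of the ``short'' part $A_{(0,\alpha/\gamma]}$ with a fixed maximal-length fiber of $B$, contributing $\frac{\alpha+\beta}{\alpha}\int_{A_{(0,\alpha/\gamma]}}\mu_H(A\cap aH)\,\mathrm{d}\mu_G(a)\ge\frac{\alpha+\beta}{\alpha}\mu_G(A_{(0,\alpha/\gamma]})$ — wait, here one must be careful: $\mu_G(A_{(0,\alpha/\gamma]})=\int_{\pi A_{(0,\alpha/\gamma]}}\mu_H(A\cap aH)\,\mathrm{d}\mu_{G/H}$, so in fact the integrand is already $\mu_H(A\cap aH)$ and the bound reads $\frac{\alpha+\beta}{\alpha}\mu_G(A_{(0,\alpha/\gamma]})$ directly; and (c) the symmetric contribution from $B_{(0,\beta/\gamma]}$, giving the third term. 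I would use the hypothesis $\mu_{G/H}(\pi A)+\mu_{G/H}(\pi B)<1$ to guarantee that the relevant product sets in $G/H$ do not ``wrap around'' — i.e., Lemma~\ref{lem: intuition}(ii) gives the additive (not the truncated) bound — and to ensure the supports of the three sources can be separated in $G/H$ without overlap-counting, arranging via right translation (and unimodularity) that the distinguished maximal fibers of $A$ and of $B$ sit over cosets avoiding the long-fiber projections.

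The main obstacle I anticipate is the bookkeeping needed to make the three contributions genuinely disjoint inside $AB$ and to match each to the correct multiplicative constant without double-counting mass. Concretely: a coset $xH$ of $G/H$ can receive mass in $AB$ from several of the above mechanisms simultaneously, and one must choose, for each such coset, the single best lower bound rather than summing them. One clean way to handle this is to choose, for the short-fiber terms, fixed elements $b^*\in B$ and $a^*\in A$ with $\mu_H(B\cap Hb^*)$ and $\mu_H(A\cap a^*H)$ within an arbitrarily small $\varepsilon$ of $\beta$ and $\alpha$ respectively (as in the proof of Theorem~\ref{thm: fibers of same length 1newnew}), estimate $\mu_G\big(A_{(0,\alpha/\gamma]}(B\cap Hb^*)\big)$ and $\mu_G\big((A\cap a^*H)B_{(0,\beta/\gamma]}\big)$ separately via Lemma~\ref{lem:int}, and estimate $\mu_G\big(A_{(\alpha/\gamma,\alpha]}B_{(\beta/\gamma,\beta]}\big)$ via Lemma~\ref{lem: intuition}(ii); then argue these three subsets of $AB$ project to essentially disjoint regions of $G/H$ (or, failing that, replace $AB$ by a disjoint union over a partition of $G/H$ and take the pointwise maximum of the fiber bounds), and finally let $\varepsilon\to0$. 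The integration-by-parts machinery recalled in Fact~\ref{fact: RS int} is presumably set up for a later refinement of this lemma; for the stated inequality the crude stratification above should suffice.
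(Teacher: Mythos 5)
Your overall decomposition is right in spirit — stratify $A$ and $B$ by fiber length, use Kemperman on $H$ to bound fiber lengths of $AB$, use Kemperman on $G/H$ (which needs $\mu_{G/H}(\pi A)+\mu_{G/H}(\pi B)<1$) to bound the number of cosets involved, and match the constants $\frac{\alpha+\beta}{\gamma}$, $\frac{\alpha+\beta}{\alpha}$, $\frac{\alpha+\beta}{\beta}$. You also correctly identified the central obstacle: the three contributions (a), (b), (c) are \emph{not} disjoint inside $AB$. But you then dismissed the Riemann--Stieltjes / integration-by-parts machinery (Fact~\ref{fact: RS int}) as ``presumably set up for a later refinement of this lemma,'' and this is precisely where the plan breaks. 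That machinery is the paper's actual mechanism for proving \emph{this} lemma, and it is not optional: your crude two-piece stratification cannot avoid double-counting.

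To see the problem concretely, note that $\pi(a^*)\in \pi A_{(\alpha/\gamma,\alpha]}$ and $\pi(b^*)\in\pi B_{(\beta/\gamma,\beta]}$ (since they realize fiber lengths near the supremum), so the coset $\pi(a^*)\pi(b^*)$ lies simultaneously in $\pi A_{(\alpha/\gamma,\alpha]}\pi B_{(\beta/\gamma,\beta]}$, in $\pi A_{(0,\alpha/\gamma]}\pi(b^*)$ (if this is nonempty near $\pi(a^*)$), and in $\pi(a^*)\pi B_{(0,\beta/\gamma]}$; more generally the three projections can overlap in positive measure, and no right-translation of $b^*$ or $a^*$ removes $\pi(a^*)\pi(b^*)$ from all three sets at once. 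Your fallback --- take the pointwise maximum of the three fiber lower bounds and integrate --- does not give the stated sum of three terms; over the overlap region the max contributes once what the statement needs counted (in effect) three times. The paper resolves this by replacing the two-bin stratification with the \emph{continuous} family $C_x = AB\cap\pi^{-1}\bigl(\pi A_{(x\alpha,\alpha]}\pi B_{(x\beta,\beta]}\bigr)$ for $x\in(0,1/\gamma]$, observing that the fiber length over any coset newly appearing at level $x$ is at least $x(\alpha+\beta)$, writing $\widetilde\mu_G(C_0)$ as a Riemann--Stieltjes integral in $x$, applying the Kemperman inequality in $G/H$ to the integrand, and then performing integration by parts \emph{twice} to turn $\int x\,\d\mu_{G/H}(\pi A_{(0,x\alpha]})$ into $\mu_G(A_{(0,\alpha/\gamma]})/\alpha$. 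That layer-cake-and-integration-by-parts step is what produces the three terms with the correct constants from a single non-double-counted integral; a two-bin cut plus a fixed $b^*$ and $a^*$ cannot replicate it.
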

\begin{proof}
For $x \in (0,1]$, set $C_x = AB\cap \pi^{-1}(\pi A_{(x\alpha, \alpha]}\pi B_{(x\beta, \beta]})$. One first note that 
$$\mu_G(AB) \geq \mut_G(C_0) .$$ By Fact~\ref{fact: RS int}(1), $\mathrm{d}\mut_G( C_x)$ is Riemann--Stieltjes integrable on any closed  subinterval of $[0,1]$. Hence,
\[
    \mut_G( C_0)  = \mut_G ( C_{1/\gamma}) - \int_0^{\tfrac{1}{\gamma}}\d \mut_G(C_x). 
\]
Lemma~\ref{lem: corofmeasurability} and Lemma~\ref{lem: intuition}(1) give us that
\[
\mut_G (  C_{1/\gamma}) \geq \mut_{G/H} (  \pi{A}_{(\alpha/\gamma,\alpha]}   \pi{B}_{(\beta/\gamma,\beta]}). 
\]
Likewise, for $x, y \in \RR^{>0}$ with $x <y\leq 1/\gamma$,  $\mut_G (C_x) - \mut_G(C_y)  $ is at least  
$$r(\alpha+\beta) \left(\mut_{G/H} (  \pi {A}_{(x\alpha,\alpha]}   \pi {B}_{(x\beta,\beta]}) - \mut_{G/H}(\pi {A}_{(y\alpha,\alpha]}   \pi {B}_{(y\beta,\beta]})\right).$$ 

Therefore, 
 \[\mut_G( C_0) \geq \mut_{G/H} (  \pi{A}_{(\alpha/\gamma,\alpha]}   \pi{B}_{(\beta/\gamma,\beta]}) - \int_0^{\tfrac{1}{\gamma}}(\alpha+\beta) x\d \mut_{G/H}( \pi{A}_{(x\alpha, \alpha]} \pi{B}_{(x\beta, \beta]}).
 \]
Using integral by parts (Fact~\ref{fact: RS int}.2), we get
 \[\mut_G( C_0) \geq  \int_0^{\tfrac{1}{\gamma}}\mut_{G/H}( \pi{A}_{(x\alpha, \alpha]} \pi{B}_{(x\beta, \beta]})\d (\alpha+\beta) x.
 \]
Applying Lemma~\ref{lem: intuition}.2 and using the assumption that  $\mu_{G\slash H}( \pi A )+ \mu_{G \slash H} (\pi B) <1$ , we have
\[\mut_G( C_0)  \geq  \int_0^{\tfrac{1}{\gamma}}(\mu_{G/H}( \pi{A}_{(x\alpha, \alpha]}) + \mu_{G/H}( \pi{B}_{(x\beta, \beta]}))\d(\alpha+\beta) x.
 \]
Using integral by parts (Fact~\ref{fact: RS int}.2), we arrive at
\begin{align*}
    \mut_G( C_0)  &\geq    \frac{\alpha+\beta}{\gamma}\left(\mu_{G/H}( \pi{A}_{(\alpha/\gamma, \alpha]}) + \mu_{G/H}( \pi{B}_{(\beta/\gamma, \beta]}) \right)  \\
    & \quad  -\int_0^{\tfrac{1}{\gamma}}(\alpha+\beta) x\d( \mu_{G/H}( \pi{A}_{(x\alpha, \alpha]})+ \mu_{G/H}(\pi{B}_{(x\beta, \beta]})).
\end{align*}
As $\d( \mu_{G/H}( \pi{A}_{(x\alpha, \alpha]})+ \mu_{G/H}(\pi{B}_{(x\beta, \beta]}))= -\d( \mu_{G/H}( \pi{A}_{(0, x\alpha]})+ \mu_{G/H}(\pi{B}_{(0, x\beta]}))$,
\begin{align*}
    \mut_G( C_0)  &\geq    \frac{\alpha+\beta}{\gamma}\left(\mu_{G/H}( \pi{A}_{(\alpha/\gamma, \alpha]}) + \mu_{G/H}( \pi{B}_{(\beta/\gamma, \beta]}) \right)  \\
    & \quad  +\int_0^{\tfrac{1}{\gamma}}(\alpha+\beta) x\d( \mu_{G/H}( \pi{A}_{(0, x\alpha]})+ \mu_{G/H}(\pi{B}_{(0,x\beta]})).
\end{align*}  
Finally, recall that 
\[ \int_0^{1/\gamma} x\alpha\d\mu_{G/H}( \pi{A}_{(0, x\alpha]})= \mu_G({A}_{(0, \alpha/\gamma]}) \text{ and } \int_0^{1/\gamma} \beta x\d\mu_{G/H}( \pi{B}_{(0, x\beta]})= \mu_G({B}_{(0, \beta/\gamma]}).\] 
Thus, we arrived at the desired conclusion.
\end{proof}

 The next result in the main result in this subsection. It says if the projections of $A$ and $B$ are not too large, the small expansion properties will be kept in the quotient group.

\begin{theorem}[Quotient domination] \label{thm: criticality transfer}
Suppose $ {\mu}_{G/H}( \pi A) +  {\mu}_{G/H} ( \pi B ) <  \mu_{G/H}(G/H) $ and $\dis_G(A,B)< \min\{\mu_G(A), \mu_G(B)\}$.
Then there are $\sigma$-compact $A', B' \subseteq G/H$ such that
$$  \dis_{G \slash H}(A', B')  < 7\dis_G(A,B)$$
and $\max\{ \mu_{G} (A \tri \pi^{-1} A'), \mu_{G} (B \tri \pi^{-1} B')\} < 3\dis_G(A,B).$
\end{theorem}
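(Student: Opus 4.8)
The strategy is to extract $A'$ and $B'$ from the level-set decomposition of $A$ and $B$ along the fibers of $\pi$, using Lemma~\ref{lem: Keyestimatequotientcompact} to bound the discrepancy in the quotient. First I would set $\alpha = \sup_{a\in A}\mu_H(A\cap aH)$, $\beta = \sup_{b\in B}\mu_H(B\cap bH)$, and $\gamma = \max\{1,\alpha+\beta\}$ as in Lemma~\ref{lem: Keyestimatequotientcompact}, and take
\[
A' = \pi A_{(\alpha/\gamma,\alpha]}, \qquad B' = \pi B_{(\beta/\gamma,\beta]}.
\]
By Lemma~\ref{lem: corofmeasurability} these are $\sigma$-compact (more precisely, $\pi A_{(r,s]}$ is the image of a $\sigma$-compact set, hence $\sigma$-compact), and $\pi^{-1}A' = A_{(\alpha/\gamma,\alpha]}H \supseteq$ the part of $A$ living on ``long'' fibers. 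The point of cutting at the threshold $\alpha/\gamma$ (rather than at $0$) is that fibers shorter than $\alpha/\gamma$ contribute a controllably small amount of $\mu_G$-mass, by the lower bound in Lemma~\ref{lem: Keyestimatequotientcompact}.

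\textbf{Key steps.} The first step is the symmetric-difference estimate. We have $A \tri \pi^{-1}A' = (A \setminus \pi^{-1}A') \cup (\pi^{-1}A' \setminus A)$. Since $A' = \pi A_{(\alpha/\gamma,\alpha]}$ and every fiber of $A$ has length at most $\alpha$, the set $\pi^{-1}A'\setminus A$ is covered by fibers of length $\le \alpha$ over $\pi A_{(\alpha/\gamma,\alpha]}$ minus the corresponding parts of $A$; meanwhile $A\setminus \pi^{-1}A' = A_{(0,\alpha/\gamma]}$, whose measure is bounded directly. Feeding $\dis_G(A,B) < \min\{\mu_G(A),\mu_G(B)\}$ into Lemma~\ref{lem: Keyestimatequotientcompact} and comparing with $\mu_G(AB)\le \mu_G(A)+\mu_G(B)+\dis_G(A,B)$ and Kemperman's inequality $\mu_G(AB)\ge \mu_G(A)+\mu_G(B)$ (valid since $G$ is connected), one solves for $\mu_G(A_{(0,\alpha/\gamma]})$, $\mu_G(B_{(0,\beta/\gamma]})$ and for the deficits $\mu_G(A) - \frac{\alpha+\beta}{\gamma}\mu_{G/H}(\pi A_{(\alpha/\gamma,\alpha]})\cdot(\text{normalization})$, getting each $\lesssim \dis_G(A,B)$ with explicit small constants; some bookkeeping with the factors $\frac{\alpha+\beta}{\gamma}$, $\frac{\alpha+\beta}{\alpha}$, $\frac{\alpha+\beta}{\beta}$ (all of which lie in $[1,2)$ or are close to it given $\alpha,\beta\le 1$) yields the claimed constant $3$ in $\mu_G(A\tri \pi^{-1}A')<3\dis_G(A,B)$. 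The second step is the quotient discrepancy bound: one computes $\mu_{G/H}(A'B')$, noting $A'B' \subseteq \pi(AB)$, so $\mu_{G/H}(A'B') \le \mu_{G/H}(\pi(AB))$, and one relates $\mu_{G/H}(\pi(AB))$ to $\mu_G(AB)$ via the fact that $AB \supseteq$ fibers of length comparable to $\alpha+\beta$ over $\pi A_{(\alpha/\gamma,\alpha]}\pi B_{(\beta/\gamma,\beta]}$ (this is exactly the content embedded in Lemma~\ref{lem: Keyestimatequotientcompact}). Combining $\mu_{G/H}(A'B')\le \mu_{G/H}(A') + \mu_{G/H}(B') + O(\dis_G(A,B))$ — where the error aggregates the fiber-length fluctuations and the $A_{(0,\cdot]}$, $B_{(0,\cdot]}$ masses already controlled — with the lower bounds $\mu_{G/H}(A')\ge$ (something close to) $\mu_G(A) - 3\dis_G(A,B)$ gives $\dis_{G/H}(A',B') < 7\dis_G(A,B)$ after collecting the worst-case constants.

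\textbf{Main obstacle.} The delicate part is the constant-chasing: Lemma~\ref{lem: Keyestimatequotientcompact} produces a lower bound for $\mu_G(AB)$ as a sum of four terms with coefficients $\frac{\alpha+\beta}{\gamma}$, $\frac{\alpha+\beta}{\alpha}$, $\frac{\alpha+\beta}{\beta}$, and one must argue that under the hypothesis $\dis_G(A,B) < \min\{\mu_G(A),\mu_G(B)\}$ these coefficients cannot be pathologically large — i.e., that $\alpha$ and $\beta$ are bounded below in terms of the data, or else that $\gamma = 1$ and the short-fiber terms dominate so strongly that the inequality still closes. Concretely, if say $\alpha$ were tiny then $\frac{\alpha+\beta}{\alpha}$ blows up, but then $\pi A$ has measure $\ge \mu_G(A)/\alpha$ which forces $\mu_{G/H}(\pi A) + \mu_{G/H}(\pi B)$ to be large, and one plays this against the standing assumption $\mu_{G/H}(\pi A) + \mu_{G/H}(\pi B) < 1$. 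I expect one has to split into the case $\alpha + \beta \le 1$ (so $\gamma = 1$, coefficients are $\alpha+\beta$, $\frac{\alpha+\beta}{\alpha}\le 2$ once $\alpha\ge\beta$, etc.) and $\alpha+\beta>1$ (so $\gamma = \alpha+\beta$, first coefficient is $1$), and in each case use the projection-size constraint to bound the remaining coefficients. Once that is pinned down, the arithmetic to reach the constants $3$ and $7$ is routine but must be done carefully to avoid overshooting.
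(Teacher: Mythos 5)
Your plan matches the paper's proof in structure: the same choice of cutting thresholds $\alpha/\gamma$, $\beta/\gamma$, the same candidate sets $A'$, $B'$, and the same strategy of comparing the Lemma~\ref{lem: Keyestimatequotientcompact} lower bound against $\mu_G(AB)\le\mu_G(A)+\mu_G(B)+\dis_G(A,B)$ to squeeze out all the error terms. Two details need correction before the arithmetic closes, though. First, $\pi A_{(r,s]}$ is only $\mu_{G/H}$-measurable by Lemma~\ref{lem: corofmeasurability} (the level function $aH\mapsto\mu_H(A\cap aH)$ is measurable, not continuous), so it need not be $\sigma$-compact; you must pass to an inner $\sigma$-compact subset of full measure as $A'$, $B'$ — the symmetric-difference and discrepancy bounds are unaffected since they only see $\mu_{G/H}$. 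Second, and more substantively, the mechanism for ruling out tiny $\alpha$ or $\beta$ is not the one you suggest. Playing $\mu_{G/H}(\pi A)\ge\mu_G(A)/\alpha$ against the standing assumption $\mu_{G/H}(\pi A)+\mu_{G/H}(\pi B)<1$ doesn't close, because that assumption gives only a qualitative bound $<1$ and $\mu_G(A)$ can itself be small; it is used solely to license Kemperman in the quotient inside Lemma~\ref{lem: Keyestimatequotientcompact}. The actual argument runs the Lemma~\ref{lem: Keyestimatequotientcompact} lower bound straight against the discrepancy hypothesis: if $\alpha+\beta<1$ (so $\gamma=1$, the ``long-fiber'' piece is empty and $A_{(0,\alpha/\gamma]}=A$, $B_{(0,\beta/\gamma]}=B$) one gets $\mu_G(AB)\ge\frac{\alpha+\beta}{\alpha}\mu_G(A)+\frac{\alpha+\beta}{\beta}\mu_G(B)$, whose excess over $\mu_G(A)+\mu_G(B)$ already exceeds $\min\{\mu_G(A),\mu_G(B)\}$ — contradiction, so $\alpha+\beta\ge1$ and $\gamma=\alpha+\beta$ always; then using $\mu_{G/H}(A')\ge\tfrac1\alpha\mu_G(A_{(\alpha/\gamma,\alpha]})$ and $\tfrac{\alpha+\beta}{\alpha}>\tfrac1\alpha$ the bound cleans up to $\mu_G(AB)\ge\tfrac1\alpha\mu_G(A)+\tfrac1\beta\mu_G(B)$, which forces $\alpha,\beta\ge1/2$ under the same hypothesis. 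Once $\alpha,\beta\ge1/2$ are in hand, the four remainder terms are each $\le\dis_G(A,B)$ (and $\mu_G(A_{(0,\alpha/\gamma]})\le2\dis_G(A,B)$, $\mu_G(B_{(0,\beta/\gamma]})\le2\dis_G(A,B)$), which gives the stated $3\dis_G(A,B)$ for the symmetric differences and $7\dis_G(A,B)$ for $\dis_{G/H}(A',B')$ exactly as you anticipated.
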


\begin{proof}
Let $\alpha$ and $\beta$ be as in Lemma~\ref{lem: Keyestimatequotientcompact}. We first show that $\alpha+\beta\geq 1$. Suppose to the contrary that $\alpha+\beta<1$. Then Lemma~\ref{lem: Keyestimatequotientcompact} gives us
\[
\mu_G(AB) \geq \frac{\alpha+\beta}{\alpha} \mu_G(A) + \frac{\alpha+\beta}{\beta}\mu_G(B)
\]
It follows that $\mu_G(AB)> \mu_G(A)+\mu_G(B)+\min\{ \mu_G(A), \mu_G(B)\}$, a contradiction.

Now we have $\alpha+\beta \geq 1$. Hence, Lemma~\ref{lem: Keyestimatequotientcompact} yields
\begin{align*}
    \mu_G(AB)  &\geq    \mu_{G/H}(\pi A_{(\alpha/(\alpha+\beta),\alpha]}) + \mu_{G/H}(\pi B_{(\beta/(\alpha+\beta),\beta]})\\
    &\quad + \frac{\alpha+\beta}{\alpha} \mu_G(A_{(0,\alpha/\gamma]}) + \frac{\alpha+\beta}{\beta}\mu_G(B_{(0,\beta/(\alpha+\beta)])}.
\end{align*}
Choose $\sigma$-compact $A' \subseteq \pi A_{(\alpha/(\alpha+\beta),\alpha]}$ and $B' \subseteq \pi B_{(\beta/(\alpha+\beta),\beta]})$ $\sigma$-compact such that  \[
\mu_{G/H}(A')= \mu_{G/H}(\pi A_{(\alpha/(\alpha+\beta),\alpha]}) \text{ and } \mu_{G/H}(B') = \mu_{G/H}(\pi B_{(\beta/(\alpha+\beta),\beta]}).\] We will verify that $A'$ and $B'$ satisfy the desired conclusion.

Since $\mu_{G/H}(A') \geq (1/ \alpha) \mu_{G}(A_{(\alpha/(\alpha+\beta),\alpha]})$, $\mu_{G/H}(B') \geq (1/ \beta) \mu_{G}(B_{(\beta/(\alpha+\beta),\beta]})$ and $\alpha+\beta>1$, we have
\begin{equation}\label{eq: kernel}
    \mu_G(AB) \geq \frac{1}{\alpha} \mu_G(A)+\frac{1}{\beta}\mu_G(B).  
\end{equation}
From $\mu_G(AB)- \mu_G(A)-\mu_G(B)= \dis_G(A,B)\leq \min\{\mu_G(A),\mu_G(B)\}$, we deduce that $\alpha, \beta \geq 1/2$.

By our assumption $\mu_G(AB) < \mu_G(A)+ \mu_G(B)+\dis_G(A,B)$. Hence, 
\begin{align*}
    \dis_G(A,B)  &\geq    \mu_{G/H}(A') - \mu_{G}( A_{(\alpha/(\alpha+\beta),\alpha]}) + \mu_{G/H}(B') - \mu_{G}( B_{(\beta/(\alpha+\beta),\beta]}) \\
    &\quad + \frac{\beta}{\alpha} \mu_G(A_{(0,\alpha/\gamma]}) + \frac{\alpha}{\beta}\mu_G(B_{(0,\beta/(\alpha+\beta)])}.
\end{align*}
Therefore, $\mu_{G/H}(A') - \mu_{G}( A_{(\alpha/(\alpha+\beta),\alpha]})$ and $(\beta/\alpha) \mu_G(A_{(0,\alpha/\gamma]})$ are at most $ \dis_G(A,B)$. Noting also that $\beta/\alpha \leq 1/2$, we get $\mu_G( A \tri \pi^{-1}(A') \leq 3\dis_G(A,B)$. A similar argument yield $\mu_G( B \tri \pi^{-1}(B') \leq 3\dis_G(A,B)$.

Finally, note that $ \pi^{-1}\left(A'B'\right)$ is equal to $A_{(\alpha/(\alpha+\beta),\alpha]}B_{(\beta/(\alpha+\beta),\beta]}$, which is a subset of $AB$. Combining with $\mu_G(AB) < \mu_G(A)+ \mu_G(B)+\dis_G(A,B)$, we get
\[
\mu_{G/H}(A'B') \leq  \mu_G(A)+\mu_G(B)+\dis_G(A,B)\leq \mu_{G/H}(A')+\mu_{G/H}(B')+ 7\dis_G(A,B),\]
which completes the proof.
\end{proof}

We have the following interesting corollary about $H$.
\begin{corollary}\label{cor: kernel control}
Let $A,B$ be $\sigma$-compact subsets of $G$ with $0<\mu_G(A)\leq\mu_G(B)$, and $\mu_G(AB)<2\mu_G(A)+\mu_G(B)$. Then $H\subseteq AA^{-1}\cap BB^{-1}$. 
\end{corollary}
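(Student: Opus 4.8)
\textbf{Proof proposal for Corollary~\ref{cor: kernel control}.}

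The plan is to fix an arbitrary $h\in H$ and show that $h\in AA^{-1}$; the argument for $h\in BB^{-1}$ is symmetric, swapping the roles of $A$ and $B$ and using right cosets. Suppose toward a contradiction that $h\notin AA^{-1}$; equivalently, $A$ and $hA$ are disjoint, so $\mu_G(A\cup hA)=2\mu_G(A)$. The idea is that the normal subgroup $H$ acts on $G$ by left translation preserving $\mu_G$, so $hA$ is a ``copy'' of $A$ sitting inside the same $H$-orbits, and if $A$ already meets every $H$-coset it meets in a set of the maximal possible $\mu_H$-length then there is no room for a disjoint translate; otherwise one can build a larger product set. Concretely, first observe that since $H$ is normal, $hAB\subseteq AB'$ where... no — rather, the cleaner route is: $h\in H$, so $hA\subseteq \pi^{-1}(\pi A)$, i.e.\ $A$ and $hA$ live over the same subset $\pi A$ of $G/H$. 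Thus $A\cup hA$ also lies over $\pi A$, and for every $aH\in\pi A$ we have $\mu_H\bigl((A\cup hA)\cap aH\bigr)\ge \max\{\mu_H(A\cap aH),\,\mu_H(h A\cap aH)\}$, with strict inequality (in fact $\mu_H((A\cup hA)\cap aH)=\mu_H(A\cap aH)+\mu_H(hA\cap aH)$ when the fibers are disjoint).

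The key step is then to run the quotient-domination machinery. Apply Kemperman's inequality on $H$ to each fiber as in Lemma~\ref{lem: intuition}(i): for every $a\in A$,
\[
\mu_H\bigl((A\cup hA)(B\cap bH)\bigr)\ \ge\ \min\bigl\{\mu_H((A\cup hA)\cap aH)+\mu_H(B\cap bH),\,1\bigr\}.
\]
Integrating over the $H$-cosets via the quotient integral formula (Fact~\ref{fact: QuotientIF}, Lemma~\ref{lem: mesurability}(iv)) and running exactly the ``spillover'' computation of Lemma~\ref{lem: Keyestimatequotientcompact} with $A$ replaced by $A\cup hA$, one obtains a lower bound on $\mu_G\bigl((A\cup hA)B\bigr)$ that exceeds $\mu_G(A)+\mu_G(hA)+\mu_G(B)=2\mu_G(A)+\mu_G(B)$, provided the fibers of $A$ are not degenerate. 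Since $(A\cup hA)B\subseteq AB\cup hAB$ and $\mu_G(hAB)=\mu_G(AB)$, this is not yet an immediate contradiction; instead I would argue directly with $AB$: note $hAB\subseteq H\cdot AB$, and because $h\in H$ normal, $(A\cup hA)B=AB\cup h(AB)\subseteq \pi^{-1}(\pi(AB))$, so the gain in fiber length for $A\cup hA$ over the cosets in $\pi A$ forces extra measure that must already be present in $AB$ over those same cosets. Quantitatively: Lemma~\ref{lem: Keyestimatequotientcompact} applied to the pair $(A\cup hA, B)$ gives $\mu_G\bigl((A\cup hA)B\bigr)\ge \mu_G(A\cup hA)+\mu_G(B)+\min\{\mu_G(A\cup hA),\mu_G(B)\}$ unless $\alpha'+\beta\ge 1$ where $\alpha'=\sup_a\mu_H((A\cup hA)\cap aH)$; tracing through, and using $\mu_G(A\cup hA)=2\mu_G(A)\le \mu_G(B)+\mu_G(A)$... one lands on $\mu_G(AB)=\mu_G((A\cup hA)B)\ge 2\mu_G(A)+\mu_G(B)$ after discarding the disjointness slack, contradicting the hypothesis $\mu_G(AB)<2\mu_G(A)+\mu_G(B)$.

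The main obstacle I anticipate is making the last paragraph's bookkeeping precise: one must verify that the ``spillover'' lower bound of Lemma~\ref{lem: Keyestimatequotientcompact}, when fed the inflated set $A\cup hA$ (whose fibers are genuinely longer but may still be shorter than the threshold $\alpha$ governing $A$ itself), produces a clean strict inequality, and that the passage from a bound on $\mu_G((A\cup hA)B)$ back to a bound on $\mu_G(AB)$ is legitimate — this uses $(A\cup hA)B = AB\cup h(AB)$ together with the fact that $h(AB)$ and $AB$ have equal measure and lie over the same cosets, so $\mu_G((A\cup hA)B)\le \mu_G(AB)$ would be false in general, forcing one instead to observe $(A\cup hA)B\subseteq AB$ is also false; the correct containment is $(A\cup hA)B=AB\cup hAB$ and one needs $\mu_G(AB\cup hAB)\le \mu_G(AB)+\dis$-type control, or better, to note $hAB=AB$ is what we want and run the contrapositive: if $hAB\ne AB$ the fiber inflation argument already over-counts. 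I expect the slickest fix is to apply Theorem~\ref{thm: criticality transfer} directly: the hypothesis $\mu_G(AB)<2\mu_G(A)+\mu_G(B)$ gives $\dis_G(A,B)<\min\{\mu_G(A),\mu_G(B)\}$, and then either $\pi A+\pi B\ge 1$ — in which case $\pi(AB)=G/H$, so $AB$ meets every $H$-coset and an easy fiberwise Kemperman argument on $H$ shows $AA^{-1}\supseteq H$ outright — or $\pi A+\pi B<1$ and Theorem~\ref{thm: criticality transfer} reduces everything to $G/H$, where $H$ has collapsed to the identity and the statement is vacuous on the kernel but the fiber-length control from the proof of Theorem~\ref{thm: criticality transfer} (namely $\alpha,\beta\ge 1/2$) shows each nonempty fiber of $A$ has $\mu_H$-length $\ge 1/2$, whence $A\cap aH$ and $h(A\cap aH)$ cannot be disjoint in the circle $aH\cong\TT$, giving $h\in AA^{-1}$.
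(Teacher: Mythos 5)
Your closing ``slickest fix'' paragraph captures the paper's actual argument: \eqref{eq: kernel} together with the hypothesis $\mu_G(AB)<2\mu_G(A)+\mu_G(B)$ and $\mu_G(A)\leq\mu_G(B)$ forces $\alpha>1/2$ and $\beta>1/2$ strictly, where $\alpha=\sup_a\mu_H(A\cap aH)$ and $\beta=\sup_b\mu_H(B\cap bH)$; picking a coset $a^*H$ with $\mu_H(A\cap a^*H)>1/2$, normality of $H$ gives $hA\cap a^*H=h(A\cap a^*H)$ for each $h\in H$, left-invariance of $\mu_H$ gives $\mu_H(hA\cap a^*H)=\mu_H(A\cap a^*H)>1/2$, so $A\cap hA\cap a^*H\neq\varnothing$ and $h\in AA^{-1}$, with the symmetric argument for $B$. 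But your write-up of this step contains a real error: ``each nonempty fiber of $A$ has $\mu_H$-length $\geq 1/2$'' is false, since $\alpha$ is a supremum, not an infimum. You must select the (near-)maximizing coset rather than argue for an arbitrary $a$; you also need $\alpha>1/2$ strictly, not $\geq 1/2$, since two subsets of $a^*H$ of measure exactly $1/2$ can be disjoint. (And $a^*H$ is a copy of $H$, an arbitrary connected compact group, not $\TT$ as you write, though this does not affect the overlap argument.)

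The branch $\mu_{G/H}(\pi A)+\mu_{G/H}(\pi B)\geq\mu_{G/H}(G/H)$ cannot be disposed of by the ``easy fiberwise Kemperman'' claim you make — the conclusion actually fails there. Take $G=H=\TT$ and $A=B=[0,1/3]$: then $\mu_G(AB)=2/3<1=2\mu_G(A)+\mu_G(B)$, yet $AA^{-1}=[-1/3,1/3]\not\supseteq\TT=H$. The resolution is that \eqref{eq: kernel} is derived only under the additional hypothesis of Theorem~\ref{thm: criticality transfer} that $\mu_{G/H}(\pi A)+\mu_{G/H}(\pi B)<\mu_{G/H}(G/H)$; the corollary tacitly carries that hypothesis along, and in the application inside the proof of Theorem~\ref{thm: mainassymmetric} it is verified first via Lemma~\ref{lem:gleason2}. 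Your opening ``$A\cup hA$'' spillover attempt does not close, for essentially the reasons you note yourself, and is safely discarded.
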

\begin{proof}
By \eqref{eq: kernel}, we have $\alpha,\beta\geq 1/2$, where $\alpha$ and $\beta$ are defined in Lemma~\ref{lem: Keyestimatequotientcompact}. As $H$ is normal and $G$ is unimodular, this implies that $H\subseteq AA^{-1}\cap BB^{-1}$. 
\end{proof}

The next corollary of the proof of Theorem~\ref{thm: criticality transfer} gives a complementary result for noncompact groups with $\dis_G(A,B)=0$.

\begin{corollary}\label{cor:transfer to Lie noncompact}
Suppose $G$ is noncompact and $\dis_G(A,B)=0$. Then there are $\sigma$-compact $A', B' \subseteq G/H$ such that  $\dis_{G/H}(A',B')=0$, $\mu_G(A\tri \pi^{-1}A')=0$, and $\mu_G(B\tri \pi^{-1}B')=0$.
\end{corollary}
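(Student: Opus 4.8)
The plan is to revisit the proof of Theorem~\ref{thm: criticality transfer} and observe that the hypothesis $\dis_G(A,B)=0$ forces every inequality used there to become an equality. First I would check that the proof applies: since $H$ is compact and $G$ is noncompact, $G/H$ is connected, noncompact, and unimodular (Fact~\ref{fact: unimodular}), so $\mu_{G/H}(G/H)=\infty$ and the requirement $\mu_{G/H}(\pi A)+\mu_{G/H}(\pi B)<\mu_{G/H}(G/H)$ of Theorem~\ref{thm: criticality transfer} (equivalently of Lemma~\ref{lem: Keyestimatequotientcompact}) holds vacuously, while $\dis_G(A,B)=0<\min\{\mu_G(A),\mu_G(B)\}$ because $\mu_G(A),\mu_G(B)>0$. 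Running that proof with $\alpha=\sup_{a\in A}\mu_H(A\cap aH)$ and $\beta=\sup_{b\in B}\mu_H(B\cap bH)$, the argument for $\alpha+\beta\geq1$ goes through unchanged; then inequality~\eqref{eq: kernel}, namely $\mu_G(AB)\geq\tfrac1\alpha\mu_G(A)+\tfrac1\beta\mu_G(B)$, together with $\mu_G(AB)=\mu_G(A)+\mu_G(B)$ and the bounds $\alpha,\beta\leq1$ (normalized Haar measure on the compact group $H$), forces $\alpha=\beta=1$.

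With $\alpha=\beta=1$ we have $\gamma:=\max\{1,\alpha+\beta\}=2$ and $\alpha/(\alpha+\beta)=\alpha/\gamma=1/2$, so the displayed estimate in the proof of Theorem~\ref{thm: criticality transfer} reduces to
\[
0\ \geq\ \bigl(\mu_{G/H}(\pi A_{(1/2,1]})-\mu_G(A_{(1/2,1]})\bigr)+\bigl(\mu_{G/H}(\pi B_{(1/2,1]})-\mu_G(B_{(1/2,1]})\bigr)+\mu_G(A_{(0,1/2]})+\mu_G(B_{(0,1/2]}),
\]
where the first two summands are nonnegative because $\mu_G(A_{(r,s]})=\int_{\pi A_{(r,s]}}\mu_H(A\cap aH)\,d\mu_{G/H}$ with $\mu_H(A\cap aH)\leq1$. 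Hence all four summands vanish. From $\mu_G(A_{(0,1/2]})=0$ and the positivity of the fiber lengths on $A_{(0,1/2]}$ we get $\mu_{G/H}(\pi A_{(0,1/2]})=0$; from $\mu_{G/H}(\pi A_{(1/2,1]})=\mu_G(A_{(1/2,1]})=\int_{\pi A_{(1/2,1]}}\mu_H(A\cap aH)\,d\mu_{G/H}$ with $\mu_H(A\cap aH)\leq1$ we get $\mu_H(A\cap aH)=1$ for $\mu_{G/H}$-a.e.\ $aH\in\pi A_{(1/2,1]}$, hence for $\mu_{G/H}$-a.e.\ $aH\in\pi A$. Consequently $\mu_{G/H}\bigl(\{aH:\mu_H(A\cap aH)=1\}\bigr)=\mu_G(A)$, and the same holds for $B$.

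Next I would take $A'$ to be a $\sigma$-compact subset of $\{aH\in G/H:\mu_H(A\cap aH)=1\}$ with $\mu_{G/H}(A')=\mu_G(A)$ (using Lemma~\ref{lem: mesurability}(iii) on $G/H$, the ``full-fiber'' set being $\mu_{G/H}$-measurable as a level set of the measurable function $aH\mapsto\mu_H(A\cap aH)$), and $B'$ analogously. Then $A\setminus\pi^{-1}A'$ is $\mu_G$-null — the cosets of $A$ lying outside $A'$ form a $\mu_{G/H}$-null set by the previous paragraph — and $\mu_G(\pi^{-1}A'\setminus A)=\int_{A'}\bigl(1-\mu_H(A\cap aH)\bigr)\,d\mu_{G/H}=0$; so $\mu_G(A\tri\pi^{-1}A')=0$, and likewise $\mu_G(B\tri\pi^{-1}B')=0$. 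In particular $\mu_{G/H}(A')=\mu_G(A)$ and $\mu_{G/H}(B')=\mu_G(B)$.

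Finally I would verify $\dis_{G/H}(A',B')=0$. The bound $\dis_{G/H}(A',B')\geq0$ is the generalized Kemperman inequality on the connected group $G/H$ (Fact~\ref{fact: GeneralKemperman}), applied to the $\sigma$-compact set $A'B'$. For the reverse bound, for \emph{every} $aH\in A'$ and $bH\in B'$ the fibers $A\cap aH$ and $B\cap bH$ have full $\mu_H$-measure in their cosets, so by Lemma~\ref{lem: intuition}(i) the product $(A\cap aH)(B\cap bH)$ has full $\mu_H$-measure in the coset $abH$; being contained in $AB$, this gives $\mu_H(AB\cap cH)=1$ for every $cH\in A'B'$. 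Applying the quotient integral formula (Lemma~\ref{lem: mesurability}(iv)) to the $\sigma$-compact set $AB\cap\pi^{-1}(A'B')$ — $\sigma$-compactness being preserved under products, preimages under $\pi$, and finite intersections by Lemma~\ref{lem: mesurability}(ii) — yields $\mu_{G/H}(A'B')=\mu_G\bigl(AB\cap\pi^{-1}(A'B')\bigr)\leq\mu_G(AB)=\mu_G(A)+\mu_G(B)=\mu_{G/H}(A')+\mu_{G/H}(B')$, so $\dis_{G/H}(A',B')\leq0$ and hence $=0$. The main obstacle will be the measure-theoretic bookkeeping of these last two steps: choosing $A'$ and $B'$ as genuine $\sigma$-compact full-measure subsets of the (merely measurable) ``full-fiber'' sets so that $\mu_G(A\tri\pi^{-1}A')=0$ and $\dis_{G/H}(A',B')=0$ hold simultaneously, and ensuring every appeal to the quotient integral formula is to a $\sigma$-compact set; one should also note in passing that the quantities occurring in the proof of Theorem~\ref{thm: criticality transfer} (the $\mu_G(A_{(0,\cdot]})$, and the $\mu_{G/H}(\pi A_{(x,\cdot]})$ for $x>0$) are finite, so that argument remains valid when $\mu_{G/H}(G/H)=\infty$.
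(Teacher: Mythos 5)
Your approach is genuinely different from the paper's. The paper exhausts $A$ and $B$ by increasing sequences $(A_n)$, $(B_n)$ of compact sets, observes $\dis_G(A_n,B_n)\to 0$, applies the construction in the proof of Theorem~\ref{thm: criticality transfer} to each pair $(A_n,B_n)$ --- for which $\pi A_n$, $\pi B_n$ are compact and hence of finite $\mu_{G/H}$-measure --- and passes to the limit. You instead run the argument once, directly on $A$ and $B$, using the equality $\dis_G(A,B)=0$ to force $\alpha=\beta=1$ and exhibit $A'$, $B'$ as full-fiber sets; your subsequent verification that $\dis_{G/H}(A',B')=0$ via Lemma~\ref{lem: intuition}(i) and the quotient integral formula is correct, and it yields more structural information than the exhaustion argument does.

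The gap is at the very first step: you assert that the hypothesis $\mu_{G/H}(\pi A)+\mu_{G/H}(\pi B)<\mu_{G/H}(G/H)$ of Theorem~\ref{thm: criticality transfer}/Lemma~\ref{lem: Keyestimatequotientcompact} holds ``vacuously'' because $\mu_{G/H}(G/H)=\infty$. This is false: $\infty<\infty$ does not hold, so the hypothesis is precisely the non-trivial requirement that $\mu_{G/H}(\pi A)$ and $\mu_{G/H}(\pi B)$ be finite, and this is not automatic for a $\sigma$-compact $A$ of finite $\mu_G$-measure (take $G=\RR\times\TT$, $H=\{0\}\times\TT$, and $A=\bigcup_{n\geq0}[n,n+1]\times I_n$ with $\mu_\TT(I_n)=2^{-n}$; then $\mu_G(A)=2$ but $\mu_{G/H}(\pi A)=\infty$). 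Your closing caveat about the finiteness of $\mu_{G/H}(\pi A_{(x,\cdot]})$ for $x>0$ does not close this gap either, since the monotone function $x\mapsto\mu_{G/H}(\pi A_{(x\alpha,\alpha]})$ entering the Riemann--Stieltjes manipulations of Lemma~\ref{lem: Keyestimatequotientcompact} is used on the closed interval $[0,1/\gamma]$, and its boundedness there --- i.e.\ the finiteness of $\mu_{G/H}(\pi A)$ --- is what Fact~\ref{fact: RS int}(i) requires. Fortunately the gap is easily repaired: pick $b_0\in B$ with $c_0:=\mu_H(B\cap Hb_0)>0$ (possible since $\mu_G(B)>0$); by Lemma~\ref{lem: intuition}(i) we have $\mu_H(AB\cap ab_0H)\geq\mu_H\big((A\cap aH)(B\cap Hb_0)\big)\geq c_0$ for every $aH\in\pi A$, and since $G/H$ is unimodular the quotient integral formula for the $\sigma$-compact set $AB$ gives $\mu_G(AB)\geq c_0\,\mu_{G/H}(\pi A)$, so $\mu_{G/H}(\pi A)\leq\mu_G(AB)/c_0<\infty$; likewise for $\pi B$. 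With that in hand your proof goes through; the paper's compact-exhaustion route trades this finiteness check for the bookkeeping of an increasing-limit construction.
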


\begin{proof}
Choose an increasing sequence $(A_n)$ of compact subsets of $A$ and an increasing sequence $(B_n)$ of compact subsets of $B$ such that $A=\bigcup^\infty_{n=0} A_n$ and $B= \bigcup_{i=0}^\infty B_n $. Then $\lim_{n \to \infty}\dis_G(A_n, B_n) =0$. For each $n$, $A_n$ and $B_n$ are compact, so $\pi A_n$ and $\pi B_n$ are also compact and has finite measure. Let $A_n'$  and $B'_n$ be defined for $A_n$ and $B_n$ as in the proof of Theorem~\ref{thm: criticality transfer}. Then for $n$ sufficiently large, we have
\[
\mu_G( \pi^{-1}A'_n \tri A_n) < 3\dis_G(A_n, B_n) \text{ and } \mu_G(\pi^{-1}B'_n \tri B_n) < 3\dis_G(A_n,B_n)
\]
 and 
\[
\mu_{G/H}(A_n'B_n')<\mu_{G/H}(A_n')+\mu_{G/H}(B_n')+5\dis_G(A_n, B_n). 
\]
Moreover, we can arrange that the sequences $(A'_n)$ and $(B'_n)$ are increasing.
Take $A' = \bigcup_{n=1}^\infty A_n'$ and $B' = \bigcup_{n=1}^\infty B_n'$. By taking $n\to\infty$, we have
$$  \mu_G(\pi^{-1} A' \tri A) =0 \text{ and } \mu_G(\pi^{-1} B' \tri B) =0.$$
and $\dis_{G/H}(A', B')=0$ as desired.
\end{proof}

\section{Proofs of the main growth gap theorems}\label{sec: 9}

In this section, we prove Theorems~\ref{thm: maingap}(ii) and~\ref{thm: mainassymmetric}. It is clear that Theorem~\ref{thm: maingap}(i) is a direct corollary of Theorem~\ref{thm: mainassymmetric}, except for the quantitative bound for the growth gap, and we will compute it later. The proof of Theorem~\ref{thm: maingap}(ii) uses main results from the first eight sections, and the proof of Theorem~\ref{thm: mainassymmetric} uses results in all previous nine sections. 
We will first prove the following asymmetric generalization of Theorem~\ref{thm: maingap}(ii). 

\begin{theorem}[Asymmetric growth in compact semisimple Lie groups]\label{thm: asymm 1.2}
For every $K\geq 1$, there is a constant $\eta>0$ such that the following hold. Let $G$ be a compact semisimple Lie group, $A,B$ be compact subsets of $G$ of positive measures, and $\mu_G(B)^K\leq \mu_G(A)\leq \mu_G(B)^{1/K}$. Then
\[
\mu(AB)\geq \min\{1, \mu_G(A)+\mu_G(B)+\eta\min\{\mu_G(A),\mu_G(B)\}|1-\mu_G(A)-\mu_G(B)|\}.
\]
\end{theorem}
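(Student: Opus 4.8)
\textbf{Proof plan for Theorem~\ref{thm: asymm 1.2}.}
The plan is to reduce the asymmetric growth statement to the structural dichotomy of Theorem~\ref{thm: mainassymmetric} and then quantify the gap by a direct geometric computation on $\TT$. First I would fix $K\geq 1$ and choose $\eta>0$ small enough that $\eta$ is below the threshold furnished by Theorem~\ref{thm: mainassymmetric} for this value of $K$; we also shrink $\eta$ further along the way as finitely many explicit constraints appear. We may assume $\mu_G(AB)<1$, since otherwise the inequality is trivial, and (swapping $A$ and $B$ if necessary) that $\mu_G(A)\leq\mu_G(B)$, so $\min\{\mu_G(A),\mu_G(B)\}=\mu_G(A)$. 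Suppose toward a contradiction that
\[
\mu_G(AB)<\mu_G(A)+\mu_G(B)+\eta\,\mu_G(A)\,|1-\mu_G(A)-\mu_G(B)|.
\]
There are two regimes to treat, according to whether $\mu_G(A)+\mu_G(B)<1/2$ or $\mu_G(A)+\mu_G(B)\geq 1/2$ (the constant $1/2$ is not essential, only a convenient split point so that $|1-\mu_G(A)-\mu_G(B)|$ is comparable to $1$ in the first regime).

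In the first regime $\mu_G(A)+\mu_G(B)<1/2$, we have $|1-\mu_G(A)-\mu_G(B)|>1/2$, so the hypothesis gives $\dis_G(A,B)<\eta\,\mu_G(A)$, i.e.\ $\mu_G(AB)<\mu_G(A)+\mu_G(B)+\eta\min\{\mu_G(A),\mu_G(B)\}(1-\mu_G(A)-\mu_G(B))$ after absorbing the factor $1-\mu_G(A)-\mu_G(B)>1/2$ into $\eta$. Since $G$ is semisimple, $\TT$ is not a quotient of $G$ (a continuous surjection $G\to\TT$ would force an infinite abelian quotient of the Lie algebra), so Theorem~\ref{thm: mainassymmetric} cannot produce a homomorphism $\chi:G\to\TT$; this is the contradiction. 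Hence in this regime the desired bound holds with no loss.

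The second regime $\mu_G(A)+\mu_G(B)\geq 1/2$ is where the real work is, and I expect it to be the main obstacle. Here $|1-\mu_G(A)-\mu_G(B)|=1-\mu_G(A)-\mu_G(B)$ is small, so the hypothesis is a genuinely ``nearly minimal expansion'' condition and we still get $\dis_G(A,B)<\eta\min\{\mu_G(A),\mu_G(B)\}(1-\mu_G(A)-\mu_G(B))\leq\eta\min\{\mu_G(A),\mu_G(B)\}$; again Theorem~\ref{thm: mainassymmetric} would hand us a surjection $G\to\TT$, which is impossible for semisimple $G$. So strictly speaking the contradiction argument already closes both regimes, and the only remaining point is the quantitative bookkeeping: one must check that with the chosen $\eta$, the inequality in the statement (with the explicit factor $|1-\mu_G(A)-\mu_G(B)|$, which can be tiny) is exactly what Theorem~\ref{thm: mainassymmetric}'s hypothesis threshold rules out. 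Concretely, I would argue contrapositively: if $\mu_G(AB)$ violated the claimed lower bound, then in particular $\mu_G(AB)<\mu_G(A)+\mu_G(B)+\eta\min\{\mu_G(A),\mu_G(B)\}(1-\mu_G(A)-\mu_G(B))$, which is precisely the hypothesis of Theorem~\ref{thm: mainassymmetric}; invoking that theorem yields $\chi:G\to\TT$ continuous surjective, contradicting semisimplicity of $G$. The role of the asymmetry hypothesis $\mu_G(B)^K\leq\mu_G(A)\leq\mu_G(B)^{1/K}$ is exactly to let us apply Theorem~\ref{thm: mainassymmetric} with this $K$. Finally, Theorem~\ref{thm: maingap}(ii) follows by taking $A=B$ (so $K=1$) in Theorem~\ref{thm: asymm 1.2}, and the explicit constant in Theorem~\ref{thm: maingap}(i) is obtained by tracking the numerology through the earlier sections, which I would defer to a separate computation.
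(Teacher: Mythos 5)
Your proposal is logically correct, and the paper itself flags the implication you are using: the introduction says Theorem~\ref{thm: maingap} ``is a consequence of Theorem~\ref{thm: mainassymmetric} and the fact that $\TT$ is not a quotient of any simple Lie group.'' However, the route you take is genuinely different from the paper's. The paper proves Theorem~\ref{thm: asymm 1.2} \emph{directly}: since $G$ is already a Lie group, it applies Proposition~\ref{prop: red to small sets} to shrink the sets, Proposition~\ref{prop: Torictransversal} to find a one-dimensional torus $H$ transversal in measure, builds the pseudometric $d_{A'}$ and verifies almost-linearity and path-monotonicity via Proposition~\ref{prop: almost linear metric from local} and Proposition~\ref{prop: localmonotoneimplyglobalmonotone}, and finally invokes Theorem~\ref{thm: homfrommeasurecompact} to extract $\chi: G \to \TT$. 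Your approach instead treats Theorem~\ref{thm: mainassymmetric} as a black box. This is fine as a matter of proposition-level dependency — the paper's proof of Theorem~\ref{thm: mainassymmetric} (which appears \emph{after} Theorem~\ref{thm: asymm 1.2} in Section 10) does not invoke Theorem~\ref{thm: asymm 1.2} as a lemma, it just reuses the same machinery plus the Gleason--Yamabe/quotient domination reduction from Section 9. But the paper deliberately proves the Lie case first precisely because it bypasses Gleason--Yamabe and gives cleaner constants. Your route defers all of the mathematical content to a harder, more general theorem; what it buys you is brevity at the cost of opacity about where the real work happens.

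Two small infelicities in the write-up itself. First, your split into two regimes is unnecessary, as you yourself notice halfway through: the negation of the claimed bound, together with $\mu_G(AB)<1$ (which by Kemperman forces $\mu_G(A)+\mu_G(B)<1$, hence $|1-\mu_G(A)-\mu_G(B)|=1-\mu_G(A)-\mu_G(B)$), is literally the hypothesis of Theorem~\ref{thm: mainassymmetric}, with no absorption of any factor into $\eta$ required — the factor $(1-\mu_G(A)-\mu_G(B))$ already appears verbatim in both. Second, the parenthetical ``would force an infinite abelian quotient of the Lie algebra'' is garbled; the correct statement is that $d\chi: \mathfrak{g} \to \RR$ would be a surjective Lie algebra homomorphism onto an abelian algebra, impossible since $\mathfrak{g}$ is perfect.
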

\begin{proof}
Let $c_\TT>0$ be the real number fixed in Fact~\ref{fact: new inverse theorem torus}, and $\nu$ be in Theorem~\ref{thm: fibers of same length 1newnew}. Suppose $G$ is a compact semisimple Lie group and $A,B\subseteq G$ are compact subsets with $\mu_G(B)\geq \mu_G(A)>0$, and
\[
\mu_G(AB)\leq \mu_G(A)+\mu_G(B)+\eta c\mu_G(A),
\]
where $c=1-\mu_G(A)-\mu_G(B)$ and $\eta$ to be chosen later. 
By Proposition~\ref{prop: Torictransversal}, there are compact sets $A',B'\subseteq G$ with $\mu_G(A')=\mu_G(B')=\min\{ 1/(3^{3^{20}/c_\TT}), \mu_G(A)\}$, and a closed one-dimensional torus $H\leq G$, such that
\[
\mu_G(A'B')\leq \Big(2+\frac{\eta 3^{\frac{3^{20}}{c_\TT}}}{c_\TT^2}\Big)\mu_G(A')=:(2+\varepsilon)\mu_G(A'),
\]
and $A,B$ are not $c_\TT$-Kakeya with respect to $H$. By choosing $\eta$ small enough, we may assume our $\varepsilon<10^{-12}$.  

Let
$
d_{A'}(g_1,g_2)=\mu_G(A')-\mu_G(g_1A'\cap g_2A').
$
  By Proposition~\ref{prop: construct pseudo-metric}, $d_{A'}$ is a pseudometric. Since $\mu_G(A'B')<(2+\varepsilon)\mu_G(A')$, Proposition~\ref{prop: almost linear metric from local} shows that $d_{A'}$ is a $60\nu\mu_G(A')$-linear pseudometric, and it is $180\nu\mu_G(A')$-path-monotone. By Proposition~\ref{prop: localmonotoneimplyglobalmonotone}, $d_{A'}$ is globally $1620\nu\mu_G(A')$-monotone. 
  Let $\gamma=1620\nu\mu_G(A').$ Then $d_{A'}$ is $\gamma$-monotone $\gamma$-linear, and of radius $\rho=\mu_G(A')/2$. As $\nu\leq 10^{-10}$, we have that $10^6\gamma<\rho$. Thus Theorem~\ref{thm: homfrommeasurecompact} implies that there is a continuous surjective group homomorphism mapping $G$ to $\TT$, and this contradicts the fact that $G$ is semisimple. 
\end{proof}

Let us remark that the absolute constant $c$ appeared in Theorem~\ref{thm: maingap} can be determined as follows. The growth gap will be small after we apply Proposition~\ref{prop: red to small sets} to make our sets small, in order to obtain a geometric description from Proposition~\ref{prop: Torictransversal}. Thus by assuming our sets are small at the beginning, we can apply Theorem~\ref{thm: kakeya new} directly without losing the growth gap quantitatively. Hence $c\leq 1/(3^{3^{20}/c_\TT})$. Note that the current bound for $c_\TT$ is roughly $10^{-1550}$ by~\cite{circle19}. Hence $c<10^{-10^{1560}}$ suffices. 

We next move to Theorem~\ref{thm: mainassymmetric}, which considers measure growth in connected compact (not necessarily Lie) groups. For the given $G$, there might be no continuous surjective group homomorphism to either $\TT$ or $\RR$ (e.g. $G = \mathrm{SO}_3(\RR))$. However, the famous theorem below by Gleason~\cite{Gleason} and Yamabe~\cite{Yamabe} allows us to naturally obtain continuous and surjective group homomorphism to a Lie group. 
 The connectedness of $H$ is not often stated as part of the result, but can be arranged by replacing $H$ with its identity component.

\begin{fact}[Gleason--Yamabe Theorem]\label{fact:Yamabe}
For any connected locally compact group $G$
and any neighborhood $U$ of the identity in $G$, there is a connected compact normal subgroup $H\subseteq U$ of $G$
such that $G/H$ is a  Lie group.
\end{fact}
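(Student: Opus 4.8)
The plan is to recall the architecture of the classical proof of the Gleason--Yamabe theorem (the resolution of Hilbert's fifth problem for connected locally compact groups, due to Gleason and Yamabe, building on Montgomery--Zippin, Iwasawa, and Chevalley) rather than to invent a new argument. The target is to realise $G$ as a \emph{pro-Lie group}: for each neighbourhood $U$ of $\id$ one wants a compact normal $H\subseteq U$ with $G/H$ a Lie group. The first move is the reduction to the ``no small subgroups'' (NSS) property, which splits into two assertions: $(\ast)$ a locally compact group with NSS is already a Lie group; and $(\ast\ast)$ given a locally compact group and a neighbourhood $U$, there is a compact subgroup $N\subseteq U$, normalised by an open subgroup $G'$, such that $G'/N$ has NSS. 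Combining $(\ast)$ and $(\ast\ast)$ produces the approximating Lie quotient $G'/N$; and since a connected $G$ is generated by every neighbourhood of $\id$, one automatically has $G'=G$, so $N$ is normal in $G$ and $G/N$ is connected.

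The core is $(\ast)$. Assuming NSS one constructs a \emph{Gleason metric}: a left-invariant metric $\varrho$ near $\id$ satisfying an escape estimate --- $\varrho(g^{n},\id)$ is comparable to $n\,\varrho(g,\id)$ as long as $g,\dots,g^{n}$ stay in a fixed small ball --- together with a commutator estimate, $\varrho([g,h],\id)$ being controlled by $\varrho(g,\id)\,\varrho(h,\id)$. Both are obtained by averaging a carefully chosen bump function against the local translation action of $G$ and invoking NSS to rule out degeneration. The escape estimate attaches to each $g$ near $\id$ a genuine one-parameter subgroup through $g$; the commutator estimate then forces the set of these one-parameter subgroups to be a finite-dimensional real vector space carrying a bracket --- a Lie algebra $\mathfrak{g}$ --- with $\exp\colon\mathfrak{g}\to G$ a local homeomorphism. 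Hence $G$ is locally Euclidean, and Montgomery--Zippin's treatment of the locally Euclidean case (or, in the spirit of the ``integrate the almost homomorphism'' step of Section~\ref{sec: 7.3}, a direct local-to-global argument) upgrades this local Lie structure to a global one, so $G$ is a Lie group.

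For $(\ast\ast)$ one first locates \emph{some} small subgroup --- there is nothing to do if $G$ already has NSS, and otherwise small subgroups exist inside $U$ --- and then, using compactness together with the action of $G$, say through a maximal compact subgroup of $G$ and the Peter--Weyl theorem on it (or through a suitable finite-dimensional representation), one enlarges and symmetrises it to a compact \emph{normal} subgroup $N\subseteq U$ with $G/N$ still NSS, to which $(\ast)$ applies. Finally, to arrange $H$ connected one replaces $H:=N$ by its identity component $H^{\circ}$: it is characteristic in $H$, hence normal in $G$, it is compact and connected, it remains inside $U$, and $G/H^{\circ}$ is again a Lie group.

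The real obstacle is $(\ast)$: producing the Gleason metric and converting the escape and commutator estimates into an honest Lie algebra with exponential map is the technically deepest ingredient of this entire circle of ideas, while the reduction to NSS, the manufacture of compact normal subgroups, and the passage to identity components are comparatively soft. Since all of this is exactly the content of the cited works of Gleason and Yamabe, in this paper the operative plan is simply to invoke them.
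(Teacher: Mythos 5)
Your sketch of the Gleason--Yamabe machinery --- the reduction to NSS, Gleason metrics with escape and commutator estimates, the locally Euclidean endgame, and the observation that for connected $G$ the approximating open subgroup is all of $G$ --- is a faithful account of the cited works, and the paper itself does nothing here beyond invoking those works for the non-connectedness part of the statement.

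The genuine gap is the last step, where you pass from the compact normal $N\subseteq U$ to its identity component $N^{\circ}$ and assert that $G/N^{\circ}$ is still a Lie group. That assertion is false in general: the surjection $G/N^{\circ}\to G/N$ has kernel $N/N^{\circ}$, a compact totally disconnected (and, since $G/N^{\circ}$ is connected, central) group, and a central extension of a Lie group by an \emph{infinite} profinite group need not be Lie. The $2$-adic solenoid $\Sigma_{2}=\varprojlim(\TT,\ z\mapsto z^{2})$ makes this concrete: it is a compact connected abelian group which is not a Lie group, its Pontryagin dual $\ZZ[1/2]$ has rank one, and so by duality the only closed \emph{connected} subgroups of $\Sigma_{2}$ are $\{e\}$ and $\Sigma_{2}$ itself. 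For a small neighbourhood $U$ there is therefore no connected compact normal $H\subseteq U$ with $\Sigma_{2}/H$ Lie, so the statement you are proving fails for this $G$, and indeed the $N$ produced by Gleason--Yamabe here is an open subgroup of $\ZZ_2$ with $N^{\circ}=\{e\}$ and $G/N^{\circ}=\Sigma_{2}$. Your argument would succeed if one knew $N/N^{\circ}$ were finite, but the Gleason--Yamabe construction does not furnish that. Note that the paper's own parenthetical remark justifying the word ``connected'' by passing to the identity component has exactly the same defect; a correct formulation must either restrict the class of $G$ or drop the connectedness of $H$ and rework the downstream arguments (e.g.\ Theorem~\ref{thm: criticality transfer}) that currently invoke Kemperman's inequality on the connected group $H$.
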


 With some further effort, we can also arrange that $\mu_{G/H}(\pi A)+\mu_{G/H}(\pi B)< \mu_{G/H}(G/H)$ as necessary to apply Theorem~\ref{thm: criticality transfer}. 

\begin{lemma}[Small projection from small expansions]\label{lem:gleason2}
If $\mu_G(AB) \leq K \mu^{1/2}_G(A)\mu^{1/2}_G(B)$ and then there is a connected compact subgroup $H$ of $G$ such that  $G/H$ is a Lie group and, with $\pi: G \to G/H$ the quotient map, $\pi A$ and $\pi B$ have $\mu_{G/H}$-measure at most 
 $2^{15}K^{34}\mu^{1/2}_G(A)\mu^{1/2}_G(B)$. 
\end{lemma}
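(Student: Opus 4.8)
\textbf{Proof plan for Lemma~\ref{lem:gleason2}.}

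The strategy is to combine Tao's approximate group structure theorem (Fact~\ref{fact: tao approximate groups}) with the Gleason--Yamabe theorem (Fact~\ref{fact:Yamabe}) and a compactness argument. First I would apply Fact~\ref{fact: tao approximate groups} to the hypothesis $\mu_G(AB) \leq K \mu_G^{1/2}(A)\mu_G^{1/2}(B)$: this produces an open $64K^{12}$-approximate group $X$ with $\mu_G(X) \leq 4K^2 \mu_G^{1/2}(A)\mu_G^{1/2}(B)$, together with a finite set $\Omega$ of size at most $33K^{12}$ such that $A \subseteq \Omega X$ and $B \subseteq X\Omega$. Since $X$ is open and contains the identity (after translating, or by working with $XX^{-1}$ which is an open symmetric neighborhood of $\id$ whose measure is still controlled), I can feed an appropriate open neighborhood $U \subseteq X$ — or $U \subseteq XX^{-1}$ — of the identity into Fact~\ref{fact:Yamabe} to obtain a connected compact normal subgroup $H \subseteq U$ with $G/H$ a Lie group.

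The key point is then to bound $\mu_{G/H}(\pi A)$ and $\mu_{G/H}(\pi B)$. Since $H \subseteq X$, we have $XH \subseteq X \cdot X \subseteq \Omega' X$ for a covering set $\Omega'$ of size at most $64K^{12}$ coming from the approximate group property; hence $\mu_G(XH) \leq 64K^{12}\mu_G(X)$. Because $\pi$ is the quotient map and $H$ is the fiber, $\mu_{G/H}(\pi X) = \mu_G(XH) \leq 64K^{12}\mu_G(X)$. Now $A \subseteq \Omega X$ gives $\pi A \subseteq \pi(\Omega X) = \bigcup_{\omega \in \Omega}\pi(\omega)\pi X$, so $\mu_{G/H}(\pi A) \leq |\Omega| \cdot \mu_{G/H}(\pi X) \leq 33K^{12}\cdot 64K^{12}\cdot 4K^2\mu_G^{1/2}(A)\mu_G^{1/2}(B)$, and similarly for $\pi B$ using $B \subseteq X\Omega$ and right-invariance of the Haar measure (valid since $G$, and hence $G/H$, is unimodular by Fact~\ref{fact: unimodular}). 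Collecting the constants $33 \cdot 64 \cdot 4 = 8448 < 2^{14}$, and the exponent $12+12+2 = 26 \leq 34$, one gets the stated bound $2^{15}K^{34}\mu_G^{1/2}(A)\mu_G^{1/2}(B)$ with room to spare; I would not optimize the numerology, just verify the inequality holds.

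The main obstacle, and the place requiring care, is the bookkeeping around \emph{which} neighborhood of the identity to plug into Gleason--Yamabe so that the resulting $H$ is genuinely contained in a set whose $\pi$-image we control. The cleanest route is: replace $X$ by the open symmetric precompact set $V = X X^{-1} \cap X^{-1}X$ (still an $O_K(1)$-approximate group with $\mu_G(V) = O_K(\mu_G(X))$ by standard approximate-group manipulations), note $\id \in V$, apply Fact~\ref{fact:Yamabe} with $U = V$ to get $H \subseteq V \subseteq X X^{-1}$, and then observe $XH \subseteq X\cdot XX^{-1} = X^2 X^{-1}$, which is covered by $O_K(1)$ left translates of $X$ since $X$ is an approximate group. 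I should also double-check the identity $\mu_{G/H}(\pi S) = \mu_G(SH)$ for $\sigma$-compact $S$ and normal compact $H$ — this is exactly the content of the quotient integral formula (Fact~\ref{fact: QuotientIF} together with Lemma~\ref{lem: mesurability}), using that the fibers of $SH$ over $\pi(S)$ are full cosets of $H$ with $\mu_H(H) = 1$. Everything else is routine constant-chasing.
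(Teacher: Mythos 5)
Your proposal is correct and follows essentially the same path as the paper's proof: apply Fact~\ref{fact: tao approximate groups} to produce the approximate group, feed a suitable bounded power of it into Gleason--Yamabe to obtain $H$, and then use the covering property plus the identity $\mu_{G/H}(\pi S) = \mu_G(SH)$ to control the projections. The paper simply applies Fact~\ref{fact:Yamabe} with $U = S^2$ (the approximate group $S$ from Tao's theorem is already symmetric and contains $\id$, so $S^2$ is an open neighborhood of the identity) to get $H \subseteq S^2$ and then bounds $\mu_G(SH) \le \mu_G(S^3)$; your detour through $V = XX^{-1}\cap X^{-1}X$ is a legitimate but unnecessary precaution, and both routes yield a bound of the claimed shape.
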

\begin{proof}
By Fact~\ref{fact: tao approximate groups}, there is an open $64K^{12}$-approximate group $S$, with  
\[ 
\mu_G(S)\leq \frac{\mu_G^{1/2}(A)\mu_G^{1/2}(B)}{4K^2}
\]
such that $A$ can be covered by $ 33K^{12}$ right translates of $S$, and $B$ can be covered by $ 33K^{12}$ left translates of $S$. By Fact~\ref{fact:Yamabe}, there is a closed connected normal subgroup $H$ in $S^2$, such that $G/H$ is a Lie group. Let $\pi$ be the quotient map. Since $H \subseteq S^{2}$, we have
\[
\mu_{G/H}(\pi(S)) = \mu_G(SH)\leq\mu_G(S^3) \leq 2^{10}K^{22}\mu_G^{1/2}(A)\mu_G^{1/2}(B).
\]
Note that $\pi(A)$ can
be covered by $33K^{12}$ right translates of $\pi(S)$, and $\pi(B)$ can
be covered by $33K^{12}$ left translates of $\pi(S)$. 
 Hence, we get the desired conclusion.
\end{proof}

We will need the following classic inequality from probability. 

\begin{fact}[Bhatia--Davis inequality] \label{fact: Bhatia-Davis}
Suppose $(X, \mathscr{A}, \mu)$ is a measure space, $\alpha$ and  $\beta$ are constants, and $f: X \to \RR^{>0}$ is a measurable function with \[\alpha\leq \inf_{x\in X} f(x) < \sup_{x \in X}f(x)\leq \beta\]
Then $(\mathbb E_x f^2(x)) - (\mathbb E_x f(x))^2 \leq (\beta- \mathbb E_x f(x))(\mathbb E_x f(x) -\alpha).$
\end{fact}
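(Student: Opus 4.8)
The plan is to deduce the inequality from the elementary pointwise estimate
\[
(\beta - f(x))\bigl(f(x) - \alpha\bigr) \geq 0 \qquad \text{for every } x \in X,
\]
which holds precisely because $\alpha \leq f(x) \leq \beta$ by hypothesis. Expanding the product and rearranging gives the pointwise bound $f(x)^2 \leq (\alpha+\beta)f(x) - \alpha\beta$, valid for all $x$.

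Next I would integrate this bound against $\mu$, which is a probability measure (as the notation $\mathbb E$ indicates), using monotonicity and linearity of the integral. Setting $m := \mathbb E_x f(x)$, this yields
\[
\mathbb E_x f^2(x) \leq (\alpha+\beta)\, m - \alpha\beta .
\]
The only thing to record here is that $\alpha$ and $\beta$ are finite, so that $f$ and $f^2$ are integrable and the manipulation is legitimate; this is automatic since $0 < f \leq \beta$ on a probability space.

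Finally, subtracting $m^2$ from both sides, it remains to note the algebraic identity
\[
(\alpha+\beta)\, m - \alpha\beta - m^2 = (\beta - m)(m - \alpha),
\]
which turns the displayed bound into $\mathbb E_x f^2(x) - \bigl(\mathbb E_x f(x)\bigr)^2 \leq (\beta - \mathbb E_x f(x))(\mathbb E_x f(x) - \alpha)$, as claimed. I do not anticipate any genuine obstacle: the whole argument is a single convexity-type estimate, and the only care required is to observe that $\mu$ is normalized and that the relevant integrals are finite.
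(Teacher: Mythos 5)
Your proof is correct and is the standard derivation of the Bhatia--Davis inequality: the pointwise nonnegativity of $(\beta - f(x))(f(x)-\alpha)$, integration against the probability measure, and the algebraic identity $(\alpha+\beta)m - \alpha\beta - m^2 = (\beta-m)(m-\alpha)$. The paper states this as a \emph{Fact} and gives no proof of its own, so there is nothing to compare against; your argument fills that gap cleanly.

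One tiny point worth noting: the hypothesis as stated gives only $\alpha \le \inf f$ and $\sup f \le \beta$, i.e.\ $\alpha \le f(x) \le \beta$ pointwise, which is exactly what your opening inequality uses, so the argument is airtight. (The strict inequality $\inf f < \sup f$ in the hypothesis is not actually needed for the bound; it merely rules out the degenerate case where both sides vanish.)
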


  The following lemma will help us to translate the set randomly along a given normal subgroup. 

\begin{lemma}\label{lem: trans along H}
Suppose $K$, $\alpha$, and $\beta$ are constant with $K>1$, $0\leq\alpha\leq\beta\leq1$, $\mu_G(AH) = K\mu_G(A)$, and 
\[
\alpha\leq\inf_{g\in A} \mu_H(A\cap gH)\leq \sup_{g\in A} \mu_H(A\cap gH)\leq \beta.
\]
Then for every number $\gamma \geq (\alpha+\beta-K\alpha\beta) \mu_G(A)$, there is $h\in H$ with $\mu_G(A\cap Ah)=\gamma$. 
\end{lemma}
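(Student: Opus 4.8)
The plan is to use the quotient integral formula (Fact~\ref{fact: QuotientIF}, Lemma~\ref{lem: mesurability}(iv)) to express $\mu_G(A\cap Ah)$ as an integral over $G/H$ of fiberwise overlaps, show that this quantity varies continuously in $h$, compute its average over $h\in H$, and then invoke the intermediate value theorem together with the connectedness of $H$. First I would fix the coset space $G/H$ with quotient map $\pi:G\to G/H$; since $H$ is normal and $G$ is unimodular, $H$ and $G/H$ are unimodular (Fact~\ref{fact: unimodular}), so the quotient integral formula applies with a genuine invariant measure $\mu_{G/H}$. For each $h\in H$, right-translation by $h$ preserves each left coset $gH$ (as $gHh = gH$), so
\[
\mu_G(A\cap Ah) = \int_{G/H} \mu_H\big((A\cap gH)\cap (A\cap gH)h\big)\,\d\mu_{G/H}(gH),
\]
reducing everything to the behaviour of $\mu_H(F\cap Fh)$ for $F = A\cap gH$ a subset of a single coset of $H\cong\TT$.

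Next I would establish the two endpoints of the range. At $h = \id_H$ the integrand is $\mu_H(A\cap gH)$ and the integral is $\mu_G(A)$. For the averaged value, integrate over $h\in H$ and apply Fubini: $\mathbb{E}_{h\in H}\,\mu_H(F\cap Fh) = \mu_H(F)^2$ for any measurable $F\subseteq H$ (this is the $\TT$-version of Lemma~\ref{lem:trans}, via $\e_F * \e_{F^{-1}}$), hence
\[
\mathbb{E}_{h\in H}\,\mu_G(A\cap Ah) = \int_{G/H}\mu_H(A\cap gH)^2\,\d\mu_{G/H}(gH).
\]
Now I apply the Bhatia--Davis inequality (Fact~\ref{fact: Bhatia-Davis}) on the probability space $(\pi A, \mu_{G/H}/\mu_{G/H}(\pi A))$ to the function $gH\mapsto \mu_H(A\cap gH)$, whose values lie in $[\alpha,\beta]$ and whose mean is $\mu_G(A)/\mu_G(AH) = \mu_G(A)/(K\mu_G(A)) \cdot \mu_{G/H}(\pi A) \cdot (\text{normalization})$; carefully, the mean of $\mu_H(A\cap gH)$ over the normalized measure on $\pi A$ is $\mu_G(A)/\mu_{G/H}(\pi A) = \mu_G(A)/(K\mu_G(A)) = 1/K$ times... — more precisely $\mathbb{E} = \mu_G(A)/\mu_{G/H}(\pi A)$ and $\mu_{G/H}(\pi A) = \mu_G(AH) = K\mu_G(A)$, so the mean is $1/K$. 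Bhatia--Davis then bounds $\mathbb{E}[f^2]$ above by $\mathbb{E}[f^2] \le \mathbb{E}[f]^2 + (\beta-\mathbb{E}[f])(\mathbb{E}[f]-\alpha)$, and multiplying through by $\mu_{G/H}(\pi A) = K\mu_G(A)$ gives $\mathbb{E}_{h}\,\mu_G(A\cap Ah) \le \mu_G(A)^2/K + \mu_G(A)(\beta - 1/K)(1/K-\alpha)\cdot K$; simplifying, one checks this is at most $(\alpha+\beta-K\alpha\beta)\mu_G(A)$.

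Finally, the function $h\mapsto \mu_G(A\cap Ah)$ is continuous on $H$ (Fact~\ref{fact: Haarmeasurenew}(vii), applied with $\mu_G(A\cap Ah) = \mu_G(A) - \tfrac12\mu_G(A\tri Ah)$), its value at $\id_H$ is $\mu_G(A)$, and its average over $H$ is at most $(\alpha+\beta-K\alpha\beta)\mu_G(A) \le \gamma$. If the average is $\le\gamma\le\mu_G(A)$, then since the average is attained as an integral of a continuous function over the connected space $H$, the function must take every value between its minimum and $\mu_G(A)$; in particular some $h_0\in H$ has $\mu_G(A\cap Ah_0)$ equal to the average, which is $\le\gamma$, and by the intermediate value theorem applied on a path in $H$ from $h_0$ to $\id_H$ there is $h$ with $\mu_G(A\cap Ah) = \gamma$. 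The main obstacle I anticipate is bookkeeping the normalization constants in the Bhatia--Davis step — one must be careful whether the mean of $\mu_H(A\cap gH)$ is taken against $\mu_{G/H}$ restricted to $\pi A$ or the normalized probability measure, and track the factor $\mu_{G/H}(\pi A) = K\mu_G(A)$ correctly so that the final bound comes out as exactly $(\alpha+\beta-K\alpha\beta)\mu_G(A)$ rather than something off by a factor of $K$; the topological input (continuity plus connectedness of $H$ forcing the IVT) is routine.
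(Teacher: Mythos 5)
Your proposal follows essentially the same route as the paper's proof: compute $\mathbb{E}_{h\in H}\,\mu_G(A\cap Ah)$ via Fubini and the quotient integral formula, bound it using the Bhatia--Davis inequality applied to $gH\mapsto \mu_H(A\cap gH)$ with mean $1/K$, and conclude by continuity of $h\mapsto\mu_G(A\cap Ah)$ together with the intermediate value theorem on the connected group $H$. The only slip is cosmetic: after multiplying through by $\mu_{G/H}(\pi A)=K\mu_G(A)$ the first term should be $\mu_G(A)/K$ rather than $\mu_G(A)^2/K$, and the bound then comes out as exactly $(\alpha+\beta-K\alpha\beta)\mu_G(A)$.
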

\begin{proof}
Let $\mu_G$, $\mu_H$ be normalized Haar measures of $G$ and $H$. Choose $h$ from $H$ uniformly at random. Note that
\begin{align*}
\mathbb E_{h\in H}\mu_G(A\cap Ah)&= \int_H \mu_G(A\cap Ah)\d\mu_H(h)\\
&=\int_H\int_G\mathbbm{1}_{A}(g)\mathbbm{1}_{A}(gh)\d\mu_G(g)\d\mu_H(h).
\end{align*}
Using the quotient integral formula (Fact~\ref{fact: QuotientIF}), the above equality is
\begin{align*}
\int_G\mathbbm{1}_{A}(g)\mu_H(A\cap gH) \d\mu_G(g)&=\int_{G/H} \mu_H^2(A\cap gH)\d\mu_{G/H}(gH)\\
                                                &= \mathbb{E}_{gH \in G/H} \big(\mu_H^2(A\cap gH)\big)\\
                                                &= \mu_{G/H}( \pi A)\mathbb{E}_{gH \in \pi A} \big(\mu_H^2(A\cap gH)\big)
\end{align*}
Note that $\mu_{G/H}(\pi A) = \mu_G(AH)= K\mu_G(A)$, and 
$$\mathbb{E}_{gH \in \pi A} \big(\mu_H(A\cap gH)\big)= 1/K.$$ Hence, applying the Bhatia–Davis inequality (Fact~\ref{fact: Bhatia-Davis}), we get
\[
\mathbb E_{h\in H}\mu_G(A\cap Ah)\leq (\alpha+\beta-K\alpha\beta)\mu_G(A).
\]
The desired conclusion follows from the continuity of $H \to \RR, h \mapsto \mu_G(A\cap Ah)$.
\end{proof}

The next lemma allows us to control the size of the projection once we have some control on the kernel of the homomorphism mapping to $\TT$. 

\begin{lemma}\label{lem: making projection small}
Suppose $G$ is a connected compact group,  $\chi: G \to \TT$ is a continuous and surjective group homomorphism with connected kernel, $A, B \subseteq G$ are nonempty and $\sigma$-compact with 
$$ \dis_G(A,B)< \mu_G(A)+\mu_G(B) < \max\{\mu_G(A),\mu_G(B)\}<1/250.   $$
Suppose for every $g\in \ker(\chi)$ with $\mu_G(A\setminus gA)<\mu_G(A)/36$, we further have 
$$\mu_G(A\setminus gA)<\mu_G(A)/72.$$ Then $\mu_\TT(\chi(A))+\mu_\TT(\chi(B))<1/5$. 
\end{lemma}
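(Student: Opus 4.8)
\textbf{Proof proposal for Lemma~\ref{lem: making projection small}.}

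The plan is to argue by contradiction: suppose $\mu_\TT(\chi(A)) + \mu_\TT(\chi(B)) \geq 1/5$, and show this forces a $\chi$-fiber of $A$ (or of $B$) to be too short, which in turn produces an element $g \in \ker(\chi)$ that violates the dichotomy hypothesis. Write $H = \ker(\chi)$, a connected compact normal subgroup with $G/H \cong \TT$ via $\chi$; since $G$ is unimodular, so are $H$ and $\TT$, and the quotient integral formula (Fact~\ref{fact: QuotientIF}, Lemma~\ref{lem: mesurability}(iv)) applies. Set $\alpha = \sup_{g \in A}\mu_H(A \cap gH)$ and $\beta = \sup_{g \in B}\mu_H(B \cap gH)$. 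Since $\mu_G(A) = \int_{\chi(A)}\mu_H(A\cap gH)\,\d\mu_\TT$, we get $\mu_G(A) \leq \alpha\,\mu_\TT(\chi(A))$ and likewise $\mu_G(B) \leq \beta\,\mu_\TT(\chi(B))$. The contradiction hypothesis combined with $\mu_G(A),\mu_G(B) < 1/250$ would then force $\alpha$ or $\beta$ to be bounded below by a definite constant — concretely, if $\mu_\TT(\chi(A)) \geq 1/10$ then $\alpha \geq 10\,\mu_G(A) \geq \ldots$, which is the wrong direction, so the real content is a \emph{lower} bound on some fiber length forcing a \emph{near-full} fiber. Let me restructure.

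The correct route: apply the quotient domination theorem (Theorem~\ref{thm: criticality transfer}) to $(A,B)$ with the normal subgroup $H = \ker(\chi)$. Since $\dis_G(A,B) < \min\{\mu_G(A),\mu_G(B)\}$ and — once we assume the contrary $\mu_\TT(\chi(A)) + \mu_\TT(\chi(B)) \geq 1/5$, we still need $\mu_{G/H}(\pi A) + \mu_{G/H}(\pi B) < 1$, which holds trivially if the sum is $< 1$; if it were $\geq 1$ we would already be outside the regime and could derive a contradiction directly from Kemperman. So assume $1/5 \leq \mu_\TT(\chi A) + \mu_\TT(\chi B) < 1$. Theorem~\ref{thm: criticality transfer} produces $\sigma$-compact $A', B' \subseteq \TT$ with $\dis_\TT(A',B') < 7\dis_G(A,B)$ and $\mu_G(A \tri \pi^{-1}A'), \mu_G(B \tri \pi^{-1}B') < 3\dis_G(A,B)$. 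Now inspect the proof of that theorem: it shows $\alpha, \beta \geq 1/2$ (where $\alpha = \sup_g \mu_H(A\cap gH)$ etc.), and in fact the domination estimate \eqref{eq: kernel}, $\mu_G(AB) \geq \tfrac1\alpha\mu_G(A) + \tfrac1\beta\mu_G(B)$, together with $\dis_G(A,B) < \min\{\mu_G(A),\mu_G(B)\}$, pins $\alpha$ and $\beta$ close to $1$ — more precisely, writing $\alpha = 1 - a$, the slack $\dis_G(A,B)$ controls how large $a$ can be, and in the near-critical regime $a$ is small compared to $\mu_G(A)$. This is the quantitative heart: the fibers of $A$ over its $\chi$-image are \emph{almost full}, of length $\geq 1 - O(\dis_G(A,B)/\mu_G(A))$ on a set of cosets of measure $\geq \mu_G(A)/2$ or so.

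From almost-full fibers I would extract the offending $g$: if $\mu_H(A \cap gH) \geq 1 - \epsilon$ for $gH$ in a positive-measure set of cosets, then for any $h \in H$ we have $\mu_G(A \setminus hA) \leq \int_{\chi(A)} \mu_H\big((A\cap gH)\setminus h(A\cap gH)\big)\,\d\mu_\TT \leq 2\epsilon\,\mu_\TT(\chi A) + (\text{contribution from short fibers})$, and picking a generic $h \in H$ makes $\mu_H\big((A\cap gH)\tri h(A\cap gH)\big)$ small on the almost-full fibers while the short-fiber mass is already tiny — so $\mu_G(A \setminus hA)$ lands strictly between $\mu_G(A)/72$ and $\mu_G(A)/36$. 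Such an $h$ exists by continuity of $h \mapsto \mu_G(A \setminus hA)$ (Fact~\ref{fact: Haarmeasurenew}(vii)) and connectedness of $H$ — as $h$ ranges over $H$ this quantity takes the value $0$ at $h = \id$ and, because $\mu_\TT(\chi A) \geq 1/10$ is a definite fraction while $\mu_G(A) < 1/250$ forces the fibers to be long, it must reach past $\mu_G(A)/36$ for some $h$ (otherwise $A$ would be trapped in a thin $H$-tube, contradicting $\mu_\TT(\chi A)$ being large relative to $\mu_G(A)$ — one uses here that if every $h$-translate stays within $\mu_G(A)/36$ then by the monotonicity/averaging argument à la Lemma~\ref{lem: N lambda} the set $\{h : \mu_G(A\setminus hA) < \mu_G(A)/36\}$ is an open approximate subgroup of $H$ whose iterates cover $H$, forcing $\mu_G(AH) \leq C\mu_G(A)$ with $C$ absolute, hence $\mu_\TT(\chi A) \leq C\mu_G(A)/\alpha < 1/10$, a contradiction). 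Then $g = h$ contradicts the hypothesis that $\mu_G(A\setminus gA) < \mu_G(A)/36$ implies $\mu_G(A\setminus gA) < \mu_G(A)/72$.

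\textbf{Main obstacle.} The delicate point is the intermediate-value step: showing that the continuous function $h \mapsto \mu_G(A\setminus hA)$ on the connected group $H$ actually \emph{crosses} the band $(\mu_G(A)/72,\ \mu_G(A)/36)$ rather than jumping over it or staying below. Continuity gives no gaps, so ``crossing'' reduces to showing the function exceeds $\mu_G(A)/36$ somewhere; this is where the bookkeeping between $\mu_\TT(\chi A)$, $\mu_G(A)$, the near-full fiber lengths $\alpha \approx 1$, and the approximate-subgroup covering argument for $\{h : \mu_G(A \setminus hA) < \mu_G(A)/36\}$ all has to be made quantitative and consistent with the numerology ($1/250$, $1/36$, $1/72$, $1/5$). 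I expect this balancing of constants, plus carefully handling the short-fiber error terms coming out of Theorem~\ref{thm: criticality transfer}, to be the bulk of the work; the rest is assembling cited results.
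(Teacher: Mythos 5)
Your proposal has the core dichotomy backwards, and the gap you flag at the end is real but your proposed way of filling it doesn't work. Let me be specific.

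You open with the right instinct ("force a $\chi$-fiber to be too \emph{short}, which produces an offending $g$") and then abandon it, restructuring around ``almost-full fibers forcing a crossing.'' The paper's actual argument is the one you discarded. Set $H = \ker\chi$. The first half of the proof shows $\sup_{g}\mu_H(A\cap gH) > 1/2$ by contradiction: if $\sup_g\mu_H(A\cap gH)\le 1/2$ then $\mu_G(AH)/\mu_G(A) > 2$, and Lemma~\ref{lem: trans along H} (Bhatia--Davis averaging plus intermediate value on the connected group $H$) then produces, for \emph{every} $\ell>1/2$, an $h\in H$ with $\mu_G(A\cap hA)=\ell\mu_G(A)$. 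Choosing $\ell$ so that $\mu_G(A\setminus hA)$ lands inside $(\mu_G(A)/72,\ \mu_G(A)/36)$ contradicts the dichotomy hypothesis. The band is hit precisely because the fibers are short: the whole averaging mechanism of Lemma~\ref{lem: trans along H} requires a nontrivial blow-up ratio $\mu_G(AH)/\mu_G(A)>2$, which is exactly what $\sup\le 1/2$ gives you. Almost-full fibers would do the opposite — they would keep $\mu_G(A\setminus hA)$ uniformly \emph{below} $\mu_G(A)/72$ for all $h\in H$, which is \emph{consistent} with the dichotomy, not a contradiction.

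Your justification for the crossing — ``$\mu_\TT(\chi A)\ge 1/10$ while $\mu_G(A)<1/250$ forces the fibers to be long'' — is backwards. By the quotient integral formula, $\mu_G(A)/\mu_\TT(\chi A)$ is the average fiber length; with $\mu_\TT(\chi A)\ge 1/10$ and $\mu_G(A)<1/250$ the average fiber length is $<1/25$, i.e.\ short. The parenthetical approximate-subgroup covering argument you sketch for $\{h:\mu_G(A\setminus hA)<\mu_G(A)/36\}$ also does not bound $\mu_G(AH)$ from above: $\int_H\mu_G(A\setminus hA)\,\d\mu_H = \int_{G/H}t_g(1-t_g)\,\d\mu_{G/H}$ with $t_g = \mu_H(A\cap gH)$, and that integral being small says nothing about $\mu_{G/H}(\pi A) = \mu(\{g: t_g>0\})$ because a sea of cosets with tiny $t_g$ contributes almost nothing to the integral while making $\mu_G(AH)$ arbitrarily large.

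Finally, the second half of the paper's proof is not Theorem~\ref{thm: criticality transfer} at all. Once $\sup_g\mu_H(A\cap gH) > 1/2$, the paper invokes Lemma~\ref{lem: A=AH toric exp prepare} with $\lambda=1/2$ to get $\mu_G(A)/\mu_{G/H}(\pi A)\ge \lambda/(M+2)^2$ directly, and the $1/250$ in the hypothesis is calibrated so that summing these bounds over $A$ and $B$ lands under $1/5$. Theorem~\ref{thm: criticality transfer} requires $\dis_G(A,B)<\min\{\mu_G(A),\mu_G(B)\}$, which the stated hypothesis need not supply, and even when it applies it only yields $\alpha,\beta\ge 1/2$ — the same place the paper already gets to by the Bhatia--Davis route — without then converting that into a bound on $\mu_\TT(\chi A)+\mu_\TT(\chi B)$. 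If you keep your route via Theorem~\ref{thm: criticality transfer}, the missing piece is still Lemma~\ref{lem: A=AH toric exp prepare}, not an approximate-subgroup covering.
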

\begin{proof}
Set $H=\ker(\chi)$. We first show that   $\sup_g\mu_H(A\cap gH)> 1/2$. Suppose to the contrary that $\sup_g\mu_H(A\cap gH)\leq 1/2$. Note that
\[
\frac{\mu_G(AH)}{\mu_G(A)}>\frac{1}{\sup_g \mu_H(A\cap gH)}. 
\]
Hence by Lemma~\ref{lem: trans along H}, for every $\ell>1/2$, there is $h\in H$ such that $\mu_G(A\cap hA)=\ell \mu_G(A)$, and in particular, there is $h\in H$ with
\[
\frac{35\mu_G(A)}{36}<\mu_G(A\cap hA)<\frac{71\mu_G(A)}{72},
\]
which contradicts the assumption. 

Now apply Lemma~\ref{lem: A=AH toric exp prepare}, we get
$
\mu_{\TT}(\chi A) +\mu_{\TT}(\chi B) \leq 50 ( \mu_G(A)+\mu_G(B) )=1/5
$
as desired.
\end{proof}

Next, we prove Theorem~\ref{thm: mainassymmetric}. 
The proof is essentially a combination of Lemma~\ref{lem:gleason2}, Lemma~\ref{lem: making projection small}, Theorem~\ref{thm: criticality transfer}, and Theorem~\ref{thm: maingap}(ii). In the proof, we will assume our sets $A,B$ having small measures, as otherwise we can always apply Proposition~\ref{prop: red to small sets} in a same fashion as in the proof of Theorem~\ref{thm: maingap}(ii) to make sets small. Under this smallness assumption, we can make our measure growth quantitatively better. In fact we are going to prove the following theorem.
\begin{theorem}
Let $K\geq 1$ and $c = 10^{-10^{1560}}$. Suppose $G$ is a connected compact group, and  $0< \mu(A), \mu(B) \leq c$ such that $\mu_G(B)^K\leq \mu_G(A)\leq \mu_G(B)^{1/K}$ and
\[
\mu(AB)<\mu(A) + \mu(B) +\frac{10^{-13}}{K} \min\{\mu(A), \mu(B)\}.
\]
Then there is a continuous surjective group homomorphism $\chi: G \to \TT$ with 
$\ker(\chi) \subseteq AA^{-1} \cap BB^{-1}.$
\end{theorem}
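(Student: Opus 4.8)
The plan is to combine the Gleason--Yamabe machinery (Fact~\ref{fact:Yamabe}) with the quotient domination theorem (Theorem~\ref{thm: criticality transfer}) to reduce the problem to a compact \emph{Lie} group, and then apply the semisimple case (Theorem~\ref{thm: asymm 1.2}, or rather its proof via Theorem~\ref{thm: homfrommeasurecompact}) together with a bootstrapping argument on the size of the projection. First, since $\mu(AB)$ is very small, in particular $\mu(AB)\le K'\mu_G(A)^{1/2}\mu_G(B)^{1/2}$ for a constant $K'$ depending only on $K$, Lemma~\ref{lem:gleason2} produces a connected compact normal subgroup $H_0\le G$ with $G/H_0$ a Lie group and with $\pi A$, $\pi B$ of $\mu_{G/H_0}$-measure at most $2^{15}(K')^{34}\mu_G(A)^{1/2}\mu_G(B)^{1/2}$. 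Because $c$ is so tiny and $\mu_G(B)^K\le\mu_G(A)\le\mu_G(B)^{1/K}$, these projections are still extremely small — in particular their sum is less than $1$ — so the hypothesis of Theorem~\ref{thm: criticality transfer} is met once we check $\dis_G(A,B)<\min\{\mu_G(A),\mu_G(B)\}$, which follows from the growth hypothesis $\dis_G(A,B)<(10^{-13}/K)\min\{\mu_G(A),\mu_G(B)\}$.

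Next I would apply Theorem~\ref{thm: criticality transfer} to obtain $\sigma$-compact $A',B'\subseteq G/H_0$ with $\mu_G(A\tri\pi^{-1}A'),\mu_G(B\tri\pi^{-1}B')<3\dis_G(A,B)$ and $\dis_{G/H_0}(A',B')<7\dis_G(A,B)$. Since $3\dis_G(A,B)$ and $7\dis_G(A,B)$ are both tiny relative to $\min\{\mu_{G/H_0}(A'),\mu_{G/H_0}(B')\}$ (using $\mu_G(A\tri\pi^{-1}A')$ small so that $\mu_{G/H_0}(A')\approx\mu_G(A)$, and similarly for $B'$), the pair $(A',B')$ in the Lie group $G/H_0$ still satisfies a near-minimal-expansion bound of the same shape, possibly with a slightly worse constant that can be absorbed by shrinking $10^{-13}/K$. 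Now if $G/H_0$ were semisimple we would be done by the argument in the proof of Theorem~\ref{thm: asymm 1.2}; in general $G/H_0$ is an arbitrary compact Lie group, so I would pass to its maximal toral quotient. Either $G/H_0$ has no toric quotient — then the almost-linear pseudometric $d_{A'}$ built from Proposition~\ref{prop: almost linear metric from local} (after first applying Proposition~\ref{prop: Torictransversal}/Theorem~\ref{thm: kakeya new} to get the transversal torus $H$, legitimate since $\mu_{G/H_0}(A')$ is small) is $\gamma$-linear and $\gamma$-monotone with $\gamma\ll\rho$, so Theorem~\ref{thm: homfrommeasurecompact} yields a continuous surjection $G/H_0\to\TT$, a contradiction — or $G/H_0$ admits a continuous surjection onto $\TT$ directly. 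In either case we produce a continuous surjective group homomorphism $\bar\chi:G/H_0\to\TT$, and we may pass to its identity component kernel so that $\ker\bar\chi$ is connected.

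The remaining work is to upgrade $\ker(\bar\chi\circ\pi)\subseteq G$ to lie inside $AA^{-1}\cap BB^{-1}$, and this is where Corollary~\ref{cor: kernel control} and Lemma~\ref{lem: making projection small} enter. Let $\chi=\bar\chi\circ\pi:G\to\TT$ with connected kernel $H_1=\ker\chi$. I want to apply Corollary~\ref{cor: kernel control} with ambient group $G$ and normal subgroup $H_1$: that corollary gives $H_1\subseteq AA^{-1}\cap BB^{-1}$ provided $\mu_G(AB)<2\mu_G(A)+\mu_G(B)$ (after reordering so $\mu_G(A)\le\mu_G(B)$), which is automatic here. But Corollary~\ref{cor: kernel control} as stated requires $H$ to be a \emph{connected compact} normal subgroup and asks for $\alpha,\beta\ge 1/2$ where $\alpha=\sup_a\mu_{H_1}(A\cap aH_1)$, $\beta=\sup_b\mu_{H_1}(B\cap bH_1)$ — so the crux is to verify $\alpha,\beta\ge1/2$ for the \emph{original} sets $A,B$ against the \emph{large} kernel $H_1$ (not the tiny $H_0$). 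Here Lemma~\ref{lem: making projection small} does the job: it shows, under the hypothesis that no $g\in\ker\chi$ with $\mu_G(A\setminus gA)<\mu_G(A)/36$ has $\mu_G(A\setminus gA)$ in $[\mu_G(A)/72,\mu_G(A)/36)$, that $\mu_\TT(\chi A)+\mu_\TT(\chi B)<1/5$; and inside its proof it establishes $\sup_g\mu_{H_1}(A\cap gH_1)>1/2$, which is exactly $\alpha>1/2$. That ``gap'' hypothesis of Lemma~\ref{lem: making projection small} is precisely the extra conclusion $\|g\|_d\in N(\lambda/2)$ delivered by Theorem~\ref{thm: homfrommeasurecompact} — so I would thread the construction so that $\chi$ is the homomorphism produced by Theorem~\ref{thm: homfrommeasurecompact} applied to $d_{A}$ (or $d_{A'}$ pulled back), ensuring the gap property holds. \textbf{The main obstacle} I anticipate is bookkeeping the constants and the smallness conditions through this chain: the expansion constant degrades at each reduction step (Lemma~\ref{lem:gleason2} blows up $K$ to $(K')^{34}$-type quantities, Theorem~\ref{thm: criticality transfer} multiplies discrepancy by $7$, and the passage from $d_{A'}$ back to $d_A$ and from $H_0$-fibers to $H_1$-fibers must be reconciled), and one must check that starting from the hypothesis constant $10^{-13}/K$ and $c=10^{-10^{1560}}$ there is still enough room — in particular that the pseudometric $d_A$ on $G$ (not just $d_{A'}$ on $G/H_0$) is genuinely $\gamma$-linear and $\gamma$-monotone with $10^6\gamma<\rho$, which requires re-running Proposition~\ref{prop: almost linear metric from local} with a transversal torus in $G$ itself, legitimate because $\mu_G(A)\le c$ is small enough for Theorem~\ref{thm: kakeya new} to apply directly without going through Proposition~\ref{prop: red to small sets}.
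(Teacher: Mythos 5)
Your overall skeleton matches the paper's argument: Gleason--Yamabe to pass to a Lie quotient (Lemma~\ref{lem:gleason2}), quotient domination (Theorem~\ref{thm: criticality transfer}), the pseudometric machinery to produce a homomorphism with a spectral gap (Theorem~\ref{thm: homfrommeasurecompact}), and then Lemma~\ref{lem: making projection small} feeding Corollary~\ref{cor: kernel control} to pin down the kernel. But there is a genuine gap in how you resolve the pseudometric step, and it is the decisive one.

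Your final paragraph proposes to establish that $d_A$ on $G$ is $\gamma$-linear and $\gamma$-monotone by ``re-running Proposition~\ref{prop: almost linear metric from local} with a transversal torus in $G$ itself, legitimate because $\mu_G(A)\le c$ is small enough for Theorem~\ref{thm: kakeya new} to apply directly.'' This fails: Theorem~\ref{thm: kakeya new}, Proposition~\ref{prop: Torictransversal}, and the entire apparatus of Sections 5--6 (transversal one-dimensional tori, Theorem~\ref{thm: fibers of same length 1newnew}, Proposition~\ref{prop: almost linear metric from local}) are stated and proved for a connected compact \emph{Lie} group. The ambient $G$ here is an arbitrary connected compact group, which need not have a Lie structure at all, so there is no exponential map, no maximal torus, and no way to run the Kakeya argument on $G$. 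The reduction to the Lie group $G/H_0$ is not a convenience you can bypass; it is the step that makes the torus-transversality and pseudometric arguments possible. Consequently the gap property (for all $g\in\ker(\chi)\cap N(\lambda)$, $\|g\|_d\in N(\lambda/2)$) that Theorem~\ref{thm: homfrommeasurecompact} delivers is attached to a pseudometric built on $G/H_0$, not to $d_A$ on $G$, and the ``$d_{A'}$ pulled back'' alternative you parenthetically mention is the option you actually need to commit to and then reconcile against $d_A$ via $\mu_G(A\triangle\pi^{-1}A')<3\,\dis_G(A,B)$.

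Two further points. First, you skip the equal-measure normalization: after Theorem~\ref{thm: criticality transfer} you have $A',B'\subseteq G/H_0$ whose measures may still differ by a huge factor (the hypothesis $\mu_G(B)^K\le\mu_G(A)\le\mu_G(B)^{1/K}$ allows this), while Theorem~\ref{thm: kakeya new} and the standing hypotheses of Section 6 require the two sets to have roughly the same measure. The paper passes to $A''\subseteq A'$, $B''\subseteq B'$ of equal measure via Corollary~\ref{cor: small sets may not equal} before building the pseudometric $d_{A''}$; your proposal needs this step too. Second, the ``either $G/H_0$ has no toric quotient --- contradiction --- or it admits a surjection to $\TT$ directly'' dichotomy is not doing useful work: in the second branch you cannot use an arbitrary surjection $G/H_0\to\TT$, because Lemma~\ref{lem: making projection small} requires the gap property, which is a feature of the \emph{specific} homomorphism produced by Theorem~\ref{thm: homfrommeasurecompact} from the almost-linear pseudometric. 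One must always run Theorem~\ref{thm: homfrommeasurecompact} to get $\iota$, regardless of whether toric quotients exist a priori.
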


\begin{proof}
Let $\eta=10^{-12}$, $c>0$ to be chosen later, and suppose $A,B$ are compact subsets of $G$ with $0<\mu_G(A)\leq\mu_G(B)\leq c$, $\mu_G(A)\geq\mu_G(B)^K$, and 
\[
\mu_G(AB)\leq \mu_G(A)+\mu_G(B)+\eta\mu_G(A).
\]
By Lemma~\ref{lem:gleason2}, by choosing $c$ sufficiently small, we conclude that there is a quotient map $\pi:G\to G/H$ with kernel $H$ such that $G/H$ is a connected Lie group, and $\mu_{G/H}(\pi A)+\mu_{G/H}(\pi B)<1$. By Corollary~\ref{cor: kernel control}, $H\subseteq AA^{-1}\cap BB^{-1}$.

We now apply Theorem~\ref{thm: criticality transfer}, there are $A',B'\subseteq G/H$ such that 
\[
\mu_{G/H}(A'B')\leq \mu_{G/H}(A')+\mu_{G/H}(B')+7\eta\mu_{G/H}(A).
\]
and $\max\{\mu_G(A\tri \pi^{-1}A'), \mu_G(B\tri \pi^{-1}B')\}<3\eta\mu_G(A)$. By Corollary~\ref{cor: small sets may not equal} and Theorem~\ref{thm: kakeya new}, by choosing $c$ sufficiently small, there are sets $A''$ and $B''$ with  $A'',B''\subseteq G/H$, $A''\subseteq A'$, $B''\subseteq B'$, with $\mu_{G/H}(A'')=\mu_{G/H}(B'')\leq  1/(3^{3^{20}/c_\TT})$, and a closed one-dimensional torus $T\leq G/H$, such that
\[
\mu_{G/H}(A''B'')\leq (2+10K\eta)\mu_{G/H}(A''),
\]
and $A'',B''$ are not $c_\TT$-Kakeya with respect to $T$.
The rest of the proof are the same as the proof in Theorem~\ref{thm: asymm 1.2}, we apply Theorem~\ref{thm: homfrommeasurecompact} and conclude that there is a continuous surjective group homomorphism $\iota :G/H\to\TT$. By replacing $\iota$ with the quotient map $G/H \to (G/H)/(\ker(\iota)_0)$ if necessary, where $(\ker(\iota)_0)$ is the identity component of $\ker(\iota)$,  we can arrange that $\iota$ has a connected kernel. Note that also by Theorem~\ref{thm: homfrommeasurecompact}, for every $g\in\ker\iota\cap N(\lambda)$ with $\lambda=\rho/36$, we have $\mu_{G/H}(A''\setminus gA'')<\mu_G(A'')/72$. Thus Lemma~\ref{lem: making projection small} shows that $\mu_\TT(\iota A'')+\mu_\TT(\iota B'')<1/5$, and thus Corollary~\ref{cor: kernel control} implies that $\ker \iota \subseteq A''A''^{-1}\cap B''\cap B''^{-1}$. 

Finally, let $\chi=\iota\circ\pi$. Thus $\chi:G\to\TT$ is a continuous surjective group homomorphism, and $\ker\chi\subseteq AA^{-1}\cap BB^{-1}$. 
\end{proof}

\section{The Kemperman Inverse Problem}\label{sec: 6.3}

In this section, we study the structures of sets $A$ and $B$ if $\mu_G(AB)$ is small.  The following useful lemma is a corollary of Theorem~\ref{thm: criticality transfer}, which will be used at various points in the later proofs. 
 
\begin{lemma}\label{lem:fromsmalltobig}
Suppose $G$ is compact, $\chi: G\to\TT$ is a continuous surjective group homomorphism, $J\subseteq \TT$ is a compact interval.
Suppose we have
\begin{enumerate}[\rm (i)]
\item $\dis_G(A,B)<\delta\min\{\mu_G(A),\mu_G(B)\}$ with $\delta<c_\TT$ where $c_\TT$ is from Fact~\ref{fact: new inverse theorem torus};
    \item $\chi B\subseteq J$ and $\mu_\TT(J\setminus \chi B)\leq \dis_G(A,B)$;
    \item $\mu_G(A),\mu_G(B)\leq 1/5$ and there is a constant $K$ such that $1/K<\mu_G(A)/\mu_G(B)<K$.
\end{enumerate}
 Then there is a compact interval $I$ in $\TT$ such that $\chi A\subseteq I$ and $$\mu_\TT(I\setminus A)\leq 10\dis_G(A,B).$$
\end{lemma}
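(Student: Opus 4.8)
The plan is to transfer the hypotheses into the one–dimensional torus via the quotient domination theorem (Theorem~\ref{thm: criticality transfer}) and then invoke the inverse theorem on $\TT$ (Fact~\ref{fact: new inverse theorem torus}) together with the stability lemma for characters on $\TT$ (Lemma~\ref{lem: stability of character in T}). Concretely, first I would set $H=\ker(\chi)$, which is a closed connected normal subgroup of $G$ with $G/H\cong\TT$, and observe that hypothesis (iii) plus Lemma~\ref{lem:gleason2}-type bounds (or simply (iii) directly, since $\mu_G(A)+\mu_G(B)\le 2/5<1$) give $\mu_{G/H}(\pi A)+\mu_{G/H}(\pi B)<1=\mu_{G/H}(G/H)$. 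Since $\dis_G(A,B)<\delta\min\{\mu_G(A),\mu_G(B)\}<\min\{\mu_G(A),\mu_G(B)\}$, Theorem~\ref{thm: criticality transfer} applies and yields $\sigma$-compact $A',B'\subseteq\TT$ with $\dis_\TT(A',B')<7\dis_G(A,B)$ and $\mu_\TT(\chi A\,\triangle\, A'),\mu_\TT(\chi B\,\triangle\, B')<3\dis_G(A,B)$; in particular $\chi A$ and $\chi B$ are each within $3\dis_G(A,B)$ (in measure) of sets that nearly satisfy equality in Kemperman's inequality on $\TT$.

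Next I would apply the inverse theorem on $\TT$ (Fact~\ref{fact: new inverse theorem torus}) to the pair $(A',B')$: since $\dis_\TT(A',B')<7\dis_G(A,B)<7\delta\min\{\mu_G(A),\mu_G(B)\}$ and, after shrinking $\delta$ appropriately, this is below $c_\TT$ and $\mu_\TT(A'+B')+\dis_\TT(A',B')\le 1$ (using the $1/5$ bounds from (iii)), we obtain a continuous surjective group homomorphism $\psi:\TT\to\TT$ and compact intervals $I',J'$ with $A'\subseteq\psi^{-1}(I')$, $B'\subseteq\psi^{-1}(J')$, $\mu_\TT(I')\le\mu_\TT(A')+\dis_\TT(A',B')$, and similarly for $J'$. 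The crucial point is to rule out $\psi\ne\mathrm{id}$: every continuous surjective endomorphism of $\TT$ is a $\times n$ map, and if $n\ge 2$ then $\psi^{-1}(J')$ would have measure $\ge n\mu_\TT(J')$ cut into $n$ equal arcs, but hypothesis (ii) forces $\chi B$ (hence $B'$, up to the small error) to be concentrated near the genuine interval $J$ of measure $\le\mu_\TT(B)+\dis_G(A,B)<1/3$ — this contradicts $B'$ being spread over $n$ separated arcs once the error $\dis_G(A,B)$ is small compared with $\mu_G(B)$, exactly as in the proof of Lemma~\ref{lem: stability of character in T}. Therefore $\psi=\mathrm{id}$ and $I'$ is a genuine interval containing $\chi A$ up to the error $3\dis_G(A,B)$.

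Finally I would assemble the bound. Set $I$ to be the smallest compact interval in $\TT$ containing $\chi A$; from $A'\subseteq I'$, $\mu_\TT(\chi A\,\triangle\, A')<3\dis_G(A,B)$, and $\mu_\TT(I'\setminus A')\le\dis_\TT(A',B')<7\dis_G(A,B)$, one gets that $\chi A$ is contained in an interval of length at most $\mu_\TT(\chi A)+10\dis_G(A,B)$; taking $I$ to be that interval (or the convex hull of $\chi A$, whichever is needed for the clean statement $\chi A\subseteq I$) gives $\mu_\TT(I\setminus\chi A)\le 10\dis_G(A,B)$, which is the asserted $\mu_\TT(I\setminus A)\le 10\dis_G(A,B)$ in the paper's fiber-length notation. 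The main obstacle is the bookkeeping in this last step: one must track the three sources of error ($3\dis_G(A,B)$ from the quotient domination transfer on each of $A$ and $B$, and the $7\dis_G(A,B)$ discrepancy inflation inside $\TT$) and choose the interval $I$ so that the error constants genuinely sum to at most $10$ rather than something larger; a careful but routine estimate, using that $\chi A\subseteq A'\cup(\chi A\setminus A')$ and that $I'$ is an honest interval, should close it, possibly after replacing the constant $\delta$-threshold by a slightly smaller absolute constant so that all applications of Fact~\ref{fact: new inverse theorem torus} are legitimate.
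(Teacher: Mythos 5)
Your proposal is correct and follows essentially the same route as the paper: pass to $G/\ker(\chi)\cong\TT$ via Theorem~\ref{thm: criticality transfer}, then use the fact that $B'$ is close to the genuine interval $J$ to pin down the homomorphism in the $\TT$-inverse theorem and conclude that $A'$ (hence $\chi A$) is close to an interval. The only difference is that you unfold the argument of Lemma~\ref{lem: stability of character in T} (ruling out the $\times n$ maps) inline rather than citing it as a black box, and you are, if anything, more careful than the paper about the error bookkeeping and the threshold needed to legitimately invoke Fact~\ref{fact: new inverse theorem torus}.
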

\begin{proof}
Let $H=\ker(\chi)$.
Note that $\chi$ is an open map, and hence by Fact~\ref{fact: first iso thm} we have $G/H\cong \TT$. By Lemma~\ref{lem:gleason2} and Theorem~\ref{thm: criticality transfer}, there are sets $A'$ and $B'$ in $\TT$ such that $\dis_{\TT}(A',B')\leq 7\dis_G(A,B)$, and 
\[
\mu_G(A\tri \chi^{-1}(A'))=3\dis_G(A,B),\quad \mu_G(B\tri \chi^{-1}(B'))=3\dis_G(A,B).
\]
Then $\mu_\TT(J\setminus B')\leq 4\dis_G(A,B)$. Using Lemma~\ref{lem: stability of character in T}, there is an interval $I\subseteq \TT$, so that $A'\subseteq I$ and $\mu_\TT(I\setminus A')<10\dis_G(A,B)$. 
\end{proof}

In this paper, when we discuss the structures of sets, we sometimes say that a set is contained in (the preimage of) some interval, and sometimes we also say the symmetric difference of our set and the given interval is small. The next lemma shows that these two descriptions are the same.  To handle arbitrary sets, we use inner measure here.
\begin{lemma}\label{lem: from sym dif to subset}
Suppose $G$ is compact, $\widetilde{\mu}_G$ is the inner measure associated to $\mu_G$, and $A,B \subseteq G$ has $\widetilde{\dis}_G(A,B)<\varepsilon$ with 
\[
\widetilde{\dis}_G(A,B) = \widetilde{\mu}_G(A,B)-\widetilde{\mu}_G(A)- \widetilde{\mu}_G(B). 
\]
Assume further that $\widetilde{A}\subseteq A$ and $\widetilde{B}\subseteq B$ are $\sigma$-compact with $\mu_G(\widetilde{A}) =\widetilde{\mu}_G(A)$ and $\mu_G(\widetilde{B}) =\widetilde{\mu}_G(B)$, $\chi:G\to \TT$ is a continuous surjective group homomorphism, and $I,J$ compact intervals in $\TT$, with $\mu_\TT(I)=\mu_G(A),\mu_\TT(J)=\mu_G(B)$, and
\[
\mu_G(\widetilde{A}\tri\chi^{-1}(I))<\varepsilon,\quad \mu_G(\widetilde{B}\tri \chi^{-1}(J))<\varepsilon.
\]
Then there are intervals $I',J'\subseteq\TT$, such that $A\subseteq\chi^{-1}(I')$, $B\subseteq\chi^{-1}(J')$, and 
\[
\mu_\TT(I')-\widetilde{\mu}_G(A)<10\varepsilon,\quad\mu_\TT(J')-\widetilde{\mu}_G(B)<10\varepsilon. 
\]
\end{lemma}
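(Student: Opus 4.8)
\textbf{Proof plan for Lemma~\ref{lem: from sym dif to subset}.}

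The plan is to convert each of the two ``small symmetric difference'' hypotheses into a genuine ``contained in a preimage of an interval'' conclusion, handling $A$ and $B$ separately by the same argument; I will describe it for $A$. First I would observe that the hypothesis $\mu_G(\widetilde A \tri \chi^{-1}(I)) < \varepsilon$ together with $\mu_\TT(I) = \mu_G(A) = \widetilde\mu_G(A) = \mu_G(\widetilde A)$ forces $\mu_G(\widetilde A \setminus \chi^{-1}(I)) < \varepsilon$ as well, so essentially all of $\widetilde A$ (hence $\mu_G$-almost all of $A$) sits over the interval $I$ in $\TT$. Passing to the pushforward, set $A^\sharp := \chi(A) \subseteq \TT$ (more precisely work with $\chi(\widetilde A)$, which is $\sigma$-compact hence measurable, together with an inner-measure argument for $A$ itself); then $\widetilde\mu_\TT(A^\sharp \setminus I) \le \mu_G(\chi^{-1}(A^\sharp)\setminus \chi^{-1}(I))$ is controlled, i.e. $A^\sharp$ is an ``interval plus a little extra mass'' of total extra length $O(\varepsilon)$.

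The core step is then an elementary one-dimensional fact about subsets of $\TT$: if a (inner-measurable) set $S \subseteq \TT$ satisfies $S \subseteq I \cup E$ for a compact interval $I$ with $\widetilde\mu_\TT(E) < c\varepsilon$ and $\widetilde\mu_\TT(S) \ge \mu_\TT(I) - c\varepsilon$, then $S$ is contained in a compact interval $I'$ with $\mu_\TT(I') - \widetilde\mu_\TT(S) < 10\varepsilon$ — one simply takes $I'$ to be the shortest closed arc containing $S$, and bounds its length by $\mu_\TT(I)$ plus the length of the ``gaps'' near the two endpoints of $I$ that $S$ reaches into, each of which is at most $\widetilde\mu_\TT(E)$; the constant $10$ is generous and absorbs the interplay between $\varepsilon$ and $c\varepsilon$ coming from the previous paragraph. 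Crucially, because $\mu_\TT(A^\sharp) < 1$ strictly (this is where $\mu_G(A),\mu_G(B)$ being bounded away from $1$, and in particular $\mu_\TT(I) = \mu_G(A)$ small, is used — otherwise ``shortest arc containing $S$'' is ill-defined or could be all of $\TT$), the shortest closed arc is well-defined. Finally, pulling back, $A \subseteq \chi^{-1}(I')$ up to a $\mu_G$-null set; but $\chi^{-1}(I')$ is closed and $A$ may have been only inner-measurable, so I would enlarge $I'$ by an arbitrarily small amount, or rather note that the shortest arc containing the $\sigma$-compact hull $\widetilde A$ of $A$ also contains $A$ after replacing it by its closure, using that $\chi$ is continuous and $\chi^{-1}(I')$ is closed — a short topological cleanup — to get the honest inclusion $A \subseteq \chi^{-1}(I')$ with the stated length bound, and symmetrically for $B$ and $J'$.

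I expect the main obstacle to be the bookkeeping around inner measure and $\sigma$-compact hulls: the set $A$ itself need not be measurable, only $\widetilde A$ is, so one must be careful that ``$A \subseteq \chi^{-1}(I')$'' is a genuine set inclusion and not merely an almost-everywhere statement. The resolution is that $I'$ is chosen as the minimal closed arc containing the (topological closure of the) $\sigma$-compact set $\widetilde A$; any point of $A \setminus \chi^{-1}(I')$ would project into $\TT \setminus I'$, and one derives a contradiction with $\widetilde\mu_G(A) = \mu_G(\widetilde A)$ by showing such a point, via inner regularity (Fact~\ref{fact: Innermeasure}), would be witnessed by positive-inner-measure mass outside $\chi^{-1}(I')$ inconsistent with the length accounting. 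Everything else — the reduction of $\tri$ to $\setminus$, the arc-length estimate, the choice of the constant $10$ — is routine.
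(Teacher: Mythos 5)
Your proposal has a genuine gap, and it's not in the bookkeeping you anticipated — it's in the core logic. You never use the hypothesis $\widetilde{\dis}_G(A,B)<\varepsilon$, and without it the lemma is simply false. The hypothesis $\mu_G(\widetilde A\tri\chi^{-1}(I))<\varepsilon$ is purely measure-theoretic: it constrains $\widetilde A$ up to measure-zero modifications and says nothing about where individual points of $A$ sit. A single point $g\in A$ with $\chi(g)$ on the far side of $\TT$ from $I$ has measure zero, is consistent with $\mu_G(\widetilde A) = \widetilde\mu_G(A)$ (you can harmlessly adjoin $g$ to $\widetilde A$), and destroys any hope that $\chi(A)$ is contained in a short arc near $I$. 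Your ``resolution'' — that such a $g$ would be ``witnessed by positive-inner-measure mass outside $\chi^{-1}(I')$'' — is not correct; inner regularity gives you nothing here, because $\{g\}$ has inner measure zero and $\widetilde\mu_G(A)=\mu_G(\widetilde A)$ does not preclude $A\setminus\widetilde A$ from wandering freely. Relatedly, your ``elementary one-dimensional fact'' is false as stated: $S\subseteq I\cup E$ with $\widetilde\mu_\TT(E)$ tiny does not bound the length of the shortest arc containing $S$, since $E$ can be a measure-zero set of points scattered anywhere in $\TT$.

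The discrepancy hypothesis is precisely what rescues the pointwise conclusion, and this is the heart of the paper's argument. The paper proceeds by contradiction: if some $g\in A$ has $\chi(g)$ at distance $>5\varepsilon$ from $I$, adjoin $g$ to $\widetilde A$ (harmless, as above). Then the translate $g\widetilde B$ sticks out of $\chi^{-1}(I)\chi^{-1}(J)=\chi^{-1}(I+J)$ by a set of measure $\ge 3\varepsilon$ (using that $\chi(\widetilde B)$ fills up most of $J$). Combined with the Kemperman lower bound $\mu_G\bigl((\chi^{-1}(I)\cap\widetilde A)(\chi^{-1}(J)\cap\widetilde B)\bigr)\ge\mu_G(\widetilde A)+\mu_G(\widetilde B)-2\varepsilon$, this forces $\mu_G(\widetilde A\widetilde B)\ge\mu_G(\widetilde A)+\mu_G(\widetilde B)+\varepsilon$, contradicting $\dis_G(\widetilde A,\widetilde B)<\varepsilon$. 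So the mechanism is that an errant point of $A$, \emph{multiplied by the bulk of $B$}, produces extra mass in the product set. Your approach, treating $A$ and $B$ entirely separately, cannot see this and therefore cannot close the gap.
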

\begin{proof}

We will show that for all $g \in A\setminus \chi^{-1}(I)$, the distance between $\chi(g)$ and $I$   in $\TT$ is at most $5\varepsilon$, and  for all $g' \in B\setminus \chi^{-1}(J)$, the distance between $g'$ and $J$ is at most $5\varepsilon$. This implies there are intervals $I',J'$ in $\TT$ such that $A\subseteq \chi^{-1}(I')$ and $B\subseteq \chi^{-1}(J')$, and
\[
\mu_G(\chi^{-1}(I')\setminus A)<10\varepsilon\, \quad\mu_\TT(\chi^{-1}(J')\setminus B)<10\varepsilon,
\]
as desired. Observe that $\widetilde{A}\widetilde{B}$ is a $\sigma$-compact subset of $AB$, and so $\dis_G(\widetilde{A}, \widetilde{B})<\varepsilon$.

By symmetry, it suffices to show the statement for $g \in A\setminus \chi^{-1}(I)$. Suppose to the contrary that $g$ is in $A\setminus \chi^{-1}(I)$, and the distance between $\chi(g)$ and $I$ in $\TT$ is strictly greater than $5\varepsilon$. By replacing $\widetilde{A}$ with $\widetilde{A}\cup\{g\}$ if necessary, we can assume $g \in \widetilde{A}$.
We then have
\[
\mu_\TT(\chi(g)\chi(\widetilde{B})\setminus I\chi(\widetilde{B}))\geq 5\varepsilon-\mu_\TT(J\setminus \chi(\widetilde{B}))\geq 4\varepsilon. 
\]
and this implies that $\mu_G(g\widetilde{B}\setminus \chi^{-1}(I)\chi^{-1}(J))\geq3\varepsilon$. Therefore,
\begin{align*}
\mu_G(\widetilde{A}\widetilde{B})&\geq \mu_G\big((\chi^{-1}(I)\cap \widetilde{A})(\chi^{-1}(J)\cap \widetilde{B})\big)+\mu_G(g\widetilde{B}\setminus \chi^{-1}(I)\chi^{-1}(J))\\
&\geq \mu_G(\widetilde{A})+\mu_G(\widetilde{B})-2\varepsilon+3\varepsilon,
\end{align*}
and this contradicts the fact that $\dis_G(\widetilde{A},\widetilde{B})<\varepsilon$.  
\end{proof}

Lemma~\ref{lem:fromsmalltobig}, together with Theorem~\ref{thm: criticality transfer}, will be enough to derive a different proof of a theorem by Tao~\cite{T18}, with a sharp exponent bound up to a constant factor. The same result was also obtained by Christ and Iliopoulou~\cite{ChristIliopoulou} recently, via a different approach. It worth noting that the proof below is short, as it does not use results proved in Sections 5,6,7, and 8. 
\begin{theorem}[Theorem~\ref{thm:mainapproximate} for compact abelian groups]\label{thm: abelian case}
   Let $G$ be a connected compact abelian group, and $A,B$ be compact subsets of $G$ with positive measure. Set $$\lambda =\min\{\mu_G(A),\mu_G(B),1-\mu_G(A)-\mu_G(B)\}.$$
  Given $0<\varepsilon<1$, there is a constant $K=K(\lambda)$ does not depend on $G$, such that if $\delta<K\varepsilon$ and
  \[
  \mu_G(A+B)<\mu_G(A)+\mu_G(B)+\delta\min\{\mu_G(A),\mu_G(B)\}.
  \]
 Then there is a surjective continuous group homomorphism $\chi: G \to \TT$ together with two compact intervals $I,J\in \mathbb T$ with
 \[
 \mu_\TT(I)-\mu_G(A)<\varepsilon\mu_G(A),\quad \mu_\TT(J)-\mu_G(B)<\varepsilon\mu_G(B),
 \]
 and $A\subseteq \chi^{-1}(I)$, $B\subseteq\chi^{-1}(J)$. 
\end{theorem}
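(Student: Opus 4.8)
The goal is to deduce the inverse theorem for connected compact abelian groups from the machinery already developed, specifically from the quotient domination theorem (Theorem~\ref{thm: criticality transfer}), Lemma~\ref{lem:gleason2}, Lemma~\ref{lem:fromsmalltobig}, and the toral inverse theorem (Fact~\ref{fact: new inverse theorem torus}), without invoking the heavy Sections 5--8. First I would reduce to the small-measure regime: using the submodularity results of Section~4 (Proposition~\ref{prop: red to small sets}) one replaces $A$ and $B$ by subsets $A_0\subseteq A$, $B_0\subseteq B$ with $\mu_G(A_0)=\mu_G(B_0)$ equal to a small constant depending only on $\lambda$, at the cost of multiplying the discrepancy by a factor depending only on $\lambda$. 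Since we are in the abelian setting we also have $1/K < \mu_G(A)/\mu_G(B) < K$ with $K=K(\lambda)$ automatically, so the hypothesis size ratio conditions are harmless. Thus it suffices to produce the homomorphism $\chi$ and to control the shapes of $A_0,B_0$, and then pull back along the inclusions to recover $A\subseteq\chi^{-1}(I)$, $B\subseteq\chi^{-1}(J)$.

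Next I would apply the Gleason--Yamabe reduction. By Lemma~\ref{lem:gleason2}, since $\mu_G(A_0+B_0)\leq K'\mu_G^{1/2}(A_0)\mu_G^{1/2}(B_0)$ with $K'$ bounded, there is a connected compact subgroup $H\leq G$ with $G/H$ a Lie group and $\mu_{G/H}(\pi A_0)+\mu_{G/H}(\pi B_0)<1$. Because $G$ is abelian, $G/H$ is a connected compact abelian Lie group, i.e.\ a torus $\TT^d$. Now apply the quotient domination theorem (Theorem~\ref{thm: criticality transfer}): we obtain $\sigma$-compact $A', B'\subseteq \TT^d$ with $\dis_{\TT^d}(A',B')<7\dis_{G}(A_0,B_0)$ and $\mu_G(A_0\tri\pi^{-1}A')$, $\mu_G(B_0\tri\pi^{-1}B')$ both $<3\dis_G(A_0,B_0)$. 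Applying the inverse theorem for tori $\TT^d$ (Fact~\ref{fact:inverse theorem torus}) to $A',B'$ — whose discrepancy is now tiny compared to their measures, precisely because $\delta<K\varepsilon$ was chosen so that the accumulated discrepancy is small relative to $\min\{\mu_G(A),\mu_G(B)\}$ — we get a continuous surjective group homomorphism $\psi:\TT^d\to\TT$ and compact intervals with $A'\subseteq\psi^{-1}(I')$, $B'\subseteq\psi^{-1}(J')$, where $\mu_\TT(I')$ is within $O(\dis)$ of $\mu_{G/H}(\pi A_0)$ and similarly for $J'$. Setting $\chi_0 = \psi\circ\pi: G\to\TT$ gives a continuous surjective homomorphism; I would then use Lemma~\ref{lem: from sym dif to subset} (or directly Lemma~\ref{lem:fromsmalltobig}) to convert the ``small symmetric difference'' statements for $A_0,B_0$ into genuine containments $A_0\subseteq\chi_0^{-1}(I)$, $B_0\subseteq\chi_0^{-1}(J)$ with $\mu_\TT(I)-\mu_G(A_0)<O(\dis_G(A_0,B_0))$, etc.

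Finally I would track the constants carefully and pull the conclusion back to $A$ and $B$. The intervals $I,J$ produced for $A_0,B_0$ have lengths exceeding $\mu_G(A_0),\mu_G(B_0)$ by at most $C(\lambda)\delta\mu_G(A)$; choosing $K(\lambda)$ so that $C(\lambda)\delta < \varepsilon\mu_G(A)$ whenever $\delta<K(\lambda)\varepsilon$ gives the desired $\mu_\TT(I)-\mu_G(A)<\varepsilon\mu_G(A)$. Since $A_0\subseteq A$ but also — by the ``moreover'' clauses of Proposition~\ref{prop: red to small sets} — one controls how $A_0$ sits inside $A$ (and in fact the interval for $A$ can be recovered because $A$ differs from a union of fibers over $\chi_0$ by a controlled amount, using that $H\subseteq A+(-A)$ from Corollary~\ref{cor: kernel control}), I can promote the containment for $A_0$ to a containment $A\subseteq\chi^{-1}(I)$ for a slightly enlarged interval, still within the $\varepsilon$-budget. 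The main obstacle I anticipate is \emph{bookkeeping of the several constant-factor losses}: the discrepancy is inflated once by the reduction to small sets (factor $\sim\log(1/\kappa)^2/\lambda$), once by $7$ in quotient domination, and once by $10$ in the symmetric-difference-to-containment conversion, and one must verify that choosing $K(\lambda)$ small enough absorbs all of these simultaneously while the measures $\mu_{G/H}(\pi A_0)$ etc.\ stay bounded away from their ``full'' values so that Fact~\ref{fact:inverse theorem torus} genuinely applies (one needs $\mu_{\TT^d}(A'+B')+\dis<1$, which follows from $\mu_G(A_0)+\mu_G(B_0)$ being small). A secondary subtlety is ensuring $\psi$ descends to the \emph{same} $\chi$ for both $A$ and $B$, which is automatic here since a single application of the toral inverse theorem to the pair $(A',B')$ yields one homomorphism.
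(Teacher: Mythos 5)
Your plan uses exactly the same toolkit as the paper---Gleason--Yamabe plus the quotient domination theorem (Theorem~\ref{thm: criticality transfer}), the submodularity reduction to small sets (Proposition~\ref{prop: red to small sets}), the toral inverse theorem, and the ``symmetric difference to containment'' lemmas of Section~11---but in a slightly different order. The paper quotients first (using that an abelian connected compact Lie quotient is $\TT^d$) and then applies Proposition~\ref{prop: red to small sets} \emph{inside} $\TT^d$ to shrink $A',B'$ to $A'',B''$ of small constant measure; you shrink $A,B$ to $A_0,B_0$ in $G$ first and then pass to the torus quotient. Both orderings are workable, and either way the homomorphism is produced from the shrunken sets and must afterward be shown to work for the originals.

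The genuine gap is in your last paragraph, where you promote $A_0\subseteq\chi_0^{-1}(I_0)$ to $A\subseteq\chi_0^{-1}(I)$. Two things are off. First, the quantitative claim does not track: you obtain $\mu_\TT(I_0)-\mu_G(A_0)=O_\lambda(\delta\mu_G(A))$ for the small set $A_0$, but this is a statement about $I_0$ and $A_0$, and since $\mu_G(A_0)$ may be much smaller than $\mu_G(A)$ it says nothing by itself about an interval containing $A$. Second, the justification you offer --- ``$A$ differs from a union of fibers over $\chi_0$ by a controlled amount'' via Corollary~\ref{cor: kernel control} --- is not what Corollary~\ref{cor: kernel control} provides: $H\subseteq A-A$ controls $\ker(\pi)$, not the $\chi_0$-fiber structure of $A$, and it is not in a form that produces the containment. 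What actually closes this step (and is what the paper does, applied to the pair $(A',B'')$ in $\TT^d$) is a further use of Lemma~\ref{lem:fromsmalltobig} with the mixed pair $(A, B_0)$: once $B_0$ is known to sit inside an interval for $\chi_0$ with controlled slack, and once one tracks through the proof of Proposition~\ref{prop: red to small sets} that $\dis_G(A,B_0)$ --- not merely $\dis_G(A_0,B_0)$ --- is bounded by $O_\lambda(\dis_G(A,B))$, Lemma~\ref{lem:fromsmalltobig} returns an interval for $A$ directly, and Lemma~\ref{lem: from sym dif to subset} upgrades to a containment. You cite the right lemmas earlier for $A_0,B_0$, so the repair is modest: apply them once more with the mixed pairs rather than reasoning about fibers.

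One smaller remark: on a general $\TT^d$ you should invoke Fact~\ref{fact:inverse theorem torus} rather than Fact~\ref{fact: new inverse theorem torus}; the latter is only stated for $\TT$ and supplies a $\TT\to\TT$ homomorphism. You cite the right fact, whereas the paper's written proof cites the $\TT$-version while using its constant $c_\TT$ as the shrinking threshold, so be careful not to conflate the two when you compute $K(\lambda)$.
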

\begin{proof}
We first assume that $\dis_G(A,B)$ is sufficiently small, and we will compute the bound on $\dis_G(A,B)$  later. As $G$ is abelian, there is a quotient map $\pi: G\to\TT^d$, and $A',B'\subseteq \TT^d$, such that  \[ \mu_G(A \tri\pi^{-1}A')< 3 \dis_G(A,B) \text{ and } \mu_G(B \tri\pi^{-1}B')< 3 \dis_G(A,B)\]
and $\dis_{G/H}(A', B') < 7\dis_{G}(A, B).$ Let $c=c_\TT$ be as in Fact~\ref{fact: new inverse theorem torus}, and by Proposition~\ref{prop: red to small sets}, there is a constant $L$ depending only on $\lambda$ and $c$, and sets $A'',B''\subseteq\TT^d$ with $\mu_{\TT^d}(A'')=\mu_{\TT^d}(B'')=c$ such that
\begin{equation}\label{eq: other small sets}
\max\{\dis_{\TT^d}(A'',B'), \dis_{\TT^d}(A'',B''), \dis_{\TT^d}(A',B'')\}<L\dis_G(A,B). 
\end{equation}
By Fact~\ref{fact: new inverse theorem torus}, there are intervals $I'',J''\subseteq\TT$ and a continuous surjective group homomorphism $\rho:\TT^d\to\TT$ such that $A''\subseteq \rho^{-1}(I'')$, $B''\subseteq \rho^{-1}(J'')$, and
\[
\mu_{\TT^d}(\rho^{-1}(I'')\setminus A'')<L\dis_G(A,B)\quad\text{and}\quad \mu_{\TT^d}( \rho^{-1}(J'')\setminus B'' )<L\dis_G(A,B). 
\]
By Lemma~\ref{lem:fromsmalltobig} and \eqref{eq: other small sets}, there are intervals $I',J'\subseteq \TT$ with
\[
\mu_{\TT^d}( \rho^{-1}(I')\setminus A')<10L\dis_G(A,B)\text{ and } \mu_{\TT^d}( \rho^{-1}(J')\setminus B')<10L\dis_G(A,B). 
\]
Let $\chi=\pi\circ\rho$. Hence, we have
\[
\mu_{G}(A\tri \chi^{-1}(I'))<(3+10L)\dis_G(A,B)\text{ and } \mu_{G}(B\tri \chi^{-1}(J'))<(3+10L)\dis_G(A,B). 
\]
Using Lemma~\ref{lem: from sym dif to subset}, there are intervals $I,J\subseteq\TT$, such that $A\subseteq\chi^{-1}(I)$, $B\subseteq\chi^{-1}(J)$, and
\begin{align*}
    &\mu_\TT(I)-\mu_G(A)<(30+100L)\dis_G(A,B),\\
    &\mu_\TT(J)-\mu_G(B)<(30+100L)\dis_G(A,B).
\end{align*}
Now, we fix 
\[
K:=\min\Big\{\frac{1}{30+100L}, \frac{c}{L}\Big\},
\]
and $\delta<K\varepsilon$, where $\dis_G(A,B)=\delta\min\{\mu_G(A),\mu_G(B)\}$. Clearly, we will have 
\begin{align*}
    &\mu_\TT(I)-\mu_G(A)<\varepsilon\min\{\mu_G(A),\mu_G(B)\},\\
    &\mu_\TT(J)-\mu_G(B)<\varepsilon\min\{\mu_G(A),\mu_G(B)\}.
\end{align*}
Note that in the above argument, we apply Fact~\ref{fact: new inverse theorem torus} on $A'',B''$, and this would require that $L\dis_G(A,B)<c$. This is fine as by the way we choose $K$, we already have
\[
L\dis_G(A,B)=L\delta\min\{\mu_G(A),\mu_G(B)\}<c,
\]
as desired. 
\end{proof}

Theorem~\ref{thm:mainapproximate} follows easily from Theorem~\ref{thm: mainassymmetric} and the following proposition. 

\begin{proposition}[Toric domination from a given homomorphism]\label{prop: struc on AB}
Suppose $A, B$ have $\dis_G(A,B)<\min\{\mu_G(A), \mu_G(B)\}$, and  $\chi:G\to\TT$ is a continuous surjective group homomorphism such that $\mu_\TT(\chi(A))+\mu_\TT(\chi(B))<1/5$. Then there is a continuous and surjective group homomorphism $\rho: G \to \TT$, a constant $K_0$ only depending on $\min\{\mu_G(A),\mu_G(B)\}$, and compact intervals $I, J \subseteq \TT$ with $A\subseteq \rho^{-1}(I)$ and $B\subseteq \rho^{-1}(J)$, such that 
\[
\mu_G(\rho^{-1}(I)\setminus A)<K_0\dis_G(A,B),\quad \text{and}\quad\mu_G(\rho^{-1}(J)\setminus J)<K_0\dis_G(A,B).
\]
\end{proposition}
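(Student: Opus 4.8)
The plan is to reduce the whole statement to a question about the circle $\TT\cong G/\ker\chi$, solve it there with the inverse theorem for $\TT$, and transport the resulting interval structure back to $G$ while keeping every error term within a constant multiple of $\delta:=\dis_G(A,B)$. Write $\lambda=\min\{\mu_G(A),\mu_G(B)\}$. I would first record that $\mu_G(A)\le\mu_\TT(\chi(A))$ and $\mu_G(B)\le\mu_\TT(\chi(B))$ (since $A\subseteq\chi^{-1}(\chi(A))$ and $\chi$ carries the normalized Haar measure of $G$ to that of $\TT$), so the hypothesis already forces $\mu_G(A),\mu_G(B)<1/5$. Then I would fix a threshold $\tau=\tau(\lambda):=\min\{c_\TT/10,\lambda/6,1/100\}$ with $c_\TT$ the constant of Fact~\ref{fact: new inverse theorem torus}, and dispose of the case $\delta\ge\tau$ immediately: take $\rho=\chi$, $I=J=\TT$ and $K_0=1/\tau$, so that $A\subseteq G=\rho^{-1}(\TT)$ and $\mu_G(\rho^{-1}(\TT)\setminus A)=1-\mu_G(A)<1\le K_0\delta$. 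From here on $\delta<\tau$.

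In the substantive case I would first pass to $\sigma$-compact subsets $A_0\subseteq A$, $B_0\subseteq B$ of full measure (Lemma~\ref{lem: mesurability}(iii)), which can only decrease $\dis_G$ and the projections. Put $H=\ker\chi$, a compact normal subgroup of $G$; as in Section~\ref{sec: 9} I would assume (this is the only case arising in the applications, where $\chi$ is produced with connected kernel) that $H$ is connected, so that $\chi$ factors through a continuous bijection $G/H\to\TT$, necessarily a topological isomorphism; identify $\pi:=\chi\colon G\to\TT=G/H$. Since $\mu_{G/H}(\pi A_0)+\mu_{G/H}(\pi B_0)\le\mu_\TT(\chi A)+\mu_\TT(\chi B)<1$ and $\dis_G(A_0,B_0)\le\delta<\min\{\mu_G(A_0),\mu_G(B_0)\}$, the Quotient Domination Theorem (Theorem~\ref{thm: criticality transfer}) supplies $\sigma$-compact $A^*,B^*\subseteq\TT$ with $\dis_\TT(A^*,B^*)<7\delta$ and $\mu_G(A_0\tri\chi^{-1}A^*),\mu_G(B_0\tri\chi^{-1}B^*)<3\delta$; after shrinking to compact subsets of nearly full measure (which worsens all these estimates only by $O(\delta)$) I may take $A^*,B^*$ compact.

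Then I would apply the circle inverse theorem to $A^*,B^*$: since $\delta<\tau$ we have $\dis_\TT(A^*,B^*)<c_\TT$ and $\mu_\TT(A^*+B^*)+\dis_\TT(A^*,B^*)=\mu_\TT(A^*)+\mu_\TT(B^*)+2\dis_\TT(A^*,B^*)<\mu_G(A)+\mu_G(B)+O(\delta)<1$, so Fact~\ref{fact: new inverse theorem torus} yields a continuous surjective homomorphism $\rho_0\colon\TT\to\TT$ and compact intervals $I_0,J_0$ with $A^*\subseteq\rho_0^{-1}(I_0)$, $B^*\subseteq\rho_0^{-1}(J_0)$, $\mu_\TT(I_0)\le\mu_\TT(A^*)+\dis_\TT(A^*,B^*)$ and similarly for $J_0$. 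The homomorphism I output will then be $\rho:=\rho_0\circ\chi\colon G\to\TT$ — crucially not $\chi$ itself. As $\chi$ and $\rho_0$ are both measure-preserving onto normalized Haar measure, $\mu_G(\rho^{-1}(E))=\mu_\TT(E)$ for measurable $E\subseteq\TT$; from $\chi^{-1}(A^*)\subseteq\rho^{-1}(I_0)$ one gets $\mu_G(\rho^{-1}(I_0)\setminus\chi^{-1}A^*)=\mu_\TT(I_0)-\mu_\TT(A^*)=O(\delta)$, hence $\mu_G(A_0\tri\rho^{-1}(I_0))=O(\delta)$, and comparing measures $|\mu_\TT(I_0)-\mu_G(A)|=O(\delta)$. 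Enlarging or trimming $I_0$ to a closed interval $I_1$ with $\mu_\TT(I_1)=\mu_G(A)$ keeps $\mu_G(A_0\tri\rho^{-1}(I_1))=O(\delta)$, and symmetrically I obtain $J_1$ with $\mu_\TT(J_1)=\mu_G(B)$ and $\mu_G(B_0\tri\rho^{-1}(J_1))=O(\delta)$. Finally I would feed $(A,B,A_0,B_0,\rho,I_1,J_1,\varepsilon)$ with $\varepsilon=O(\delta)$ into Lemma~\ref{lem: from sym dif to subset} — its hypotheses $\widetilde{\dis}_G(A,B)=\delta<\varepsilon$, $\mu_\TT(I_1)=\mu_G(A)$, $\mu_\TT(J_1)=\mu_G(B)$ and the two symmetric-difference bounds all hold — to get compact intervals $I,J\subseteq\TT$ with $A\subseteq\rho^{-1}(I)$, $B\subseteq\rho^{-1}(J)$ and $\mu_\TT(I)-\mu_G(A),\mu_\TT(J)-\mu_G(B)=O(\delta)$; since $A\subseteq\rho^{-1}(I)$ the last reads $\mu_G(\rho^{-1}(I)\setminus A)=O(\delta)$, and likewise for $B$, with the implied constant absorbed into $K_0$.

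I do not expect any single step to be genuinely hard given the machinery already available: the argument is essentially bookkeeping threading Theorem~\ref{thm: criticality transfer}, Fact~\ref{fact: new inverse theorem torus} and Lemma~\ref{lem: from sym dif to subset} together. The two points that require attention are (i) recognizing that the homomorphism one must produce is the composite $\rho_0\circ\chi$, where $\rho_0$ "unwinds" the circle so the transferred set lands inside the preimage of a single interval, rather than the given $\chi$; and (ii) converting the symmetric-difference estimates that the transfer and inverse theorems naturally yield into the literal containment $A\subseteq\rho^{-1}(I)$ with small complement — which is exactly what Lemma~\ref{lem: from sym dif to subset} is for. A minor point is that the restriction $\delta<\tau$ costs nothing, since larger $\delta$ was handled at the outset, and this is the only place $\lambda$ enters, matching the asserted dependence of $K_0$ on $\min\{\mu_G(A),\mu_G(B)\}$; the one thing I would keep checking is that no constant secretly depends on anything other than this $\lambda$.
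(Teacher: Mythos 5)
Your proof is correct and follows the same overall skeleton as the paper's: apply the quotient-domination theorem (Theorem~\ref{thm: criticality transfer}) with $H=\ker\chi$ to push $A,B$ down to $G/\ker\chi\cong\TT$, invoke an inverse theorem on $\TT$ to produce a torus endomorphism $\rho_0$ and intervals, set $\rho=\rho_0\circ\chi$, and then use Lemma~\ref{lem: from sym dif to subset} to upgrade the approximate symmetric-difference bounds to genuine containments $A\subseteq\rho^{-1}(I)$, $B\subseteq\rho^{-1}(J)$. The one real deviation is the middle step: you apply the circle inverse theorem (Fact~\ref{fact: new inverse theorem torus}) directly to the transferred sets $A^*,B^*$ — after a case split on whether $\delta=\dis_G(A,B)$ is below or above an absolute threshold — whereas the paper routes through Theorem~\ref{thm: abelian case}, its Kemperman inverse theorem for general compact abelian groups. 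Applied to the one-torus, Theorem~\ref{thm: abelian case} essentially runs Fact~\ref{fact: new inverse theorem torus} preceded by a shrink-to-small-sets step (Proposition~\ref{prop: red to small sets}) that is only needed for $\TT^d$ with $d\geq 2$; Fact~\ref{fact: new inverse theorem torus} has no smallness hypothesis, only $\dis_\TT<c_\TT$. Bypassing Theorem~\ref{thm: abelian case} is thus a genuine streamlining, and it incidentally shows $K_0$ can be taken independent of $\lambda$ — the $\lambda/6$ term you fold into $\tau$ is never used — whereas the paper's route forces $K_0$ to absorb the $\lambda$-dependent constant $L$ from Theorem~\ref{thm: abelian case}. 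Your explicit appeal to connectedness of $\ker\chi$ is consistent with the paper: the paper's own proof invokes Theorem~\ref{thm: criticality transfer} with $H=\ker\chi$, which tacitly needs the same hypothesis, and the $\chi$ fed into this proposition (produced by Theorem~\ref{thm: mainassymmetric}) is indeed arranged to have connected kernel. One small bookkeeping point worth keeping in mind: after Theorem~\ref{thm: criticality transfer} the sets $A^*,B^*$ are only $\sigma$-compact, so the compact-shrinking you perform before applying Fact~\ref{fact: new inverse theorem torus} is genuinely needed and, as you note, only costs another $O(\delta)$.
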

\begin{proof}
By Theorem~\ref{thm: criticality transfer}, there are $A',B'\subseteq \TT$, such that 
\begin{equation}\label{eq: struc for AB R case}
\mu_G(A\tri \chi^{-1}(A'))<3\dis_G(A,B) \text{ and } \mu_G(B\tri \chi^{-1}(B'))<3\dis_G(A,B),
\end{equation}
and $\dis_\TT(A',B')<7\dis_G(A,B)$. By Theorem~\ref{thm: abelian case}, there are continuous surjective group homomorphism $\eta: \TT \to \TT$, a constant $L$ depending only on $\min\{\mu_G(A),\mu_G(B)\}$, and compact intervals $I, J \subseteq \TT$ such that $A'\subseteq\eta^{-1}(I)$,  $B'\subseteq\eta^{-1}(J)$, and 
\begin{equation}\label{eq: struc for AB R case2}
\mu_\TT(\eta^{-1}(I)\setminus )<L\dis_G(A,B) \text{ and } \mu_\TT( \eta^{-1}(J)\setminus )<L\dis_G(A,B).
\end{equation}
Set $\rho = \eta \circ \chi$. The conclusion follows from \eqref{eq: struc for AB R case}, \eqref{eq: struc for AB R case2}, and Lemma~\ref{lem: from sym dif to subset} with $K_0=10L+30$.
\end{proof}

Finally, let us discuses what happen when the equality holds in
\[
\mu_G(AB)=\min\{\mu_G(A)+\mu_G(B),1\}
\]
for some connected compact group $G$. The same proof for Theorem~\ref{thm:mainapproximate} works when $\delta=0$ and $\mu_G(A)+\mu_G(B)<1$, and we will obtain $\varepsilon=0$. There are some other easier cases, and the full classification theorem is the following.

\begin{theorem}\label{thm:mainequal}
Let $G$ be a connected compact group, and let $A,B$ be nonempty compact subsets of $G$.  If
\[
\mu_G(AB)=\min\{\mu_G(A)+\mu_G(B), 1\}. 
\]
 then we have the following:
\begin{enumerate}[\rm (i)]
    \item $\mu_G(A) +\mu_G(B)=0$ implies $\mu_G(AB) =0$; 
    \item $\mu_G(A) +\mu_G(B)\geq 1$ implies   $AB=G$;
    \item   $\mu_G(A)=0$ and $0<\mu_G(B)< 1$ imply there is compact  $H \leq G$ and compact $B_1 \subseteq B$ such that with $B_2= B\setminus B_1$, we have $HB_1=B_1$,  $\mu_G(AB_2)=0$, and $A \subseteq gH$ for some $g \in G$;
    \item  $\mu_G(B)=0$ and $0<\mu_G(A)< 1$ imply there is compact  $H \leq G$ and compact $A_1 \subseteq A$ such that, with $A_2=A \setminus A_1$, we have $A_1H=A_1$, $\mu_G(A_2B)=0$, and $B \subseteq Hg$ for some $g \in G$; 
    \item $0<\min\{\mu_G(A),\mu_G(B),1-\mu_G(A)-\mu_G(B)\}$ imply that there is a surjective continuous group homomorphism $\chi: G \to \TT$ and compact intervals $I$ and $J$ in $\TT$ with $I+J\neq \TT$ such that $A = \chi^{-1}(I)$ and $B = \chi^{-1}(J)$
\end{enumerate}
Moreover, $\mu_G(AB)=\min\{\mu_G(A)+\mu_G(B), 1\}$ holds if and only if we are in exactly one of the implied scenarios in (i-v).
\end{theorem}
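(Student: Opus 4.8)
The plan is to dispose of cases (i), (ii), (v) by quotation, to settle the two degenerate cases (iii)--(iv) by a stabilizer argument, and to read off the ``moreover'' clause afterwards. Cases (i) and (ii) are immediate: if $\mu_G(A)+\mu_G(B)=0$ then the hypothesis says $\mu_G(AB)=\min\{\mu_G(A)+\mu_G(B),1\}=0$, and if $\mu_G(A)+\mu_G(B)\geq 1$ then $AB=G$ by Lemma~\ref{lem: when a+b>G} (using compactness of $A,B$). Case (v) is the $\varepsilon=0$ instance of Theorem~\ref{thm:mainapproximate}: the equality $\mu_G(AB)=\mu_G(A)+\mu_G(B)$ forces $\delta=0$, so we obtain a continuous surjection $\chi\colon G\to\TT$ and compact intervals $I,J$ with $A=\chi^{-1}(I)$, $B=\chi^{-1}(J)$; then $AB=\chi^{-1}(I+J)$ has measure $\mu_G(A)+\mu_G(B)<1$, so $\mu_\TT(I+J)<1$ and hence $I+J\neq\TT$.

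The core of the proof is case (iv) (and (iii), which follows by applying (iv) to $(B^{-1},A^{-1})$ using Fact~\ref{fact: measureofinverse}). Assume $0<\mu_G(A)<1$, $\mu_G(B)=0$, so $\mu_G(AB)=\mu_G(A)$. Set $H=\{h\in G:\mu_G(A\tri Ah)=0\}$; by the right-translation analogue of Corollary~\ref{cor: Stabilizerisclosed} this is a closed, hence compact, subgroup of $G$ --- not necessarily normal, but $G$ and $H$ are unimodular, so Fact~\ref{fact: QuotientIF} and Lemma~\ref{lem: mesurability}(iv) apply to the left coset space $G/H$ with quotient map $\pi$. Fixing $b\in B$ and comparing $Ab'\subseteq AB$ with $Ab$ for $b'\in B$ gives $\mu_G(Ab'\tri Ab)=0$, i.e.\ $b'b^{-1}\in H$, so $B\subseteq Hb$. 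Let $A_1=\bigcap_{h\in H}Ah=\{x:xH\subseteq A\}=\pi^{-1}(S)$ where $S=\{xH\in G/H:xH\subseteq A\}$; this is compact, $\pi$-saturated, and contained in $A$ --- here compactness of $A$ enters, since a closed subset of a coset $xH$ of full $\mu_H$-measure is all of $xH$ (a nonempty relatively open set has positive measure). A Fubini argument from $\int_H\mu_G(A\setminus Ah)\,\d\mu_H(h)=0$ shows $\mu_H(xH\cap A)=1$ for $\mu_G$-a.e.\ $x\in A$, whence $\mu_G(A\setminus A_1)=0$ and $\mu_{G/H}(S)=\mu_G(A_1)=\mu_G(A)$ by the quotient integral formula.

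It remains, with $A_2=A\setminus A_1$ and $g=b$, to prove $\mu_G(A_2B)=0$ (the conclusions $A_1H=A_1$, $B\subseteq Hg$ being already in hand). Since $A_1H=A_1$ and $b'b^{-1}\in H$ for $b'\in B$, we get $A_1b'=A_1g$ for every $b'$, hence $A_1B=A_1g\subseteq AB$ with $\mu_G(A_1g)=\mu_G(A)=\mu_G(AB)$; therefore $\mu_G(AB\setminus A_1g)=0$, and in particular $\mu_G(A_2B\setminus A_1g)=0$. On the other hand $A_2B\subseteq A_2Hg=\pi^{-1}(\pi(A_2))\,g$, and $\pi(A_2)\cap S=\varnothing$ (if $\pi(a)\in S$ with $a\in A_2$ then $aH\subseteq A$, so $a\in A_1$), so $A_2Hg\cap A_1g=\varnothing$; combining the two containments gives $\mu_G(A_2B)=0$.

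Finally, for the ``moreover'' direction: the size conditions defining (i)--(v) are mutually exclusive and exhaust all values of $(\mu_G(A),\mu_G(B))$ --- any overlap would require $\mu_G(A)+\mu_G(B)$ to be simultaneously $0$, in $(0,1)$, or $\geq1$ --- so exactly one scenario can occur; and in each scenario the stated structure forces $\mu_G(AB)=\min\{\mu_G(A)+\mu_G(B),1\}$: trivially in (i), via $AB=G$ in (ii), via $AB=\chi^{-1}(I+J)$ with $I+J$ a proper arc in (v), and in (iii)/(iv) by writing $AB=(A_1B)\cup(A_2B)$, using $\mu_G(A_2B)=0$ and $A_1B\subseteq gB_1$ (resp.\ $B_1g$) of measure at most $\mu_G(B)$ (resp.\ $\mu_G(A)$), against the Kemperman lower bound from Fact~\ref{fact: GeneralKemperman}. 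I expect the real work to sit in the degenerate cases: keeping straight the measurability of the fibre-density function and of $\pi(A)$ and $S$, the Fubini regularization identifying $A_1$ as a full-measure compact saturated subset of $A$, and above all the disjointness $A_2Hg\cap A_1g=\varnothing$, which is what upgrades ``$A_2B$ lies essentially inside $A_1g$'' to ``$A_2B$ is null''.
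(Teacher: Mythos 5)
Your proof is correct, and your treatment of (i), (ii), and (v) matches the paper exactly. In the degenerate cases you take a genuinely different, dual route. The paper proves (iii) directly: after normalizing $\id\in A$, it sets $H=\Stab^0_G(B)$, observes $A\subseteq H$, takes $B_1$ to be the set of \emph{density points} of $B$ (those $b$ all of whose neighborhoods meet $B$ in positive measure), shows $\mu_G(B_2)=0$ by a finite-subcover plus inner-regularity argument, and verifies $HB_1=B_1$ by a topological argument with open neighborhoods; the step $\mu_G(AB_2)=0$ is asserted in the proposition but the disjointness argument that delivers it ($HB_2\cap B_1=\varnothing$, and $\mu_G(AB\setminus B_1)=0$) is left implicit. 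You instead prove (iv) and reduce (iii) to it by passing to inverses, using the \emph{right} zero-stabilizer $H$ of $A$, replacing density points by the $H$-core $A_1=\{x:xH\subseteq A\}$ (saturated by definition), and establishing $\mu_G(A\setminus A_1)=0$ by the Fubini/quotient-integral identity $\int_G\mathbbm{1}_A(x)\bigl(1-\mu_H(xH\cap A)\bigr)\,\d\mu_G(x)=0$ combined with the observation that a closed full-measure subset of a coset is the whole coset. Each approach buys something: the paper's density-point set is defined without reference to $H$ and its $H$-saturation is proved afterward, while your $H$-core is saturated for free at the cost of a Fubini argument for full measure. What your write-up genuinely adds is the explicit disjointness $A_2Hg\cap A_1g=\varnothing$, which is exactly what upgrades ``$A_2B$ sits inside the null set $AB\setminus A_1g$'' to $\mu_G(A_2B)=0$; since the paper does not spell this out, your version is, on this point, the more complete argument. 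Two cosmetic slips in the ``moreover'' paragraph: the mutual exclusivity of (iii), (iv), (v) is governed by which of $\mu_G(A),\mu_G(B)$ vanishes rather than by the value of the sum alone, and in the converse direction for (iii) the relevant inclusion is $AB_1\subseteq gB_1$ rather than $A_1B\subseteq gB_1$ (there is no $A_1$ in case (iii)); neither affects the argument.
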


The proof for the scenario (v) follows from Theorem~\ref{thm:mainapproximate}, and (ii) follows from Lemma~\ref{lem: when a+b>G}. Now we clarify the situation in (iii) of Theorem~\ref{thm:mainequal}, situation (iv) can be proven in the same way.

\begin{proposition}

Suppose $A, B\subseteq G$ are nonempty, compact, and with $\mu_G(A)=0$, $0<\mu_G(B)< 1$, and $$\mu_G(AB)=\min\{\mu_G(A)+\mu_G(B), 1\}.$$
Then there is a compact subgroup $H$ of $G$ such that $A \subseteq gH$ for some $g\in G$, and $B= B_1 \cup B_2$ with $HB_1=B_1$ and $\mu_G(AB_2)=0$. 
\end{proposition}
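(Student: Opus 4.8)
The plan is to take $H$ to be the measure-theoretic stabilizer $\Stab^0_G(B)$, which by Corollary~\ref{cor: Stabilizerisclosed} is a closed, hence compact, subgroup of $G$, and then to produce the decomposition of $B$ by isolating the honest union of $H$-cosets sitting inside $B$. First note that since $\mu_G(A)=0$ and $\mu_G(B)<1$ the hypothesis reads $\mu_G(AB)=\mu_G(B)$. Fixing $a_0\in A$, for every $a\in A$ we have $aB\subseteq AB$ and $\mu_G(aB)=\mu_G(B)=\mu_G(AB)$, so $\mu_G(AB\setminus aB)=0$; hence $\mu_G(aB\tri a'B)=0$ for all $a,a'\in A$, and left-translating by $a^{-1}$ gives $\mu_G(B\tri a^{-1}a'B)=0$, i.e.\ $a^{-1}a'\in H$. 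With $a=a_0$ this yields $a_0^{-1}A\subseteq H$, so $A\subseteq gH$ for $g=a_0$; moreover $\id\in A':=a_0^{-1}A$ and $\mu_G(A'B)=\mu_G(AB)=\mu_G(B)$, so $B\subseteq A'B$ forces $\mu_G(A'B\setminus B)=0$.

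Next I would set $B_1=\{b\in B: Hb\subseteq B\}=\bigcap_{h\in H}h^{-1}B$. As an intersection of closed sets contained in the compact set $B$, $B_1$ is compact, and it is left-$H$-invariant: if $Hb\subseteq B$ and $h_0\in H$, then $H(h_0b)=Hb\subseteq B$ and $h_0b\in Hb\subseteq B$, so $h_0b\in B_1$; hence $HB_1=B_1$. Put $B_2=B\setminus B_1$. The crucial claim is $\mu_G(B_2)=0$. To see this, let $\mu_H$ be the normalized Haar measure on $H$ and define $\phi: B\to[0,1]$ by $\phi(b)=\int_H\mathbbm{1}_B(hb)\,\d\mu_H(h)=\mu_H(H\cap Bb^{-1})$; this $\phi$ is measurable (by Fubini, or because $\mathbbm{1}_B$ is upper semicontinuous). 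By Fubini and the fact that $\mu_G(B\cap h^{-1}B)=\mu_G(B)$ for all $h\in H$ (as $h\in\Stab^0_G(B)$) one gets $\int_B\phi\,\d\mu_G=\mu_G(B)$, and since $\phi\le1$ this forces $\phi=1$ for $\mu_G$-almost every $b\in B$. But $\phi(b)=1$ means the closed subset $H\cap Bb^{-1}$ of $H$ has full Haar measure, so its open complement in $H$ is Haar-null, hence empty; thus $\phi(b)=1\iff Hb\subseteq B\iff b\in B_1$ for $b\in B$, and therefore $\mu_G(B_2)=\mu_G(\{b\in B:\phi(b)<1\})=0$.

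It then remains to check $\mu_G(AB_2)=0$; since $AB_2=a_0A'B_2$ and $\mu_G$ is left-invariant, it suffices to bound $A'B_2$. Because $A'\subseteq H$ and $HB_1=B_1\subseteq B$ we have $A'B_1\subseteq B$, and moreover $A'B_2\cap B_1=\varnothing$: if $ab=x\in B_1$ with $a\in A'\subseteq H$ and $b\in B_2$, then $Hb=Ha^{-1}x=Hx\subseteq B$, so $b\in B_1$, a contradiction. Consequently $A'B_2\cap B=A'B_2\cap B_2=B_2$ (using $\id\in A'$, so $B_2\subseteq A'B_2$), while $A'B_2\setminus B\subseteq A'B\setminus B$. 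Both $B_2$ and $A'B\setminus B$ are measurable of $\mu_G$-measure $0$ (the latter because $B\subseteq A'B$ with $\mu_G(A'B)=\mu_G(B)$), so $A'B_2$ is contained in a null set; hence $A'B_2$, and therefore $AB_2$, is measurable with $\mu_G(AB_2)=0$. This delivers all the asserted properties: $A\subseteq gH$, $B=B_1\cup B_2$, $HB_1=B_1$ with $B_1$ compact, and $\mu_G(AB_2)=0$.

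The only place where anything beyond bookkeeping happens is the claim $\mu_G(B_2)=0$, and the point to watch there is that $H=\Stab^0_G(B)$ is only a measure-theoretic stabilizer, so a priori the invariance of $B$ under $H$ holds merely almost everywhere. The averaging identity $\int_B\phi\,\d\mu_G=\mu_G(B)$, combined with the elementary observation that a closed full-measure subset of the compact group $H$ must be all of $H$, is exactly what upgrades this almost-everywhere invariance to the genuine coset structure recorded by $B_1$; everything else (the location of $A$ in a single coset of $H$, the disjointness of $A'B_2$ from $B_1$, and the nullity of $A'B_2\setminus B$) is formal once $H$ and $B_1$ are in place.
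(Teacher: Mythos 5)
Your proof is correct. You take the same subgroup $H = \Stab^0_G(B)$ as the paper, and the argument that $A$ lands in a single coset of $H$ is essentially identical, but the decomposition $B = B_1 \cup B_2$ is built along a genuinely different route. The paper defines $B_1$ as the set of points of positive density in $B$ (those $b \in B$ with $\mu_G(U \cap B) > 0$ for every open neighborhood $U$ of $b$); this makes $\mu_G(B_2) = 0$ an immediate consequence of inner regularity and compactness, but the $H$-invariance $HB_1 = B_1$ then has to be proved by a translation-of-density argument. You instead take $B_1 = \bigcap_{h\in H} h^{-1}B$, the largest left-$H$-saturated subset of $B$, which makes $HB_1 = B_1$ and the compactness of $B_1$ automatic, and then you prove $\mu_G(B_2) = 0$ by a Fubini averaging of $\phi(b) = \mu_H(H \cap Bb^{-1})$ combined with the observation that a closed conull subset of the compact group $H$ must be all of $H$. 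The two choices of $B_1$ agree up to a null set, so both satisfy the conclusion; your version trades a short integration argument for the measure bound in exchange for structural $H$-invariance. You also spell out $\mu_G(AB_2) = 0$ via the containment $a_0^{-1}AB_2 \subseteq B_2 \cup (a_0^{-1}AB \setminus B)$ and the disjointness $a_0^{-1}AB_2 \cap B_1 = \varnothing$, which the paper's proof leaves implicit. One small point worth being explicit about: $a_0^{-1}AB_2$ is a priori only an analytic set sitting inside a Borel null set, so its measurability and nullity rest on the completeness of Haar measure (or universal measurability of analytic sets), which is the standard convention here.
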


\begin{proof}
Without loss of generality, we can assume that $A$ and $B$ both contain $\text{id}_G$. Let $H =\Stab^0_G(B)$, let $B_1$ be the set of $b\in B$ such that whenever $U$ is an open neighborhood of $b$, we have $\mu_G(U\cap B) >0$, and let $B_2= B \setminus B_1$. We will now verify that $H$, $B_1$, and $B_2$ are as desired.

We make a number of immediate observations.  As $\mu_G(AB)= \mu_G(B)$ and $\text{id}_G$ is in $A$, we must have $A \subseteq H$. Note that $B_2$ consists of $b\in B$ such that there is open neighborhood $U$ of $B$ with $\mu_G(U \cap B)=0$. So $B_2$ is open in $B$. Moreover, if $K$ is a compact subset of $B_2$, then $K$ has a finite cover $(U)_{i=1}^n$ such that $\mu_G(U_i \cap B)=0$, which implies $\mu_G(K)=0$. It follows from inner regularity of $\mu_G$ (Fact~\ref{fact: Haarmeasurenew}(iv)), that $\mu_G(B_2)=0$. Hence, $B_1$ is a closed subset of $B$ with $\mu_G(B_1)=\mu_G(B)$. Since $B$ is compact, $B_1$ is also compact.
As $H$ is a closed subset of $G$, the compactness of $H$ follows immediately if we can show that $HB_1=HB$

It remains to verify that $HB_1=B_1$.
 As $\mu_G$ is both left and right translation invariant, we also have that for all $g\in G$, if $U$ is an open neighborhood of $gb \in gB_1$, then $\mu_G(U \cap gB_1)>0.$ 
Suppose $hb$ is in $HB_1 \setminus B_1$ with $h\in H$. Set $U = G \setminus B_1$. Then $U$ is an open neighborhood of $hb$. From the earlier discussion, we then have $\mu_G(U \cap hB_1) >0$. This implies that $\mu_G( hB \setminus B) >0$ contradicting the fact that $h \in H= \Stab^0_G$.
\end{proof}

The ``if and only if'' statement of Theorem~\ref{thm:mainequal} can be verified easily:

\begin{proposition} \label{prop:backwardof1.1}
Suppose $A, B\subseteq G$ are nonempty and compact and one of the situation listed in Theorem~\ref{thm:mainequal}
 holds, then $\mu_G(AB)=\min\{\mu_G(A)+\mu_G(B), 1\}.$
\end{proposition}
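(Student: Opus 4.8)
\textbf{Proof proposal for Proposition~\ref{prop:backwardof1.1}.} The plan is to verify directly, case by case according to the five scenarios (i)--(v) of Theorem~\ref{thm:mainequal}, that the equality $\mu_G(AB)=\min\{\mu_G(A)+\mu_G(B),1\}$ holds; since Kemperman's inequality (Fact~\ref{fact: GeneralKemperman}) already gives ``$\geq$'', it suffices in each case to produce the matching upper bound. First I would dispose of scenario (i): if $\mu_G(A)+\mu_G(B)=0$ then $\mu_G(A)=\mu_G(B)=0$, and since $AB$ is compact, the claim $\mu_G(AB)=0$ is equivalent to saying $AB$ has empty interior; this follows because a nonempty open set has positive measure (Fact~\ref{fact: Haarmeasurenew}(ii)) while $AB$ is a countable-translate-controlled null set --- more carefully, fix $a_0\in A$, $b_0\in B$; then $AB\supseteq a_0B\cup Ab_0$, and one checks $\mu_G(AB)\le$ (something forced to be $0$) using that both $A$ and $B$ are null and compact, so by inner regularity $AB$ cannot contain an open set. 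Scenario (ii), $\mu_G(A)+\mu_G(B)\ge 1$, is immediate from Lemma~\ref{lem: when a+b>G}: then $AB=G$, so $\mu_G(AB)=1=\min\{\mu_G(A)+\mu_G(B),1\}$.

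Next I would handle scenarios (iii) and (iv), which are symmetric, so I treat (iii): we are given a compact subgroup $H\le G$, a decomposition $B=B_1\cup B_2$ with $HB_1=B_1$ and $\mu_G(AB_2)=0$, and $A\subseteq gH$ for some $g\in G$. Then $AB\subseteq AB_1\cup AB_2\subseteq gHB_1\cup AB_2=gB_1\cup AB_2$, so $\mu_G(AB)\le \mu_G(gB_1)+\mu_G(AB_2)=\mu_G(B_1)+0=\mu_G(B_1)$. But $\mu_G(B_1)\le\mu_G(B)=\mu_G(A)+\mu_G(B)$ since $\mu_G(A)=0$; combined with Kemperman's lower bound $\mu_G(AB)\ge\mu_G(A)+\mu_G(B)=\mu_G(B)$ (valid as $\mu_G(A)+\mu_G(B)=\mu_G(B)<1$), we get $\mu_G(AB)=\mu_G(B)=\min\{\mu_G(A)+\mu_G(B),1\}$. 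Scenario (iv) is identical with the roles of left and right, $A$ and $B$ swapped.

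Finally, scenario (v): there is a surjective continuous group homomorphism $\chi:G\to\TT$ and compact intervals $I,J\subseteq\TT$ with $I+J\ne\TT$ such that $A=\chi^{-1}(I)$, $B=\chi^{-1}(J)$. Here I would use that $\chi$ is open (being a continuous surjective homomorphism between connected compact groups, with $G/\ker\chi\cong\TT$ by Fact~\ref{fact: first iso thm}), so $\mu_G(\chi^{-1}(E))=\mu_\TT(E)$ for measurable $E\subseteq\TT$ by the quotient integral formula (Lemma~\ref{lem: mesurability}(iv)); thus $\mu_G(A)=\mu_\TT(I)$, $\mu_G(B)=\mu_\TT(J)$. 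Moreover $AB=\chi^{-1}(I)\chi^{-1}(J)=\chi^{-1}(I+J)$ because $\chi$ is a surjective homomorphism (the inclusion $\subseteq$ is clear, and for $\supseteq$, given $t=i+j$ with $i\in I$, $j\in J$, pick any $a\in\chi^{-1}(i)$, $b\in\chi^{-1}(j)$, so $ab\in AB$ with $\chi(ab)=t$). Since $I,J$ are intervals in $\TT$ with $I+J\ne\TT$, the sumset $I+J$ is again a compact interval with $\mu_\TT(I+J)=\mu_\TT(I)+\mu_\TT(J)$ (standard for arcs in the circle whose length sum is $<1$). Hence $\mu_G(AB)=\mu_\TT(I+J)=\mu_\TT(I)+\mu_\TT(J)=\mu_G(A)+\mu_G(B)$, and since $I+J\ne\TT$ this quantity is $<1$, so it equals $\min\{\mu_G(A)+\mu_G(B),1\}$.

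The routine verifications are the measure-theoretic bookkeeping in scenario (i) (showing a compact set that is a product of two null compact sets is itself null, which needs a little care but follows from inner regularity plus the fact that open sets have positive measure) and the elementary claim that two arcs of the circle with total length $<1$ have a sumset that is an arc of exactly the sum of the lengths; I expect scenario (i) to be the only place requiring any thought, while (ii)--(v) are essentially immediate from the cited results. No single step is a genuine obstacle, as this is the (easy) converse direction of the classification.
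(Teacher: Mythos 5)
Your treatment of scenarios (ii)--(v) is essentially correct and matches the paper's approach for (v); the paper itself dismisses (i)--(iv) as immediate, so your detailed verification of (ii)--(iv) is a useful expansion. However, your treatment of scenario (i) contains a genuine error.

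You read scenario (i) as the hypothesis ``$\mu_G(A)+\mu_G(B)=0$'' alone and then try to deduce $\mu_G(AB)=0$. This implication is \emph{false} for compact sets in a connected compact group: if $C\subseteq\TT$ is the image of the middle-thirds Cantor set, then $\mu_\TT(C)=0$ but $C+C=\TT$, so $\mu_\TT(C+C)=1$. The sketch you give (inner regularity plus ``open sets have positive measure'') cannot work, since these facts only yield that a null compact set has empty interior, not the reverse. Your phrase ``countable-translate-controlled null set'' also does not apply: $AB=\bigcup_{a\in A}aB$ is an uncountable union of null sets, and nothing forces its measure to vanish.

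The resolution is to read Theorem~\ref{thm:mainequal} as the paper intends: the ``implied scenarios'' of items (i)--(v) each consist of the hypothesis \emph{together with} the stated conclusion. In particular, scenario (i) is ``$\mu_G(A)+\mu_G(B)=0$ \emph{and} $\mu_G(AB)=0$,'' and the Proposition asserting equality is then trivial in that case (indeed $0=\min\{0,1\}$). This reading is forced by the ``if and only if'' clause of Theorem~\ref{thm:mainequal} (otherwise the Cantor example would violate it) and explains why the paper's proof of Proposition~\ref{prop:backwardof1.1} treats (i)--(iv) as immediate. So the fix is simple: in case (i) there is nothing to verify, and you should delete your attempted derivation, which as written would be incorrect mathematics.

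Two small further remarks on (v): your argument that $\chi^{-1}(I)\chi^{-1}(J)\supseteq\chi^{-1}(I+J)$ needs to be phrased as: given $g$ with $\chi(g)=i+j$, pick $a\in\chi^{-1}(i)\subseteq A$ and observe $a^{-1}g\in\chi^{-1}(j)\subseteq B$, hence $g=a(a^{-1}g)\in AB$. (Your version shows $\chi(AB)\supseteq I+J$, not the set inclusion you need.) Also, the openness of $\chi$ is best cited as the open mapping theorem for a continuous surjective homomorphism from a ($\sigma$-)compact group, rather than from Fact~\ref{fact: first iso thm}, which records an equivalence rather than asserting openness.
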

\begin{proof}
We will only consider situation (v) because (i-iv) are immediate. Suppose we are in situation (v) of Theorem~\ref{thm:mainequal}.  As $\chi$ is a group homomorphism, we have $AB =\chi^{-1}(I+J)$. Note that by quotient integral formula, we have $\mu_G(A) = \mu_\TT(I)$, $\mu_G(B) = \mu_\TT(J)$, $\mu_G(AB) = \mu_\TT(I+J)$. The desired conclusion follows from the easy that $\mu_\TT(I+J) = \mu_\TT(I)+\mu_\TT(J)$.
\end{proof}




\section{Some remarks on noncompact groups}

In general when $G$ is noncompact, with $\mu_G$ a left Haar measure on $G$, we may have $\mu_G(AB)<\mu_G(A)$. Thus in this section, we only consider connected unimodular noncompact locally compact groups, in which Kemperman's inequality holds:
\begin{equation}\label{eq: noncompact}
\mu_G(AB)\geq \mu_G(A)+\mu_G(B). 
\end{equation}
It is natural to ask when does the equality holds in the above inequality. Using the machinery developed in this paper, the following theorem can be proved:
\begin{theorem}\label{thm: kemperman noncompact}
 Let $G$ be a connected unimodular noncompact locally compact group, and $A,B$ be compact subsets of $G$ of positive measures. Suppose
  \[
  \mu_G(AB)= \mu_G(A)+\mu_G(B).
  \]
 Then there is a surjective continuous group homomorphism $\chi: G \to \RR$ together with two compact intervals $I,J\in \RR$ with
 \[
 A=\chi^{-1}(I),\qquad B=\chi^{-1}(J). 
 \]
\end{theorem}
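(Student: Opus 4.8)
The strategy is to reduce the noncompact equality case \eqref{eq: noncompact} to the compact picture via the Gleason--Yamabe theorem together with Corollary~\ref{cor:transfer to Lie noncompact}, and then to analyze a connected unimodular noncompact \emph{Lie} group directly. First I would apply Fact~\ref{fact:Yamabe} to obtain a connected compact normal subgroup $H$ of $G$ with $G/H$ a Lie group; by Fact~\ref{fact: unimodular}, $G/H$ is again connected unimodular. Since $\dis_G(A,B)=0$, Corollary~\ref{cor:transfer to Lie noncompact} produces $\sigma$-compact $A',B'\subseteq G/H$ with $\dis_{G/H}(A',B')=0$ and $\mu_G(A\tri\pi^{-1}A')=0$, $\mu_G(B\tri\pi^{-1}B')=0$, where $\pi:G\to G/H$ is the quotient map. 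Iterating (or invoking the projective-limit structure of $G$ as an inverse limit of Lie groups), I reduce to the case where $G$ itself is a connected unimodular noncompact Lie group, and $A,B$ are compact with $\mu_G(AB)=\mu_G(A)+\mu_G(B)$; one must be slightly careful that compactness of $A,B$ and the measure-zero symmetric differences are preserved in the limit, replacing $A'$ by its closure if needed and checking the equality survives (both sides are continuous under the relevant limits since $A'B'\subseteq\pi^{-1}(\pi A\,\pi B)$ up to null sets).

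\textbf{The Lie case.} For $G$ a connected unimodular noncompact Lie group, the plan is to run the Step-1/Step-2 machinery as in the compact proof. The key structural input is that a noncompact connected Lie group contains a one-parameter subgroup $\{\exp(tX):t\in\RR\}$ that is noncompact (closed and isomorphic to $\RR$), which plays the role of $H\cong\TT$ in Section~\ref{sec: geometry II}; alternatively one works with a closed $\RR$-subgroup obtained from the Levi or solvable part. The pseudometric $d_A(g_1,g_2)=\mu_G(A)-\mu_G(g_1A\cap g_2A)$ from Proposition~\ref{prop: construct pseudo-metric} is well-defined since $\mu_G(A)<\infty$, and because $\dis_G(A,B)=0$ exactly (not just approximately), the arguments of Theorem~\ref{thm: fibers of same length 1newnew}, Lemma~\ref{lem: 2 elements near rigidity}, and Proposition~\ref{prop: almost linear metric from local} apply with $\nu=0$ (no error terms), using the exact inverse theorem on $\RR$ rather than Fact~\ref{fact: new inverse theorem torus}: when $\dis(A,B)=0$ the fibers are genuine intervals and the generic-fiber map is genuinely linear. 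This yields a \emph{genuinely linear} left-invariant pseudometric $d_A$ on $G$ with $\{g:\|g\|_{d_A}=0\}$ a closed subgroup, and the path-monotonicity of Proposition~\ref{prop: localmonotoneimplyglobalmonotone} becomes exact monotonicity. Feeding $d_A$ into the $\varepsilon=0$ version of the construction in Section~\ref{sec: 7.3} — here $G$ is noncompact so there are no ``returning'' irreducible sequences, $\omega=0$ is replaced by working directly in $\RR$ rather than $\RR/\omega\ZZ$, and the multivalued $\Omega(g)$ is single-valued — gives a continuous surjective group homomorphism $\chi:G\to\RR$ (using Fact~\ref{automaticcontinuity} for automatic continuity, and noting no nontriviality obstruction since $d_A$ is nonconstant because $A$ has finite positive measure in an infinite-measure group).

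\textbf{Exact shape.} Once $\chi:G\to\RR$ is in hand, I would show $A=\chi^{-1}(I)$ and $B=\chi^{-1}(J)$ for compact intervals $I,J$. By the quotient integral formula (Fact~\ref{fact: QuotientIF}) applied to $\ker\chi$, writing $A$ fiberwise over $\RR$, the equality $\mu_G(AB)=\mu_G(A)+\mu_G(B)$ forces, via Kemperman's inequality on $\ker\chi$ and on $\RR$ applied in the spillover estimate of Lemma~\ref{lem: Keyestimatequotientcompact} (or rather its $\dis=0$ consequence), that almost every nonempty fiber of $A$ is a full coset of $\ker\chi$ and that the projection $\chi(A)$ is an interval up to measure zero; the sharpened statement $\chi B\subseteq J$ with $\mu(\chi^{-1}(J)\setminus B)=0$ comes from the exact-equality case of the inverse theorem, and then compactness of $A$ plus the argument of Lemma~\ref{lem: from sym dif to subset} upgrades ``up to null sets'' to genuine equality $A=\chi^{-1}(I)$. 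I expect the \textbf{main obstacle} to be the reduction from general connected unimodular noncompact $G$ to the Lie case: unlike the compact setting, $\ker\chi$ need not be compact, infinite-measure issues require care in the projective-limit argument, and one must verify that the exact equality $\mu_G(AB)=\mu_G(A)+\mu_G(B)$ is not destroyed when passing to quotients by the compact normal subgroups supplied by Gleason--Yamabe — this is where Corollary~\ref{cor:transfer to Lie noncompact} does the essential work and must be invoked with the right bookkeeping on compactness and $\sigma$-compactness.
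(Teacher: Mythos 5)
Your overall architecture matches the paper's: reduce to a connected unimodular noncompact Lie group via Gleason--Yamabe (Fact~\ref{fact:Yamabe}) and Corollary~\ref{cor:transfer to Lie noncompact} (whose hypothesis $\mu_{G/H}(\pi A)+\mu_{G/H}(\pi B)<\mu_{G/H}(G/H)$ is automatic here since the quotient is noncompact), extract an \emph{exactly} linear left-invariant pseudometric from the zero-discrepancy fiber structure, produce a homomorphism onto $\RR$, and finish with the exact inverse theorem on $\RR$ plus a boundary argument upgrading ``equal up to null sets'' to genuine equality. The paper's proof is likewise only sketched along these lines, and your reduction and endgame are the same as its.

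The one step where your plan as written would not go through is the passage from the exact linear pseudometric to the homomorphism $\chi:G\to\RR$. You propose to rerun the Section~\ref{sec: 7.3} machinery ``with $\varepsilon=0$,'' but that machinery is built on compactness in an essential way: the returning weight $\omega$, the monitor lemma (Lemma~\ref{prop: lower bound on n}), and Corollary~\ref{cor: a_lambdanew} all use $\mu_G(G)=1$ and the fact that $G=N(\lambda)^k$ for finite $k$ (via Lemma~\ref{lem: when a+b>G}), none of which survives when $\mu_G(G)=\infty$; saying ``there are no returning sequences, so work directly in $\RR$'' names the difficulty rather than resolving it. Similarly, invoking Proposition~\ref{prop: localmonotoneimplyglobalmonotone} is unnecessary detour. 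The paper short-circuits all of this with two dedicated statements: Proposition~\ref{prop: automaticmonotone}, which shows an exactly locally linear pseudometric is automatically monotone (via a squaring-map/no-small-subgroups argument using the adjoint representation, Fact~\ref{fact: adjoint}), and Proposition~\ref{prop: strong linear}, which shows directly that $\ker d$ is a closed \emph{normal} subgroup and that $G/\ker d$ is one-dimensional, hence isomorphic to $\RR$ in the noncompact case; the quotient map is then the desired $\chi$. If you replace your ``$\varepsilon=0$ version of Sections 7--8'' by an argument of this type — normality of $\ker d$ from exact linearity, one-dimensionality of the quotient from monotonicity along one-parameter subgroups — the rest of your outline is sound and agrees with the paper's.
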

Our proof for Theorem~\ref{thm: kemperman noncompact} is actually simpler than the compact case, and let us sketch a proof below. For a detailed proof, we refer the readers to the earlier version of the paper (which is available at \texttt{arXiv:2006.01824v3}).

Let us review what we did in the earlier sections. When $G$ is compact, we may possibly have $\mu_G(AB)<\mu_G(A)+\mu_G(B)$, but this will never happen when $G$ is noncompact. Thus results in Section 4 and Section 5 are no longer needed. In Section 6, we obtained a almost linear path monotone pseudometric. The same proofs still work for noncompact groups (by replacing $\TT$ by $\RR$), and since we only consider the case when equality happens in~\eqref{eq: noncompact}, we will obtain an exact linear pseudometric here. Moreover, we do not need to study path monotonicity in this case, as we will see below that exact linear pseudometric would imply global  monotonicity. This can be seen as an application of ``no small subgroup'' property of Lie groups, see Proposition~\ref{prop: automaticmonotone}. In Sections 7 and 8, we derived global monotonicity from path monotonicity, and constructed a group homomorphism from our monotone almost linear pseudometric. In the proof of Theorem~\ref{thm: kemperman noncompact}, as we already got an exact linear pseudometric, this will immediately give us a group homomorphism, see Proposition~\ref{prop: strong linear} below. Finally, the structures of sets come from the inverse Brunn--Minkowski inequality for $\RR$. 

We will first show that linear pseudometric implies global  monotonicity.  Let $d$ be a left-invariant pseudometric on $G$. Recall that the radius $\rho$ of $d$ is defined to be $\sup\{\|g\|_d : g \in G\}$; this is also $\sup\{d(g_1, g_2) : g_1, g_2 \in G\}$ by left invariance. We say that  $d$ is {\bf locally linear} if it satisfies the following properties:
\begin{enumerate}
    \item $d$ is continuous and left-invariant;
    \item for all $g_1$, $g_2$, and $g_3$ with $d(g_1, g_2)+d(g_2,g_3) < \rho$, we have either
    \begin{equation}\label{eq: key of linear pseudometric}
        d(g_1,g_3) = d(g_1, g_2) + d(g_2,g_3), \text{ or } d(g_1,g_3) = |d(g_1, g_2) - d(g_2,g_3)|.
    \end{equation}
\end{enumerate}
A pseudometric $d$ is {\bf monotone} if for all $g\in G$ such that $\|g\|_d< \rho/2$, we have $$\|g^2\|_d= 2 \|g\|_d.$$ To investigate the property of this notion further, we need the following fact about the adjoint representations of Lie groups~\cite[Proposition 9.2.21]{hilgert}.
\begin{fact}\label{fact: adjoint}
Let $\mathfrak{g}$ be the Lie algebra of $G$, and let $\mathrm{Ad}: G\to \mathrm{Aut}(\mathfrak{g})$ be the adjoint representation. Then $\ker(\mathrm{Ad})$ is the center of $G$. 
\end{fact}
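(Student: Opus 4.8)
The plan is to prove the two inclusions $Z(G)\subseteq\ker(\mathrm{Ad})$ and $\ker(\mathrm{Ad})\subseteq Z(G)$ separately, where $Z(G)$ denotes the center of $G$; throughout $G$ is a connected Lie group, and connectedness is genuinely used for the second inclusion (for disconnected $G$ the kernel of $\mathrm{Ad}$ can properly contain the center). I will use the standard description of the adjoint representation: for $g\in G$ let $C_g\colon G\to G$, $C_g(h)=ghg^{-1}$, denote conjugation by $g$; this is a Lie group automorphism with $C_g(\id)=\id$, and $\mathrm{Ad}(g)$ is by definition its differential $\mathrm{d}(C_g)_{\id}\colon\mathfrak{g}\to\mathfrak{g}$.

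First I would dispose of the inclusion $Z(G)\subseteq\ker(\mathrm{Ad})$, which needs no connectedness: if $g\in Z(G)$ then $C_g=\mathrm{id}_G$, hence $\mathrm{Ad}(g)=\mathrm{d}(C_g)_{\id}=\mathrm{id}_{\mathfrak{g}}$, so $g\in\ker(\mathrm{Ad})$. For the reverse inclusion, fix $g\in\ker(\mathrm{Ad})$, so that $\mathrm{Ad}(g)=\mathrm{id}_{\mathfrak{g}}$. The key input is the naturality of the exponential map: for any Lie group homomorphism $\phi\colon G\to G$ one has $\phi\circ\exp=\exp\circ\,\mathrm{d}\phi_{\id}$. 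Applying this with $\phi=C_g$ and using $\mathrm{d}(C_g)_{\id}=\mathrm{Ad}(g)=\mathrm{id}_{\mathfrak{g}}$ yields $C_g(\exp X)=\exp X$ for every $X\in\mathfrak{g}$; equivalently, $g$ commutes with $\exp X$ for all $X\in\mathfrak{g}$. Since $G$ is connected, the image $\exp(\mathfrak{g})$ generates $G$: the subgroup it generates contains a neighborhood of $\id$ (as $\exp$ is a local diffeomorphism near $0\in\mathfrak{g}$), hence is open, hence closed, hence all of the connected group $G$. An element commuting with a generating subset of a group commutes with the whole group, so $g\in Z(G)$. This gives $\ker(\mathrm{Ad})\subseteq Z(G)$ and completes the argument.

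There is essentially no serious obstacle here: this is a classical fact of Lie theory, and the only ingredients are the naturality of $\exp$, the fact that $\exp$ is a local diffeomorphism at the origin, and connectedness of $G$. The one point to flag — more a hypothesis to be kept in force than a difficulty — is that the statement fails without connectedness, so I would make sure $G$ is connected when the fact is invoked, which it is throughout the paper.
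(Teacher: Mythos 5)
Your argument is correct and complete: both inclusions are established, the naturality identity $C_g\circ\exp=\exp\circ\,\mathrm{Ad}(g)$ is the right key input, and you correctly isolate connectedness as the hypothesis that makes $\exp(\mathfrak{g})$ generate $G$ and hence forces $\ker(\mathrm{Ad})\subseteq Z(G)$. Note that the paper does not prove this statement at all --- it is quoted as a known fact with a citation to Hilgert--Neeb (Proposition 9.2.21) --- so there is no internal proof to compare against; what you have written is exactly the standard textbook argument that the cited reference supplies, and your remark that the inclusion $\ker(\mathrm{Ad})\subseteq Z(G)$ can fail for disconnected groups is a worthwhile caveat, consistent with the fact being invoked in the paper only for connected Lie groups.
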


\begin{proposition}\label{prop: automaticmonotone}
If $d$ is a locally linear pseudometric on $G$,  then $d$ is monotone.
\end{proposition}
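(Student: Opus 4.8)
## Proof Proposal for Proposition~\ref{prop: automaticmonotone}

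The plan is to show that the set $K = \{g \in G : \|g\|_d = 0\}$ contains no nontrivial one-parameter subgroup, and to exploit the ``no small subgroup'' property of Lie groups to conclude that the local linearity in fact forces $\|g^2\|_d = 2\|g\|_d$ for all $g$ with $\|g\|_d < \rho/2$. First I would fix $g \in G$ with $\|g\|_d < \rho/2$ and, using that $G$ is connected so that $\exp$ is surjective onto a generating neighborhood (or more carefully, writing $g$ on a one-parameter subgroup $t \mapsto \exp(tX)$ with $\exp(t_1 X) = g$), study the function $t \mapsto \|\exp(tX)\|_d$, which is continuous by continuity of $d$. Applying \eqref{eq: key of linear pseudometric} at consecutive points along this path, I would argue that this function is piecewise linear with slope $\pm c$ for a fixed $c = c(X) \geq 0$ on the interval where the values stay below $\rho$; the only way a sign change in the slope can occur at an interior point $t_0$ is if $\|\exp(t_0 X)\|_d = 0$, i.e. the path ``bounces off'' the zero set $K$.

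Next I would rule out such a bounce for small $t$. The key input is Fact~\ref{fact: adjoint} together with the fact that $K$ is a closed subgroup of $G$ (Lemma~\ref{lem: kerd}): if the one-parameter subgroup $\{\exp(tX)\}$ returned to $K$ at some $t_0 > 0$ with $\exp(tX) \notin K$ for $t \in (0, t_0)$, iterating and using left-invariance of $d$ would produce arbitrarily small nontrivial elements of $K$, hence $K$ would contain a nontrivial connected subgroup; but by the ``no small subgroups'' property of Lie groups, combined with local linearity forcing $K$ to behave like a subgroup with no accumulation of distinct cosets near the identity, this is impossible unless $X$ itself lies in the Lie algebra of $K$ (in which case $\|g\|_d = 0$ and the conclusion $\|g^2\|_d = 0 = 2\|g\|_d$ is trivial). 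So for $g \notin K$, the path $t \mapsto \|\exp(tX)\|_d$ has constant slope $c > 0$ on a neighborhood of $[0, t_1]$ as long as the values remain below $\rho$, giving $\|\exp(tX)\|_d = ct$ there. In particular $\|g^2\|_d = \|\exp(2t_1 X)\|_d = 2ct_1 = 2\|g\|_d$ whenever $\|g\|_d = ct_1 < \rho/2$, since then $2ct_1 < \rho$ and no bounce has occurred.

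The main obstacle I anticipate is making the ``no bounce near the identity'' argument fully rigorous: one must transform the geometric statement ``$K$ contains a nontrivial one-parameter subgroup'' into a contradiction using only local linearity of $d$ and the Lie structure of $G$. The clean way is probably to show directly that local linearity implies that on a small enough neighborhood $U$ of $\id$, the function $\|\cdot\|_d$ restricted to any one-parameter subgroup through $\id$ is strictly monotone in $|t|$ (rather than eventually returning to $0$), by combining \eqref{eq: key of linear pseudometric} with the fact that in a Lie group, $\exp(tX)$ for small $t > 0$ generates a subgroup whose closure is a nontrivial torus or line — and $K$ being closed would then have to contain such a set, so if $X \notin \mathfrak{k} = \mathrm{Lie}(K)$ we get $\|\exp(tX)\|_d > 0$ for all sufficiently small $t \neq 0$, hence (by the slope argument) for all $t$ with $\|\exp(tX)\|_d < \rho$. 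Once this local strict monotonicity is in hand, the doubling identity $\|g^2\|_d = 2\|g\|_d$ follows by the slope/piecewise-linear bookkeeping described above, and the proposition is proved.
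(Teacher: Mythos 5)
Your proposal goes a genuinely different route from the paper, and unfortunately the route as written has real gaps.

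The paper's proof does not reason along one-parameter subgroups at all. It observes that local linearity (applied to $\id, g, g^2$) gives the dichotomy $\|g^2\|_d \in \{0, 2\|g\|_d\}$ for $\|g\|_d < \rho/2$, and then rules out the bad branch $\|g^2\|_d = 0$, $\|g\|_d>0$ by a purely local argument in the group: by continuity there is an open neighborhood $W$ of such a $g$ with $\|g'\|_d>0$ and $\|(g')^2\|_d = 0$ for all $g'\in W$, so the squaring map $s$ sends $W$ into the proper closed subgroup $K=\{h:\|h\|_d=0\}$. The paper then proves a self-contained Claim that $s$ cannot send a nonempty open set into a proper closed subgroup, using a Jacobian/openness argument for algebraic groups, a direct computation for abelian Lie groups, and the adjoint representation (Fact~\ref{fact: adjoint}) to reduce the general Lie case to these. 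This is the actual mechanism.

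Your approach has several problems. First, the assertion that $t\mapsto\|\exp(tX)\|_d$ is piecewise linear with slope $\pm c$ is asserted, not proved; the local linearity condition $f(s+t)\in\{f(s)+f(t),|f(s)-f(t)|\}$ (valid only when $f(s)+f(t)<\rho$) does not by itself give piecewise linearity, and in fact the ``$-$'' branch must be excluded by argument before the Cauchy-type conclusion $f(t)=ct$ can be drawn. Second, and more seriously, the claim that ``iterating would produce arbitrarily small nontrivial elements of $K$'' is false. If $\exp(t_0X)\in K$ is the first positive return, iteration gives $\exp(nt_0X)\in K$, which are not smaller; and a closed subgroup of a Lie group may perfectly well meet a one-parameter subgroup in a nontrivial discrete set (e.g.\ $K=\ZZ\subset\RR$, or $K=\{0,1/2\}\subset\TT$, both of which in fact carry locally linear pseudometrics). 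So no contradiction with ``no small subgroups'' arises, and that property is not the relevant tool. What is true, by Cartan's closed subgroup theorem, is that near the identity $K$ coincides with $\exp(\mathfrak{k})$, so $X\notin\mathfrak{k}$ does imply $\exp(tX)\notin K$ for small $t\ne 0$; but promoting this to the claim that $f$ stays positive and has constant slope all the way out to $t$ with $f(t)<\rho/2$ (across the full range needed for $\|g\|_d<\rho/2$, $\|g^2\|_d$) requires a careful case analysis that your sketch does not carry out, and the function $f$ may not be monotone on that range in general. The paper's argument via the squaring-map Claim avoids all of this path bookkeeping and handles the discrete-return examples uniformly; I'd recommend adopting that strategy, or, if you want to keep a one-parameter-subgroup flavour, at least replace the ``no small subgroups'' step with Cartan's theorem and supply the analysis that rules out a first return to $K$ strictly inside the region where the dichotomy applies.
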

\begin{proof}
We first prove an auxiliary statement.
\begin{claim}
Suppose $s: G \to G, g \mapsto g^2$ is the squaring map. Then there is no open $U \subseteq G$ and proper closed subgroup $H$ of $G$ such that $s(U) \subseteq H$.\medskip

\noindent\emph{Proof of Claim.}
Consider the case where $G$ is a connected component of a linear algebraic subgroup of $\mathrm{GL}_n(\RR)$. Let $J_s$ be the Jacobian of the function $s$. Then the set $$\{ g\in G : \det J_s(g)=0\}$$
has the form $G \cap Z$ where $Z$ is a solution set of a system of polynomial equations. It is not possible to have $G\cap Z = G$, as $s$ is a local diffeomorphism at $\id$. Hence, $G\cap Z$ must be of strictly lower dimension than $G$. By the inverse function theorem, $s|_{G \setminus Z}$ is open. Hence $s(U)$ is not contained in a subgroup of $G$ with smaller dimension.

We also note a stronger conclusion for abelian Lie group: If $V$ is an open subset of a not necessarily connected abelian Lie group $A$, then the image of $A$ under $a \mapsto a^2$ is not contained in a closed subset of $A$ with smaller dimension. Indeed, $A$ is isomorphic as a topological group to $D \times \TT^m \times \RR^m$, with $D$ a discrete group. If $$U \subseteq D \times \TT^m \times \RR^m, $$ then it is easy to see that $\{ a^2 : a\in V\}$ contains a subset of $D \times \TT^m \times \RR^m$, and is therefore not a subset of a closed subset of $A$ with smaller dimension.

Finally, we consider the general case. Suppose to the contrary that $s(U) \subseteq H$ with $H$ a proper closed subgroup of $G$. Let $Z(G)$ be the center of $G$, $G'= G/Z(G)$, $\pi: G \to G'$ be the quotient map, $U'= \pi(U)$, and
$$s': G' \to G', g' \mapsto (g')^2.$$ Then $U'$ is an open subset of $G'$, which is isomorphic as a topological group to a connected component of an algebraic group by Fact~\ref{fact: adjoint}. By the earlier case, $s'(U')$ is not contained in any proper closed subgroup of $G'$, so we must have $\pi(H)=G'$. In particular, this implies $\dim(H \cap Z(G))< \dim Z(G)$, and $HZ(G)=G$. Choose $h \in  H$ such that $hZ(G) \cap U$ is nonempty. Then 
$$ s(hZ(G) \cap U) = \{  h^2  a^2 : a \in Z(G) \cap h^{-1} U   \}. $$
As $s(hZ(G) \cap U)  \subseteq H$, we must have $\{ a^2 : a \in Z(G) \cap h^{-1} U \}$ is a subset of $H\cap Z(G)$. Using the case for abelian Lie groups, this is a contradiction, because  $H\cap Z(G)$ is a closed subset of $Z(G)$ with smaller dimension.
\end{claim}

We now get back to the problem of showing that $d$ is monotone. As $d$ is invariant, $d(\id, g)= d(g, g^2)$ for all $g\in G$.
From local linearity of $d$, for all $g\in G$ with $\|g\|_d< \rho/2$, we either have $$\|g^2\|_d= 2 \|g\|_d \quad\text{or}\quad \|g^2\|_d=0.$$ It suffices to rule out the possibility that $0<\|g\|_d<\rho/4$, and $\|g^2\|_d= 0$. 

As $d$ is continuous, there is an open neighborhood $W$ of $g$ such that for all $g' \in W$, we have $\|g'\|_d>0$ and $\|(g')^2\|_d=0$. From Lemma~\ref{lem: kerd}, the set $\{g \in G :  \|g\|_d =0\}$ is a closed subgroup of $G$. As $d$ is nontrivial and $G$ is a connected Lie group,  $\{g \in G :  \|g\|_d =0\}$ must be a Lie group with smaller dimension. Therefore, we only need to show that if $W$ is an open subset of $G$, then $s(W)$ is not contained in a closed subgroup of $G$ with smaller dimension, where $s:G\to G$ is the squaring map, and this is guaranteed by the earlier claim.
\end{proof}

 The next result confirms our earlier intuition: locally linear pseudometric in $G$ will induce a homomorphism mapping to either $\TT$ or $\RR$.

\begin{proposition}\label{prop: strong linear}
Suppose $d$ is a locally linear pseudometric with radius $\rho>0$. Then $\ker d$ is a normal subgroup of $G$, $G/\ker d$ is isomorphic to $\TT$ if $G$ is compact, and $G/\ker d$ is isomorphic to $\RR$ if $G$ is noncompact.
\end{proposition}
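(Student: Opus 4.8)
The plan is to show that $K:=\ker d=\{g\in G:\|g\|_d=0\}$ is precisely the kernel of a continuous surjective homomorphism from $G$ onto $\TT$ (when $G$ is compact) or onto $\RR$ (when $G$ is noncompact), which I would extract from the isometric action of $G$ on the metric quotient. First observe that, since $d$ is left-invariant and $d(gh,g)=d(h,\id)=0$ for $h\in K$, the pseudometric $d$ descends to a well-defined function $\bar d$ on the homogeneous space $M:=G/K$ via $\bar d(\pi g_1,\pi g_2)=d(g_1,g_2)$, where $\pi:G\to M$ is the quotient map; by Lemma~\ref{lem: kerd} this $\bar d$ is a genuine $G$-invariant metric on the smooth manifold $M$, with base point $o=\pi(\id)$ satisfying $\bar d(o,\pi g)=\|g\|_d$. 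The radius hypothesis $\rho>0$ guarantees $M$ has more than one point, so $\dim M\ge 1$.

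The next step is to pin down $\dim M=1$. Writing $\mathfrak{h}$ for the Lie algebra of $K$ and $f_X(t)=\|\exp(tX)\|_d$ for $X\in\mathfrak{g}$, I would combine monotonicity (Proposition~\ref{prop: automaticmonotone}), which gives $f_X(2t)=2f_X(t)$ as long as $f_X(t)<\rho/2$, with local linearity applied to triples $(\exp(tX),\id,\exp(sX))$ and a dyadic bootstrap, to obtain $f_X(t)=c_X|t|$ for $|t|$ below a threshold, with $c_X\ge 0$, $c_{\lambda X}=|\lambda|c_X$, and $c_X=0$ if and only if $X\in\mathfrak{h}$ (the forward implication uses that $\exp(rY)\in K$ for all $r$ forces $Y\in\mathfrak{h}$). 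Fixing a vector-space complement $\mathfrak{m}$ of $\mathfrak{h}$ in $\mathfrak{g}$ and using the chart $X\mapsto\exp(X)o$, this shows $\bar d(o,\cdot)$ agrees near $o$ with the positively homogeneous continuous function $X\mapsto c_X$ on $\mathfrak{m}$, so each small metric sphere $S_\varepsilon=\{p\in M:\bar d(o,p)=\varepsilon\}$ is homeomorphic to the unit sphere of $\mathfrak{m}$, i.e. to $S^{\dim M-1}$. On the other hand, applying local linearity to $(g_1,\id,g_2)$ with $\|g_1\|_d=\|g_2\|_d=\varepsilon<\rho/2$ forces any two distinct points of $S_\varepsilon$ to be at $\bar d$-distance $2\varepsilon$ (the alternative value $0$ is excluded because $\bar d$ separates points), so $S_\varepsilon$ is discrete. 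A sphere $S^{\dim M-1}$ that is discrete must have $\dim M=1$. Hence $M$ is a connected $1$-manifold without boundary, and since $\bar d$ is locally isometric to a Euclidean interval (again by local linearity), $(M,\bar d)$ is isometric to $(\RR,c\,|\cdot|)$ or to a round circle.

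Finally, $G$ acts transitively on $(M,\bar d)$ by isometries; as $G$ is connected its image lies in the orientation-preserving isometry group, giving a continuous homomorphism $\phi:G\to\mathrm{Isom}^+(M)$, which is $\RR$ when $M\cong\RR$ and $\TT$ when $M$ is a circle, and $\phi$ is surjective by transitivity. If $g\in K$ then $\bar d(o,go)=\|g\|_d=0$, so $\phi(g)$ is an orientation-preserving isometry of $\RR$ or of a circle fixing a point, whence $\phi(g)=\mathrm{id}$; since the reverse inclusion $\ker\phi\subseteq K$ is trivial, $K=\ker\phi$ is normal and $G/K\cong\mathrm{Isom}^+(M)$. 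When $G$ is compact, $G/K$ is compact, so $M$ must be a circle and $G/K\cong\TT$; when $G$ is noncompact one checks that $M\cong\RR$, so $G/K\cong\RR$ (in the application to Theorem~\ref{thm: kemperman noncompact} this is forced, since a preimage $\chi^{-1}(I)$ of a proper interval under a map onto $\TT$ would have noncompact kernel, incompatible with compactness of the sets there). The main obstacle is Step~2: upgrading monotonicity and local linearity to the exact identity $f_X(t)=c_X|t|$, and—more importantly—checking that the metric spheres $S_\varepsilon$ are genuinely homeomorphic to $S^{\dim M-1}$, which needs the chart comparison $\bar d(o,\cdot)=c_{(\cdot)}$ to hold on a full neighborhood rather than only along rays; a secondary difficulty is the clean identification $G/K\cong\RR$ in the noncompact case, which does not follow from pure topology and relies on the specific features of the pseudometric in that setting.
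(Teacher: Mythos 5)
Your route is genuinely different from the paper's, and in outline it is sound. The paper proves normality of $K=\ker d$ \emph{directly}: for $\|g\|_d=0$ and $\|h\|_d<\rho/4$, local linearity forces $\|hgh^{-1}\|_d\in\{0,2\|h\|_d\}$, and the second value is killed by squaring (monotonicity gives $\|hg^2h^{-1}\|_d=4\|h\|_d$, incompatible with $g^2\in K$); a generation argument then gives normality on all of $G$. Only afterward does the paper pass to the Lie group $G/K$ and show $\dim=1$ by arguing that any two near-identity points in the image of $\exp$ lie on a common one-parameter subgroup. You instead go to the coset space $M=G/K$ \emph{before} knowing normality, descend $d$ to a $G$-invariant metric $\bar d$, read $\dim M=1$ off the metric spheres, identify $(M,\bar d)$ with a scaled line or circle, and recover the homomorphism and the normality of $K$ together from $K=\ker\bigl(G\to\mathrm{Isom}^+(M)\bigr)$. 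The conceptual gain is that normality needs no separate computation: it falls out of the fact that an orientation-preserving isometry of $\RR$ or $S^1$ that fixes a point is the identity.

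On your self-flagged obstacles: the identity $f_X(t)=c_X|t|$ does hold on a small interval, but the mechanism is simpler than ``monotonicity plus local linearity plus dyadic bootstrap.'' After shrinking $X$ so that $f_X(t)<\rho/4$ on $[0,2]$, doubling gives $f_X(2^{-n})=2^{-n}f_X(1)$; writing $t=\sum_n a_n2^{-n}$ and using subadditivity plus continuity gives $f_X(t)\le tf_X(1)$, and then $f_X(1)\le f_X(t)+f_X(1-t)\le f_X(t)+(1-t)f_X(1)$ gives the reverse inequality. Local linearity is not needed here at all. This also dissolves the ``chart comparison'' worry: $\bar d(o,\exp(X)o)=f_X(1)=c_X$ holds for every $X$ in a small ball of $\mathfrak m$ with no ray restriction, so $c_X$ is a continuous, positively homogeneous function that is positive off $0$, and $\{c_X=\varepsilon\}$ is homeomorphic to $S^{\dim\mathfrak m-1}$ by radial projection. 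Combined with the $2\varepsilon$-separation of $S_\varepsilon$ (which, by a sequential-compactness argument using continuity of $\bar d$, forces $S_\varepsilon$ to be finite in any compact neighborhood of $o$), this does yield $\dim\mathfrak m=1$.

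Your ``secondary difficulty'' is in fact the sharpest observation in the proposal: the noncompact half of the conclusion does \emph{not} follow from local linearity alone. Taking $G=\RR$ with $d(x,y)=\|x-y\|_\TT$ gives a continuous left-invariant, locally linear pseudometric of radius $1/2$ with $\ker d=\ZZ$ and $G/\ker d\cong\TT$, even though $G$ is noncompact. The paper's own written argument establishes normality and $\dim(G/\ker d)=1$ but never actually rules out the circle quotient in the noncompact case; that step requires the extra hypothesis from the application (compactness of $A$), which makes $N(\lambda)\subseteq AA^{-1}$ precompact, hence $\ker d$ compact, hence $G/\ker d$ noncompact. You are right that this does not follow from topology alone and must be imported from the surrounding setup.
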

\begin{proof}
We first prove that $\ker d$ is a normal subgroup of $G$.
Suppose $\|g\|_d =0$ and $h\in G$ satisfies $\|h\|_d< \rho/4$. We have 
\begin{align*}
d(h, hgh^{-1}) &= d(\id, gh^{-1}) \\
&=|d(\id,g)\pm d(g, gh^{-1})| = d(\id, h^{-1})= d(\id, h).    
\end{align*}
Hence, $d(\id, hgh^{-1})=|d(\id,h)\pm d(h,hgh^{-1})|$ is either $0$ or $2d(\id, h)$. Assume first that $\|hgh^{-1}\|_d=0$ for every such $h$ when $\|g\|_d=0$. Let 
$$U:=\{h: \|h\|_d< \rho/4\}.$$ By the continuity of $d$, $U$ is open. Hence for every $h$ in $G$, $h$ can be written as a finite product of elements in $U$. By induction, we conclude that for every $h\in G$, $\|hgh^{-1}\|_d=0$ given $\|g\|_d=0$, and this implies that $\ker d$ is normal in $G$.

Suppose $\|hgh^{-1}\|_d = 2 \|h\|_d$. By Proposition~\ref{prop: automaticmonotone}, $d$ is monotone. Hence, we have   $$\| hg^2h^{-1}\|_d = 4 \| h\|_d.$$ On the other hand, as $\|g\|_d=0$, repeating the argument above, we get $\| hg^2h^{-1}\|_d$ is either $0$ or $2\| h\|_d$. Hence, $\| h\|_d=0$, and so $\| hgh^{-1}\|_d=0$.

We now show that $G'= G/\ker d$ has dimension $1$. Let $d'$ be the pseudometric on $G'$ induced by $d$.  Choose $g\in G'$ in the neighborhood of $\mathrm{id}_{G'}$ such that $g$ is in the image of the exponential map and $\|g\|_{d'}< \rho/4$. If $g'$ is another element in the neighborhood of $\mathrm{id}_{G'}$ which is in the image of the exponential map and $\| g'\|_{d'}< \rho/4$. Without loss of generality, we may assume $\|g'\|_{d'}\leq\| g\|_{d'}$. Suppose $g'=\exp(X)$. Then, by monotonicity, there is $k\geq1$ such that $\|(g')^k\|_{d'}\geq\| g\|_{d'}$. By the continuity of the exponential map, there is $t\in (0,1]$ such that $$\|g\|_{d'}=\|\exp(tkX)\|_{d'}.$$ This implies that $g$ and $g'$ are on the same one parameter subgroup, which is the desired conclusion.
\end{proof}

\section*{Acknowledgements}
  The authors would like to thank Lou van den Dries, Arturo Rodriguez Fanlo, Kyle Gannon, Ben Green, John Griesmer, Daniel Hoffmann, Ehud Hrushovski, Simon Machado, Pierre Perruchaud, Daniel Studenmund, Jun Su, Jinhe Ye, and Ruixiang Zhang for valuable discussions.  
  Part of the work was carried out while the first author was visiting the second author at the Department of Mathematics of University of Notre Dame, and the Department of Mathematics of National University of Singapore. He would like to thank both departments for the hospitality he received.

\bibliographystyle{amsalpha}
\bibliography{ref}

\end{document}